\newtheorem{thm}{Theorem}[section]
\newtheorem{prop}[thm]{Proposition}
\newtheorem{cor}[thm]{Corollary}
\newtheorem{ass}[thm]{Assumption}
\newtheorem{lemma}[thm]{Lemma}
\newtheorem{defn}[thm]{Definition}
\theoremstyle{remark}
\newtheorem{remark}[thm]{Remark}
\numberwithin{equation}{section}
\newcommand{\R}{\mathbb R}
\newcommand{\N}{\mathbb N}
\newcommand{\eps}{\varepsilon}
\newcommand{\dd} {\, \mathrm{d}}
\newcommand{\dv}{\, \mathrm{d} v}
\newcommand{\dw}{\, \mathrm{d} w}
\newcommand{\du}{\, \mathrm{d} u}
\newcommand{\dz}{\, \mathrm{d} z}
\newcommand{\da}{d_{\mathrm{a}}}
\newcommand{\dgs}{d_{\mathrm{GS}}}
\newcommand{\one}{\mathds{1}}
\newcommand{\To} {\mathcal T_0}
\newcommand{\Q}{\mathcal Q}
\newcommand{\un}{\one}
\newcommand{\Cpol}{C_{\ell,\rm{fast}}}
\newcommand{\fin}{f_{\rm{in}}}
\newcommand{\LL}{\mathcal{L}}
\newcommand{\Qint}{Q_{\rm{int}}}
\newcommand{\cone}{\Xi}
\DeclareMathOperator*{\osc}{osc}
\DeclareMathOperator{\PV}{PV}
\DeclareMathOperator{\degk}{\deg_{\mathrm k}}
\title{Global regularity estimates for the Boltzmann equation without cut-off}
\author{Cyril Imbert} 
\address[C.~Imbert]{CNRS \& DMA, École normale supérieure, Université PSL, CNRS, 75005 Paris, France \\
45 rue d'Ulm, 75005 Paris, France}
\email{Cyril.Imbert@ens.psl.eu}
\author{Luis Silvestre}
\thanks{The authors would like to thank C.~Mouhot for interesting discussions regarding this work, and also for collaboration in some of the earlier work that was useful for this paper. LS is supported by NSF grant DMS-1764285.}
\address[L.~Silvestre]{Mathematics Department, University of Chicago,
  Chicago, Illinois 60637, USA} \email{luis@math.uchicago.edu}
\date{\today}
\begin{document}

\begin{abstract}
  We derive $C^\infty$ a priori estimates for solutions of the inhomogeneous Boltzmann equation without cut-off, conditional to pointwise bounds on their mass, energy and entropy densities. We also establish decay estimates for large velocities, for all derivatives of the solution. 
\end{abstract}

\maketitle

\setcounter{tocdepth}{1}
\tableofcontents

\section{Introduction}

The Boltzmann equation is a fundamental nonlinear evolution model from statistical mechanics. It describes the evolution of a
system made of a very large number of particles at an intermediate scale between the microscopic one (which consists of the trajectory of every single particle and their interactions) and the macroscopic one (the hydrodynamic models like Euler or Navier-Stockes equations).

We consider the space in-homogeneous Boltzmann equation without cut-off,
\begin{equation}\label{e:boltzmann}
\partial_t f + v \cdot \nabla_x f = \Q(f,f)  \quad \text{ for } \; (t,x,v) \in (0,T) \times \R^d \times \R^d.
\end{equation}
Boltzmann's collision operator $\Q(f,f)$ is typically written in the following way
\begin{equation}
  \label{e:Q}
  \Q(f,f) = \int_{\R^d} \int_{S^{d-1}} (f(v'_*)f(v') - f(v_*)f(v)) B(|v-v_*|,\cos \theta) \dd v_* \dd \sigma
\end{equation}
where $v_*'$ and $v'$ are computed in terms of $v_\ast$ and $\sigma$ by the formula
\[ v' = \frac{v+v_*}2 + \frac{|v-v_*|}2 \sigma \quad \text{ and } \quad 
v'_* = \frac{v+v_*}2 - \frac{|v-v_*|}2 \sigma .\]
The angle $\theta$ measures the deviation between $v$ and $v'$. In this case, it is the angle so that
\[ \cos \theta := \frac{v-v_*}{|v-v_*|} \cdot \sigma 
\qquad \left( \text{ and } \quad \sin (\theta/2) := \frac{v'-v}{|v'-v|} \cdot \sigma \right) .\]
We consider the standard non-cutoff collision kernels $B$. They have the form
\begin{equation}\label{assum:B}
B(r,\cos \theta) = r^\gamma b(\cos \theta) \quad \text{ with} \quad b (\cos \theta) 
\approx|\sin (\theta/2)|^{-(d-1)-2s}
\end{equation}
with $\gamma > -d$ and $s \in (0,1)$. 

In a microscopic model where the particles repel each other by an inverse-power law potential with exponent $q>2$, the collision kernel has the form \eqref{assum:B} with $\gamma = (q-2d+1)/(q-1)$ and $s = 1/(q-1)$ (See for example \cite{villani-book}, chapter 1, Section 1.4). In three dimensions, for inverse-power law potentials, the value of $\gamma+2s$ would be in the range $[-1,1]$. Our results in this paper apply to the range $\gamma+2s \in [0,2]$. In Subsection~\ref{s:open_problems}, we briefly discuss the problem with the \emph{very soft potential} case: $\gamma+2s<0$.

We define the hydrodynamic quantities
\[
\begin{aligned}
M (t,x) &:= \int_{\R^d}f(t,x,v) \dd v && \text{(mass density)}, \\
E (t,x) & := \int_{\R^d}f(t,x,v) |v|^2 \dd v && \text{(energy density)}, \\
H (t,x) & := \int_{\R^d} f \ln f (t,x,v) \dd v && \text{(entropy density)}. 
\end{aligned}
\]
These hydrodynamic quantities, together with moment density, are the 
quantities associated with the solution of the Boltzmann equation
that are macroscopically observable.

In this article, we are concerned with regularity estimates for the solution of \eqref{e:boltzmann}. This is intimately related with the well posed-ness problem for smooth classical solutions. The question of existence of global smooth solutions for the Boltzmann equation \eqref{e:boltzmann} is a well known and remarkable open problem. There is a warm discussion about it in the first chapter of C\'edric Villani's book \cite{cedric2012theoreme}. The Boltzmann equation is a more detailed model for the evolution of a fluid than the hydrodynamic models like Euler or Navier-Stokes equations. Indeed, in certain asymptotic regime (see \cite{bardos1991}), the hydrodynamic quantities associated to the Boltzmann equation converge to the solution of the compressible Euler equation, that is known to develop singularities in finite time \cite{sideris}. A next order expansion shows that the hydrodynamic quantities approximately solve a compressible Navier-Stokes equation, for which the classical well-posedness problem is not well understood. It makes sense to expect the Boltzmann equation to retain the difficulties of the hydrodynamic models, and add some more. Should we expect singularity formation in finite time then? The answer to this question is not straight forward. There are different types of singularities that emerge from the flow of the compressible Euler equation. Some of them may be compatible with the Boltzmann equation, and others are not. In a \emph{shock} singularity for the Euler equation, a discontinuity emerges from the flow similarly as in Burgers equation. All the quantities involved stay bounded up to the time of the discontinuity. One would not see this as a singularity for the Boltzmann equation, since the kinetic model allows for different velocities to co-exist at one point in space. The Navier-Stokes equation will not allow for shock singularities either, since the viscosity would smooth out any discontinuity for as long as solutions stay bounded and away from vacuum. A fundamentally different, and much more delicate, kind of singularity is that of an \emph{implosion}. In that case, the mass and energy concentrate and become unbounded at one point. It was only very recently (in fact, after this paper was initially posted) that smooth implosion profiles for the compressible Euler equation were found and proved to be stable in \cite{merle2019smooth} and \cite{merle2019implosion}. These implosion singularities exist for the compressible Navier-Stokes equation as well. As of now, we cannot think of any reason to rule out the existence of implossion singularities for the Boltzmann equation. If they did, which seems like a likely scenario, the question that remains is whether this is the only type of singularity that may emerge from the flow of the Boltzmann equation. Our main result in this paper aims at answering that question.

As we explained in the previous paragraph, the unconditional regularity of solutions to the in-homogeneous Boltzmann equation seems to be completely out of reach. The problem that we study is conditional to pointwise bounds on the hydrodynamic quantities. More precisely, we make the following assumption.
\begin{ass}[Hydrodynamic quantities under control] \label{a:hydro-assumption}
The following inequalities hold uniformly in $t$ and $x$,
\begin{itemize}
\item $0 < m_0 \leq M(t,x) \leq M_0$.
\item $E(t,x) \leq E_0$.
\item $H(t,x) \leq H_0$.
\end{itemize}
\end{ass}

We do not prove Assumption~\ref{a:hydro-assumption}. We take it for granted (hence the name \emph{assumption}).  Conditional to it, we obtain $C^\infty$ estimates that we state in our main theorem. Assumption~\ref{a:hydro-assumption} is a way to disallow the implossion singularities that we described above. Our result in this paper essentially says that no other types of singularity may exist for the Boltzmann equation other than (potentially) the ones that are hydrodynamically visible.

\begin{thm}[Global regularity estimates] \label{t:main2} 
Let $f$ be a solution to the Boltzmann equation in $(0,T) \times \R^d \times \R^d$ (as in Definition~\ref{d:solution}) with a collision kernel of the form \eqref{assum:B} and $\gamma+2s \in [0,2]$. If Assumption~\ref{a:hydro-assumption} holds, then for any multi-index $k \in \mathbb N^{1+2d}$, $\tau>0$ and $q>0$,
\[ \| (1+|v|)^q D^k f \|_{L^\infty([\tau,T) \times \R^d \times \R^d)} \leq C_{k,q,\tau}.\]
Here $D^k$ is any arbitrary derivative of $f$ of any order, in $t$, $x$ and/or $v$.

When $\gamma>0$, the constants $C_{k,q,\tau}$ depend only on $k$, $q$ and $\tau$, and the constants $m_0$, $M_0$, $E_0$ and $H_0$ from Assumption~\ref{a:hydro-assumption}, and the parameters $s$, $\gamma$ and dimension $d$.

When $\gamma\leq 0$, the constants $C_{k,q,\tau}$ depend in addition on the pointwise decay  of the initial data. That is, on the constants $N_r$ with $r \ge 0$, given by
\begin{equation} \label{e:initial_decay_assumption}
 N_r := \sup_{x,v} (1+|v|)^r f_0(x,v) \qquad \text{for each } r\geq 0.
\end{equation}
\end{thm}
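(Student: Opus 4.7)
The proof proceeds in several interlocking stages, exploiting the modern viewpoint that Boltzmann's collision operator $\Q(f,f)$ behaves like a linear nonlocal integro-differential operator of order $2s$ acting in the velocity variable, with coefficients depending on $f$ itself through averaged (hydrodynamic) quantities. The plan is to first obtain strong pointwise control on $f$ and on the effective kernel, then cascade De Giorgi / Schauder estimates for kinetic integro-differential equations up to $C^\infty$.

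Stage 1 (Pointwise upper bound with velocity decay). Using Assumption~\ref{a:hydro-assumption}, I would first establish that $f(t,x,v) \le C_q (1+|v|)^{-q}$ for every $q > 0$. The strategy is a barrier / maximum principle argument: after writing
\[
\Q(f,f)(v) = \int_{\R^d}\bigl(f(v')-f(v)\bigr)K_f(v,v')\dv' + c_f(v)\,f(v),
\]
one tests $f$ against a Maxwellian-type barrier $\varphi(v) = C_q(1+|v|)^{-q}$. The coercive part of the integral dominates, while the zeroth-order term $c_f$ and the drift $v\cdot\nabla_x$ are controlled by the hydrodynamic bounds. When $\gamma > 0$, coercivity at infinity is automatic; when $\gamma \le 0$ the barrier must be fed with the initial pointwise decay $N_r$ and propagated by a Grönwall-type argument along the kinetic flow.

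Stage 2 (Ellipticity of the linearized operator). Once $f$ enjoys the polynomial decay from Stage~1, the kernel $K_f(v,v')$ is comparable to that of a fractional Laplacian of order $2s$ in a full cone of directions, while remaining integrable at infinity; simultaneously $c_f$ is bounded. This turns the Boltzmann equation into a kinetic nonlocal equation
\[
\partial_t f + v\cdot\nabla_x f - \LL_f f = c_f\, f,
\]
with $\LL_f$ of order $2s$ and with ``bounded measurable coefficients'' in a sense made precise in our earlier work.

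Stage 3 (Hölder regularity). Plugging into the De~Giorgi / Harnack regularity theory for kinetic integro-differential equations, I deduce a uniform $C^\alpha$ estimate of $f$ in the kinetic distance, depending only on the constants of Assumption~\ref{a:hydro-assumption} and on the velocity decay established in Stage~1.

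Stage 4 (Schauder bootstrap to $C^\infty$). Once $f$ is $C^\alpha$, the coefficients $K_f$ and $c_f$ inherit a corresponding Hölder regularity in $(t,x,v)$. Kinetic Schauder estimates for nonlocal equations then upgrade $f$ to $C^{\alpha+2s}$ in the kinetic scaling. Differentiating the equation in $x$, $t$ and $v$ produces equations of the same type for $D^k f$, where the right-hand side involves lower-order derivatives paired against the collision kernel. Iterating Schauder, and absorbing the new terms via the velocity decay from Stage~1 and the hydrodynamic bounds, delivers $C^k$ estimates for every $k$.

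Stage 5 (Weighted decay of derivatives). Finally, to get $\|(1+|v|)^q D^k f\|_{L^\infty} \le C_{k,q,\tau}$, I would run Stages~1--4 in weighted function spaces. Writing the equation satisfied by $(1+|v|)^q D^k f$ produces commutator terms that are controlled, inductively in $k$, by the unweighted Schauder estimates of lower order plus the velocity decay of $f$ itself.

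The main obstacle is Stage~4: commuting $D^k$ with $\Q$ creates singular integral contributions whose control requires both the sharp polynomial decay of Stage~1 and a careful tracking of how kinetic Schauder constants depend on the Hölder norm of the kernel. Closing the induction in a way that is uniform in $k$, and handling the degeneration of coercivity at infinity when $\gamma \le 0$ (where the initial decay $N_r$ must replace the missing intrinsic damping), is the genuinely delicate part of the argument.
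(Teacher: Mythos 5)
Your outline reproduces the correct overall architecture (decay bounds, writing $\Q$ as a nonlocal operator of order $2s$ plus a lower-order term, De Giorgi H\"older estimates, Schauder bootstrap), but it has a genuine gap at the point where the local regularity theory must be made global in $v$. The ellipticity constants of the kernel $K_f$ --- both the nondegeneracy/coercivity constant $\lambda$ (the cone of nondegeneracy has width and measure $\approx (1+|v|)^{-1}$) and the boundedness constant $\Lambda$ --- degenerate as $|v|\to\infty$, and the De Giorgi and Schauder theorems do not track their dependence on $\lambda,\Lambda$. Your Stage~5 proposes to fix the large-velocity behavior by working in weighted spaces, but weights only address the \emph{decay} of $f$ and its derivatives; they do nothing about the degeneration of the \emph{ellipticity}, which would make every constant in Stages~3--4 blow up as $|v_0|\to\infty$ and prevent the bootstrap from closing. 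The paper's resolution is a velocity-dependent change of variables $\mathcal T_0$ (an anisotropic dilation by $T_0$ compressing the direction of $v_0$ by $|v_0|^{-1}$, combined with a time rescaling by $|v_0|^{-\gamma-2s}$) under which the transformed kernel $\bar K_f$ satisfies all ellipticity, boundedness and cancellation conditions with constants \emph{independent of} $v_0$; this is the device that converts the local H\"older and Schauder estimates into global ones with explicit asymptotics in $|v|$, and it is absent from your proposal.

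A second, smaller gap is in Stage~4: when $s<1/2$ each Schauder application gains only $2s<1$ derivatives, so one cannot simply ``differentiate the equation'' to set up the induction --- the derivative of $f$ is not yet known to exist. The paper instead applies the (global) Schauder estimate to finite increments $\Delta_y f$ and $\Delta_w f$, and uses dedicated incremental-quotient lemmas to accumulate gains of $2s$ until a full derivative is reached. Moreover the commutator of a velocity increment with the transport term $v\cdot\nabla_x$ produces $\nabla_x Df$, which forces a specific ordering of the induction (first pure $x$-derivatives, then $t$, then $v$, with the $x$-regularity of one order higher always established before the corresponding $v$-derivative). Without this ordering and the increment machinery, the induction you describe does not close for $s<1/2$.
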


Note that the upper bounds on energy density and mass density in \eqref{a:hydro-assumption} correspond to upper bounds for mass, moment and temperature density. Moreover, the upper bound in entropy is slightly stronger than a lower bound for temperature (and equivalent in the hydrodynamic limit).

We work with a strong notion of solution that we describe in Definition~\ref{d:solution}. See section~\ref{s:renormalized} for a discussion about weaker notions of solutions. Moreover, we work with functions $f$ that are periodic in $x$. It is a convenient, but non-essential, technical assumption. It is only applied for the upper bounds in \cite{imbert2018decay}, that require the use of a maximum principle argument (as in ``let $(t_0,x_0,v_0)$ be the point where this maximum is achieved''). Theorem~\ref{t:main2} would hold in any other regime where these upper bounds hold. The estimates in Theorem~\ref{t:main2} do not depend on the length of the period. The problem of regularity estimates as in Theorem~\ref{t:main2} in bounded domain with physical boundary conditions would require some further analysis.

\begin{remark}
The difference between $\gamma>0$ and $\gamma \leq 0$ in Theorem~\ref{t:main2} has its origin in the decay estimates from \cite{imbert2018decay}. The decay of the solution $f$ is self generated when $\gamma>0$. However, when $\gamma \leq 0$, the function $f$ will decay rapidly only if it initially does.
\end{remark}

\begin{remark}
In the case $\gamma \leq 0$, each constant $C_{k, q, \tau}$ depends on one constant $N_r$ in \eqref{e:initial_decay_assumption} for a specific value of $r$ depending on $k$ and $q$. However, its explicit dependence is hard to track. Obviously, the larger $q$, the larger the value of $r$ will be required to be. It turns out that for higher order derivatives $D^k$, we also need to use larger values of $r$.
\end{remark}

\subsection{Consequences of our main theorem}

\subsubsection{Convergence to equilibrium}

In a celebrated result \cite{desvillettes-villani}, Desvillettes and Villani proved that solutions to the non-cutoff in-homogeneous Boltzmann equation, periodic in $x$ (or with other physical boundary conditions), converge to equilibrium faster than any algebraic rate, conditional to the following two main assumptions
\begin{enumerate}
\item The solution $f$ stays in $C^\infty$ for all time with uniform bounds as $t \to \infty$.
\item The solution $f$ is bounded below by some fixed Maxwellian.
\end{enumerate}

A priori, these two assumptions appeared to be very strong. After Theorem~\ref{t:main2}, they can be reduced to only Assumption~\ref{a:hydro-assumption}. Indeed, the lower bound by a fixed Maxwellian is obtained in our earlier work with Cl\'ement Mouhot \cite{imbert2019gaussian}. 

Since the estimates in Theorem~\ref{t:main2} do not depend on $T$, we can take $T \to \infty$ and deduce a uniform regularity estimate in $(\tau,\infty] \times \R^d \times \R^d$. As a consequence, we state the following improvement for the Theorem in \cite{desvillettes-villani}.

\begin{cor}
Let $f$ be a solution of \eqref{e:boltzmann} in $(0,\infty) \times \R^d \times \R^d$ (as in Definition~\ref{d:solution}, in particular periodic in $x$). Assume that Assumption~\ref{a:hydro-assumption} holds globally. Then $f$ converges to a Maxwellian as $t \to \infty$ as described in Theorem 2 in \cite{desvillettes-villani}.
\end{cor}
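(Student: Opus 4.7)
The corollary is essentially a direct consequence of Theorem~\ref{t:main2} combined with two auxiliary results already in the literature. The plan is as follows.

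First, I would verify the two hypotheses of the Desvillettes--Villani theorem. For hypothesis (1), I apply Theorem~\ref{t:main2} on each finite interval $(0,T)$ under the standing Assumption~\ref{a:hydro-assumption}. Since the constants $C_{k,q,\tau}$ in the statement do not depend on $T$, I can let $T \to \infty$ and conclude that for every $\tau>0$, every multi-index $k \in \N^{1+2d}$ and every $q>0$,
\[
\| (1+|v|)^q D^k f \|_{L^\infty([\tau,\infty) \times \R^d \times \R^d)} \leq C_{k,q,\tau}.
\]
In particular, $f \in C^\infty$ on $[\tau,\infty) \times \R^d \times \R^d$ with bounds that do not grow as $t \to \infty$, and all derivatives decay faster than any polynomial in $v$. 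This gives the uniform-in-time $C^\infty$ Schwartz-type control that Desvillettes--Villani require.

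Second, for hypothesis (2), I would invoke the Gaussian lower bound from \cite{imbert2019gaussian}: under Assumption~\ref{a:hydro-assumption} (and a mild moment/regularity assumption on $f$ supplied by the previous step), $f$ is bounded below by a fixed Maxwellian on $[\tau,\infty) \times \R^d \times \R^d$ for each $\tau>0$. With both hypotheses of Theorem~2 of \cite{desvillettes-villani} verified from some starting time $\tau$ onward, the original Desvillettes--Villani argument applied to $f$ restricted to $[\tau,\infty)$ yields convergence to a (global) Maxwellian faster than any algebraic rate as $t \to \infty$, which is exactly the stated conclusion.

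There is essentially no obstacle in the argument; the corollary is by design a clean packaging of Theorem~\ref{t:main2}. The only thing to be careful about is that the Desvillettes--Villani hypotheses are typically stated on $[0,\infty)$ while our regularity bounds are only available on $[\tau,\infty)$, but since convergence to equilibrium is an asymptotic-in-$t$ statement, shifting the initial time from $0$ to $\tau$ is harmless. The periodicity in $x$, already built into our notion of solution (Definition~\ref{d:solution}), matches the setting of \cite{desvillettes-villani}.
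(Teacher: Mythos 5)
Your proposal is correct and matches the paper's own reasoning: the paper likewise verifies hypothesis (1) of Desvillettes--Villani by taking $T \to \infty$ in Theorem~\ref{t:main2} (whose constants are $T$-independent) and hypothesis (2) via the Maxwellian lower bound from \cite{imbert2019gaussian}. Nothing further is needed.
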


\subsubsection{Continuation criteria}
\label{s:continuation}

Theorem~\ref{t:main2} also suggests the following continuation criteria. Let $f$ be a solution to the Boltzmann equation \eqref{e:boltzmann} in $(0,T) \times \R^d \times \R^d$ as in Definition~\ref{d:solution}. Suppose that it cannot be extended further in time, that is, it cannot be extended as a solution in $(0,T+\eps) \times \R^d \times \R^d$ for any $\eps>0$. Then, one of the following must happen
\begin{enumerate}
\item $\lim_{t \to T} \max_{x \in \R^d} M(t,x) = +\infty$.
\item $\lim_{t \to T} \max_{x \in \R^d} E(t,x) = +\infty$.
\item $\lim_{t \to T} \max_{x \in \R^d} H(t,x) = +\infty$.
\item $\lim_{t \to T} \min_{x \in \R^d} M(t,x) = 0$.
\end{enumerate}

This continuation criteria can be immediately justified by combining Theorem~\ref{t:main2} with an appropriate short time existence result. When $s \in (0,1/2)$ and $\gamma \in (-3/2,0]$, we can use the short time existence from \cite{morimoto2015local}. For any $s \in (0,1)$ and $\gamma \leq 0$, there is a recent appropriate short time existence result in \cite{HST1}.

Note that the short time existence result in \cite{amuxy2010} requires the initial data to have Gaussian decay, which is not propagated to positive times by our estimates in Theorem~\ref{t:main2}.

This continuation criteria says that the only way a singularity can arise in finite time for the Boltzmann equation without cutoff is by one of the hydrodynamic quantities $M$, $E$ or $H$ to blow up, or by creation of vacuum. There is a recent result in \cite{HST2} saying that this continuation criteria can be reduced to the first two items. That is, either the mass or the energy density should blow up if the solution develops a singularity.  It rules out the case in which there is creation of vacuum or zero temperature while the mass and energy density stay bounded. It is conceivable that this blow up criteria may be relaxed in some other way in the future. As we explained above, a completely unconditional continuation criteria seems to be out of reach with current techniques.

It is natural to expect a similar continuation criteria to hold in the cut-off case as well. However, the reason for it would be fundamentally different. The cut-off Boltzmann equation does not have a regularization effect. One would expect a propagation of regularity provided that Assumption~\ref{a:hydro-assumption} holds. From the mathematical point of view, it is a very different problem from the one we address here. We will not analyze the cut-off case any further.

\subsection{Future directions and open problems}

\subsubsection{Regularity estimates for weak solutions}
\label{s:renormalized}

In this paper we obtain a priori estimates for classical solutions. Working with a weaker notion of solution would entail several technical difficulties. We thought it was not the right time to take on that burden yet. In fact, we consider a very strong notion of solution (see Definition~\ref{d:solution}).  It would be interesting to extend Theorem~\ref{t:main2} as a regularity estimate for renormalized solutions with a defect measure as defined in \cite{av2002}. Below, we analyze the difficulties of this problem.

The biggest challenge of such an extension would be to recover the pointwise estimates from \cite{luis} and \cite{imbert2018decay}. The proofs in these papers use a maximum principle type argument that seems to be difficult to adapt to the setting of \cite{av2002}.

Once a weak solution is proved to be bounded, we can apply the result in \cite{imbert2016weak} (Theorem~\ref{t:local-holder} below) and deduce the H\"older continuity of the solution.

There is a (presumably minor) difficulty in the application of the Schauder estimates from \cite{schauder} in order to derive Corollary~\ref{c:schauder_boltzmann} in this paper. This is because the result of \cite{schauder} is not stated for weak solutions. The later applications of the Schauder estimates in our proof of Theorem~\ref{t:main2} in Section~\ref{s:smoothing} are not problematic. In each step of the iteration we apply the Schauder estimates to increments that are qualitatively as regular as the function itself.

\subsubsection{The grazing collision limit}

When $s \to 1$, the Boltzmann equation converges formally to the Landau equation. For that, we need the collision kernel $B$ to satisfy
\begin{equation} \label{e:normalized_B}
 B \approx (1-s) |v-v_\ast|^\gamma \sin(\theta/2)^{-d+1-2s}.
\end{equation}
The normalizing factor $(1-s)$ is transferred into the ellipticity conditions on the Boltzmann kernel $K_f$ (defined in \eqref{e:kf}). It is well known in the literature of nonlocal equations that this is the necessary factor to have uniform bounds as $s \to 1$ (see for example \cite{caffarelli2009regularity} or \cite{russell}).

It is to be expected that the estimates of Theorem~\ref{t:main2} would remain uniform as $s \to 1$ if $B$ satisfies \eqref{e:normalized_B}. However, it is still an open problem. Below, we explain the difficulties with our current approach.

Note that any technique that establishes the estimates from Theorem~\ref{t:main2} uniformly as $s \to 1$, would also imply the corresponding regularity estimates for the Landau equation as a consequence. A method that provides estimates uniform as $s \to 1$ must use techniques that apply both to integro-differential equations and second order parabolic equations.

The most challenging difficulty in proving uniform estimates as $s \to 1$ would be to establish the pointwise bounds from \cite{luis} and \cite{imbert2018decay}. The proofs in these papers use purely nonlocal techniques. The constants obtained in the estimates there certainly blow up as $s \to 1$. The corresponding pointwise upper bound for the Landau equation is established in \cite{cameron2017global} using different methods.

The H\"older estimates from \cite{imbert2016weak} are robust as $s \to 1$. We would also expect the Schauder estimates from \cite{schauder} to be robust as $s \to 1$, however it does not follow directly from the current proof in \cite{schauder} because it is non constructive. Some constants are proved to exist under a compactness argument, and by that we loose track of their dependence on $s$. It is conceivable that a refinement of the proof in \cite{schauder} may lead to robust estimates since the proof in that paper works for second order equations as well.

\subsubsection{Other open problems}
\label{s:open_problems}

The following is a (non exhaustive) list of other open questions related to the main result of this article.
\begin{enumerate}
\item Interior estimates: if Assumption~\ref{a:hydro-assumption} holds for $(t,x) \in (-1,0] \times B_1$, can we establish the regularity estimates as in Theorem~\ref{t:main2} for $(t,x) \in (-1/2,0]\times B_{1/2}$?
\item Bounded domains: when the equation is supported in a smooth bounded domain $x \in \Omega \subset \R^d$, with physical boundary conditions, do the estimates from Theorem~\ref{t:main2} hold in the full domain $\Omega$?
\item Weaker conditions: can we reduce Assumption~\ref{a:hydro-assumption} to a weaker condition?
\item Very soft potentials: can we establish regularity estimates when $\gamma+2s < 0$? This is a very difficult problem that is open even in the space homogeneous setting. The most challenging step seems to be obtaining the $L^\infty$ estimate as in \cite{luis}.
\end{enumerate}

\subsection{Previous regularity results for the Boltzmann and Landau equations}.

The well-posedness and regularity of the space homogeneous Boltzmann equation is well understood in the case of hard and moderately soft potentials (i.e. $\gamma+2s \geq 0$). See \cite{dw2004,aes2005,aes2009,huo2008,morimoto2009regularity,chen2011smoothing}. Note that in the space homogeneous case, Assumption~\ref{a:hydro-assumption} is trivially satisfied by the conservation of mass and energy and the monotonicity of entropy.

Results on the regularity for the space in-homogeneous Boltzmann equation are scarce. Other than the papers that are part of our program, the most relevant previous result is the $C^\infty$ regularity of solutions conditional to a uniform bound in $H^5_{x,v}$, plus infinite bounded moments, plus a lower bound on the mass density. These results were established in \cite{amuxyCRAS2009,amuxy2010,chen-he-2012}. We improve these results by significantly lowering the condition for regularity to the bounds of Assumption~\ref{a:hydro-assumption}, which are physically meaningful. We refer to   \cite[{\S 1.3.2}]{imbert2016weak} for further discussion on other results in the literature.

Our program of establishing conditional regularity provided that the hydrodynamic quantities are controlled as in Assumption
\ref{a:hydro-assumption} has also been studied for the Landau equation. It is currently fairly well understood in the cases of hard and moderately soft potentials. The local H\"older estimates were obtained in \cite{golse2016harnack}. The upper bounds and Gaussian decay bounds (when appropriate) for moderately soft potentials were obtained in \cite{cameron2017global} using the estimates from \cite{golse2016harnack} combined with a change of variables that inspires our construction in Section~\ref{s:cov}. The higher regularity of solutions was studied in \cite{henderson2017c} applying a kinetic version of Schauder estimates. These regularity estimates were extended to the case of hard potentials in \cite{snelson2018inhomogeneous}. In \cite{henderson2017local}, they refine the continuation criteria for the in-homogeneous Landau equation as mentioned in Subsection~\ref{s:continuation}.

\subsection{Strategy of proof}
The result in this paper is obtained as the final step in a program of conditional regularity that started in 2014. Here, we use the previous results by the authors, and also by the authors in collaboration with Cl\'ement Mouhot, that were part of this program. Theorem~\ref{t:main2} is proved by combining the following ingredients.

\begin{itemize}
\item An $L^\infty$ estimate for positive time depending only of the hydrodynamic quantities. This holds provided $\gamma+2s \geq 0$. It is proved in \cite{luis}.
\item A weak Harnack inequality for kinetic integro-differential equations. It gives us local $C^\alpha$ estimates for some $\alpha>0$ small. It was obtained in \cite{imbert2016weak}.
\item Schauder estimates for kinetic integro-differential equations. It gives us local $C^{2s+\alpha}$ estimates. It was obtained in \cite{schauder}.
\item Pointwise decay estimates. They say $f \lesssim (1+|v|)^{-q}$ for all $q>0$.
They are self generated if $\gamma > 0$ and they propagate from the initial data if $\gamma \leq 0$.
It was proved in \cite{imbert2018decay}.
\item A change of variables that turns our local H\"older and Schauder estimates into global ones. We develop it in Section~\ref{s:cov}.
\item Some new inequalities for kinetic H\"older spaces (defined in Section~\ref{s:holder}) and how they interact with the Boltzmann collision operator (see Section~\ref{s:bilinear}) and increments (see Section~\ref{s:incremental_quotients}).
\item A bootstrapping mechanism by iterating the global version of the Schauder estimates.
\end{itemize}

\medskip

In order to obtain regularity estimates like the ones in this article, it is key to think of the Boltzmann equation as a kinetic equation with integral diffusion in the $v$ variable plus a lower order term, in the way that was described in \cite{luis}. Using Carleman coordinates and the cancellation lemma (as in \cite{alexandre2000entropy}), the Boltzmann equation takes the following form
\begin{equation} \label{e:Boltzmann_IDE}
 \partial_t f + v \cdot \nabla_x f = \int_{\R^d} (f' - f) K_f(t,x,v,v') \dd v' + c (f \ast_v |\cdot|^\gamma) f.
\end{equation}

The kernel $K_f$ depends on the solution $f$ itself. We give more details in Section~\ref{s:boltzmann} and recall the formula for $K_f$ in \eqref{e:kf}. When Assumption~\ref{a:hydro-assumption} holds, the kernel $K_f$ satisfies certain ellipticity conditions that allow us to derive regularity estimates.

In \cite{imbert2016weak}, we obtained a weak Harnack inequality for kinetic integro-differential equations. It implies a regularity estimate for the local H\"older regularity, for a small exponent, of bounded solutions to \eqref{e:boltzmann} that satisfy Assumption~\ref{a:hydro-assumption}. In \cite{schauder}, we obtained a Schauder estimate for kinetic integro-differential equations. It implies a local estimate of H\"older regularity with exponent $2s+\alpha$ for some $\alpha>0$. It is enough regularity to make sense of the equation classically. These are two results for generic kinetic integro-differential equations. They apply to the solution of the Boltzmann equation thanks to the expression \eqref{e:Boltzmann_IDE}. They also apply to derivatives of $f$ with respect to $t$, $x$ and $v$ provided that we can appropriately bound each of the extra error terms that come up in the equation when differentiating the collision and transport terms. In order to turn this procedure into a bootstrap iteration leading to $C^\infty$ estimates we need to turn the local regularity estimates from \cite{imbert2016weak} and \cite{schauder} into global ones.

The weak Harnack inequality in \cite{imbert2016weak} and the Schauder estimate in \cite{schauder} depend on ellipticity conditions on the kernel $K_f$ in \eqref{e:Boltzmann_IDE}. In these papers, we showed how these ellipticity conditions are implied locally by Assumption~\ref{a:hydro-assumption}. However, they degenerate for large velocities. In order to obtain global estimates from the application of the weak Harnack inequality and Schauder estimates, we construct a special change of variables. It transforms the function $f$ into a solution to a kinetic integro-differential equation whose kernel is uniformly elliptic with parameters that do not degenerate for large velocities. This change of variables is a key ingredient in the proof of this paper. It is described in Section~\ref{s:cov}. It allows us to turn any local (in velocity) estimate for the Boltzmann equation into a global one.

The bootstrap iteration consists in applying the global version of Schauder estimates (via the change of variables) to the equation satisfied by derivatives of the solution $f$. The extra error terms are handled by careful (and new) estimates in H\"older spaces for the Boltzmann collision operator, increments and derivatives (described in Sections~\ref{s:bilinear} and \ref{s:incremental_quotients}). In each step of the iteration, we gain a regularity estimate for a higher derivative in terms of the estimates already obtained for the lower order ones. There is a loss of decay exponent in each differentiation step. Thus, we need to start with a function with rapid decay at infinity in order for the iteration to continue indefinitely. This initial decay is provided by the result in \cite{imbert2018decay}.

\subsection{Notation}

We use the notation $a \lesssim b$ to denote the fact that there exists a constant $C$ so that $a \leq Cb$. The constant $C$ can depend on a variable collection of parameters depending on context. This notation is used mostly inside proofs of lemmas, propositions and theorems. In each statement, we explain what the constants depend on. The implicit constants in each symbol $\lesssim$ inside a proof depend on the parameters specified in the corresponding statement. Even though this notation might be arguably confusing at times, it allows us to clean up the computations substantially.

The symbol $a \approx b$ means that $a \lesssim b$ and $a \gtrsim b$.

We use the standard notation $B_r$ to denote a ball of radius $r$ in $\R^d$ centered at the origin. We also write $B_r(w)$ to denote a ball centered at some point $w \in \R^d$. The kinetic cylinders $Q_r \subset \R \times \R^d \times \R^d$ are explained in Section~\ref{s:cylinders}.

\section{Preliminary estimates for the Boltzmann equation}
\label{s:preliminary}

In this section, we collect some preliminary results for the Boltzmann equation that play a role in the proofs in this paper.

As we mentioned before, we work with a very strong notion of classical solutions. In this way, all the results in the literature are applicable and we avoid technical difficulties. We give the definition below.

\begin{defn} \label{d:solution}
A function $f : (0,T) \times \R^d \times \R^d \to \R$ is a \emph{solution} to the Boltzmann equation \eqref{e:boltzmann} when 
\begin{itemize}
\item It is non-negative everywhere.
\item It is $C^\infty$ in all variables $t,x,v$.
\item It solves \eqref{e:boltzmann} for every value of $(t,x,v)$ in the classical sense.
\item For each value of $(t,x)$, the function $f$ decays rapidly as $|v|\to \infty$. More precisely, for any $q>0$, we have
\[ \lim_{|v|\to \infty} \frac{f(t,x,v)}{(1+|v|)^q} = 0,\]
locall uniformly in $(t,x)$.
For derivatives of $f$, we only assume that there is sufficient decay so that
\[ \int_{\R^d} |D^2_vf(t,x,v)| (1+|v|)^{\gamma+2s} \dd v < +\infty,\]
for every value of $t$ and $x$.
\end{itemize}
For simplicity, we will also consider $f$ to be periodic in $x$.
\end{defn}

The results in this paper certainly apply to a weaker notions of solution as well. We discuss it in Section \ref{s:renormalized}. However, by considering a strong notion of solution as in Definition \ref{d:solution}, we avoid superflous technical difficulties that would make this paper harder to read.

In the last section of \cite{imbert2018decay}, we discuss how to replace the rapid decay assumption in the last item of Definition \ref{d:solution} with a weaker algebraic decay condition. 

The last condition on the integrability of $D^2_v f$ is convenient for Lemmas \ref{lem:cancel} and \ref{l:cancel-2}. These are the only parts of this paper where it plays a role.

\subsection{Decay estimates}

We start by reviewing the decay estimates in the velocity variables obtained
in \cite{imbert2018decay} for solutions of the Boltzmann equation. When $\gamma>0$, these decay estimates are forced by the equation regardless of the initial data. When $\gamma \leq 0$, we need to impose the appropriate decay on the initial data, and it is propagated in time by the equation.

\begin{thm}[Decay estimates in the velocity variables]\label{t:decay}
  Let the parameters $\gamma,s$ from \eqref{assum:B} satisfy
  $\gamma+2s \in [0,2]$ and let $f$ be a solution of the Boltzmann
  equation~\eqref{e:boltzmann} in
  $(0,T) \times \R^d \times \R^d$, periodic in $x$, such that
  $f(0,x,v) = \fin (x,v)$ in $\R^d \times \R^d$ and such that
  the Assumption~\ref{a:hydro-assumption} holds. If $\gamma \leq 0$, we also
  asssume that for all $q >0$, there exists a constant $N_{0,q}$ so that $\fin (x,v) \le N_{0,q} (1+|v|)^{-q}$ for
  $(x,v) \in \R^d \times \R^d$. Then the solution $f$ satisfies 
  \[ f (t,x,v) \le N_q (1+|v|)^{-q} \qquad \text{in } (0,T) \times \R^d \times \R^d,\]
  for some constant $N_q$ only depending on dimension $d$, parameters
  $\gamma,s$ from the collision kernel, see \eqref{assum:B}, and the
  hydrodynamical bounds $m_0,M_0,E_0,H_0$ from Assumption~\ref{a:hydro-assumption}, and, in the case $\gamma \leq 0$, also on the constants $N_{0,q}$.
\end{thm}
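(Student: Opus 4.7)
The plan is to establish the pointwise decay $f(t,x,v) \leq N_q (1+|v|)^{-q}$ by a barrier/maximum principle argument applied to the Carleman form of the collision operator \eqref{e:Boltzmann_IDE}. For a fixed $q>0$, I would look for a barrier of the form $\phi(t,v) = N(t)(1+|v|)^{-q}$, and try to prove that $f(t,x,v) \leq \phi(t,v)$ for all $(t,x,v)$ by contradiction: assume the inequality fails and pick the first contact time/point $(t_0,x_0,v_0)$. The rapid decay assumption in Definition~\ref{d:solution} guarantees that $v_0$ stays in a compact set, so this first touching point is well defined. At the contact point we have $\partial_t(f-\phi)\geq 0$, $\nabla_x(f-\phi)=0$, and $(f-\phi)(t_0,x_0,v_0)=0$, so the equation reduces to an inequality between the nonlocal diffusion term $\int (f'-f)K_f\dd v'$, the lower-order term $c(f*_v|\cdot|^\gamma)f$, and $\partial_t \phi$.

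The key step is to extract a strong negative contribution from the nonlocal term acting on $\phi$ at $v_0$. Using the ellipticity of $K_f$ that follows from Assumption~\ref{a:hydro-assumption} (there is a cone $\cone(v_0)$ of directions pointing toward a region of nontrivial mass where $K_f(v_0,v')\gtrsim |v'-v_0|^{-d-2s}|v-v_\ast|^\gamma$ in the appropriate sense), one should be able to show
\[ \int_{\R^d} (\phi(v') - \phi(v_0)) K_f(t_0,x_0,v_0,v')\dd v' \leq -c_0 (1+|v_0|)^{\gamma+2s}\phi(v_0),\]
for some $c_0>0$ depending only on the hydrodynamic constants. Meanwhile, since $f\leq \phi$ everywhere at time $t_0$, one bounds $(f*_v |\cdot|^\gamma)(v_0)\leq C(1+|v_0|)^\gamma$ using $M_0$, $E_0$ and the decay $\phi$ (with a separate argument for the singular integrable part when $\gamma<0$). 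Thus the lower-order term contributes at most $C(1+|v_0|)^\gamma \phi(v_0)$.

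Combining these, at the contact point
\[ N'(t_0)(1+|v_0|)^{-q} \leq -c_0 N(t_0)(1+|v_0|)^{\gamma+2s-q} + C N(t_0) (1+|v_0|)^{\gamma-q}. \]
Two cases arise. When $\gamma>0$, the factor $(1+|v_0|)^{\gamma+2s}$ dominates for large $|v_0|$, so a static barrier $N(t)\equiv N_q$ with $N_q$ chosen large (depending only on the hydrodynamic bounds and $q$) produces a contradiction at any large $|v_0|$; for $|v_0|$ in a compact set the $L^\infty$ bound from \cite{luis} already gives the decay. This yields the self-generated decay. When $\gamma\leq 0$, the diffusive term no longer dominates the lower-order term, so I would instead take $N(t)$ to satisfy an ODE $N'(t) = C_* N(t)$ with an appropriate constant, and start from $N(0)=N_{0,q}$; this propagates the initial decay forward in time and yields a time-dependent constant $N_q$ depending on $N_{0,q}$.

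The main obstacle is the first step, namely obtaining the coercivity estimate $\int (\phi(v')-\phi(v_0))K_f\dd v' \lesssim -(1+|v_0|)^{\gamma+2s}\phi(v_0)$ with a constant that is genuinely uniform in $|v_0|\to\infty$. This requires a careful geometric analysis of the set of $v'$ for which $K_f$ is nondegenerate: by Assumption~\ref{a:hydro-assumption} there is mass of order $m_0$ in a bounded region, and from each such $v_*$ the kernel $K_f(v_0,\cdot)$ concentrates on an annulus roughly normal to $v_0-v_*$. One needs that enough of this cone lies in the directions where $\phi$ is much smaller than $\phi(v_0)$, so that the gain $\phi(v')-\phi(v_0)\leq -c\phi(v_0)$ on a substantial portion of the integration domain. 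The scaling of $K_f$ in $|v-v_*|^\gamma$ is what produces the $(1+|v_0|)^{\gamma+2s}$ weight. Making this step rigorous, and in particular handling the singularity of the kernel near the diagonal via the cancellation captured in \eqref{e:Boltzmann_IDE}, is the technical heart of the argument.
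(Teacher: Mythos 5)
The paper does not actually prove Theorem~\ref{t:decay} here: it is quoted from \cite{imbert2018decay}, and your overall strategy --- a barrier $\phi = N(t)(1+|v|)^{-q}$, a first contact point, the comparison $\int (f'-f)K_f \leq \int(\phi'-\phi)K_f$ there, and the cone of nondegeneracy of Proposition~\ref{p:cone_of_nondegeneracy} to extract a good negative term --- is indeed the strategy of that reference. However, the one quantitative claim on which your whole argument rests is wrong as stated, and you acknowledge not proving it. The inequality $\mathcal{L}_{K_f}\phi(v_0) \leq -c_0(1+|v_0|)^{\gamma+2s}\phi(v_0)$ is not achievable. To make $\phi(v_0+w)-\phi(v_0) \leq -c\,\phi(v_0)$ with $c$ of order one, you must move along the cone by $|w| \gtrsim (1+|v_0|)/\sqrt{q}$ (the cone is nearly perpendicular to $v_0$, so $|v_0+w|$ only grows like $|v_0|(1+|w|^2/|v_0|^2)^{1/2}$). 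At that scale the kernel mass in the cone is, by Proposition~\ref{p:cone_of_nondegeneracy}, of order $(1+|v_0|)^{-1}\cdot(1+|v_0|)^{1+\gamma+2s}\cdot |w|^{-2s} \approx q^{s}(1+|v_0|)^{\gamma}$ --- the same homogeneity in $|v_0|$ as the bad term $\Q_2(f,f) \lesssim (1+|v_0|)^{\gamma}\phi(v_0)$, not better by a factor $(1+|v_0|)^{2s}$. The mechanism that closes the argument in \cite{imbert2018decay} is the factor $q^{s}$ (largeness of the decay exponent, with small $q$ recovered trivially from large $q$), not largeness of $|v_0|$. In addition, the second difference $\phi(v_0+w)+\phi(v_0-w)-2\phi(v_0)$ is \emph{positive} in the nearly radial directions (where $(1+|v|)^{-q}$ is convex), and this positive contribution, integrated against the upper bound \eqref{e:upper_bound_for_Kf}, is of the same order as the good term; controlling it is a genuine part of the proof that your sketch omits entirely.

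The downstream case analysis is also off. With your claimed inequality the diffusive term would dominate for every $\gamma$ with $\gamma+2s>\gamma$, i.e.\ always, which would wrongly yield self-generated decay for soft potentials. Moreover a static barrier $N(t)\equiv N_q$ cannot self-generate decay even for $\gamma>0$: the comparison must hold at $t=0$, where $f_{\mathrm{in}}$ is not assumed to lie below $N_q(1+|v|)^{-q}$; the actual argument for $\gamma>0$ uses a barrier with $N(t)\to\infty$ as $t\to 0^+$, and it is the genuinely positive power $(1+|v_0|)^{\gamma}$ in the good term that absorbs $N'(t)$. Conversely, for $\gamma\leq 0$ your Gronwall choice $N'=C_*N$ produces a constant growing like $e^{C_*T}$, whereas the theorem's $N_q$ is independent of $T$; the time-independent bound again requires beating the bad term pointwise via the $q^{s}$ gain rather than absorbing it into $N'$. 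So the skeleton is right, but the quantitative heart of the proof --- precisely the step you flag as the main obstacle --- is both missing and, in the form you propose, false.
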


We include the following two lemmas about the decay or growth of convolutions of $f$ with different powers of $|v|$. They will be applied repeatedly in several parts of the paper. The first one gives us an upper bound depending on the mass and energy of $f$ only. The second one is in terms of its pointwise upper bounds.

\begin{lemma} \label{l:convolution-moments}
Let $f : \R^d \to [0,\infty)$. Assume that $0 \leq \kappa \leq 2$. Then
\[ \int_{\R^d} f(v+w) |w|^\kappa \dd w \leq C \left( (1+|v|)^\kappa M_0 + E_0 \right),\]
where $C$ is a universal constant and $M_0$ and $E_0$ are numbers so that 
\[ \int_{\R^d} f(v) \dd v \leq M_0 \qquad \text{and} \qquad \int_{\R^d} f(v) |v|^2 \dd v \leq E_0 .\]
\end{lemma}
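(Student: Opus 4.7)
The proof should be a short direct computation. My plan is to change variables $u = v+w$ so that the integrand becomes $f(u)|u-v|^\kappa$, and then split $|u-v|^\kappa$ into a piece comparable to $|v|^\kappa$ and a piece comparable to $|u|^\kappa$, using only the triangle inequality and a concavity/convexity estimate on $t \mapsto t^\kappa$.

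Concretely, I would argue as follows. After the substitution,
\[ \int_{\R^d} f(v+w)|w|^\kappa \dd w = \int_{\R^d} f(u) |u-v|^\kappa \dd u. \]
Since $0\leq \kappa \leq 2$, the elementary inequality $(a+b)^\kappa \leq 2^{\max(\kappa-1,0)}(a^\kappa+b^\kappa) \leq 2(a^\kappa+b^\kappa)$ applied to $a=|u|$, $b=|v|$ gives $|u-v|^\kappa \leq 2(|u|^\kappa + |v|^\kappa)$. Hence
\[ \int_{\R^d} f(u) |u-v|^\kappa \dd u \leq 2|v|^\kappa \int_{\R^d} f(u) \dd u + 2\int_{\R^d} f(u)|u|^\kappa \dd u \leq 2 |v|^\kappa M_0 + 2\int_{\R^d} f(u)|u|^\kappa \dd u. \]

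The remaining moment $\int f(u)|u|^\kappa \dd u$ is controlled by interpolating between the $L^1$ bound and the energy: splitting over $\{|u|\leq 1\}$ and $\{|u|>1\}$, and using $|u|^\kappa \leq 1$ on the first set and $|u|^\kappa \leq |u|^2$ on the second (which is legitimate since $\kappa \leq 2$), one obtains $\int f(u)|u|^\kappa\dd u \leq M_0 + E_0$. Combining these two bounds and absorbing $M_0$ into $(1+|v|)^\kappa M_0$ yields
\[ \int_{\R^d} f(v+w)|w|^\kappa \dd w \leq C\bigl((1+|v|)^\kappa M_0 + E_0\bigr), \]
with a universal constant $C$.

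There is really no obstacle here; this is purely a manipulation of moment bounds and I do not expect any subtlety. The only small care point is the case $\kappa \in [0,1]$ versus $\kappa \in [1,2]$ in the triangle-type inequality for $(a+b)^\kappa$, which is handled uniformly by the constant $2$ since $2^{\max(\kappa-1,0)} \leq 2$ on the whole interval $[0,2]$.
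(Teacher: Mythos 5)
Your proof is correct and follows essentially the same route as the paper: translate the variable, split $|u-v|^\kappa \lesssim |u|^\kappa + |v|^\kappa$, and control the $\kappa$-moment by $M_0 + E_0$ using $|u|^\kappa \leq 1 + |u|^2$ for $\kappa \in [0,2]$. There is nothing to add.
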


\begin{proof}
We compute directly
\begin{align*} 
 \int_{\R^d} f(v+w) |w|^\kappa \dd w&= \int_{\R^d} f(w) |w-v|^\kappa \dd w, \\
&\lesssim \int_{\R^d} f(w) \left( |w|^\kappa + |v|^\kappa \right) \dd w, \\
&\lesssim \int_{\R^d} f(w) \left( 1 + |w|^2 + |v|^\kappa \right) \dd w, \\
&\lesssim \left( \int_{\R^d}f(w) |w|^2 \dd w + (1+|v|)^\kappa \int_{\R^d}f(w) \dd w\right).\qedhere
\end{align*}
\end{proof}

\begin{lemma} \label{l:convolution-C0}
Let $f : \R^d \to [0,\infty)$ and $\kappa > -d$. Assume that $f(v) \leq N (1+|v|)^{-q}$ for some $q>d+\kappa_+$. Then
\[ \int_{\R^d} f(v+w) |w|^\kappa \dd w \leq C N (1+|v|)^\kappa,\]
for some constant $C$ depending on $d$, $\kappa$ and $q$ only (neither on $N$ nor $v$).
\end{lemma}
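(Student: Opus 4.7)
The plan is a straightforward decomposition of the domain of integration into regions based on the relative sizes of $|w|$ and $|v|$, estimating $f(v+w)$ by the pointwise decay assumption on each piece. After the substitution (which is trivial here: we keep the integral as written), I would first dispose of the region where $|v| \lesssim 1$ as a trivial case, since in that regime $(1+|v|)^\kappa \approx 1$ and the integrability near $w=0$ uses only $\kappa>-d$, while for $|w|\ge 2$ the bound $|v+w|\ge |w|/2$ gives $f(v+w)\le CN(1+|w|)^{-q}$, which is integrable against $|w|^\kappa$ precisely because $q>d+\kappa_+$.

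For the main case $|v|>1$, I would split $\R^d$ into three annuli and estimate each contribution separately:
\begin{itemize}
\item[(A)] On $\{|w|\le |v|/2\}$, the triangle inequality gives $|v+w|\ge |v|/2$, so $f(v+w)\le CN(1+|v|)^{-q}$; since $\kappa>-d$, integration of $|w|^\kappa$ over this ball yields a factor $\lesssim |v|^{\kappa+d}$, and the resulting contribution is bounded by $CN|v|^{\kappa+d-q}\le CN|v|^\kappa$ because $q\ge d$.
\item[(B)] On $\{|v|/2<|w|\le 2|v|\}$, the factor $|w|^\kappa$ is comparable to $|v|^\kappa$, so the contribution is bounded by $C|v|^\kappa \int_{\R^d} f(v+w)\dd w \le CN|v|^\kappa \int_{\R^d}(1+|u|)^{-q}\dd u$, which is finite since $q>d$.
\item[(C)] On $\{|w|>2|v|\}$, now $|v+w|\ge |w|/2$, so $f(v+w)\le CN|w|^{-q}$; the remaining integral $\int_{|w|>2|v|}|w|^{\kappa-q}\dd w \approx |v|^{\kappa+d-q}$ is finite precisely because $q>d+\kappa_+\ge d+\kappa$, and is bounded by $|v|^\kappa$ since $q\ge d$.
\end{itemize}
Summing the three contributions gives the claimed bound $CN(1+|v|)^\kappa$, with a constant depending only on $d$, $\kappa$, and $q$.

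The only point requiring a bit of care, and the closest thing to an obstacle, is to verify that the hypothesis $q>d+\kappa_+$ is used in the sharpest spot, namely to ensure the convergence of the integral over region (C) when $\kappa\ge 0$, while for $\kappa<0$ the weaker inequality $q>d$ already suffices there. Once this bookkeeping is in order, the proof reduces to elementary computations with radial integrals and gives the stated $v$-independent constant.
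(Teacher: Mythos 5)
Your proof is correct. It differs from the paper's only in organization: the paper treats the two signs of $\kappa$ separately, using for $\kappa\ge 0$ the pointwise inequality $|w-v|^\kappa\lesssim |w|^\kappa+|v|^\kappa$ (no domain splitting at all, just integrating the decay against $|w|^\kappa$ and $|v|^\kappa$ separately, which is where $q>d+\kappa$ enters), and for $\kappa<0$ a two-region split at $|w|=|v|/2$ that is essentially your regions (A) and (B)$\cup$(C) merged. Your unified three-annulus decomposition handles both signs at once, at the cost of a slightly longer case analysis; the paper's $\kappa\ge0$ branch is marginally shorter but does not extend to negative $\kappa$ since $t\mapsto t^\kappa$ is then not subadditive. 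Your bookkeeping of where $q>d+\kappa_+$ versus $q>d$ is needed (region (C) for $\kappa\ge0$, everywhere else only $q>d$) matches the paper's use of the hypotheses, and the constants you obtain are indeed independent of $v$ and $N$.
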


\begin{proof}
We do a different computation depending on whether $\kappa \geq 0$ or $\kappa < 0$.

For $\kappa \geq 0$, it is very similar to Lemma~\ref{l:convolution-moments}. We compute
\begin{align*} 
 \int_{\R^d} f(v+w) |w|^\kappa &= \int_{\R^d} f(w) |w-v|^\kappa \dd w, \\
&\lesssim \int_{\R^d} f(w) \left( |w|^\kappa + |v|^\kappa \right) \dd w, \\
&\lesssim N \left( \int_{\R^d} (1+|w|)^{-q} |w|^\kappa \dd w + |v|^\kappa \int_{\R^d} (1+|w|)^{-q} \dd w\right).
\end{align*}
Since $-q+\kappa < -d$ and $\kappa > -d$, the integrals are computable for each value of $q$ and $\kappa \geq 0$.

For $\kappa <0$, we estimate the integrals differently. In this case we will use that $q>d$.
\begin{align*} 
 \int_{\R^d} f(v+w) |w|^\kappa \dd w &\leq N \int_{\R^d} (1+|v+w|)^{-q} |w|^\kappa \dd w, \\
& = N \int_{|w|<|v|/2} (1+|v+w|)^{-q} |w|^\kappa \dd w + N \int_{|w|>|v|/2} (1+|v+w|)^{-q} |w|^\kappa \dd w , \\
&\lesssim N \int_{|w|<|v|/2} (1+|v|)^{-q} |w|^\kappa \dd w +N  \int_{|w|>|v|/2} (1+|v+w|)^{-q} |v|^\kappa \dd w , \\
&\lesssim N \int_{|w|<|v|/2} (1+|v|)^{-q} |w|^\kappa \dd w + N \int_{\R^d} (1+|v+w|)^{-q} |v|^\kappa \dd w , \\
                                     & \lesssim N (1+|v|)^{-q} |v|^{\kappa+d} + |v|^\kappa \\
  & \lesssim N (1+|v|)^\kappa. \qedhere
\end{align*}
\end{proof}

\subsection{A coercivity condition for integro-differential operators}

In \cite{chaker2019coercivity}, there is a practical condition to verify if the quadratic form associated with an integro-differential operator is coercive with respect to the $H^s$ semi-norm. The result says that following.

\begin{thm} [Coercivity condition --  {\cite[{Theorem 1.3}]{chaker2019coercivity}}] \label{t:coercivity}
Let $K: B_2 \times B_2 \to [0,\infty)$ be a kernel satisfying the following assumption. There exists $\lambda>0$ and $\mu \in (0,1)$ such that for any $v \in B_2$ and any ball $B \subset B_2$ that contains $v$,
\[ | \{ v' \in B: K(v,v') \geq \lambda |v'-v|^{-d-2s} \}| \geq \mu |B|.\]
Then, for any function $u : B_2 \to \R$,
\[ \iint_{B_2 \times B_2} (u(v') - u(v))^2 K(v,v')  \dd v' \dd v \geq c \lambda \iint_{B_1 \times B_1} \frac{(u(v') - u(v))^2}{|v'-v|^{d+2s}} \dd v' \dd v.\]
The constant $c$ depends on dimension and $\mu$ only.
\end{thm}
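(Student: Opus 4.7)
The plan is to use an \emph{averaging over intermediate points} argument to trade the generic kernel $K$ for the radial kernel $\lambda|v'-v|^{-d-2s}$ wherever it is known to be large. Introduce the ``good set''
\[ G_v := \{ w \in B_2 : K(v,w) \geq \lambda |v-w|^{-d-2s} \}, \]
so the hypothesis becomes: $|G_v \cap B| \geq \mu |B|$ for every ball $B \subset B_2$ containing $v$. The goal is to dominate each integrand $(u(v)-u(v'))^2 |v-v'|^{-d-2s}$ on $B_1 \times B_1$ by an average of quantities of the form $(u(v)-u(w))^2 K(v,w)$ with $w \in G_v$, then conclude by Fubini.

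For each pair $(v,v') \in B_1 \times B_1$, I would use the elementary bound $(u(v)-u(v'))^2 \leq 2(u(v)-u(w))^2 + 2(u(w)-u(v'))^2$ and average $w$ over a ``connector set'' $R_{v,v'} \subset B_2$ enjoying three properties: (i) $|R_{v,v'}| \gtrsim |v-v'|^d$; (ii) on $R_{v,v'}$ we have $|v-w| \approx |w-v'| \approx |v-v'|$; (iii) $w \in G_v \cap G_{v'}$. Granted such a set, one obtains the pointwise estimate
\[ \frac{(u(v)-u(v'))^2}{|v-v'|^{d+2s}} \lesssim \frac{1}{|R_{v,v'}|}\int_{R_{v,v'}} \Bigl[ (u(v)-u(w))^2 K(v,w) + (u(w)-u(v'))^2 K(w,v') \Bigr] \lambda^{-1} \, dw, \]
after using property (iii) to replace $|v-w|^{-d-2s}$ and $|w-v'|^{-d-2s}$ by $\lambda^{-1} K$. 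Integrating over $B_1 \times B_1$ and applying Fubini to dispose of the inner average produces an integral of $(u(v)-u(w))^2 K(v,w)$ over a subset of $B_2 \times B_2$, which is controlled by the left-hand side of the desired inequality.

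The technical core is the construction of $R_{v,v'}$, and this is the main obstacle. Setting $r = |v-v'|$, the natural candidate is an annular region around $(v+v')/2$ inside $B_\ast := B((v+v')/2, 3r/2) \cap B_2$, which has measure $\gtrsim r^d$ and contains both $v$ and $v'$. The hypothesis applied at $v$ and at $v'$ on $B_\ast$ gives $|G_v \cap B_\ast|, |G_{v'} \cap B_\ast| \geq \mu |B_\ast|$, but the naive intersection bound $(2\mu-1)|B_\ast|$ is useless when $\mu \leq 1/2$. To bypass this, I would replace the two-term decomposition above by a three-term one,
\[ (u(v)-u(v'))^2 \lesssim (u(v)-u(w_1))^2 + (u(w_1)-u(w_2))^2 + (u(w_2)-u(v'))^2, \]
and average $w_1$ over $G_v \cap B_\ast$ and $w_2$ over $G_{v'} \cap B_\ast$ \emph{independently}. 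The first and third terms are then directly paired with the kernel $K$ by definition of $G_v$ and $G_{v'}$. The middle term $(u(w_1)-u(w_2))^2$ is handled by iterating the procedure at the scale of $B_\ast$: it amounts to proving an $L^2$-Poincar\'e-type inequality
\[ \int_B\!\!\int_B (u(v)-u(v'))^2 \, dv \, dv' \lesssim \frac{r^{d+2s}}{\lambda \mu} \int_B\!\!\int_B (u(v)-u(v'))^2 K(v,v') \, dv \, dv' \]
for every ball $B \subset B_2$ of radius $r$.

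Once this local $L^2$-Poincar\'e inequality is in hand, the passage to the global $H^s$-coercivity is a standard dyadic/Campanato-style summation: for each pair $(v,v') \in B_1 \times B_1$ choose a nested chain of balls joining them with geometrically decreasing radii, use the local inequality on each, and sum telescopically. The difficulty-weighted Fubini at the end collapses the resulting triple integral into the double integral on the left-hand side of the conclusion, with a constant depending only on dimension and $\mu$, matching the stated form.
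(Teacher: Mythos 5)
First, note that the paper does not prove this theorem: it is quoted verbatim from Chaker--Silvestre \cite{chaker2019coercivity}, where its proof is the main content of that paper. So there is no internal proof to compare against; I can only assess your sketch on its own terms, and it has a genuine gap precisely at the point you flag as ``the technical core.''

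You correctly identify the obstruction (for $\mu \leq 1/2$ the good sets $G_v$ and $G_{v'}$ inside a common ball need not intersect), but the proposed resolution does not close. Your three-point decomposition pushes all of the difficulty into the middle term $(u(w_1)-u(w_2))^2$, for which no kernel lower bound is available since $(w_1,w_2)$ is a generic pair in $B_\ast$ at mutual distance comparable to $r$; you then assert that this ``amounts to'' a local $L^2$-Poincar\'e inequality on every ball, which you do not prove. That local inequality is essentially the theorem itself localized to a ball, and the natural attempt to prove it by the same two-point averaging is circular: writing $(u(v)-u(v'))^2 \leq 2(u(v)-u(w))^2 + 2(u(w)-u(v'))^2$ and averaging $w$ over $G_v\cap B$, the first term is controlled by the kernel, but the second integrates to $\tfrac{2}{\mu}\iint_{B\times B}(u(w)-u(v'))^2\,dw\,dv'$, i.e.\ $\tfrac{2}{\mu}\geq 2$ times the quantity you are trying to bound, so it cannot be absorbed; and ``iterating at the scale of $B_\ast$'' does not contract for the same reason. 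This is exactly why the hypothesis with a possibly tiny density $\mu$ is delicate and why \cite{chaker2019coercivity} needs a more elaborate argument than chaining.

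There is a second, independent problem with the final step. Even granting the local Poincar\'e inequality on every ball $B_r(x)\subset B_2$, the ``standard dyadic summation'' overcounts: the scale-$2^{-k}$ contribution to the Gagliardo seminorm is bounded by $\tfrac{C}{\lambda\mu}\iint_{|v-w|<2^{-k+1}}(u(v)-u(w))^2K(v,w)$, and summing over $k$ weights each pair $(v,w)$ by roughly $\log_2(1/|v-w|)$, so one obtains the conclusion only with a logarithmically divergent weight. Fixing this requires the local inequality with the kernel integral restricted to an annulus $|v-w|\approx r$, and the hypothesis only guarantees good points at distances anywhere in $(0,r]$ (the good set in $B_r(v)$ may lie entirely in $B_{r/2}(v)$ when $\mu\leq 2^{-d}$). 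In short: the framework (averaging over intermediate points in the good sets, then Fubini) is the right one, but both the key local inequality and the scale summation are left at the level of assertions, and the obvious ways to fill them in fail.
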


We recall from \eqref{e:Boltzmann_IDE} that the Boltzmann equation can be written as an integro-differential equation with some kernel $K_f$ depending on the solution $f$ itself. The explicit formula for $K_f$ is worked out in \cite{luis} and we recall it now,
\begin{equation}\label{e:kf-0}
 K_f (v,v') = \frac{2^{d-1}}{|v'-v|} \int_{w \perp v'-v} f(v+w) B(r,\cos \theta) r^{-d+2} \dd w \quad \text{ 
with } 
\begin{cases} 
r^2 =|v'-v|^2 + |w|^2, \\ 
\cos \theta = \frac{w-(v-v')}{|w-(v-v')|}\cdot \frac{w+(v'-v)}{|w+(v'-v)|} .
\end{cases} 
\end{equation} 
 This kernel $K_f$ satisfies the assumption of Theorem~\ref{t:coercivity} in the stronger form of a cone of nondegeneracy as described in \cite{luis}. We describe it in the following proposition.

\begin{prop} [Cone of nondegeneracy --  {\cite[Lemma 7.1]{luis}}] \label{p:cone_of_nondegeneracy}
Let $f:\R^d \to \R$ be a nonnegative function and $K_f$ be the corresponding Boltzmann kernel as in \eqref{e:kf-0}. For any $v \in \R^d$, there exists a symmetric subset of the unit sphere $A(v) \subset S^{d-1}$ such that
\begin{itemize}
\item $|A(v)| \geq \mu (1+|v|)^{-1}$.
\item For every $\sigma \in A(v)$, $|\sigma \cdot v| \leq C$ \\ (i.e. $A(v)$ is concentrated around the equator perpendicular to $v$ with width $\le C/(1+|v|)$).
\item For any $\sigma \in A(v)$ and $r>0$, 
\[ K_f(v, v+r\sigma) \geq \lambda (1+|v|)^{1+\gamma+2s} r^{-d-2s}.\]
\end{itemize}
Here, the constants $\mu$, $\lambda$ and $C$ depend only on dimension and on the hydrodynamic bounds of Assumption~\ref{a:hydro-assumption}.
\end{prop}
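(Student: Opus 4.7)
The plan is to evaluate the representation \eqref{e:kf-0} along the one-parameter family $v' = v + r_0\sigma$ and then choose $\sigma$ in a carefully constructed equatorial band using a nondegeneracy core extracted from Assumption \ref{a:hydro-assumption}. First I would simplify the geometry: when $w \perp \sigma$ we have $|w \pm (v'-v)|^2 = |w|^2 + r_0^2 =: r^2$, so $\cos\theta = (|w|^2 - r_0^2)/(|w|^2 + r_0^2)$ and $\sin(\theta/2) = r_0/r$. Inserting the form \eqref{assum:B} of $B$ into \eqref{e:kf-0}, the powers of $r$ collapse and
\[ K_f(v, v+r_0\sigma) \gtrsim r_0^{-d-2s}\int_{w\perp \sigma} f(v+w)\,(r_0^2+|w|^2)^{(\gamma+1+2s)/2}\,\dd w. \]

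Second, I would produce a universal nondegeneracy core from the hydrodynamic bounds. Standard Chebyshev-type arguments combining $M\ge m_0$, $E\le E_0$, and $H\le H_0$ yield universal constants $R_0, A_0, \ell_0 > 0$ and, at every $(t,x)$, a measurable set $G = G(t,x) \subset B_{R_0}$ with $|G|\ge \ell_0$ and $\ell_0 \le f(t,x,\cdot) \le A_0$ on $G$. (The energy bound forces mass into $B_{R_0}$, the entropy bound keeps it from concentrating above $A_0$, and what remains is bounded below on a set of controlled measure.)

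Third, fix $v$ and define
\[ A(v) := \{\sigma \in S^{d-1} : |\sigma \cdot v| \le 2R_0\}. \]
This is a symmetric equatorial band, concentrated perpendicular to $v$ with angular width $\lesssim R_0/(1+|v|)$, of measure $|A(v)| \gtrsim \mu (1+|v|)^{-1}$. For each $\sigma \in A(v)$, the affine hyperplane $H_\sigma := v + \sigma^\perp$ lies within distance $2R_0$ of the origin and hence cuts a $(d-1)$-dimensional disk of radius $\gtrsim R_0$ out of $B_{2R_0}$. A Fubini decomposition of the $d$-dimensional integral $\int_G f \ge \ell_0^2$ along the family of hyperplanes through $v$ then shows: after possibly restricting to a further subset of $A(v)$ of comparable measure, we can arrange that the $(d-1)$-dimensional slice mass $\int_{H_\sigma \cap G} f \,\dd u'$ is bounded below by a universal constant $c_0>0$.

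Finally, for such $\sigma$, substitute $u = v+w$ in the integral of step one. On $H_\sigma \cap G$ we have $|w| = |u-v| \ge |v| - R_0 \gtrsim 1 + |v|$, and since $\gamma + 2s \ge 0$ the weight satisfies $(r_0^2 + |w|^2)^{(\gamma+1+2s)/2} \gtrsim (1+|v|)^{\gamma+1+2s}$, uniformly in $r_0 > 0$. Thus
\[ K_f(v, v+r_0\sigma) \gtrsim r_0^{-d-2s}(1+|v|)^{\gamma+1+2s}\int_{H_\sigma \cap G} f\,\dd u' \gtrsim \lambda (1+|v|)^{1+\gamma+2s}\, r_0^{-d-2s}, \]
which is the third bullet; the first two bullets are the construction of $A(v)$ itself. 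The main obstacle is the Fubini argument in the third step: we need to transfer a $d$-dimensional lower bound on $\int_G f$ to a lower bound on the $(d-1)$-dimensional slice mass for a positive-measure set of slices, rather than for just one. This is handled by parameterizing hyperplanes through $v$ by their normal $\sigma$ and observing that each point of $G$ lies on a codimension-one sphere of such hyperplanes, which allows the total mass of $G$ to be averaged against the band $A(v)$ without losing the factor $(1+|v|)^{-1}$ that is already accounted for in the measure of the band.
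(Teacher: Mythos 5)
The paper does not actually prove this proposition; it imports it verbatim from \cite[Lemma 7.1]{luis}. Your reconstruction follows the same strategy as that reference: extract from the hydrodynamic bounds a core set $G\subset B_{R_0}$ of measure $\ge\ell_0$ on which $\ell_0\le f\le A_0$, rewrite $K_f(v,v+r_0\sigma)$ as $r_0^{-d-2s}$ times a weighted integral of $f$ over the hyperplane $v+\sigma^\perp$, and select an equatorial band of directions whose hyperplanes capture a fixed amount of the core. The Fubini step you single out as the main obstacle does go through: the relevant identity is
\[
\int_{S^{d-1}}\Big(\int_{\sigma^\perp} g(v+w)\,\dd w\Big)\dd\sigma \;=\; c_d\int_{\R^d}\frac{g(v+z)}{|z|}\,\dd z,
\]
applied to $g=f\one_G$. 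Since $G\subset B_{R_0}$, the right-hand side is $\gtrsim (1+|v|)^{-1}\int_G f\gtrsim (1+|v|)^{-1}$, the integrand on the left is supported in the band $\{|\sigma\cdot v|\le R_0\}$ of measure $\lesssim(1+|v|)^{-1}$, and it is bounded above by $A_0\,\omega_{d-1}R_0^{d-1}$; Chebyshev then produces a symmetric subset of directions of measure $\gtrsim(1+|v|)^{-1}$ on which the slice mass is $\ge c_0$. So the factor $(1+|v|)^{-1}$ from the co-area Jacobian is exactly absorbed by the measure of the band, as you anticipated.

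There is one genuine gap in the last step: the inequality $|w|\ge |v|-R_0\gtrsim 1+|v|$ is false for $|v|\lesssim R_0$, where points $u\in H_\sigma\cap G$ can be arbitrarily close to $v$ and the weight $(r_0^2+|w|^2)^{(\gamma+2s+1)/2}\ge |w|^{\gamma+2s+1}$ then degenerates (the exponent $\gamma+2s+1\ge 1$ is positive, so small $|w|$ kills the lower bound rather than helping it). You cannot bound $\int_{H_\sigma\cap G}f\,|w|^{\gamma+2s+1}\dd w$ below by the slice mass alone in that regime. The fix is easy: discard $B_\delta(v)\cap H_\sigma$ from the slice; by the upper bound $f\le A_0$ on $G$ this removes at most $A_0\omega_{d-1}\delta^{d-1}\le c_0/2$ of the slice mass for $\delta$ small and universal, and on the remainder $|w|\ge\delta$, giving the required bound $\gtrsim\delta^{\gamma+2s+1}c_0\approx(1+|v|)^{1+\gamma+2s}$ for bounded $|v|$. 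A second, purely cosmetic point: with $A(v)=\{|\sigma\cdot v|\le 2R_0\}$ the hyperplane $H_\sigma$ can be tangent to $B_{2R_0}$ and meet it in a degenerate disk; take the band half-width to be, say, $R_0$ and intersect with a slightly larger ball, or simply note that only directions with $|\sigma\cdot v|\le R_0$ carry any slice mass anyway.
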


The cone of nondegeneracy described in Proposition~\ref{p:cone_of_nondegeneracy} immediately implies the assumption of Theorem~\ref{t:coercivity}. Thus, the Boltzmann kernel $K_f$ satisfies a coercivity inequality restricted to velocities in $B_2$. Naturally, we can apply a translated and dilated version of Theorem~\ref{t:coercivity} to derive a coercivity condition for the Boltzmann collision kernel in any bounded set of velocities. It naturally implies a local coercivity inequality. However, as we see in Proposition~\ref{p:cone_of_nondegeneracy}, the thickness of the cone of nondegeneracy  degenerates as $|v| \to \infty$. This is natural in view of the fact that the optimal global coercivity inequalities for the Boltzmann collision operator depend on certain modified distance and weight that degenerate as $|v| \to \infty$ (see \cite{gressmanstrainBETTERpaper}).

We are able to recover the optimal coercivity estimates for large velocities using Theorem~\ref{t:coercivity} and Proposition~\ref{p:cone_of_nondegeneracy} together with the change of variables described in Section~\ref{s:cov}. See Appendix~\ref{s:gressman} for a derivation of the global coercivity estimate in \cite{gressmanstrainBETTERpaper} along these lines.

\section{Kinetic H\"older spaces}
\label{s:holder}

Here, following \cite{schauder}, we describe the appropriate formulation of H\"older spaces for kinetic equations. These are, in the context of H\"older spaces, what the spaces described in \cite{armstrong2019variational} are in the context of Sobolev spaces. They are adapted to the group of translations and dilations that leave the equation in an invariant ellipticity class. In order to motivate and explain all the necessary machinery related to these spaces, it is best to first consider the simpler fractional Kolmogorov equation
\begin{equation} \label{e:fractional_kolmogorov}
 \partial_t f + v \cdot \nabla_x f + (-\Delta)^s f = 0.
\end{equation}
The equation \eqref{e:fractional_kolmogorov} is the simplest kinetic equation with integro-differential diffusion of order $2s$ and it serves as a model equation to start our analysis. The Boltzmann collision operator is the sum of a nonlinear integro-differential operator of order $2s$ (which is \textbf{not} the fractional Laplacian) plus a lower order term. This decomposition is precisely given by the two terms in \eqref{e:Boltzmann_IDE}.

\subsection{Scaling and translation invariances}

Assume that a function $f$ solves \eqref{e:fractional_kolmogorov} in some domain. For any $r>0$, if we scale the function by
\begin{equation} \label{e:scaling}
 f_r(t,x,v) = f(r^{2s} t, r^{1+2s} x, rv),
\end{equation}
then the scaled function $f_r$ satisfies the same equation in the appropriately scaled domain.

The space $\R \times \R^d \times \R^d$ is endowed with the following Lie group structure: for all $\xi = (h,y,w)$ and $z= (t,x,v)$, the operator $\xi \circ z$ is given by the formula
\begin{equation} \label{e:lie_group}
 \xi \circ z = (h+t,x+y+ tw, v+w). 
\end{equation}
If $f$ is a solution of \eqref{e:fractional_kolmogorov} and $z_0 = (t_0,x_0,v_0) \in \R^{1+2d}$ is arbitrary, then the function
\[ \tilde f(z) = f(z_0 \circ z) \]
solves the same equation (in a translated domain).

The scaling invariance and left-translation invariance described here are the motivation for the definitions of kinetic cylinders, distance, degree and H\"older spaces given below.

In Sections~\ref{s:degiorgi} and \ref{s:schauder}, we will describe the results from \cite{imbert2016weak} and \cite{schauder} which are kinetic integro-differential versions of the classical regularity results of De Giorgi and Schauder for elliptic equations with variable coefficients. These equations are not invariant by scaling or translations individually, but rather as a class. Scaling or left translations of functions solving an equation as in Theorems~\ref{t:local-holder} or \ref{t:local-schauder}, will solve an equation with the same structure and the same ellipticity parameters.

\subsection{Cylinders}
\label{s:cylinders}
When working with parabolic equations, one often considers parabolic cylinders
of the form $(t_0-r^2,t_0] \times B_r(x_0)$. Because of the invariant
transformations we mentioned above, it is natural and convenient to consider cylinders respecting them. For all $z_0 \in \R^{1+2d}$, we define
\[ Q_r (z_0)  = \{ (t,x,v) : t_0 -r^{2s} < t \le t_0,  |x-x_0 -(t-t_0) v_0| < r^{1+2s}, |v-v_0| < r \}.\]
Cylinders centered at the origin $(0,0,0)$ and of radius $r>0$ are
simply denoted by $Q_r$.

Note that under this definition $(t,x,v)$ belongs to $Q_1$ if and only if $(r^{2s} t, r^{1+2s} x, rv)$ belongs to $Q_r$. Thus, our cylinders honor the scaling given in \eqref{e:scaling}. Moreover, for any $z_0 \in \R^{1+2d}$, we have $Q_r(z_0) = z_0 \circ Q_r$, where $\circ$ denotes the Lie group operator given in \eqref{e:lie_group}.

\subsection{Kinetic distance}

We recall the notion of kinetic distance defined in \cite{schauder}. It is constructed so that it agrees with the scaling given in \eqref{e:scaling} and the left action of the group \eqref{e:lie_group}.

\begin{defn} \label{d:kinetic_distance}
The \emph{kinetic distance} between two points $z_1=(t_1,x_1,v_1)$ and $z_2=(t_2,x_2,v_2)$ in $\R^{1+2d}$ is given by the following formula
\[
  d_\ell(z_1,z_2) := \min_{w \in \R^d} \left\{ \max \left( |t_1-t_2|^{ \frac 1 {2s} },
     |x_1-x_2-(t_1-t_2)w|^{ \frac 1 {1+2s} } , |v_1-w| , |v_2-w| \right) \right\}.
\]
\end{defn}

We show in \cite{schauder} that $d_\ell$ is indeed a distance when $s \geq 1/2$. For $s<1/2$, the triangle inequality fails for $d_\ell$, however $d_\ell(z_1,z_2)^{2s}$ is in fact a distance. We still work with $d_\ell$ for any value of $s \in (0,1)$ in order to keep our formulas consistent.

This distance is scale invariant in the following sense: for any $z_1, z_2 \in \R^{1+2d}$ and $r>0$, if we scale $S_r z_1 := (r^{2s} t_1, r^{1+2s} x_1, rv_1)$ and $S_r z_2 := (r^{2s} t_2, r^{1+2s} x_2, rv_2)$, we have
\[ d_\ell(S_r z_1,S_r z_2) = r d_\ell(z_1,z_2).\]

This distance is also left invariant by \eqref{e:lie_group}. Indeed, for any three points $\xi$, $z_1$ and $z_2$ in $\R^{1+2d}$ we have
\[ d_\ell(\xi \circ z_1, \xi \circ z_2) = d_\ell(z_1, z_2).\]

It is also convenient to define the length of a vector $z \in \R^{1+2d}$ by $\|z\| := d_\ell(z,0)$. Technically, $\|z\|$ is not a norm. It is not homogeneous of degree one, but rather it is homogeneous with respect to the scaling in \eqref{e:scaling}. It satisfies the triangle inequality with respect to the group action \eqref{e:lie_group}:
\begin{equation} \label{e:lie_norm}
 \|z_1 \circ z_2\| \leq \|z_1\| + \|z_2\|.
\end{equation}
There are several convenient equivalent expressions for $\|z\|$ that we write below.
\begin{align*} 
 \|z\| &= \min_{w \in \R^d} \left\{ \max \left( |t|^{ \frac 1 {2s} },
     |x-tw|^{ \frac 1 {1+2s} } , |v-w| , |w| \right) \right\}, \\
&\approx \max \left( |t|^{ \frac 1 {2s} },
     |x|^{ \frac 1 {1+2s} } , |v| \right) , \\
&\approx |t|^{ \frac 1 {2s} }+ |x|^{ \frac 1 {1+2s} } + |v|
\end{align*}

The symbol $\approx$ denotes in this context that the quantities on both sides are comparable up to a factor depending on $s$ and dimension $d$ only. The last two expressions would not satisfy \eqref{e:lie_norm}, but they are easier to compute in some cases in which the constant factors do not matter.

Note that due to the left invariance of this distance, $d_\ell(z_1,z_2) = \|z_2^{-1} \circ z_1\|$. 

The distance $d_\ell$ is left invariant but not right invariant. This lack of right invariance is occasionally problematic and results in some loss in some exponents in some inequalities. It is essentially the reason why the exponents $\alpha$ and $\alpha'$ are different in the main result of \cite{schauder}. For example, for all $z_1,z_2 \in \R^{1+2d}$ and $w \in \R^d$, a direct computation shows the following inequalities that will be used repeatedly.
\begin{equation}
  \label{e:repeatedly}
\begin{aligned}
 d_\ell (z_1 \circ (0,0,w), z_2 \circ (0,0,w)) &\leq d_\ell(z_1,z_2) + |t_1-t_2|^{1/(1+2s)} |w|^{1/(1+2s)}, \\ 
&\leq d_\ell(z_1,z_2) + d_\ell(z_1,z_2)^{2s / (1+2s)} |w|^{1/(1+2s)}.
\end{aligned}
\end{equation}
Moreover, when $d_\ell(z_1,z_2) \leq 1$, then the last inequality is also less than or equal to $d_\ell(z_1,z_2)^{2s / (1+2s)} (1+|w|)^{1/(1+2s)}$. We see that the right translations $z_1\circ (0,0,w)$ and $z_2 \circ (0,0,w)$ may be an order of magniture further apart than the original points $z_1$ and $z_2$.

\subsection{Kinetic degree of polynomials}

We recall the definition of kinetic degree from \cite{schauder}, for polynomials $p$ in $\R[t,x,v]$. Given a monomial $m$ of the form
\[ m(t,x,v) = c \, t^{\alpha_0} x_1^{\alpha_1} \dots x_d^{\alpha_d} v_1^{\alpha_{d+1}} \dots v_d^{\alpha_{2d}} \text{ with } c \neq 0,\]
we define its kinetic degree as 
\[ \degk m = 2s \alpha_0 + (1+2s) \sum_{j=1}^d \alpha_j + \sum_{j=d+1}^{2d} \alpha_j.\]
That is, the degree of $m$ is computed by counting $2s$ times
the exponent for the variable $t$, $1+2s$ for the exponents in the variables $x_i$ and $1$ for variables $v_i$. This definition is justified by the fact that we want a notion of kinetic degree consistent with the scaling \eqref{e:scaling}. With this definition in mind,
\[ m(r^{2s} t, r^{1+2s} x, rv) = r^{\degk m } m(t,x,v).\]

Given any non-zero polynomial $p$ in $\R[t,x,v]$ we define the kinetic degree of $p$ (and we write it $\degk p$) as the maximum of the kinetic degree of each of its (non-zero) monomials.

The kinetic degree of the zero polynomial is not properly defined above. It is appropriate to make it equal to $-\infty$ (or perhaps $-1$). The fact that the kinetic degree of the zero polynomial is a negative value is relevant for the definition of the $C^0_\ell$ norm given below in Definition~\ref{d:holder-space}.

\subsection{Kinetic H\"older spaces}

We recall here the kinetic H\"older spaces introduced in
\cite{schauder}. 

\begin{defn}[Kinetic H\"older spaces] \label{d:holder-space} Given an
  open set $D \subset \R^{1+2d}$ and a parameter  $\alpha \in [0,\infty)$, a continuous function $f: D \to \R$ is  \emph{$\alpha$-H\"older continuous} at a point  $z_0 \in \R^{1+2d}$ if there exists a polynomial  $p \in \R[t,x,v]$ such that $\degk p < \alpha$ and for  any $z \in D$  \[ |f(z) - p(z)| \leq C d_\ell(z,z_0)^\alpha.\] When this property  holds at every point $z_0$ in the domain $D$, with a uniform  constant $C$, we say $f \in C_\ell^\alpha(D)$. The semi-norm   $[f]_{C_\ell^\alpha(D)}$ is the smallest value of the constant $C$ so  that the inequality above holds for all $z_0, z \in D$. 

Note that with this definition $[f]_{C_\ell^0(D)} = \|f\|_{C^0(D)} = \|f\|_{L^\infty(D)}$.
We define the norm  $\|f\|_{C_\ell^\alpha(D)}$ to be $[f]_{C_\ell^\alpha(D)}+\|f\|_{C^0(D)}$.
\end{defn}

We recall the interpolation inequalities proven in
\cite[Proposition~2.10]{schauder}.
\begin{prop}[Interpolation inequalities --
  \cite{schauder}] \label{p:interpol} Given
  $0 \le \alpha_1 < \alpha_2 < \alpha_3$  so that
  $\alpha_2 = \theta \alpha_1 + (1-\theta)\alpha_3$, we have for all
  function $f \in C_\ell^{\alpha_3} (Q_r (z_0))$,
  \[
    [f]_{C_\ell^{\alpha_2}(Q_r (z_0))} \leq C \left( [f]_{C_\ell^{\alpha_1}(Q_r(z_0))}^\theta [f]_{C_\ell^{\alpha_3}(Q_r(z_0))}^{1-\theta}
    + r^{\alpha_1-\alpha_2} [f]_{C_\ell^{\alpha_1}(Q_r(z_0))} \right),
  \]
for some constant $C$ depending on $\alpha_1$, $\alpha_3$ and dimension only.
\end{prop}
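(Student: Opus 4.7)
The plan is to prove the inequality pointwise: for each $w_0 \in Q_r(z_0)$, I must produce a polynomial $P$ with $\degk P < \alpha_2$ such that $|f(z) - P(z)| \le C \,(\text{RHS}) \, d_\ell(z,w_0)^{\alpha_2}$ for every $z \in Q_r(z_0)$. Let $P_1$ and $P_3$ be the polynomials of kinetic degree less than $\alpha_1$ and $\alpha_3$ provided by the $C_\ell^{\alpha_1}$ and $C_\ell^{\alpha_3}$ conditions at $w_0$, and let $P$ be the truncation of $P_3$ obtained by discarding every monomial of kinetic degree $\geq \alpha_2$. The triangle inequality gives
\[
 |f(z) - P(z)| \le [f]_{C_\ell^{\alpha_3}} \, d_\ell(z,w_0)^{\alpha_3} + |P_3(z) - P(z)|,
\]
so the work is reduced to estimating the discarded, high-kinetic-degree monomials of $P_3$.

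The key quantitative step is a coefficient/sup-norm equivalence on the finite-dimensional space of polynomials of bounded kinetic degree: if $Q$ is a polynomial with $\degk Q < \alpha_3$ and $|Q|\le M$ on the kinetic cylinder $Q_\rho(w_0)$, then every coefficient $a_m$ in the $w_0$-centered expansion of $Q$ satisfies $|a_m| \le C M \rho^{-\degk m}$. This follows from the kinetic dilation $S_\rho$ (which rescales $Q_1$ to $Q_\rho$ and multiplies each kinetic-homogeneous monomial of degree $\delta$ by $\rho^\delta$) together with the elementary equivalence of sup-norm and coefficient-norm on polynomials of bounded kinetic degree restricted to $Q_1$. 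I apply this to $Q = P_3 - P_1$ on $Q_\rho(w_0)$, where by the two H\"older conditions
\[
|P_3(z) - P_1(z)| \le [f]_{C_\ell^{\alpha_1}}\rho^{\alpha_1} + [f]_{C_\ell^{\alpha_3}}\rho^{\alpha_3}.
\]
Since $\degk P_1 < \alpha_1 \leq \alpha_2$, the monomials of $P_3 - P$ (those of degree $\delta \in [\alpha_2,\alpha_3)$) coincide with the corresponding monomials of $P_3 - P_1$, and their coefficients are therefore bounded by $C(\rho^{\alpha_1 - \delta}[f]_{C_\ell^{\alpha_1}} + \rho^{\alpha_3 - \delta}[f]_{C_\ell^{\alpha_3}})$. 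Evaluating the contribution of each such monomial at $z$ by $d_\ell(z,w_0)^\delta$ and summing over the finitely many relevant degrees yields the pointwise bound on $|P_3(z) - P(z)|$.

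The free parameter $\rho$ is then optimized. The unconstrained minimizer is $\rho_\star = ([f]_{C_\ell^{\alpha_1}}/[f]_{C_\ell^{\alpha_3}})^{1/(\alpha_3 - \alpha_1)}$, at which the two contributions balance; provided $\rho_\star \le r$ this produces exactly $C[f]_{C_\ell^{\alpha_1}}^\theta [f]_{C_\ell^{\alpha_3}}^{1-\theta} d_\ell(z,w_0)^{\alpha_2}$, the first term of the advertised inequality. In the complementary regime $\rho_\star > r$ the optimal scale exceeds the diameter of the domain and one must force $\rho = r$; the $\alpha_3$ term is then dominated by the $\alpha_1$ one, which produces the boundary contribution $r^{\alpha_1-\alpha_2}\,[f]_{C_\ell^{\alpha_1}}\, d_\ell(z,w_0)^{\alpha_2}$.

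The main obstacle is geometric and stems from the lack of right-invariance of $d_\ell$: the cylinder $Q_\rho(w_0)$ is not simply a translate of $Q_\rho$, and when $w_0$ lies near the boundary of $Q_r(z_0)$ the cylinder $Q_\rho(w_0)$ may not be contained in $Q_r(z_0)$, so the sup-norm bound on $P_3 - P_1$ cannot be read off directly from the hypothesis of the proposition. The way around this is to apply the coefficient-norm equivalence on the intersection $Q_\rho(w_0)\cap Q_r(z_0)$: as long as $\rho$ is at most a fixed fraction of $r$, this intersection contains a full sub-cylinder of $Q_\rho(w_0)$ in a controlled direction (exploiting the translation invariance only along the $v$ and $x$ directions simultaneously via \eqref{e:repeatedly}), and the elementary equivalence still holds with a constant depending only on $\alpha_3$ and $d$. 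Absorbing this constant into $C$ completes the argument.
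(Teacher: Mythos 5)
Your overall strategy (truncate the $\alpha_3$-expansion at kinetic degree $\alpha_2$, control the discarded coefficients through a sup-norm/coefficient equivalence applied to $P_3-P_1$ on a cylinder of an optimized radius $\rho$) is the right one, and the paper itself only cites \cite{schauder} for this proposition, so I am judging the argument on its own terms. There is, however, a genuine gap: your pointwise estimate only covers the near regime $d_\ell(z,w_0)\lesssim\rho$. You reduce everything to
\[
|f(z)-P(z)|\le [f]_{C_\ell^{\alpha_3}}\,d_\ell(z,w_0)^{\alpha_3}+|P_3(z)-P(z)|,
\qquad
|P_3(z)-P(z)|\lesssim\sum_{\alpha_2\le\delta<\alpha_3}\bigl(\rho^{\alpha_1-\delta}[f]_{C_\ell^{\alpha_1}}+\rho^{\alpha_3-\delta}[f]_{C_\ell^{\alpha_3}}\bigr)d_\ell(z,w_0)^{\delta}.
\]
Dividing by $d_\ell(z,w_0)^{\alpha_2}$, every term carries a factor $(d_\ell(z,w_0)/\rho)^{\delta-\alpha_2}$ with $\delta-\alpha_2\ge 0$ (and $\delta=\alpha_3$ for the first term), which is bounded only when $d_\ell(z,w_0)\le\rho$. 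In the case $\rho_\star\le r$ you set $\rho=\rho_\star$, but $z$ ranges over all of $Q_r(z_0)$, so $d_\ell(z,w_0)$ can be of order $r\gg\rho_\star$; there the first term alone gives $[f]_{C_\ell^{\alpha_3}}r^{\alpha_3-\alpha_2}=(r/\rho_\star)^{\theta(\alpha_3-\alpha_1)}[f]_{C_\ell^{\alpha_1}}^{\theta}[f]_{C_\ell^{\alpha_3}}^{1-\theta}$, which is not controlled by the right-hand side. (The complementary case $\rho_\star>r$, where you force $\rho=r$, is fine, because then $d_\ell(z,w_0)\lesssim\rho$ automatically.)

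The fix is the standard second regime, and you already have the needed ingredient. For $d_\ell(z,w_0)>\rho$, compare $f$ with $P_1$ instead: $|f(z)-P_1(z)|\le[f]_{C_\ell^{\alpha_1}}d_\ell(z,w_0)^{\alpha_1}\le[f]_{C_\ell^{\alpha_1}}\rho^{\alpha_1-\alpha_2}d_\ell(z,w_0)^{\alpha_2}$, and bound $|P(z)-P_1(z)|$ by the \emph{low}-degree ($\delta<\alpha_2$) monomials of $P_3-P_1$, whose coefficients are controlled by exactly the same equivalence; for those, $\rho^{-\delta}d_\ell^{\delta}\le\rho^{-\alpha_2}d_\ell^{\alpha_2}$ precisely because now $d_\ell(z,w_0)\ge\rho$ and $\delta<\alpha_2$. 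With $\rho=\min(\rho_\star,cr)$ the two regimes together yield the stated inequality. As written, your proof establishes the estimate only on $Q_\rho(w_0)$ rather than on $Q_r(z_0)$, and passing from the small scale to the full cylinder via Lemma~\ref{l:Calpha_local} would cost $\rho^{-\alpha_2}\osc f$, which reproduces the right-hand side only when $\alpha_1=0$; so the two-regime argument is genuinely needed.
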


In this article, we will iteratively gain a priori estimates for solutions to the Boltzmann equation on H\"older spaces with increasingly large exponents $\alpha$. We deal with global estimates that work for all $v \in \R^d$. We need to keep track of the asymptotic behavior of these norms for large velocities. The most convenient way to do it is by considering functions in H\"older spaces with fast decay, that we define below.
\begin{defn}[H\"older spaces with fast
  decay] \label{d:holder-space-fast} Given $\alpha \in [0,\infty)$, a
  function $f: [\tau,T] \times \R^d \times \R^d$ lies in $\Cpol^\alpha$
  if, for all $q >0$ and all $r \in (0,1]$, there exists $C_{q}>0$
  such that for all $z \in [\tau,T] \times \R^d \times \R^d$, with
  $Q_r (z) \subset [\tau,T] \times \R^d \times \R^d$, 
  \[ \|f\|_{C_\ell^\alpha (Q_r(z))} \le \frac{C_{q}}{(1+|v|)^q} .\]
This is a locally convex vector space with the following family of semi-norms
\[ [f]_{C_{\ell,q}^\alpha([\tau,T] \times \R^d \times \R^d)} := \sup \left\{ (1+|v|)^q [f]_{C_\ell^\alpha (Q_r(z))} : r \in (0,1] \text{ and } Q_r(z) \subset [\tau,T] \times \R^d \times \R^d \right\}.\]
We also write $\|f\|_{C^\alpha_{\ell,q}} := \|f\|_{C^0_{\ell,q}} + [f]_{C^\alpha_{\ell,q}}$. Thus, a function $f$ belongs to $\Cpol^\alpha$ when $\|f\|_{C^\alpha_{\ell,q}} < \infty$ for all $q>0$. 
\end{defn}

Note that $[f]_{C^\alpha_{\ell,q_1}} \geq  [f]_{C^\alpha_{\ell,q_2}}$ if $q_1 \geq q_2$. Also, the norm $\|f\|_{C_\ell^\alpha (Q_r(z))}$ is monotone increasing with respect to $r$. It is pointless to consider small values of $r$ in Definition~\ref{d:holder-space-fast}. In practice, one would only take the largest $r$ that the interval $[\tau,T]$ allows, which will often be $r=1$.

\medskip

We know that the property of a function being H\"older continuous is local, but its H\"older norm is not (at least for non-integer exponents). The following lemma is useful to obtain a H\"older estimate in a large domain by covering with smaller patches where the H\"older norm is bounded.
\begin{lemma} \label{l:Calpha_local}
Let $\alpha >0$, $r_0>0$, $f : Q_1 \to \R$ be a bounded continuous function. Assume that for every $z_0 \in Q_1$, there is a polynomial $p_{z_0}$ of kinetic degree strictly less than $\alpha$ such that
\[ |f(z_0 \circ \xi) - p_{z_0}(\xi)| \leq C_0 \|\xi\|^\alpha,\]
whenever $\|\xi\| \leq r_0$ and ${z_0} \circ \xi \in Q_1$. Then $f \in C^\alpha_\ell(Q_1)$ and
\[ [f]_{C^\alpha_\ell(Q_1)} \leq C_0 + C r_0^{-\alpha} \osc_{Q_1} f,\]
for a constant $C$ depending on $\alpha$ and dimension only. Here $\osc_{Q_1} f = \sup_{Q_1} f - \inf_{Q_1} f$.
\end{lemma}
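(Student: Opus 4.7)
The plan is: for each $z_0 \in Q_1$, use the polynomial $p_{z_0}$ already supplied by the hypothesis and verify the $C_\ell^\alpha$ inequality for all $\xi$ with $z_0 \circ \xi \in Q_1$, splitting into the two regimes $\|\xi\| \leq r_0$ and $\|\xi\| > r_0$. The first regime is free: the hypothesis gives the bound $C_0 \|\xi\|^\alpha$ directly. All the work lies in the second regime, where I must convert the ball-bound on $p_{z_0}$ into a growth bound of the correct polynomial order.

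The starting observation is that plugging $\xi = 0$ into the hypothesis forces $p_{z_0}(0) = f(z_0)$. Hence $\tilde p_{z_0}(\xi) := p_{z_0}(\xi) - f(z_0)$ vanishes at the origin, retains kinetic degree strictly less than $\alpha$, and obeys
\[
|\tilde p_{z_0}(\xi)| \leq |f(z_0\circ\xi) - f(z_0)| + |f(z_0\circ\xi) - p_{z_0}(\xi)| \leq \osc_{Q_1} f + C_0 r_0^\alpha
\]
for every $\xi$ with $\|\xi\| \leq r_0$ and $z_0\circ\xi \in Q_1$.

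Next I would propagate this bound to $\|\xi\| > r_0$ by a finite-dimensional rescaling argument. Writing $\tilde p_{z_0} = \sum_m c_m m$, where $m$ runs over nonconstant monomials with $\degk m < \alpha$, the rescaled polynomial $Q(\eta) := \tilde p_{z_0}(S_{r_0} \eta) = \sum_m c_m r_0^{\degk m} m(\eta)$ lies in the finite-dimensional space of such polynomials. On this space all norms are equivalent, so the ball-bound above yields $\sum_m |c_m| r_0^{\degk m} \leq C(\osc_{Q_1} f + C_0 r_0^\alpha)$ with $C$ depending only on $\alpha$ and $d$. For $\|\xi\| > r_0$, writing $\xi = S_{\|\xi\|} \zeta$ with $\|\zeta\| = 1$ and using $(\|\xi\|/r_0)^{\degk m} \leq (\|\xi\|/r_0)^\alpha$ (since $\degk m < \alpha$ and $\|\xi\|/r_0 > 1$), I obtain $|\tilde p_{z_0}(\xi)| \leq C(\osc_{Q_1} f + C_0 r_0^\alpha)(\|\xi\|/r_0)^\alpha$.

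To finish, for $\|\xi\| > r_0$ with $z_0 \circ \xi \in Q_1$ I combine $|f(z_0\circ\xi) - f(z_0)| \leq \osc_{Q_1} f$ with the polynomial estimate and use the trivial bound $\osc_{Q_1} f \leq r_0^{-\alpha} (\osc_{Q_1} f) \|\xi\|^\alpha$ to collect everything into the required form $(C_0 + C r_0^{-\alpha} \osc_{Q_1} f)\|\xi\|^\alpha$, after absorbing numerical constants appropriately. I expect the main obstacle to be organizing the finite-dimensional equivalence of norms cleanly, in particular verifying that the sup-norm on $\{\|\eta\|\leq 1\}$ is a genuine norm on the space of kinetic-degree-$<\alpha$ polynomials that vanish at the origin; this rests on the elementary but necessary fact that this kinetic unit ball has nonempty Euclidean interior, so any such polynomial vanishing on it must be identically zero.
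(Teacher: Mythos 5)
Your proposal is correct and follows essentially the same route as the paper: both arguments convert the small-ball hypothesis into bounds on the coefficients of $p_{z_0}$ (the paper by invoking Lemma 2.8 of \cite{schauder}, you by finite-dimensional equivalence of norms on polynomials of bounded kinetic degree) and then estimate the polynomial's growth for $\|\xi\|>r_0$ using $\degk m<\alpha$. The only point to be careful about is that the hypothesis controls $\tilde p_{z_0}$ only on $\{\|\xi\|\le r_0\}\cap\{z_0\circ\xi\in Q_1\}$, which for $z_0$ near $\partial Q_1$ is a proper subset of the kinetic ball; your norm-equivalence step must therefore be run on that subset, which works with uniform constants because it always contains a kinetic ball of radius comparable to $r_0$ — exactly the point handled by the cited lemma.
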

\begin{proof}
The inequality we assume for $|f({z_0} \circ \xi) - p_{z_0}(\xi)|$ when $\|\xi\|<r_0$ is identical to the one in Definition~\ref{d:holder-space}. We need to extend this inequality to every value of $\xi$ so that $z_0 \circ \xi \in Q_1$, regardless of whether $\|\xi\| < r_0$ or not.

Without loss of generality, let us assume $\inf_{Q_1} f=0$ (otherwise, repeat the proof below for $f - \inf_{Q_1} f$). Thus, in this case $\osc_{Q_1} f = \|f\|_{C^0(Q_1)}$. In the following we simply write $\|f\|_{C^0}$ for $\|f\|_{C^0(Q_1)}$.

For any point $z_0 \in Q_1$, let us analyze the polynomial $p_{z_0}$. We know that whenever $\|\xi\| \leq r_0$ and $z_0 \circ \xi \in Q_1$, $|f(z_0 \circ \xi) - p_{z_0}(\xi)| \leq C_0 \|\xi\|^\alpha$. In particular, $|p_{z_0}| \leq C_0 r_0^\alpha + \|f\|_{C^0}$ at those points.

We use Lemma 2.8 in \cite{schauder} (See also the proof of Lemma 2.7 and Proposition 2.10 in \cite{schauder}), and get that for any point $z_0 \in Q_1$, the polynomial $p_{z_0}$ has the form
\[ p_{z_0}(z) = \sum_{j \in \mathbb N^{1+2d}} a_j m_j(z),\]
where $a_j \neq 0$ only for multi-indexes so that $\deg_k m_j < \alpha$, and moreover 
\[ |a_j| \leq C \left( C_0 r_0^{\alpha - \deg_k m_j}  + \|f\|_{C^0} r_0^{-\deg_k m_j} \right).\]

Thus, when $z_0 \circ \xi \in Q_1$ but $\|\xi\| > r_0$, we estimate
\begin{align*} 
 |f(z_0 \circ \xi) - p_{z_0}(\xi)| &\leq \|f\|_{C^0} + |p_{z_0}(\xi)| , \\
&\leq \|f\|_{C^0} + \sum_{j \in \mathbb N^{1+2d}} |a_j| \|\xi\|^{\deg_k m_j}, \\
&\leq \|f\|_{C^0} + C \left( C_0 r_0^\alpha + \|f\|_{C^0} \right)  \sum_{\substack{j \in \mathbb N^{1+2d} \\ \deg_k m_j < \alpha}} \left( \frac{\|\xi\|}{r_0} \right)^{\deg_k m_j}, \\
&\lesssim\left( C_0 r_0^\alpha + \|f\|_{C^0} \right) \left( \frac{\|\xi\|}{r_0} \right)^\alpha, \\
&= \left( C_0 + r_0^{-\alpha} \|f\|_{C^0} \right) \|\xi\|^\alpha.
\end{align*}
And we conclude the proof.
\end{proof}

\begin{remark}
Comparing with classical H\"older spaces $C^\alpha$, we observe that the estimate in Lemma~\ref{l:Calpha_local} is not optimal for large values of $\alpha$. Consider for example the Lipschitz norm, that corresponds to $\alpha=1$ and is purely local. So, the optimal inequality for the classical Lipschitz space would not have the second term in Lemma~\ref{l:Calpha_local}. H\"older norms are non-local, so some dependence on $r_0$ ought to be retained at least when $\alpha$ is not an integer (or $\alpha \notin \mathbb N + 2s \mathbb N$ for kinetic H\"older spaces).
\end{remark}

\begin{lemma} \label{l:Calpha_product}
Let $f, g \in C^\alpha_\ell(Q_1)$. Then $fg \in C^\alpha_\ell(Q_1)$ and
\[ \|f g\|_{C^\alpha_\ell(Q_1)} \le C \|f\|_{C^\alpha_\ell(Q_1)} \|g\|_{C^\alpha_\ell(Q_1)} ,\]
for a constant $C$ depending on dimension and $\alpha$ only.
\end{lemma}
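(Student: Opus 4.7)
The plan is to verify the defining inequality of $C^\alpha_\ell$ pointwise at each base point $z_0 \in Q_1$, using the approximating polynomials for $f$ and $g$, and then to assemble a global estimate via Lemma~\ref{l:Calpha_local}. Fix $z_0 \in Q_1$. By Definition~\ref{d:holder-space} there are polynomials $p_f$ and $p_g$ with $\degk p_f, \degk p_g < \alpha$ such that
\[ f(z_0 \circ \xi) = p_f(\xi) + R_f(\xi), \qquad g(z_0 \circ \xi) = p_g(\xi) + R_g(\xi), \]
with $|R_f(\xi)| \leq [f]_{C^\alpha_\ell(Q_1)} \|\xi\|^\alpha$ and similarly for $R_g$, whenever $z_0 \circ \xi \in Q_1$. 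Expanding the product gives
\[ (fg)(z_0 \circ \xi) = p_f(\xi) p_g(\xi) + p_f(\xi) R_g(\xi) + R_f(\xi) p_g(\xi) + R_f(\xi) R_g(\xi).\]
Split the polynomial product $p_f p_g = q + \tilde r$, where $q$ consists of the monomials of kinetic degree strictly less than $\alpha$ and $\tilde r$ of those of kinetic degree $\geq \alpha$. The candidate approximating polynomial at $z_0$ is $q$, which by construction satisfies $\degk q < \alpha$.

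The first step is to bound the coefficients of $p_f$ and $p_g$ by $C\|f\|_{C^\alpha_\ell(Q_1)}$ and $C\|g\|_{C^\alpha_\ell(Q_1)}$ respectively. This follows from the argument used in the proof of Lemma~\ref{l:Calpha_local}, which in turn invokes Lemma~2.8 of \cite{schauder}: the inequality $|f(z_0 \circ \xi) - p_f(\xi)| \leq [f]_{C^\alpha_\ell} \|\xi\|^\alpha$ combined with $\|f\|_{C^0(Q_1)} \leq \|f\|_{C^\alpha_\ell(Q_1)}$ forces the coefficients of each monomial $m_j$ to be of size at most $C(\|f\|_{C^\alpha_\ell} + \|f\|_{C^0}) \lesssim \|f\|_{C^\alpha_\ell}$ (taking the relevant scale equal to $1$, which is legitimate because $Q_1$ has bounded kinetic diameter). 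Consequently, for $\|\xi\| \leq 1$ each term $|m_j(\xi)|$ is bounded by $C\|\xi\|^{\degk m_j}$, and so
\[ |p_f(\xi)| \leq C \|f\|_{C^\alpha_\ell(Q_1)}, \qquad |p_g(\xi)| \leq C \|g\|_{C^\alpha_\ell(Q_1)}.\]
Moreover the coefficients of $\tilde r$ are bounded by $C \|f\|_{C^\alpha_\ell}\|g\|_{C^\alpha_\ell}$, and since every monomial in $\tilde r$ has kinetic degree $\geq \alpha$, one has $|\tilde r(\xi)| \leq C \|f\|_{C^\alpha_\ell}\|g\|_{C^\alpha_\ell} \|\xi\|^\alpha$ whenever $\|\xi\| \leq 1$.

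Combining the four pieces yields, for $\|\xi\| \leq 1$ and $z_0 \circ \xi \in Q_1$,
\[ |(fg)(z_0 \circ \xi) - q(\xi)| \leq |\tilde r(\xi)| + |p_f(\xi)|\,|R_g(\xi)| + |R_f(\xi)|\,|p_g(\xi)| + |R_f(\xi)|\,|R_g(\xi)|, \]
and each term is bounded by $C \|f\|_{C^\alpha_\ell(Q_1)} \|g\|_{C^\alpha_\ell(Q_1)} \|\xi\|^\alpha$ (for the last term we use $\|\xi\|^{2\alpha} \leq \|\xi\|^\alpha$). An application of Lemma~\ref{l:Calpha_local} with $r_0 = 1$ then gives
\[ [fg]_{C^\alpha_\ell(Q_1)} \leq C \|f\|_{C^\alpha_\ell(Q_1)} \|g\|_{C^\alpha_\ell(Q_1)} + C \osc_{Q_1}(fg), \]
and the oscillation is controlled by $2\|f\|_{C^0}\|g\|_{C^0} \leq 2\|f\|_{C^\alpha_\ell}\|g\|_{C^\alpha_\ell}$. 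Adding the trivial bound $\|fg\|_{C^0(Q_1)} \leq \|f\|_{C^0(Q_1)}\|g\|_{C^0(Q_1)}$ completes the proof. The only mildly delicate point is the coefficient bound for $p_f$ and $p_g$; everything else is a routine expansion and application of Lemma~\ref{l:Calpha_local}.
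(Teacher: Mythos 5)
Your proof is correct and follows essentially the same route as the paper's: approximate $fg$ by the low-kinetic-degree part of $p_f p_g$, bound the discarded high-degree monomials and the polynomial coefficients via Lemma~2.7/2.8 of \cite{schauder}, and use Lemma~\ref{l:Calpha_local} to pass from $\|\xi\|\le 1$ to all of $Q_1$. The only cosmetic difference is that the paper writes the error as $f\,(g-q)+(f-p)\,q$ (two terms), whereas your four-term expansion produces the extra piece $R_fR_g$, harmlessly absorbed by $\|\xi\|^{2\alpha}\le\|\xi\|^{\alpha}$.
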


\begin{proof}
It is clear that the $C^0$ norm satisfies the inequality with constant $C=1$. We are left with verifying the inequality for the semi-norm $[\cdot]_{C^\alpha_\ell}$. To that end, let $z \in Q_1$ and consider the polynomials $p$ and $q$ of kinetic degree less than $\alpha$ so that
\[ |f(z \circ \xi) - p(\xi)| \leq [f]_{C^\alpha_\ell} \|\xi\|^\alpha \qquad \text{and} \qquad |g(z\circ \xi) - q(\xi)| \leq [g]_{C^\alpha_\ell} \|\xi\|^\alpha.\]
Thanks to Lemma~\ref{l:Calpha_local}, it is sufficient to consider the case $\|\xi\| < 1$. We have
\begin{align*} 
 |f(z \circ \xi) g (z \circ \xi) - p(\xi) q(\xi)| &\leq |f(z \circ \xi)||g(z \circ \xi) - q(\xi)| + |f(z\circ \xi) - p(\xi)| |q(\xi)|, \\
&\leq \left( [g]_{C_\ell^\alpha} \|f\|_{C^0} + [f]_{C^\alpha_\ell} \|q\|_{C^0} \right) \|\xi\|^\alpha, \\
&\lesssim \|g\|_{C^\alpha_\ell} \|f\|_{C^\alpha_\ell}  \|\xi\|^\alpha.
\end{align*}
The last inequality holds for $\|\xi\| < 1$ due to the identification of the coefficients of $q$ with derivatives of $g$ and   \cite[Lemma 2.7]{schauder} (see also  \cite[Remark 2.9]{schauder}).

The polynomial $p(\xi)q(\xi)$ may have a kinetic degree higher than $\alpha$. In that case, let $r(\xi)$ be the sum of the terms in $p(\xi)q(\xi)$ of kinetic degree larger than or equal to $\alpha$. We  also see from the application of \cite[Lemma 2.7 \& Remark 2.9]{schauder}, reasoning  term by term, that $|r(\xi)| \leq \|f\|_{C^\alpha_\ell}  \|g\|_{C^\alpha_\ell} \|\xi\|^\alpha$ whenever $\|\xi\| < 1$. Thus, the lemma follows.
\end{proof}

\section{Kinetic equations with integral diffusion}
\label{l:lin-kin}

\subsection{The kernel associated with the Boltzmann equation}
\label{s:boltzmann}

Like in \eqref{e:Boltzmann_IDE}, we use the decomposition of the Boltzmann collision operator described in \cite{silvestre2016new} and suggested earlier in \cite[Chapter 2, Section 6.2]{villani-book}. We split Boltzmann's collision operator $\Q(f,f)$ appearing in \eqref{e:boltzmann} as the sum of two terms $\Q = \Q_1 + \Q_2$. The first term $\Q_1(f,f)$ is an integro-differential operator and $\Q_2(f,f)$ a lower order term,
\begin{equation}
  \label{e:Q-bis}
\begin{aligned}
\Q_1(f,f) &:= \mathcal{L}_{K_f} f, \\
\Q_2(f,f) &:= c_b (f \ast |\cdot|^\gamma) f
\end{aligned}
\end{equation}
where $c_b$ is a positive constant only depending on the function $b$
appearing in \eqref{assum:B} and where the integro-differential diffusion operator $\mathcal L_{K_f}$ is defined
as
\[
  \mathcal{L}_{K_f} g (t,x,v) = \PV \int_{\R^d} (g(t,x,v') -g(t,x,v)) K_f (t,x,v,v') \dv'.
\]
The kernel $K_f$ characterizing the operator $\mathcal{L}_{K_f}$ is
given by the following formula
\begin{equation}\label{e:kf}
 K_f (v,v') = \frac{2^{d-1}}{|v'-v|} \int_{w \perp v'-v} f(v+w) B(r,\cos \theta) r^{-d+2} \dd w \quad \text{ 
with } 
\begin{cases} 
r^2 =|v'-v|^2 + |w|^2, \\ 
\cos \theta = \frac{w-(v-v')}{|w-(v-v')|}\cdot \frac{w+(v'-v)}{|w+(v'-v)|} .
\end{cases} 
\end{equation} 
The following expressions are easier to handle in computations.
\begin{align} 
  \label{e:A}
 K_f(v,v') &=  |v'-v|^{-d-2s} \int_{w \perp v'-v} f(t,x,v+w) A(|v'-v|,|w|)  |w|^{\gamma+2s+1} \dd w  \\
\label{e:Kernel-approx}
  & \approx |v-v'|^{-d-2s} \int_{w \perp (v'-v)} f(t,x,v+w) |w|^{\gamma+2s+1} \dd w
\end{align}
where $A \simeq 1$ is a bounded function only depending on the collision kernel $B$.

In Formula \eqref{e:kf} we omited the $(t,x)$ dependence in $K_f=K_f(t,x,v,v')$ and $f=f(t,x,v)$. This is because for every fixed value of $(t,x)$, we think of $f(t,x,\cdot)$ as a function of $v$ and compute the kernel $K_f$ accordingly by Formula \eqref{e:kf}. Thus, if $f=f(v)$ is a function of $v$ only, the kernel $K_f=K_f(v,v')$ depends on $v$ and $v'$. When $f$ depends on other parameters, so does $K_f$. In particular, $K_f=K_f(t,x,v,v')$ when $f=f(t,x,v)$. In the same spirit, we occasionally refer to Assumption~\ref{a:hydro-assumption} for a function $f=f(v)$ depending only on $v$ as a way to state that its mass, energy and entropy are bounded by constants $m_0>0$, $M_0$, $E_0$ and $H_0$. This abuse of notation is convenient when stating lemmas that relate bounds for $f$ with bounds for $K_f$.

There are two general regularity results for general kinetic integro-differential equations that we apply in this paper. The first one, given in \cite{imbert2016weak}, is a H\"older estimate with a small exponent, in the style of the well known theorem of De Giorgi and Nash. It is, in some sense, the integro-differential version of the result in \cite{golse2016harnack}. The second one, given in \cite{schauder}, is a higher order H\"older estimate in the style of the classical result by Schauder for linear elliptic equations. We will iterate these Schauder-type estimates, and combine them with the large-velocity decay estimates from \cite{imbert2018decay}, in order to obtain $C^\infty$ estimates. Each of these two regularity results depends on different conditions on the diffusion kernel $K_f$. In the next two sections, we discuss the assumptions for each of these results.

\begin{remark} \label{r:div-vs-nondiv}
The theorems for kinetic integro-differential equations in the style of De Giorgi/Nash (explained below in Section~\ref{s:degiorgi}) and the Schauder-type results (described in Section~\ref{s:schauder}) depend on assumptions on the kernel that look rather different from each other. These assumptions are best understood by comparing them with the corresponding conditions for the Landau equation, in terms of classical second order diffusion, that formally correspond to the limit as $s \to 1$.

The in-homogeneous Landau equation is also a kinetic equation of the form
\begin{equation} \label{e:landau-divergence}
 \partial_t f + v\cdot \nabla_x f = \mathcal{Q}_L(f,f),
\end{equation}
where the collision operator $\mathcal{Q}_L(f,f)$ involves second order derivatives of the function $f$. It can be written in divergence form
\begin{equation} \label{e:landau-divergence-bis}
  \mathcal{Q}_L(f,f) = \partial_{v_i} \left( a^f_{ij} \partial_{v_j} f + b^f_i f \right).
\end{equation}
The H\"older estimates (as in \cite{golse2016harnack}), obtained following De Giorgi method, depend on this expression and on the uniform ellipticity conditions on the diffusion matrix $a^f_{ij}$ (that depend on the solution $f$ itself through its hydrodynamic quantities). The term $\partial_{v_i} (b^f_i f)$ is of lower order.

The application of a Schauder type estimate (for example as in \cite{sergio2004recent}) would depend on the expression of $\mathcal{Q}_L(f,f)$ in non-divergence form. In the case of the Landau equation, it takes the form
\begin{equation} \label{e:landau-nondivergence}
 \mathcal{Q}_L(f,f) = a^f_{ij} \partial_{v_i v_j} f + c^f f.
\end{equation}
Here, the lower order term is $c^f f$. The Schauder estimate depends on the diffusion coefficients (in this case $a_{ij}^f$) being uniformly elliptic and H\"older continuous.

The difference between the divergence and nondivergence structures in \eqref{e:landau-divergence} and \eqref{e:landau-nondivergence} translates in different structure assumptions for the diffusion kernel in their integro-differential counterpart. The two terms in the decomposition of the Boltzmann collision kernel \eqref{e:Q-bis} correspond more naturally to \eqref{e:landau-nondivergence} than to \eqref{e:landau-divergence}.

One can apply divergence-form techniques to integro-differential operators when these have a variational structure. It corresponds to cancellation conditions between  $K(t,x,v,v')$ and $K(t,x,v',v)$. Ideally, the case of symmetry of the form $K(t,x,v,v') = K(t,x,v',v)$ would correspond to an integro-differential operator in divergence form without lower order terms. However, we see in \eqref{e:landau-divergence-bis} that there is a first order lower order term in the Landau equation. In Section~\ref{s:degiorgi} we will state the precise cancellation conditions for $K(t,x,v,v') - K(t,x,v',v)$ so that the asymmetry in the kernel is of lower order than the diffusion. The H\"older estimate in the style of the theorem of De Giorgi, Nash and Moser in Section~\ref{s:degiorgi} depends on this cancellation condition.

One can apply nondivergence techniques to integro-differential operators when their structure allows us to make pointwise estimates. It corresponds to the cancellation condition $K(t,x,v,v+w) = K(t,x,v,v-w)$. The Boltzmann kernel $K_f$ satisfies this symmetry by construction. Thus, the application of nondivergence techniques to the Boltzmann equation is more direct. This will be reflected in the assumptions of the Schauder-type estimate described in Section~\ref{s:schauder}.
\end{remark}

\subsection{The local H\"older estimate}

\label{s:degiorgi}

A local H\"older estimate for a general class of kinetic equations
with integral diffusion was obtained in \cite{imbert2016weak} following classical ideas from De Giorgi. It applies to equations of the form
\begin{equation}
  \label{e:class}
  \partial_t g  + v \cdot \nabla_x g= \mathcal{L}_{K} g +h 
\end{equation}
where $h$ is a given source term and $\mathcal{L}_{K}$ is an integral operator of the form
\[
 \mathcal{L}_{K} g (t,x,v) = \PV \int_{\R^d} (g(t,x,v') -g(t,x,v)) K
(t,x,v,v') \dv' 
\]
associated with a (non-negative) kernel $K(t,x,v,v')$ defined in
$(-1,0] \times B_1 \times B_2 \times \R^d$. 

The H\"older estimates for kinetic integro-differential equations developed in \cite{imbert2016weak} are a result comparable to the theorem of De Giorgi, Nash and Moser for elliptic or parabolic equations in divergence form. These regularity estimates are independent of any well-posedness questions. It does not matter where the kernel $K$ comes from, weather it depends on $f$ or not, or how smooth it is with respect to any of its parameters. It is a result that only requires some uniform ellipticity conditions on the kernel $K$ that we describe below. 

The following list of assumptions must be met uniformly in $t$ and $x$. In order to keep the formulas short, we omit their dependence on $t$ and $x$.

\medskip
\paragraph{\it Non-degeneracy conditions}

\begin{eqnarray} \label{e:nondeg1}
&&\text{For all $v \in B_2$ and $r>0$,} \;  \inf_{|e| =1} \int_{B_r(v)} ((v'-v) \cdot e)^2_+ K(v,v') \dd v' \ge \lambda r^{2-2s}.\\
 \label{e:nondeg2}
&& \text{For any $f$ supported in $B_2$,} \; \iint_{B_2 \times \R^d} f(v)(f(v)-f(v')) K(v,v') \dd v' \dd v \geq \lambda \|f\|_{\dot{H}^s(\R^d)}^2 - \Lambda \|f\|^2_{L^2(\R^d)}.
\end{eqnarray}
The first non-degeneracy condition~\eqref{e:nondeg1} is necessary only
for $s < 1/2$. It is not clear if the second condition
\eqref{e:nondeg2} may actually follow from \eqref{e:nondeg1}. In
practice, \eqref{e:nondeg1} is usually much easier to check than
\eqref{e:nondeg2}.

\medskip
\paragraph{\it Boundedness conditions}

\begin{eqnarray} \label{e:bounded1}
&& \text{For all $v \in B_2$ and $r>0$,} \qquad  \int_{\R^d \setminus B_r(v)} K(v,v') \dd v' \le \Lambda r^{-2s}.\\
 \label{e:bounded2}
&& \text{For all $v' \in B_2$ and $r>0$,} \qquad  \int_{\R^d \setminus B_r(v')} K(v,v') \dd v \le \Lambda r^{-2s}.
\end{eqnarray}

\medskip
\paragraph{\it Cancellation conditions}

\begin{eqnarray}
  \label{e:cancellation1}
  && \text{For all $v \in B_{7/4}$,} \qquad \left\vert \PV \int_{B_{1/4}(v)} \left( K(v,v') - K(v',v) \right) \dd v' \right\vert \le \Lambda.\\
  \label{e:cancellation2}
  && \text{For all $r \in [0,1/4]$ and  $v \in B_{7/4}$,} 
  \qquad \left\vert \PV \int_{B_r (v)} \left( K(v,v') - K(v',v) \right)(v'-v) \dd v' \right\vert \le \Lambda (1+r^{1-2s}).
\end{eqnarray}
The second cancellation condition~\eqref{e:cancellation2} is necessary only for $s \ge \frac12$. 

The cancellation conditions \eqref{e:cancellation1} and \eqref{e:cancellation2} correspond to the representation of the integral diffusion as a divergence form operator with a lower order asymmetry (see Remark~\ref{r:div-vs-nondiv} above).

The nondegeneracy and boundedness conditions \eqref{e:nondeg1}, \eqref{e:nondeg2}, \eqref{e:bounded1} and \eqref{e:bounded2} correspond to the uniform ellipticity of the integral diffusion kernel. In practice, the most difficult to verify is the coercivity assumption~\eqref{e:nondeg2}.

\begin{thm}[Local H\"older estimate -- \cite{imbert2016weak}]\label{t:local-holder}
  Let
  $K:(-1,0] \times B_1 \times B_2 \times \R^d \to
  [0,+\infty)$
  be a kernel satisfying the ellipticity conditions~\eqref{e:nondeg1}
  (only if $s < \frac12$), \eqref{e:nondeg2}, \eqref{e:bounded1},
  \eqref{e:bounded2}, \eqref{e:cancellation1}, \eqref{e:cancellation2}
  (only if $s \ge \frac12$).  Let $f:(-1,0] \times B_1 \times \R^d$ be
  a solution of \eqref{e:class} in $Q_1$ for some bounded function
  $h$. Assume also that $f$ is bounded in
  $(-1,0] \times B_1 \times \R^d$. Then $f$ is H\"older continuous in
  $Q_{\frac12}$ and the following estimate holds
  \[ [f]_{C^\alpha_\ell (Q_{\frac12})} \le C ( \|f\|_{L^\infty
    ((-1,0]\times B_1 \times \R^d)} + \|h \|_{L^\infty (Q_1)}) \]
  where $\alpha \in (0,1)$ and $C>0$ only depend on dimension $d$, and the
  constants $\lambda$ and $\Lambda$ appearing in the assumptions.
\end{thm}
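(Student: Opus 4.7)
The plan is to follow the De Giorgi--Nash--Moser strategy, adapted to the kinetic integro-differential setting. Two principal ingredients are needed: a local energy inequality that exploits the ellipticity assumptions on $K$, and a ``diminution of oscillation'' lemma, which together yield H\"older regularity by iteration on kinetically-scaled cylinders $Q_r(z_0)$.

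First I would derive a local Caccioppoli/energy estimate. Testing \eqref{e:class} against $(f-k)_+ \varphi^2$ for a smooth cutoff $\varphi$ supported in $Q_1$ and a level $k \in \R$, the coercivity assumption \eqref{e:nondeg2} provides an $\dot H^s$ gain for the truncation $(f-k)_+$ in $v$ on $B_1$. The cancellation conditions \eqref{e:cancellation1}--\eqref{e:cancellation2} are used to absorb the antisymmetric part of $K(v,v') - K(v',v)$ into lower-order terms, and the boundedness conditions \eqref{e:bounded1}--\eqref{e:bounded2} control tail contributions from $\R^d \setminus B_2$. The transport $v \cdot \nabla_x$ gives a boundary term handled by the cutoff, while $h$ enters as an $L^\infty$ source.

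Next comes a De Giorgi $L^2 \to L^\infty$ iteration. Taking nested cylinders $Q_{r_n}$ with $r_n \downarrow 1/2$ and levels $k_n \uparrow k_\infty$, the energy inequality combined with the kinetic Sobolev/interpolation embedding (trading the $\dot H^s$ gain against the transport to get some extra integrability on kinetic cylinders) yields a recursion $U_{n+1} \le C b^n U_n^{1+\delta}$, forcing $U_n \to 0$ when $U_0$ is small. This proves a first lemma: if $f \le 1$ in $(-1,0]\times B_1 \times \R^d$ and $\{f > 0\}$ has small measure inside $Q_1$, then $f \le 1/2$ on a smaller sub-cylinder. A companion ``intermediate-value'' lemma then shows that if $f \le 1$ everywhere and $\{f \le 0\}$ occupies a definite fraction of $Q_1$, then $\{f \le 1-\theta\}$ has a definite fraction of measure in a slightly smaller cylinder.

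The key obstacle, and the genuinely kinetic part of the proof, is propagating the pointwise smallness across the whole cylinder in the $x$ variable: the transport tilts characteristics, so a standard De Giorgi covering does not work. I would use a Krylov--Safonov ``ink-spots'' argument adapted to the kinetic group \eqref{e:lie_group}, chaining sub-cylinders along the drift to promote the local bound on a small sub-cylinder to a measure-theoretic lower bound on $\{f \le 1 - \theta\}$ in $Q_1$. Iterating this with the first lemma gives a universal $\eta \in (0,1)$ such that
\[
  \osc_{Q_{1/2}} f \;\le\; \eta \, \osc_{(-1,0] \times B_1 \times \R^d} f \;+\; C\|h\|_{L^\infty(Q_1)}.
\]
Finally, because the class of kernels satisfying \eqref{e:nondeg1}--\eqref{e:cancellation2} is preserved under the kinetic scalings \eqref{e:scaling} and left-translations \eqref{e:lie_group}, this diminution of oscillation can be applied at every $z_0 \in Q_{1/2}$ on the scales $r = 2^{-n}$, yielding $[f]_{C^\alpha_\ell(Q_{1/2})}$ with $\alpha = -\log_2 \eta$, and giving the stated estimate. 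Throughout, the delicate part will be that the tail of $f$ in $|v|\to\infty$ enters the nonlocal diffusion at every scale, which is precisely why the right-hand side of the estimate involves $\|f\|_{L^\infty((-1,0]\times B_1 \times \R^d)}$ and not merely the local $L^\infty$ norm on $Q_1$.
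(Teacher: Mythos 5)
This theorem is not proved in the present paper; it is quoted from \cite{imbert2016weak}, and your outline is precisely the strategy followed there: a Caccioppoli estimate built on \eqref{e:nondeg2} with the cancellation conditions absorbing the asymmetry and \eqref{e:bounded1}--\eqref{e:bounded2} controlling the tails, a De Giorgi $L^2\to L^\infty$ iteration combined with a velocity-averaging/hypoelliptic gain of integrability, an intermediate-value lemma, a kinetic ink-spots propagation along the transport, and an oscillation-decay iteration using the invariance of the ellipticity class under \eqref{e:scaling} and \eqref{e:lie_group}. The plan is correct in structure and matches the cited proof, the only caveat being that the original estimate is obtained in a classical H\"older space and converted to $C^\alpha_\ell$ at the cost of shrinking $\alpha$ by a factor $\min(1,2s)$, as noted in the remark following the theorem.
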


In \cite{imbert2016weak}, we verified that the Boltzmann kernel $K_f$ (given in \eqref{e:kf}) satisfies (locally in $v$) the assumptions \eqref{e:nondeg1}, \eqref{e:nondeg2}, \eqref{e:bounded1}, \eqref{e:bounded2}, \eqref{e:cancellation1} and \eqref{e:cancellation2} with parameters depending only on the hydrodynamic constants $m_0$, $M_0$, $E_0$ and $H_0$ of Assumption~\ref{a:hydro-assumption}. 

\begin{remark} 
  In \cite{imbert2016weak}, the H\"older estimate is obtained in a
  classical H\"older space for some $\alpha>0$ sufficiently small. Such a H\"older estimate implies an estimate in the kinetic H\"older space used in the present work at the expense of reducing the exponent $\alpha$ by a factor $\min(2s,1)$. This is because for any two points $z,z_0 \in \R \times B_1 \times \R^d$ such that $d_\ell(z,z_0)<1$, we have $|z-z_0| \leq C d_\ell(z,z_0)^{\min (1,2s)}$. Indeed, when $|v_0| \leq 1$, we have
 \begin{align*} 
  |z -z_0| & = |t-t_0| + |x-x_0| + |v-v_0| \\
             & \le  (1+ |v_0|)|t-t_0| + |x-x_0 -(t-t_0)v_0 | + |v-v_0|  \\
             & \leq C( d_\ell (z,z_0)^{2s} +d_\ell (z,z_0)^{1+2s}+d_\ell (z,z_0))
  \end{align*}
  for some constant $C$ only depending on $s$.  In particular, if for all $z_0,z \in Q_{\frac12}$, we have
  $|f(z)-f(z_0)| \le C_\alpha |z-z_0|^\alpha$, then
  $|f(z)-f(z_0)| \le \tilde{C} C_\alpha d_\ell (z,z_0)^{\tilde \alpha}$ with
  $\tilde{\alpha} = \min (1,2s) \alpha$. The constant $\tilde{C}$ only
  depends on $s$ and $\alpha$.
\end{remark}

\subsection{A Schauder estimate for kinetic integro-differential equations}

\label{s:schauder}

The classical Schauder estimates for elliptic or parabolic equations of second order apply whenever we have an equation with uniformly elliptic and H\"older continuous coefficients. In \cite{schauder}, we obtained a Schauder-type estimate for kinetic equations with integro-differential diffusion like \eqref{e:class} in non-divergence form. The result depends on the kernel satisfying different
ellipticity conditions than the ones ensuring the local H\"older
estimate (Theorem~\ref{t:local-holder}). In some sense, the conditions described below reflect that the integro-differential equation is in non-divergence form and the kernel has a H\"older continuous dependence with respect to $(t,x,v)$. They should be understood from the perspective described in Remark~\ref{r:div-vs-nondiv} above.

There are two types of conditions that are necessary for a Schauder-type estimate: uniform ellipticity and H\"older continuity of coefficients. We should think of the kernel $K(t,x,v,v')$ as a map from the first three variables $(t,x,v)$ to a kernel depending on a single parameter $w \in \R^d$ given by $K_{(t,x,v)} (w) := K(t,x,v,v+w)$. The uniform ellipticity assumption will say that for every value of $(t,x,v)$, the kernel $K_{(t,x,v)}$ belongs to a certain ellipticity class. The H\"older continuity will say that for two different values $z_1=(t_1,x_1,v_1)$ and $z_2 = (t_2,x_2,v_2)$, the kernels $K_{z_1}$ and $K_{z_2}$ are in some sense at distance $\lesssim d_\ell(z_1,z_2)^\alpha$.

Let us recall the ellipticity class of order $2s$ defined in \cite{schauder}.

\begin{defn}[The ellipticity class] \label{d:classK}
Let $s \in (0,1)$.   A non-negative kernel $K: \R^d \to \R$ belongs to the ellipticity class $\mathcal{K}$ if
\begin{itemize}
\item[i.] $K(w) = K(-w)$, 
\item[ii.] For all $r>0$, $\int_{B_r} |w|^2 K(w) \dw \le \Lambda r^{2-2s}$, 
\item[iii.] For all $R>0$ and $\varphi \in C^2 (B_R)$, 
  \begin{equation} \label{e:frozen_coercivity}
 \iint_{B_R \times B_R} (\varphi (v') -\varphi (v))^2 K (v'-v) \dv' \dv \ge \lambda \iint_{B_{R/2} \times B_{R/2}} (\varphi (v') -\varphi (v))^2 |v'-v|^{-d-2s} \dv' \dv .
\end{equation}
\item[iv.] For any $r>0$ and $e \in S^{d-1}$, \[ \int_{B_r} (w \cdot e)_+^2 K(w) \dw \ge \lambda r^{2-2s}.\]
\end{itemize}
\end{defn}

Some remarks are in order. 
\begin{enumerate}
\item Definition~\ref{d:classK} is borrowed from \cite{schauder}. However, the definition in that paper is more general since $K(w) \dd w$ is supposed to be a nonnegative Radon measure that is not necessarily absolutely continuous. For the purpose of this paper, because we deal with classical solutions, our kernel $K$ will always be given by a non-negative density function and we do not need to deal with singular measures.
\item The last two items iii. and iv. might be redundant. Indeed, we do not know any example of a kernel satisfying i. and ii. and either iii. or iv., without satisfying all of them. This is related to the problem of coercivity for integro-differential operators. See the discussion in \cite{imbert2016weak}, \cite{dyda2011comparability}, \cite{dyda2015regularity} and \cite{chaker2019coercivity}. Item iv. is in practice much easier to verify than iii.
\item Earlier works on integro-differential equations concentrated on a more restricted class of kernels that were pointwise comparable to the fractional Laplacian: $K(w) \approx |w|^{-d-2s}$. This traditional assumption does not suffice to study the Boltzmann equation. The diffusion kernel that appears in the Boltzmann equation belongs to the more general class of Defintion \ref{d:classK}, with parameters depending on the constants in Assumption~\ref{a:hydro-assumption}.
\end{enumerate}

The condition on the H\"older dependence of $K_z$ with respect to the point $z$ is given in the assumption~\eqref{e:A0-schauder} below.

\begin{thm}[Local Schauder estimate -- \cite{schauder}]\label{t:local-schauder}
  Let $s \in (0,1)$ and $\alpha \in (0, \min (1,2s))$ and
  $\alpha' = \frac{2s}{1+2s} \alpha$. Let
  $K: (-(2r)^{2s},0] \times B_{(2r)^{1+2s}} \times \R^d \times \R^d \to [0,+\infty)$ such
  that for all $z =(t,x,v) \in (-(2r)^{2s},0] \times B_{(2r)^{1+2s}} \times \R^d$, the
  kernel $K_z (w) = K(t,x,v, v+w)$ belongs to the ellipticity class
  $\mathcal{K}$ from Definition~\ref{d:classK}. Assume moreover that for
  all $z_1,z_2 \in Q_{2r}$ and all $\rho >0$,
  \begin{equation}
    \label{e:A0-schauder}
  \int_{B_\rho} \big|K_{z_1} (w) - K_{z_2} (w) \big| \; |w|^2  \dd w
  \le A_0 \rho^{2-2s} d_\ell (z_1,z_2)^{\alpha'}
  \end{equation}
  with $z_i =(t_i,x_i,v_i)$ for $i=1,2$. 

If $f \in C_\ell^\alpha ((-(2r)^{2s},0] \times B_{(2r)^{1+2s}} \times \R^d)$ solves \eqref{e:class} in $Q_{2r}$, then 
\begin{align}
\label{e:loc-holder}
  [f]_{C_\ell^{2s+\alpha'} (Q_{r})} \le C \bigg( & \max \left( r^{-2s-\alpha'+\alpha}, A_0^{\frac{2s+\alpha'-\alpha}{\alpha'}} \right) [f]_{C_\ell^\alpha ((-(2r)^{2s},0] \times B_{(2r)^{1+2s}} \times \R^d)} \\
\nonumber & + [h]_{C_\ell^{\alpha'} (Q_{2r})} + \max( r^{-\alpha'}, A_0 )  \|h\|_{C^0(Q_{2r})} \bigg).
\end{align}
where the constant $C$ depends on $d,s$, and the constants
$\lambda, \Lambda$ from the definition of $\mathcal{K}$.
\end{thm}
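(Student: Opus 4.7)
The plan is to follow the classical perturbation strategy of Campanato/Caffarelli, adapted to the kinetic integro-differential setting. First I would rescale using \eqref{e:scaling} to reduce to the case $r = 1$, absorbing the geometric factors into the various norms; this is why only $\max(r^{-\bullet}, A_0^{\bullet})$ combinations appear in the final estimate. The heart of the argument is then to show that at every point $z_0 \in Q_1$ one can produce a polynomial $p_{z_0} \in \R[t,x,v]$ of kinetic degree strictly less than $2s+\alpha'$ such that $\osc_{Q_\rho(z_0)}(f - p_{z_0}) \le C \rho^{2s+\alpha'}$, which by the definition of $C^{2s+\alpha'}_\ell$ is exactly what we need.

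The main step I would focus on first is the \emph{frozen coefficient} (or constant-coefficient) approximation. Fix any $K_0 \in \mathcal{K}$ and consider solutions $g$ of $\partial_t g + v \cdot \nabla_x g = \mathcal{L}_{K_0} g$ in a kinetic cylinder. The kernel $K_0$ is even in $w$, so the cancellation conditions \eqref{e:cancellation1} and \eqref{e:cancellation2} are trivially satisfied, and Theorem~\ref{t:local-holder} applies to yield a small H\"older exponent $\alpha_0$ for $g$. I would then bootstrap this using the invariance of the frozen equation under the scaling \eqref{e:scaling} and the left translations \eqref{e:lie_group}: applying the $C^{\alpha_0}$ estimate to the rescaled increment $g(z_0 \circ (r^{2s}t, r^{1+2s}x, rv))/r^{2s+\alpha'}$ repeatedly yields, via a Liouville-type argument, that $g$ can be approximated at any point by polynomials of kinetic degree $<2s+\alpha'$ with the correct rate. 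This furnishes the missing "$C^{2s+\alpha'}$ estimate for the constant-coefficient equation" at the frozen-kernel level.

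Next comes the perturbation step. At any $z_0 \in Q_1$ and scale $\rho$, let $g$ solve the frozen equation with kernel $K_{z_0}$ on $Q_\rho(z_0)$ with the same data as $f$ outside $Q_\rho(z_0)$. The difference $u := f - g$ satisfies
\[
\partial_t u + v \cdot \nabla_x u - \mathcal{L}_{K_{z_0}} u = h + (\mathcal{L}_K - \mathcal{L}_{K_{z_0}}) f.
\]
Writing $(\mathcal{L}_K - \mathcal{L}_{K_{z_0}})f$ as an integral of the second-order increment of $f$ against $K - K_{z_0}$, and using the $C^\alpha_\ell$ bound on $f$ together with the H\"older assumption \eqref{e:A0-schauder} on the kernel (plus item ii.\ of Definition~\ref{d:classK} to cut the tail), the source term can be bounded in $L^\infty(Q_\rho(z_0))$ by a constant times $\rho^{\alpha' + \alpha - 2s}$ if we are careful, or more simply by a product of $A_0 \rho^{\alpha'}$, $[f]_{C^\alpha_\ell}$, and $\|h\|$-type terms. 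A maximum principle (or an $L^\infty$ estimate for the linear equation with zero exterior data) controls $\|u\|_{C^0(Q_\rho(z_0))}$ by a small power of $\rho$, while Step~1 gives a polynomial approximation for $g$. Triangle inequality yields the polynomial approximation for $f$ at a slightly worse scale.

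Finally I would run the standard Campanato/Caffarelli iteration: at each dyadic scale $\rho_k = 2^{-k}$, Steps~1 and~2 together produce a polynomial $p_k$ of kinetic degree $<2s+\alpha'$ with $\osc_{Q_{\rho_k}(z_0)}(f - p_k) \le C \rho_k^{2s+\alpha'}$; the differences $p_{k+1}-p_k$ form a geometrically convergent series, and their limit is the desired $p_{z_0}$. The exponent mismatch $\alpha' = \tfrac{2s}{1+2s}\alpha$ rather than $\alpha$ arises from the lack of right invariance of $d_\ell$ recorded in \eqref{e:repeatedly}, which is what forces the loss when one compares polynomial approximations at different base points. The main obstacle is Step~1: proving the Liouville/polynomial approximation result for the frozen equation all the way up to order $2s+\alpha'$ (rather than only up to some small $\alpha_0$), since the ellipticity class $\mathcal{K}$ is broader than the pointwise $K \approx |w|^{-d-2s}$ class, so off-the-shelf higher-order regularity for fractional-Laplacian-type operators does not apply directly and the coercivity \eqref{e:frozen_coercivity} has to be exploited in its full generality.
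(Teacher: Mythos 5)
Your proposal reproves the Schauder estimate from scratch, which is not what the paper does: here the theorem is essentially a citation. The hard analytic content is Theorem 1.6 of \cite{schauder}, quoted for $r=1$ and with an unspecified dependence on $A_0$, and the entire proof supplied in the paper is the bookkeeping needed to make the dependence on $r$ and $A_0$ explicit. Concretely: one checks that under the scaling $S_r$ the kernel hypothesis \eqref{e:A0-schauder} transforms with $A_0 \mapsto r^{\alpha'}A_0$; if $A_0 r^{\alpha'}\le 1$ the cited theorem applies with a universal constant and scaling back gives the $r^{-2s-\alpha'+\alpha}$ and $r^{-\alpha'}$ terms; if $A_0 r^{\alpha'}>1$ one works at the smaller scale $\tilde r = A_0^{-1/\alpha'}$ and patches the estimate back up to $Q_r$ with the covering Lemma~\ref{l:Calpha_local}, which is where the factors $A_0^{(2s+\alpha'-\alpha)/\alpha'}$ and $A_0$ come from. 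You compress all of this into one sentence (``absorbing the geometric factors into the various norms''), so the part of the statement that the paper actually proves --- the precise form of the $\max(r^{-\bullet},A_0^{\bullet})$ coefficients --- is not established by your argument.

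As for the from-scratch Campanato scheme you sketch: it is a legitimate alternative strategy (and closer in spirit to classical Schauder proofs than to \cite{schauder}, which argues by compactness and a Liouville theorem, hence non-constructively), but as written it does not close. In the perturbation step, bounding the second-order increment of $f$ by $[f]_{C^\alpha_\ell}|w|^\alpha$ and the kernel difference by \eqref{e:A0-schauder} gives $\|(\mathcal{L}_{K}-\mathcal{L}_{K_{z_0}})f\|_{L^\infty(Q_\rho(z_0))}\lesssim A_0\,\rho^{\alpha'}\rho^{\alpha-2s}[f]_{C^\alpha_\ell}$, hence $\|f-g\|_{L^\infty(Q_\rho(z_0))}\lesssim A_0\,\rho^{\alpha+\alpha'}$ after the maximum principle; since $\alpha<2s$, this is strictly weaker than the $\rho^{2s+\alpha'}$ decay the iteration requires. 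One must instead estimate the error using the $C^{2s+\alpha'}_\ell$ seminorm of $f$ itself and absorb it by interpolation, or argue by contradiction and compactness. The other obstacle you acknowledge --- upgrading the small De Giorgi exponent $\alpha_0$ to full $C^{2s+\alpha'}$ interior regularity for the frozen equation over the whole class $\mathcal K$ of Definition~\ref{d:classK} --- is real and is precisely where most of the effort in \cite{schauder} goes; ``iterating the increment estimate'' does not obviously achieve it, because the class $\mathcal K$ is not closed under the pointwise comparisons that such bootstraps usually rely on.
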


\begin{proof}
The main result in \cite{schauder} is for $r=1$ and the constant $C$ depends on $A_0$ in an unspecified way. In order to justify \eqref{e:loc-holder} we work out explicitly its dependence on $r$ and $A_0$. It is a consequence of Theorem 1.6 in \cite{schauder} combined with a scaling argument. Indeed, let $S_r(t,x,v) = (r^{2s} t, r^{1+2s} x, rv)$. This is the natural scaling that maps $Q_1$ into $Q_r$. The function $f \circ S_r$ satisfies the scaled equation
\[ (\partial_t + v \cdot \nabla_x ) [f \circ S_r] = \mathcal L_{\tilde K} + r^{2s} h \circ S_r,\]
where
\[ \tilde K_z(w) = r^{d+2s} K_{S_r z} (rw).\]
We point out that $\tilde K$ satisfies assumption \eqref{e:A0-schauder} with $r^{\alpha'} A_0$ instead of $A_0$. Indeed,
\begin{align*} 
 \int_{B_\rho} (\tilde K_{z_1}(w) - \tilde K_{z_2}(w)) |w|^2 \dd w &= \int_{B_\rho} r^{d+2s} \left( K_{S_r z_1} (rw) - K_{S_r z_2} (rw) \right) |w|^2 \dd w, \\
&= r^{2s-2} \int_{B_{r\rho}}  \left( K_{S_r z_1} (\bar w) - K_{S_r z_2} (\bar w) \right) |\bar w|^2 \dd \bar w, \\
&\leq A_0 r^{2s-2} (r \rho)^{2-2s} d_\ell(S_r z_1, S_r z_2)^{\alpha'} = (A_0 r^{\alpha'}) \rho^{2-2s} d_\ell(z_1,z_2)^{\alpha'}.
\end{align*}

Provided that $r^{\alpha'} A_0 \leq 1$, we apply Theorem 1.6 in \cite{schauder} (same as Theorem~\ref{t:local-schauder} but for $r=1$ and constants depending implicitly on $A_0$). We get
\begin{equation} \label{e:sch-a1}  [f \circ S_r]_{C_\ell^{2s+\alpha'} (Q_1)} \le C\left([f \circ S_r]_{C_\ell^\alpha ((-2^{2s},0] \times B_{2^{1+2s}} \times \R^d)} + r^{2s} \| h \circ S_r \|_{C_\ell^{\alpha'} (Q_{2})}\right). 
\end{equation}
We can take a universal constant $C$ (depending on $d$, $\lambda$, $\Lambda$ and $s$ only) provided that $A_0 r^{\alpha'} \leq 1$. In that case, we scale back to express \eqref{e:sch-a1} in terms of the original functions $f$ and $h$ to obtain
\begin{equation} \label{e:sch-a2}
 [f]_{C_\ell^{2s+\alpha'} (Q_{r})} \le C \left(r^{\alpha-2s-\alpha'} [f]_{C_\ell^\alpha ((-(2r)^{2s},0] \times B_{(2r)^{1+2s}} \times \R^d)} + [h]_{C_\ell^{\alpha'} (Q_{2r})} + r^{-\alpha'} \|h\|_{C^0(Q_{2r})} \right),
\end{equation}
provided that $A_0 r^{\alpha'} \leq 1$.

If $A_0 r^{\alpha'} > 1$, we should further look at a smaller scale $\tilde r < r$ so that $A_0 \tilde r^{\alpha'} = 1$. In that case, the inequality \eqref{e:sch-a2} holds with $\tilde r$ instead of $r$, and for any cylinder $Q_{\tilde r}(z_0) \subset Q_r$. Taking into account Lemma~\ref{l:Calpha_local}, we get
\[
 [f]_{C_\ell^{2s+\alpha'} (Q_{r})} \le C \left({\tilde r}^{\alpha-2s-\alpha'} [f]_{C_\ell^\alpha ((-(2r)^{2s},0] \times B_{(2r)^{1+2s}} \times \R^d)} + [h]_{C_\ell^{\alpha'} (Q_{2r})} + {\tilde r}^{-\alpha'} \|h\|_{C^0(Q_{2r})} \right),
\]

Taking into account that $1/\tilde r = A_0^{1/\alpha'}$, we get
\begin{equation} \label{e:sch-a3}
[f]_{C_\ell^{2s+\alpha'} (Q_{r})} \le C \left( A_0^{\frac{2s+\alpha'-\alpha}{\alpha'}} [f]_{C_\ell^\alpha ((-(2r)^{2s},0] \times B_{(2r)^{1+2s}} \times \R^d)} + [h]_{C_\ell^{\alpha'} (Q_{2r})} + A_0 \|h\|_{C^0(Q_{2r})} \right).
\end{equation}

Combining \eqref{e:sch-a2} with \eqref{e:sch-a3} we obtain \eqref{e:loc-holder}.
\end{proof}

The Boltzmann kernel $K_{f,(t,x,v)}$ as in \eqref{e:kf} belongs (locally) to the class $\mathcal K$ provided that Assumption~\ref{a:hydro-assumption} holds. This follows from computations that are in the literature. Indeed, at least when $v$ stays in a bounded domain, we have
\begin{enumerate}
\item[i.] The symmetry of the Boltzmann kernel is immediate by construction. We see in the formula \eqref{e:kf} that $K_f(t,x,v,v+w) = K_f(t,x,v,v-w)$.
\item[ii.] The condition ii. in Definition~\ref{d:classK} tells us that the kernels $K \in \mathcal K$ are bounded in an averaged sense. It is a weaker condition than the more classical pointwise bound $K(w) \leq \Lambda |w|^{-d-2s}$. By a simple computation, we can verify that it is equivalent to any of the following two alternative formulations (see  \cite[Section 2.2]{imbert2016weak})
\begin{align*} 
 \int_{\R^d \setminus B_r} K(w) \dd w &\leq \Lambda r^{-2s}, \\
 \int_{B_{2r} \setminus B_r} K(w) \dd w &\leq \Lambda r^{-2s}.
\end{align*}
In each case, the inequality is supposed to hold for all $r>0$ and the value of $\Lambda$ may need to be adjusted by a dimensional constant when passing from one formulation to another.

It is the same inequality as in the assumption \eqref{e:bounded1} for Theorem \ref{t:local-holder}.

This boundedness assumption for a kernel $K$, together with the symmetry condition i. in Definition~\ref{d:classK} allows us to estimate the value of the integro-differential operator $\mathcal L_K f$ pointwise. See Lemma~\ref{l:pointwise_operator_bound} below.
\item[iii.] Using that $\gamma + 2s \in [0,2]$, the integral upper bound on item ii. in Definition~\ref{d:classK} holds for the Boltzmann kernel $K_f$ (at least locally) according to \cite[Lemma 4.3]{luis}. Indeed, that lemma says that for any $f: \R^d \to \R$,
\begin{align}
\label{e:upper_bound_for_Kf} \int_{\R^d \setminus B_r} K_f(v,v+w) \dd w &\lesssim \left( \int_{\R^d} f(v+w) |w|^{\gamma+2s} \dd w \right) r^{-2s}, \\
\intertext{applying Lemma~\ref{l:convolution-moments},}
\nonumber &\lesssim ((1+|v|)^{\gamma+2s} M_0 + E_0) r^{-2s}.
\end{align}
\item[iv.] The coercivity condition for $K_{f,(t,x,v)}$ is easier to verify than the usual coercivity estimates for the Boltzmann equation. This is because $K_{f,(t,x,v)}$ depends on the single variable $w \in \R^d$. We should think of the kernel $K_{f,(t,x,v)}$ as what we get from the original kernel $K_f$ by \emph{freezing coefficients}. The coercivity estimate iii. in Definition~\ref{d:classK} is a direct consequence of the existence of a cone of nondegeneracy described in \cite[Lemma 4.8]{luis} combined with the coercivity conditions from \cite{dyda2015regularity} or from \cite{chaker2019coercivity}.
\item[v.] The last nondegeneracy assumption, \textit{i.e.} item iv. in Definition~\ref{d:classK}, is a straight-forward consequence of the existence of a cone of nondegeneracy described in \cite[Lemma 4.8]{luis}.
\end{enumerate}

An important difficulty is apparent at this point: the constants $\Lambda$ in ii. and $\lambda$ in iii. and iv. deteriorate as $|v| \to \infty$. The kernel $K_{f,(t,x,v)}$ belongs to an ellipticity class only locally in $v$. In order to control the asymptotic behavior of all our regularity estimates, it will be important to establish precise asymptotics on the ellipticity of the kernel as $v \to \infty$. The same difficulty arises in regards to the assumptions for Theorem~\ref{t:local-schauder}. A change of variables will be described in Section~\ref{s:cov} that addresses this difficulty.

Now we state and prove the lemma mentioned above about pointwise bounds for $\mathcal L_K f$. For more applicability, we state it for kernels $K$ satisfying only i. and ii. in Definition~\ref{d:classK}, and that might even change sign. It is related to the   \cite[Estimate~(3.4)]{schauder}.

\begin{lemma} \label{l:pointwise_operator_bound}
Let $K: \R^d \to \R$ be a symmetric kernel (i.e. $K(w) = K(-w)$) so that 
\[ \int_{\R^d \setminus B_r} |K(w)| \dd w \leq \Lambda r^{-2s}.\]
Consider the integro-differential operator $\mathcal L_K$,
\[ \mathcal L_K f(v) = \PV \int_{\R^d} (f(v+w) - f(v)) K(w) \dd w.\]
If $f$ is bounded in $\R^d$ and $C^{2s+\alpha}$ at $v$ for some $\alpha \in (0,1)$, then
\[ \left| \mathcal L_K f(v) \right| \leq C \Lambda |f|_{C^0(\R^d)}^{\frac \alpha {2s+\alpha}} [f]_{C^{2s+\alpha}(v)}^{\frac {2s} {2s+\alpha}}.\]
The constant $C$ depends on dimension, $s$ and $\alpha$.
\end{lemma}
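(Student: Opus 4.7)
The plan is to split the integral at a scale $r>0$ to be optimized, handling the far part by brute force and the near part via the pointwise $C^{2s+\alpha}$ Taylor expansion of $f$, with the symmetry $K(w)=K(-w)$ killing odd terms. Write $\mathcal{L}_K f(v) = I_1 + I_2$ with $I_1 = \PV \int_{B_r} (f(v+w)-f(v)) K(w) \dw$ and $I_2 = \int_{\R^d \setminus B_r}(f(v+w)-f(v)) K(w) \dw$. For $I_2$ the trivial bound $|f(v+w)-f(v)|\le 2\|f\|_{L^\infty}$ and the hypothesis give $|I_2| \le 2\Lambda \|f\|_{L^\infty} r^{-2s}$. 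For $I_1$, first observe that the assumption $\int_{\R^d \setminus B_\rho} |K(w)| \dw \le \Lambda \rho^{-2s}$ implies, via a dyadic shell decomposition, that $\int_{B_r} |K(w)| |w|^\beta \dw \le C_\beta \Lambda r^{\beta-2s}$ for any $\beta > 2s$, and the same for $\beta = 2$ when $s<1$.

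Next, use the pointwise $C^{2s+\alpha}$ regularity at $v$: there exists a polynomial $p$ in $w$ of kinetic degree less than $2s+\alpha$ with $p(0)=f(v)$ such that $|f(v+w) - p(w)| \le [f]_{C^{2s+\alpha}(v)} |w|^{2s+\alpha}$. Split
\[
 I_1 = \PV\int_{B_r} (f(v+w) - p(w)) K(w) \dw + \PV\int_{B_r} (p(w)-p(0)) K(w) \dw.
\]
The first integral is bounded by $C\Lambda[f]_{C^{2s+\alpha}(v)} r^\alpha$ using the shell estimate with $\beta = 2s+\alpha$. For the second, decompose $p(w)-p(0)$ into its odd and even parts in $w$: by the symmetry $K(w)=K(-w)$, the odd part integrates to zero in the principal value sense. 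Since $\alpha\in(0,1)$ and $s\in(0,1)$ we have $2s+\alpha<3$, so the even part is nonzero only when $2s+\alpha>2$, in which case it equals $\tfrac{1}{2}w^\top D^2 f(v) w$; its contribution is bounded by $C\Lambda |D^2 f(v)| r^{2-2s}$ using the shell estimate with $\beta=2$.

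To eliminate $|D^2 f(v)|$ in the remaining term I use a standard polynomial interpolation: since $|p(w)-f(v)|\le[f]_{C^{2s+\alpha}(v)}|w|^{2s+\alpha} + 2\|f\|_{L^\infty}$ for all $w$, comparing coefficients of the quadratic polynomial $q(w) := p(\rho w)-p(0)$ on the unit ball with its $L^\infty$ norm gives $\rho^2 |D^2 f(v)| \le C([f]_{C^{2s+\alpha}(v)} \rho^{2s+\alpha} + \|f\|_{L^\infty})$ for every $\rho>0$; optimizing in $\rho$ yields
\[
 |D^2 f(v)| \le C \, \|f\|_{L^\infty}^{(2s+\alpha-2)/(2s+\alpha)} [f]_{C^{2s+\alpha}(v)}^{2/(2s+\alpha)},
\]
so the Hessian contribution has the same homogeneity as the desired right-hand side.

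Finally, optimize $r$ by balancing the two main terms, setting $\|f\|_{L^\infty} r^{-2s} = [f]_{C^{2s+\alpha}(v)} r^\alpha$, which gives $r^{2s+\alpha} = \|f\|_{L^\infty}/[f]_{C^{2s+\alpha}(v)}$ and the announced bound $C\Lambda \|f\|_{L^\infty}^{\alpha/(2s+\alpha)} [f]_{C^{2s+\alpha}(v)}^{2s/(2s+\alpha)}$. The main obstacle is the range $2s+\alpha>2$ where the quadratic part of $p$ survives the symmetry: the interpolation step bounding $|D^2 f(v)|$ is the essential ingredient there, but it is classical and follows from comparing polynomial norms on a single ball.
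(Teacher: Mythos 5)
Your proposal is correct and follows essentially the same route as the paper: split at a scale $r$, bound the far part by $\Lambda\|f\|_{L^\infty}r^{-2s}$, use the pointwise Taylor polynomial with the symmetry of $K$ killing the odd part on the near part, control the surviving Hessian term (when $2s+\alpha>2$) via the classical interpolation $|D^2f(v)|\lesssim \|f\|_{L^\infty}^{(2s+\alpha-2)/(2s+\alpha)}[f]_{C^{2s+\alpha}(v)}^{2/(2s+\alpha)}$, and optimize $r=(\|f\|_{L^\infty}/[f]_{C^{2s+\alpha}(v)})^{1/(2s+\alpha)}$. The only difference is cosmetic: you make explicit the dyadic-shell bound $\int_{B_r}|K(w)||w|^\beta\,\dd w\lesssim\Lambda r^{\beta-2s}$ and the polynomial-coefficient proof of the interpolation inequality, both of which the paper uses implicitly or cites as classical.
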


We use the standard notation $[\cdot]_{C^\alpha(v)}$ to denote the smallest value of $N \geq 0$ so that there exists a polynomial $q$ of degree strictly less than $\alpha$ so that $|f(v+w) - q(w)| \leq N|w|^\alpha$ for all $w \in \R^d$.
Note that $2s+\alpha$ may be larger than $2$ in Lemma~\ref{l:pointwise_operator_bound}.

\begin{proof}
The fact that $f$ is $C^{2s+\alpha}$ at the point $v$ means that
\begin{itemize}
\item $|f(v+w) - f(v)| \leq [f]_{C^{2s+\alpha}(v)} |w|^{2s+\alpha}$ if $2s+\alpha \in (0,1]$.
\item $|f(v+w) - f(v) - w \cdot \nabla f(v) | \leq [f]_{C^{2s+\alpha}(v)} |w|^{2s+\alpha}$ if $2s+\alpha \in (1,2]$.
\item $|f(v+w) - f(v) - w \cdot \nabla f(v) - \frac 12 w_i w_j \partial_{ij} f(v)| \leq [f]_{C^{2s+\alpha}(v)} |w|^{2s+\alpha}$ if $2s+\alpha \in (2,3]$.
\end{itemize}
For some $r>0$ to be determined later, we use the inequalities above to estimate the part of the integral where $w \in B_r$. Note that the term $w \cdot \nabla f(v)$ is odd in $w$. Since the kernel $K$ is symmetric, it vanishes in the principal value. We have
\begin{align*} 
 \bigg\vert \PV \int_{B_r} & (f(v+w) - f(v)) K(w) \dd w \bigg \vert \\ 
&\leq \int_{B_r} \left( [f]_{C^{2s+\alpha}(v)} |w|^{2s+\alpha} + \left\{ \frac 12 w_i w_j |\partial_{ij} f(v)| \right\}_{\text{if $2s+\alpha>2$}} \right) |K(w)| \dd w \\
&\lesssim \Lambda \left( [f]_{C^{2s+\alpha}(v)} r^\alpha + \left\{ |D^2 f(v)|  r^{2-2s} \right\}_{\text{if $2s+\alpha>2$}} \right).
\intertext{If $2s+\alpha > 2$, we use the (classical) interpolation inequality in the full space $|D^2 f(v)| \leq [f]_{C^{2s+\alpha}(v)}^{\frac 2 {2s+\alpha}}  |f|_{C^0(\R^d)}^{\frac{2s+\alpha-2}{2s+\alpha}}$,}
&\lesssim \Lambda \left( [f]_{C^{2s+\alpha}(v)} r^\alpha + \left\{ [f]_{C^{2s+\alpha}(v)}^{\frac 2 {2s+\alpha}}  |f|_{C^0(\R^d)}^{\frac{2s+\alpha-2}{2s+\alpha}} r^{2-2s} \right\}_{\text{if $2s+\alpha>2$}} \right).
\end{align*}
For the part of the integral $w \notin B_r$, we bound $|f(v+w) - f(v)|$ by $2|f|_{C^0(\R^d)}$. We get
\begin{align*} 
 \bigg\vert \int_{\R^d \setminus B_r} (f(v+w) - f(v)) K(w) \dd w \bigg \vert &\leq 2 |f|_{C^0(\R^d)} \int_{\R^d \setminus B_r} |K(w)| \dd w, \\
&\lesssim \Lambda |f|_{C^0(\R^d)} r^{-2s}.
\end{align*}
Adding up the two (or three if $2s+\alpha>2$) terms,
\[ |\mathcal L_K f(v)| \leq \Lambda \left( [f]_{C^{2s+\alpha}(v)} r^\alpha + \left\{ [f]_{C^{2s+\alpha}(v)}^{\frac 2 {2s+\alpha}}  |f|_{C^0(\R^d)}^{\frac{2s+\alpha-2}{2s+\alpha}} r^{2-2s} \right\}_{\text{if $2s+\alpha>2$}} + |f|_{C^0(\R^d)} r^{-2s} \right).\]
We finish the proof by choosing (the optimal) $r>0$ as
\[ r = \left( \frac {|f|_{C^0(\R^d)}}{[f]_{C^{2s+\alpha}(v)}} \right)^{\frac 1 {2s+\alpha}}. \qedhere\] 
\end{proof}

\section{The change of variables}
\label{s:cov}

\subsection{The change of variables}

The \emph{ellipticity} of Boltzmann's collision operator degenerates for large velocities.  This shows up, for example, in the weights of the well known coercivity estimates from \cite{gressmanstrainBETTERpaper}. Correspondingly, the constants $\lambda$ and $\Lambda$ in the assumptions of Theorem~\ref{t:local-holder} are bounded for $K_f$ only locally in $v$. Likewise, if we want to apply Theorem~\ref{t:local-schauder} to the Boltzmann kernel $K_f$ given in \eqref{e:kf}, the constants $\lambda$ and $\Lambda$ in Definition~\ref{d:classK} would only exist for a bounded set of velocities.

This is a major obstruction in order to obtain global regularity estimates using Theorems~\ref{t:local-holder} and \ref{t:local-schauder}. Moreover, global regularity estimates are crucial in order to carry out an iterative gain of regularity. The constants in Theorems~\ref{t:local-holder} and \ref{t:local-schauder} do not have an explicit dependence on the parameters $\lambda$ and $\Lambda$ in the assumptions. There is no hope to obtain a global regularity estimate unless we are able to apply these theorems with fixed values of the ellipticity parameters $\lambda$ and $\Lambda$ for all velocities in $\R^d$.

In this section, we describe a change of variables that resolves this difficulty. For any point $z_0 = (t_0,x_0,v_0) \in \R^{1+2d}$, we construct a function $\mathcal T_0$ that maps the kinetic cylinder $Q_1$ into a product of ellipsoids centered at $z_0$. Moreover, the function $f \circ \mathcal T_0$ satisfies a kinetic integro-differential equation whose kernel is elliptic with constants $\lambda$ and $\Lambda$ (either in the sense of Theorem~\ref{t:local-holder} or Theorem~\ref{t:local-schauder}) depending only on the constants $m_0$, $M_0$, $E_0$, $H_0$, $s$ and dimension, but \textbf{not} on $v_0$.

This change of variables allows us to turn our local regularity results (as in Theorems~\ref{t:local-holder} and \ref{t:local-schauder}) into global estimates with precise asymptotics as $|v| \to \infty$. It is a key tool for the proofs of the main results in this paper. It was motivated by a similar change of variables for the Landau equation from \cite{cameron2017global}.

In order to illustrate the significance of this change of variables, we show in Appendix~\ref{s:gressman} how it can be used to derive the global coercivity estimate with respect to the anisotropic distance obtained by Gressman and Strain in \cite{gressman2011global} and \cite{gressmanstrainBETTERpaper}.

Given $t_0 \in \R$, $x_0 \in \R^d$ and $v_0 \in \R^d$, we consider the transformed
function
\begin{equation} \label{e:change-of-variables}
 \bar f(t,x,v) := f(\bar t, \bar x, \bar v)
\end{equation}
with $(\bar t, \bar x, \bar v) =\mathcal{T}_0(t,x,v)$. The transformation $\mathcal T_0$ depends on the reference point $z_0 = (t_0,x_0,v_0) \in \R^{1+2d}$. If $|v_0| < 2$, we will simply take $\mathcal T_0 z := z_0 \circ z$. When $|v_0| \geq 2$, which is the important case, we define
\begin{equation}\label{e:T0}
\begin{aligned}
(\bar t, \bar x, \bar v) =\mathcal{T}_0(t,x,v) &:= \left(t_0+\frac{t}{|v_0|^{\gamma+2s} }, x_0 +
\frac{T_0x + t v_0}{|v_0|^{\gamma+2s} } , v_0 + T_0v\right) , \\
&= z_0 \circ (|v_0|^{-\gamma-2s} t, |v_0|^{-\gamma-2s} T_0 x, T_0 v)
\end{aligned}
\end{equation}
and  $T_0 : \R^d \to \R^d$ is the following transformation:
\begin{equation}\label{e:T0v}
 T_0(a v_0 + w) := \frac a {|v_0|} v_0 + w \qquad
\text{for all } w \perp v_0, a \in \R.
\end{equation}
Note that $T_0$ maps $B_1$ into an ellipsoid $E_1$ with radius
$1/|v_0|$ in the direction of $v_0$ and $1$ in the directions
perpendicular to $v_0$. For consistency, let us also define $T_0$ as the identity operator whenever $|v_0| < 2$. The following sets are
naturally associated with the change of variables. For
$ z_0 \in \R^{1+2d}$ and $r >0$, we consider
\[
  \mathcal{E}_r (z_0 ) = \mathcal{T}_0 (Q_r), \quad E_r (v_0) = v_0 +
  T_0 (B_r).
\]
The set of velocities $E_r (v_0)$ is an ellipsoid in $\R^d$. 
The linear operator $\mathcal{T}_0$ maps $Q_1$ into
$\mathcal{E}_1(z_0):= \mathcal{E}_1^{t,x} (z_0)\times E_1(v_0)$ where
$\mathcal{E}_1^{t,x}(z_0)=\{ (t_0+|v_0|^{-\gamma-2s} t, x_0 +
|v_0|^{-\gamma-2s} (T_0x + t v_0)):t \in [-1,0], x \in B_1\}$ is a
slanted cylinder. 

By a direct computation, we verify that if $f$ satisfies the Boltzmann
equation in $\mathcal E_1(z_0)$ then $\bar f$ solves the equation
\[ 
\partial_t \bar f + v \cdot \nabla_x \bar f = \mathcal{L}_{\bar K_f} \bar f + \bar h, \quad (t,x,v) \in Q_1
\] 
where
\begin{equation} \label{e:barKf}
 \bar K_f(t,x,v,v') = |v_0|^{-1-\gamma-2s} K_f(\bar t, \bar x, \bar v, v_0+T_0v')
\end{equation}
and 
\[ \bar h (t,x,v)= c_b |v_0|^{-\gamma-2s} f(\bar t,\bar x,\bar  v) (f \ast  |\cdot|^\gamma ) (\bar t,\bar x,\bar v).\] 

The point of this change of variables is to \emph{straighten up} the ellipticity of the Boltzmann kernel $K_f$. The following theorem says that we are able to apply the H\"older estimates of Theorem~\ref{t:local-holder} with uniform ellipticity constants to $\bar f$.

\begin{thm}[Change of variables - I] \label{t:change-of-vars}
  Let $z_0 = (t_0,x_0,v_0)$ and  $ \mathcal{E}_1(z_0) = \mathcal{E}_1^{t,x} (z_0) \times E_1 (v_0)$ be defined as above. Assume that Assumption~\ref{a:hydro-assumption} holds for all $(t,x) \in \mathcal{E}_1^{t,x} (z_0)$, and
  \begin{equation}
\label{e:extra-int}
 \text{if $\gamma<0$}, \qquad \sup_{v \in \R^d} \int_{B_1} f(v+u) |u|^{\gamma} \dd u \le C_\gamma . 
\end{equation}
Then the kernel $\bar K_f$ satisfies \eqref{e:nondeg1} (only if $s<1/2$), \eqref{e:nondeg2}, \eqref{e:bounded1}, \eqref{e:bounded2}, \eqref{e:cancellation1} and \eqref{e:cancellation2} (only if $s \ge 1/2$), with constants depending on $d,s,\gamma$ and $m_0$, $M_0$, $E_0$ and $H_0$ (and $C_\gamma$ if $\gamma<0$) only, uniformly with respect to $v_0$. 
\end{thm}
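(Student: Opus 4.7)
The plan is to split on whether $|v_0|<2$ or $|v_0|\geq 2$. In the first case, $\mathcal{T}_0 z = z_0\circ z$ is simply a left translation under the group~\eqref{e:lie_group} with $T_0=\Id$, and $\bar K_f$ is $K_f$ at shifted coordinates; the base velocity $\bar v=v_0+v$ stays in a bounded set for $v\in B_2$, so all six ellipticity conditions for $\bar K_f$ reduce to the local verification of these same conditions for $K_f$ already performed in \cite{imbert2016weak}, with the required dependence of the constants on $m_0,M_0,E_0,H_0,s,\gamma$ (and $C_\gamma$ when $\gamma<0$). The substance of the theorem is therefore the case $|v_0|\geq 2$, where the anisotropic rescaling $T_0$ is essential and which I treat in what follows.

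For $|v_0|\geq 2$, the central change of variables is $u=T_0(v'-v)$, with Jacobian $\dd v'=|v_0|\du$ and $v_0+T_0 v'=\bar v+u$, converting
\[
\int F(v')\,\bar K_f(v,v')\,\dd v' = |v_0|^{-\gamma-2s}\int F(v+T_0^{-1}u)\,K_f(\bar v,\bar v+u)\,\du.
\]
The geometric engine is Proposition~\ref{p:cone_of_nondegeneracy}: the cone $A(\bar v)\subset S^{d-1}$ concentrates near the equator perpendicular to $\hat v_0$ with thickness $\sim 1/|v_0|$, and $K_f(\bar v,\bar v+r\sigma)\gtrsim(1+|v_0|)^{1+\gamma+2s}r^{-d-2s}$ for $\sigma\in A(\bar v)$. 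Since $T_0^{-1}$ stretches by $|v_0|$ in the $\hat v_0$ direction, the preimage directions $T_0^{-1}\sigma$ sweep out a solid angle of size $\sim 1$ with $|T_0^{-1}\sigma|\approx 1$, and the prefactor $|v_0|^{-1-\gamma-2s}$ exactly cancels $(1+|v_0|)^{1+\gamma+2s}$. This yields the uniform pointwise lower bound $\bar K_f(v,v')\gtrsim|v'-v|^{-d-2s}$ on a cone of $O(1)$ aperture in $v'-v$, which gives \eqref{e:nondeg1} directly and, via Theorem~\ref{t:coercivity}, the coercivity \eqref{e:nondeg2}.

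For the boundedness conditions \eqref{e:bounded1}--\eqref{e:bounded2}, after the same substitution $\int_{\R^d\setminus B_r(v)}\bar K_f(v,v')\dd v'$ becomes $|v_0|^{-\gamma-2s}\int_{\R^d\setminus T_0(B_r)}K_f(\bar v,\bar v+u)\du$. I would split this region as $\{|u|\geq r/\sqrt 2\}$ plus the ``thin conical layer'' where $|u_\perp|<r/\sqrt 2$ and $|u\cdot\hat v_0|\geq r/(\sqrt 2|v_0|)$. On the first piece, the upper bound \eqref{e:upper_bound_for_Kf} from \cite[Lemma 4.3]{luis} gives exactly $|v_0|^{\gamma+2s}r^{-2s}$, which cancels the prefactor. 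On the thin layer I plan to use the explicit formula \eqref{e:Kernel-approx}, writing $K_f(\bar v,\bar v+u)$ as $|u|^{-d-2s}$ times an integral of $f$ over the hyperplane $w\perp u$; since $u$ is essentially aligned with $\hat v_0$ in this regime, this hyperplane integral is close to $\int_{w\perp\hat v_0}f(\bar v+w)|w|^{\gamma+2s+1}\,\dw$, which I would control using Lemma~\ref{l:convolution-moments} (when $\gamma+2s\geq 0$) together with the auxiliary integrability assumption \eqref{e:extra-int} (when $\gamma<0$). The condition \eqref{e:bounded2} then follows from \eqref{e:bounded1} via the symmetry $K_f(v,v+w)=K_f(v,v-w)$ inherited from the Boltzmann kernel, combined with the roles swap $v\leftrightarrow v'$.

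The cancellation conditions \eqref{e:cancellation1}--\eqref{e:cancellation2} (needed only for $s\geq 1/2$) reduce, using the even symmetry of $K_f$ in its offset argument, to controlling
\[
\bar K_f(v,v')-\bar K_f(v',v) = |v_0|^{-1-\gamma-2s}\bigl[K_f(\bar v,\bar v+u) - K_f(\bar v+u,\bar v+2u)\bigr],
\]
with $u=T_0(v'-v)$. The cancellation integrals for $\bar K_f$ thus become integrals of a first-order increment of $K_f$ between base points close to $v_0$ (differing by $u$), for which the corresponding local cancellations were already verified in \cite{imbert2016weak}; the $|v_0|$-scaling is then matched by the same bookkeeping as above. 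The principal obstacle throughout is the fine analysis of the thin conical layer in \eqref{e:bounded1}--\eqref{e:bounded2} (and its counterpart for the cancellation bounds), where the anisotropy introduced by $T_0$ prevents us from relying only on the isotropic upper bounds for $K_f$, and one must exploit the explicit Carleman representation \eqref{e:Kernel-approx} together with the decay of $f$ on hyperplanes perpendicular to $\hat v_0$ passing near $v_0$.
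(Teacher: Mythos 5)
Your treatment of the non-degeneracy conditions is sound and matches the paper (Lemma~\ref{l:cone_transformed} and Corollary~\ref{c:ndc}), but the boundedness part has two genuine gaps. First, on your ``thin conical layer'' you propose to bound the hyperplane integral $\int_{w\perp u}f(\bar v+w)|w|^{\gamma+2s+1}\dd w$ by comparing it to $\int_{w\perp \hat v_0}f(\bar v+w)|w|^{\gamma+2s+1}\dd w$ and invoking Lemma~\ref{l:convolution-moments}. That lemma controls full $d$-dimensional convolutions; a codimension-one integral of $f$ over a single hyperplane is \emph{not} controlled by $M_0$, $E_0$ or \eqref{e:extra-int} (the mass could concentrate on that hyperplane). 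It is only the integration in $u$ over the layer that rescues the estimate: the paper's Lemma~\ref{lem:reduction} applies the Fubini identity \eqref{e:cdv} to convert $\int_u\int_{w\perp u}(\dots)$ into a single $d$-dimensional integral of $f$ against the anisotropic weight $\left(|v_0|^2-(v_0\cdot w/|w|)^2+1\right)^s|w|^{\gamma+2s}$, which Lemma~\ref{lem:barlambda} then bounds by $M_0+E_0$. (Also, ``$u$ essentially aligned with $\hat v_0$'' fails on much of $B_r\setminus E_r$: a point with $|u_\perp|\approx r/2$ and $|u\cdot\hat v_0|\approx r/|v_0|$ lies outside $E_r$ yet is nearly perpendicular to $v_0$.) Second, \eqref{e:bounded2} does not follow from \eqref{e:bounded1} by symmetry: the Boltzmann kernel satisfies $K_f(v,v+w)=K_f(v,v-w)$, not $K_f(v,v')=K_f(v',v)$, and \eqref{e:bounded2} integrates over the \emph{first} variable. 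It is a genuinely separate and harder estimate (Lemma~\ref{l:bc2}), and it is precisely where the extra hypothesis \eqref{e:extra-int} is needed when $\gamma+s<0$; if it followed from \eqref{e:bounded1} by symmetry, that hypothesis would be superfluous.

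The cancellation conditions are likewise underestimated. Two points. (i) Changing variables inside a principal value replaces the excised balls $B_R$ by ellipsoids $E_R=T_0(B_R)$, and one must prove that $\int_{B_R\setminus E_R}(\dots)\to 0$ as $R\to 0$ before the $\PV$ can be transported; this is the content of Lemmas~\ref{lem:cancel} and \ref{l:cancel-2} and requires the qualitative integrability of $\nabla_v f$ and $D_v^2 f$ assumed in Definition~\ref{d:solution}. Your proposal does not address this. (ii) For \eqref{e:cancellation2}, the symmetry of $K_f$ kills the $K(v,v')$ contribution, but what remains is $\PV\int_{B_r(v')}(v'-v)\,\bar K_f(v,v')\dd v$, again an integral over the first variable on an anisotropic domain, to be bounded by $\Lambda(1+r^{1-2s})$ uniformly in $v_0$. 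This is the most delicate estimate in the whole section (Lemma~\ref{l:cc2}, resting on Lemma~\ref{l:aux2} and a careful split $I_1+I_2$ with an exact cancellation of the $\sigma^\perp$ components in $I_1$); it is not obtained from the local cancellation of \cite{imbert2016weak} by ``the same bookkeeping as above.''
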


\begin{remark}
  Condition~\eqref{e:extra-int} is weaker than imposing  an $L^\infty$
  bound for $f$. Such a bound is proved in \cite{silvestre2016new} for
  solutions of the Boltzmann equation for $t>0$ and it only depends on
  the hydrodynamic quantities appearing in Assumption \eqref{a:hydro-assumption}
  when $\gamma +2s \in [0,2]$.
\end{remark}
\begin{remark}
Note that our computation works for $\gamma+2s \in [0,2]$. For values
of $s$ and $\gamma$ away of that range, we would need further
assumptions on either integrability of $f$ (for $\gamma +2s < 0$) or
higher moments (if $\gamma+2s>2$).
\end{remark}

We also have a corresponding result for the ellipticity assumptions of the Schauder-type estimates in Theorem~\ref{t:local-schauder}.

\begin{thm}[Change of variables - II] \label{t:change-of-vars-schauder}
  Let $z_0 = (t_0,x_0,v_0)$ with  $v_0 \in \R^d$ and $ \mathcal{E}_1(z_0) = \mathcal{E}_1^{t,x} (z_0) \times E_1 (v_0)$  be defined as above. Assume that Assumption~\ref{a:hydro-assumption} holds for all $(t,x) \in \mathcal{E}_1^{t,x} (z_0)$. Then, for every $z=(t,x,v) \in Q_1$, the kernel $\bar K_{f,z}(w) = \bar K_f(t,x,v,v+w)$ belongs to the class $\mathcal K$ of Definition~\ref{d:classK}. The constants $\lambda$ and $\Lambda$ in Definition~\ref{d:classK} depend on $d,s,\gamma$ and $m_0$, $M_0$, $E_0$ and $H_0$ (and $C_\gamma$ if $\gamma<0$) but \textbf{not} on $v_0$. 
\end{thm}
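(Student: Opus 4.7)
The plan is to verify each of the four defining items of $\mathcal K$ (Definition~\ref{d:classK}) in turn for the kernel $\bar K_{f,z}(w) = |v_0|^{-1-\gamma-2s}K_f(\bar z, \bar v + T_0 w)$, with ellipticity constants independent of $v_0$. The case $|v_0|<2$ is essentially trivial since $T_0 = \mathrm{Id}$ and $\bar v$ stays in a fixed bounded set, so I would concentrate on $|v_0|\geq 2$. Item~i.\ (symmetry) is immediate from the invariance of \eqref{e:kf} under $w \leftrightarrow -w$ and the linearity of $T_0$.

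Item~ii.\ would be reduced to the boundedness condition \eqref{e:bounded1} for $\bar K_f$ already obtained in Theorem~\ref{t:change-of-vars}, which gives $\int_{\R^d \setminus B_r} \bar K_{f,z}(w) \dw \leq \Lambda r^{-2s}$ uniformly in $v_0$. A dyadic decomposition over shells $B_{2^{-j}r}\setminus B_{2^{-j-1}r}$ then yields $\int_{B_r} |w|^2 \bar K_{f,z}(w)\dw \lesssim r^{2-2s}$, the geometric series converging since $2-2s>0$.

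The core of the argument is items~iii.\ and~iv., which I would derive from the cone of nondegeneracy of $K_f$ given by Proposition~\ref{p:cone_of_nondegeneracy} applied at $\bar v$ (noting $|\bar v|\approx |v_0|$). That proposition provides a symmetric $A(\bar v)\subset S^{d-1}$ of measure $\gtrsim|v_0|^{-1}$, concentrated in an equatorial band of width $\lesssim|v_0|^{-1}$ around the plane perpendicular to $\bar v$, on which $K_f(\bar v, \bar v + \rho\sigma)\gtrsim|v_0|^{1+\gamma+2s}\rho^{-d-2s}$. Because $T_0^{-1}$ stretches the $\hat v_0$-direction by a factor $|v_0|$, the normalized image $\bar A := \{T_0^{-1}\sigma/|T_0^{-1}\sigma| : \sigma \in A(\bar v)\}$ is a symmetric subset of $S^{d-1}$ of measure bounded below independently of $v_0$; moreover, for $w = \rho'\sigma'$ with $\sigma'\in\bar A$, a direct computation gives $|T_0 w|\approx \rho'$. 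Combining the cone lower bound for $K_f$ with the prefactor $|v_0|^{-1-\gamma-2s}$ then yields the pointwise bound $\bar K_{f,z}(w)\gtrsim |w|^{-d-2s}$, uniformly in $v_0$, for $w$ in the fat cone above $\bar A$. Item~iv.\ follows by integrating this bound in polar coordinates and using the symmetry of $\bar A$ to handle arbitrary $e\in S^{d-1}$ (the matrix $\int_{\bar A}\sigma'\otimes\sigma'\dd\sigma'$ has eigenvalues bounded below). For item~iii., the same pointwise bound gives the density condition required by Theorem~\ref{t:coercivity}, which yields the $L^2$ coercivity on $B_1\times B_1$; a routine scaling argument, valid since the pointwise lower bound on the fat cone is scale invariant, extends it to every $R>0$.

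The hard part will be the geometric bookkeeping in the cone transformation: the three factors $|v_0|^{1+\gamma+2s}$ (from the $K_f$ cone lower bound), $|v_0|^{-1-\gamma-2s}$ (from the prefactor defining $\bar K_f$), and the factor $|v_0|$ by which the cone aperture is enlarged under $T_0^{-1}$ must cancel exactly. This precise cancellation is the whole reason for the specific form of $T_0$ and of the prefactor in the definition of $\bar K_f$.
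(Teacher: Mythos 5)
Your proposal is correct and follows essentially the same route as the paper: item ii.\ is exactly Corollary~\ref{c:bounded_for_schauder} (the tail bound from the first boundedness condition converted via dyadic rings), and items iii.--iv.\ are exactly Lemma~\ref{l:cone_transformed} plus Corollary~\ref{c:ndc-schauder} (transformed cone of nondegeneracy, Theorem~\ref{t:coercivity}, and a scaling argument), with the same cancellation of the $|v_0|$ powers between the cone lower bound, the prefactor in \eqref{e:barKf}, and the aperture enlargement under $T_0^{-1}$.
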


The proofs in this section largely consist in direct computations to verify the claims. However, rather involved manipulations of multiple integrals are needed, especially for the proof of the second cancellation condition in Lemma~\ref{l:cc2}.

\begin{remark} \label{r:large-v0}
When $|v_0|  \leq 2$, there is no need for a change of variables. All our ellipticity conditions hold for any arbitrary (but fixed) bounded set of velocities. The results of Theorems~\ref{t:change-of-vars} and \ref{t:change-of-vars-schauder} are already established in \cite{imbert2016weak} and \cite{schauder} for $|v_0| < 2$. Here, we need to prove them for $|v_0|\geq 2$. The purpose of the change of variables $\mathcal T_0$ is to analyze the asymptotic behavior of the estimates for large values of $|v_0|$. Thus, the case $|v_0| \geq 2$ is the important one. Yet, we define $\mathcal T_0$ for any value of $v_0$ for consistency. The change of variables $\mathcal T_0$ does not modify the equation at all when $|v_0|<2$.
\end{remark}

\subsection{Non-degeneracy conditions}

\label{s:nondegeneracy}

The nondegeneracy condition \eqref{e:nondeg1} and the coercivity condition \eqref{e:nondeg2} are a consequence of the existence of a cone of nondegeneracy described in Proposition~\ref{p:cone_of_nondegeneracy}.

Proposition~\ref{p:cone_of_nondegeneracy} describes a set of directions $A(v)$ depending on each point $v \in \R^d$, along which the kernel $K_f$ has a lower bound. Using the notation introduced in \cite{luis}, we call the cone of nondegeneracy $\cone(v)$. Here
\[ \cone(v) := \left\{ w : \frac w{|w|} \in A(v) \right\}. \]

Proposition~\ref{p:cone_of_nondegeneracy} says that for each value of $v \in \R^d$, the set of directions $A(v) \subset S^{d-1}$ is contained in a strip of width $\approx 1/(1+|v|)$ around the equator perpendicular to $v$, with measure $\gtrsim 1/(1+|v|)$, so that $K(v,v+w)  \gtrsim (1+|v|)^{1+\gamma+2s} |w|^{-d-2s}$ whenever $w$ belongs to $\cone(v)$.

Naturally, there is a cone of nondegeneracy for $\bar K_f$ corresponding to the cone of nondegenerate directions for $K_f$. Indeed, we write
\begin{align*} 
 \bar \cone(v) &= \{ w \in \R^d : T_0 w \in \cone(v_0 + T_0 v)\}, \\
 \bar A(v) &= \{\sigma \in S^{d-1} : T_0 \sigma / |T_0 \sigma| \in A(v_0 + T_0 v)\}.
\end{align*}
By construction, we have that $w \in \bar \cone(v)$ if and only if $w/|w| \in \bar A(v)$. Moreover, $\cone(v_0 + T_0 v) = T_0(\bar \cone(v))$.

The following lemma tells us that $\bar K_f$ has its nondegenerate directions $\bar A(v)$, and both its lower bound $\bar K_f$ and the volume of $\bar A(v)$ are independent of the center point $v_0$ of the change of variables.

\begin{lemma}[Transformed cone of nondegeneracy] \label{l:cone_transformed}
Let $f$ be a function such that Assumption~\ref{a:hydro-assumption} holds. Let $v_0 \in \R^d$ and $v \in B_2$, with $\bar A(v)$ and $\bar \cone(v)$ defined as above. Then
\begin{itemize}
\item $\bar K_f(v,v+w) \geq \lambda |w|^{-d-2s}$ whenever $w \in \bar \cone(v)$.
\item $|\bar A(v)| \geq \bar \mu$ for some $\bar \mu>0$ depending on the parameters of Assumption~\ref{a:hydro-assumption} and dimension, but \textbf{not} on $v_0$.
\end{itemize}
\end{lemma}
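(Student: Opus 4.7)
The case $|v_0|<2$ is immediate from Proposition~\ref{p:cone_of_nondegeneracy} applied at the point $v_0+v$, for which $|v_0+v|\le 4$ and $\mathcal T_0$ is merely a left-translation; I focus on the important case $|v_0|\ge 2$.

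As a preliminary, I would observe that $T_0$ is non-expanding and contracts only in the $v_0$-direction: if $\hat v_0 := v_0/|v_0|$, then $|T_0 w|^2 = (w\cdot\hat v_0)^2/|v_0|^2 + |w - (w\cdot\hat v_0)\hat v_0|^2 \le |w|^2$. In particular $|T_0 v|\le |v|\le 2$ for $v\in B_2$, so a direct computation (writing $v = av_0 + w_\perp$) gives $|\bar v|\ge |v_0|-2/|v_0|\ge |v_0|/2$ and $|\bar v|\le |v_0|+2$; hence $1+|\bar v|\approx |v_0|$. Moreover $\hat{\bar v}$ differs from $\hat v_0$ by an angle of order $|T_0 v|/|\bar v|\lesssim 1/|v_0|$.

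For the first bullet I chain the definition \eqref{e:barKf} of $\bar K_f$ with the cone bound of Proposition~\ref{p:cone_of_nondegeneracy}. By definition of $\bar\cone(v)$, if $w\in\bar\cone(v)$ then $T_0 w\in\cone(\bar v)$, and so
\[
\bar K_f(v,v+w) = |v_0|^{-1-\gamma-2s} K_f(\bar v,\bar v + T_0 w) \ge \lambda\, |v_0|^{-1-\gamma-2s}(1+|\bar v|)^{1+\gamma+2s}\, |T_0 w|^{-d-2s}.
\]
The prefactor $|v_0|^{-1-\gamma-2s}(1+|\bar v|)^{1+\gamma+2s}$ is $\approx 1$ thanks to the previous paragraph, and $|T_0 w|^{-d-2s}\ge |w|^{-d-2s}$ because $T_0$ is non-expanding. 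This yields the bound with a constant depending only on the hydrodynamic parameters.

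For the second bullet I would write $\bar A(v) = \Psi(A(\bar v))$, where $\Psi:S^{d-1}\to S^{d-1}$ is the diffeomorphism $\Psi(\tau) := T_0^{-1}\tau/|T_0^{-1}\tau|$. Proposition~\ref{p:cone_of_nondegeneracy} guarantees $|\tau\cdot\hat{\bar v}|\lesssim 1/(1+|\bar v|)$ for $\tau\in A(\bar v)$; combined with $|\hat{\bar v}-\hat v_0|\lesssim 1/|v_0|$ this gives $|\tau\cdot\hat v_0|\lesssim 1/|v_0|$, so $A(\bar v)$ lies inside the thin strip $\{|\tau\cdot\hat v_0|\le C/|v_0|\}$ about the equator perpendicular to $\hat v_0$. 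In the local spherical coordinates $\tau = (\tau_1,\sqrt{1-\tau_1^2}\,\omega)$ adapted to $\hat v_0$, the map $\Psi$ reads $(\tau_1,\omega)\mapsto (|v_0|\tau_1/|T_0^{-1}\tau|,\omega)$, and on this strip one has $|T_0^{-1}\tau|\approx 1$, so the Jacobian of $\Psi$ (as a map $S^{d-1}\to S^{d-1}$) is $\approx |v_0|$. Therefore
\[
|\bar A(v)| = \int_{A(\bar v)} |\det D\Psi(\tau)|\,d\tau \gtrsim |v_0|\cdot |A(\bar v)| \ge \frac{|v_0|\,\mu}{1+|\bar v|} \gtrsim \mu.
\]
The main obstacle is really the bookkeeping of this last paragraph: verifying that $A(\bar v)$, although a priori defined with respect to the equator of $\hat{\bar v}$, sits inside a strip of thickness $\lesssim 1/|v_0|$ around the equator of $\hat v_0$ (which is what makes $\Psi$ well-behaved there), and checking that the Jacobian of $\Psi$ is uniformly of order $|v_0|$ on that strip.
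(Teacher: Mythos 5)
Your proof of the first bullet is essentially the paper's: unwind \eqref{e:barKf}, apply the cone bound of Proposition~\ref{p:cone_of_nondegeneracy} at $\bar v$, absorb $(1+|\bar v|)^{1+\gamma+2s}|v_0|^{-1-\gamma-2s}\approx 1$, and use that $T_0$ is non-expanding so $|T_0w|^{-d-2s}\ge|w|^{-d-2s}$. For the second bullet you take a genuinely different route. You push the surface measure forward through the sphere diffeomorphism $\Psi(\tau)=T_0^{-1}\tau/|T_0^{-1}\tau|$ and show its Jacobian is $\approx|v_0|$ on the thin strip containing $A(\bar v)$; the paper deliberately avoids exactly this computation (the authors remark that Jacobians of induced sphere maps are ``confusing to the best of us'') and instead converts spherical measure into cone volume via the identity $|A|_{\mathcal H^{d-1}}=\tfrac{d}{R^d}|\cone\cap B_R|$, shows $T_0^{-1}(\cone(\bar v)\cap B_R)\subset B_1$ for a small universal $R$ using the width condition, and then only needs the linear Jacobian $\det T_0^{-1}=|v_0|$. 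The two arguments are equivalent in substance — both exploit the width $\lesssim 1/|v_0|$ of $A(\bar v)$ around the equator of $\hat v_0$ so that $T_0^{-1}$ acts with bounded distortion there and contributes a factor $|v_0|$ that cancels the $(1+|\bar v|)^{-1}$ in $|A(\bar v)|$. Your route is more direct conceptually but puts the burden on the Jacobian estimate; the paper's route trades that for an elementary solid-geometry computation in $\R^d$ that is easier to make airtight.

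One small point to patch in your version: the estimates $|T_0^{-1}\tau|\approx 1$ and $\tfrac{ds_1}{d\tau_1}\approx|v_0|$ on the strip $\{|\tau\cdot\hat v_0|\le C/|v_0|\}$ only read off cleanly when $|v_0|$ is large compared with the width constant $C$; for $2\le|v_0|\lesssim C$ the strip need not be thin. That regime is harmless — there $\det T_0^{-1}=|v_0|$ is bounded, so $|\bar A(v)|\gtrsim|A(\bar v)|\gtrsim\mu(1+|\bar v|)^{-1}\gtrsim 1$ by bounded distortion of $T_0$ alone — but you should say so explicitly rather than let the asymptotic argument carry all values of $|v_0|\ge 2$.
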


\begin{proof}
Proposition~\ref{p:cone_of_nondegeneracy} immediately implies the result of this lemma when $|v_0| \leq 2$. In order to prove it for $|v_0| \geq 2$, we need to analyze the interaction of the change of variables with the bounds in Proposition~\ref{p:cone_of_nondegeneracy}.

We first check the first item in the lemma. Pick $w$ such that $w \in \bar{\cone} (v)$, i.e. $T_0w \in \cone(v_0 + T_0 v)$. Then 
\begin{align*}
\bar K_f(v,v+w) = \frac{1}{|v_0|^{1+\gamma+2s}} K_f(v_0+T_0 v, v_0+T_0 v + T_0 w) \geq \lambda |T_0 w|^{-d-2s}
& \ge \lambda |w|^{-d-2s}.
\end{align*}
For the last inequality, we used the fact that $|T_0(w)| \le |w|$.

We are left with checking the second item. Note that $A(v_0 + T_0 v)$ and $\bar A(v)$ are subsets of $S^{d-1}$ related by the nonlinear map $\sigma \mapsto T_0 \sigma / |T_0 \sigma|$. In order to relate their volumes, we would have to make a computation involving the Jacobian of the map, which in this case is the determinant of the derivatives that act on the tangent space of $S^{d-1}$. This kind of computations are confusing to the best of us. So, instead, we opt to estimate the volume of $\bar A(v)$ through $\bar \cone(v)$. Indeed, the following elementary formula allows us to relate the volume of a set of directions with the volume of the corresponding cone. For any $R>0$, 
\[ | A(v_0 + T_0 v) |_{\mathcal H^{d-1}} = \frac d {R^d} | \cone(v_0 + T_0 v) \cap B_R| \qquad \text{and} \qquad  |\bar A(v) |_{\mathcal H^{d-1}} = \frac d {R^d} | \bar \cone(v) \cap B_R|.\]

Combining this formula with the estimate in Proposition~\ref{p:cone_of_nondegeneracy}, we have, for any $R>0$,
\begin{equation} \label{e:cv_nd1}
 | \cone(v_0 + T_0 v) \cap B_R| \geq \frac{R^d}d \mu (1+|v_0 + T_0v|)^{-1}.
\end{equation}

Let us recall the definition of $T_0$. Given any $w \in \R^d$, we write it as $w = a v_0 + w^\perp$ with $v_0 \cdot w^\perp = 0$. Then $T_0 w = a v_0 / |v_0| + w^\perp$. We want to estimate an upper bound for $a$ under the condition that $T_0 w \in \cone(v_0 + T_0 v) \cap B_R$.

According to the width condition in Proposition~\ref{p:cone_of_nondegeneracy}, if $T_0 w \in \cone(v_0 + T_0 v)$, we must have
\[ \left\vert \frac{T_0 w}{|T_0 w|} \cdot (v_0 + T_0 v) \right\vert \leq C_0.\]
Replacing the formula $T_0 w = a v_0 / |v_0| + w^\perp$, and recalling $w^\perp \cdot v_0 = 0$, we get
\[ \frac{|a|}{|T_0 w|} |v_0| \leq C_0 + |T_0 v| \leq C_0 + 2.\]
If in addition we know that $|T_0 w| \leq R$, we conclude that
\[ |a| \leq \frac{(C_0 + 2) R }{|v_0|}.\]
Now, for every $w$ such that $T_0 w \in \cone(v_0 + T_0 v) \cap B_R$, we have
\begin{align*} 
 |w| &= \sqrt{a^2 |v_0|^2 + |w^\perp|^2} , \\ 
 &\leq a |v_0| + |w^\perp| , \\ 
 &\leq R(C_0+3).
\end{align*}
Let us pick $R = (C_0+3)^{-1}$, which is a constant depending on the parameters of Assumption~\ref{a:hydro-assumption} and dimension only. We deduce that
\[ T_0^{-1} ( \cone(v_0 + T_0 v) \cap B_R) \subset B_1.\]
Therefore, 
\begin{align*} 
 |\bar \cone(v) \cap B_1| &\geq |T_0^{-1} ( \cone(v_0 + T_0 v) \cap B_R) | , \\ 
 &= ( \det T_0^{-1} ) \left\vert \cone(v_0 + T_0 v) \cap B_R \right \vert , \\ 
 &\geq |v_0| (1+|v_0 + T_0 v|)^{-1} \mu R^d / d \qquad \text{using \eqref{e:cv_nd1} and Proposition~\ref{l:convolution-C0}}, \\ 
 &\geq \bar \mu
\end{align*}
for some constant $\bar \mu>0$ depending only on the constants $\mu$ and $C_0$ of Proposition~\ref{p:cone_of_nondegeneracy} and dimension (and \textbf{not} on $v_0$).
\end{proof}

\begin{cor}[Non-degeneracy conditions for the H\"older estimates]\label{c:ndc}
When $f$ satisfies Assumption~\ref{a:hydro-assumption}, the kernel $\bar K_f$ satisfies
  \eqref{e:nondeg1} and also the inequality
\begin{equation} \label{e:local_coercivity}
 \iint_{B_1 \times B_1} (g(v') - g(v))^2 \bar K_f(v,v') \dd v' \dd v \geq \lambda \iint_{B_{1/2} \times B_{1/2}} \frac{(g(v') - g(v))^2}{|v'-v|^{d+2s}}  \dd v' \dd v
\end{equation}
for a parameter $\lambda>0$ depending on the constants in Assumption~\ref{a:hydro-assumption} (uniform with respect to $(t_0,x_0,v_0)$).
\end{cor}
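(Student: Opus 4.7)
The plan is to deduce both claims from Lemma~\ref{l:cone_transformed}, which provides a symmetric set of directions $\bar A(v) \subset S^{d-1}$ with $|\bar A(v)| \geq \bar\mu$ (uniform in $v_0$) and a corresponding cone $\bar \cone(v)$ on which $\bar K_f(v, v+w) \geq \lambda |w|^{-d-2s}$.

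For the non-degeneracy condition \eqref{e:nondeg1}, I would estimate, for any unit vector $e$, any $v \in B_2$, and any $r>0$,
\[ \int_{B_r} ((v'-v)\cdot e)^2_+ \bar K_f(v,v')\, dv' \geq \lambda \int_{B_r \cap \bar \cone(v)} (w \cdot e)_+^2 |w|^{-d-2s}\, dw, \]
and then pass to polar coordinates $w = t\sigma$ with $\sigma \in \bar A(v)$ and $t \in (0,r]$. Using the symmetry $\bar A(v) = -\bar A(v)$, the right-hand side becomes
\[ \frac{\lambda r^{2-2s}}{2(2-2s)} \int_{\bar A(v)} (\sigma \cdot e)^2\, d\sigma. \]
The inequality \eqref{e:nondeg1} thus reduces to a uniform lower bound $\int_{\bar A(v)} (\sigma \cdot e)^2\, d\sigma \geq c > 0$ independent of $e$, $v$, and $v_0$. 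This is a geometric fact about the transformed belt: the strip $A(v_0+T_0v)$ has both width and measure of order $1/(1+|v_0|)$ around the equator perpendicular to $v_0$, and since $T_0^{-1}$ stretches the $\hat v_0$ direction by exactly $|v_0|$, the image $\bar A(v)$ occupies a belt of fixed angular width on $S^{d-1}$. Writing the integral as $e^\top M_v e$ with $M_v := \int_{\bar A(v)} \sigma \sigma^\top\, d\sigma$, the eigenvalues of $M_v$ are then bounded below by a constant depending only on the hydrodynamic parameters and dimension.

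For the coercivity \eqref{e:local_coercivity}, I would invoke Theorem~\ref{t:coercivity} applied to $\bar K_f$ on $B_2 \times B_2$. Its hypothesis is satisfied because, for any $v \in B_2$ and any $r>0$, the intersection $(v+\bar \cone(v)) \cap B_r(v) = v + r\,(\bar \cone(v)\cap B_1)$ has measure at least $\bar\mu r^d$ by conic self-similarity, which is a fixed fraction of $|B_r(v)|$. For a general ball $B = B_\rho(v_c) \ni v$, a polar-coordinate computation shows that each direction $\sigma \in \bar A(v)$ with $\sigma\cdot(v_c-v) \geq 0$ yields a radial segment from $v$ of length at least $\sqrt{\rho^2 - |v-v_c|^2}$; the symmetry of $\bar A(v)$ guarantees that at least half of $|\bar A(v)|$ lies in this half-sphere, producing the required density estimate with constants depending only on $\bar\mu$ and $d$.

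The main obstacle is the geometric claim on $\bar A(v)$ used in the first computation: one must track the nonlinear normalization $\sigma \mapsto T_0\sigma/|T_0\sigma|$ carefully to verify that $\bar A(v)$ does not concentrate near any hyperplane as $v_0$ varies. This is precisely the content of the change of variables $\mathcal T_0$: the thickness of the strip (of order $1/|v_0|$) is exactly matched to the stretching factor of $T_0^{-1}$ along $\hat v_0$, so $\bar A(v)$ emerges as a genuinely $(d-1)$-dimensional set of bounded aspect ratio. Once this geometric statement is in hand, both \eqref{e:nondeg1} and the hypothesis of Theorem~\ref{t:coercivity} follow from essentially the same polar-coordinate calculation.
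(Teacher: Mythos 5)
Your route is the same as the paper's: the proof of Corollary~\ref{c:ndc} consists precisely of noting that Lemma~\ref{l:cone_transformed} gives \eqref{e:nondeg1} directly and verifies the hypothesis of Theorem~\ref{t:coercivity}, from which \eqref{e:local_coercivity} follows. Two remarks on the details you fill in. First, the ``main obstacle'' you flag at the end is not really an obstacle: once Lemma~\ref{l:cone_transformed} gives $|\bar A(v)|\geq\bar\mu$ uniformly in $v_0$, no further tracking of the normalization map is needed, because a symmetric subset of $S^{d-1}$ of measure $\geq\bar\mu$ cannot be contained in the slab $\{|\sigma\cdot e|<\delta\}$ once $\delta\lesssim\bar\mu$; hence $\int_{\bar A(v)}(\sigma\cdot e)^2\,d\sigma\geq \tfrac{\bar\mu}{2}\delta^2$ for every unit $e$, which is all your first computation requires. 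Second, your verification of the hypothesis of Theorem~\ref{t:coercivity} for a general ball $B=B_\rho(v_c)\ni v$ does not quite close as written: the radial length $\sqrt{\rho^2-|v-v_c|^2}$ degenerates when $v$ is near $\partial B$, so the resulting measure bound is $\gtrsim\bar\mu(\rho^2-|v-v_c|^2)^{d/2}$ rather than the required $\gtrsim\rho^d$. The fix uses the symmetry of the cone more carefully: for each $\sigma\in\bar A(v)$ the longer of the two radial segments $t_\pm(\sigma)$ satisfies $\max(t_+,t_-)\geq\max\bigl(2|\sigma\cdot(v_c-v)|,\sqrt{\rho^2-|v-v_c|^2}\bigr)$, and one concludes either from the second term (when $|v-v_c|\leq\rho/2$) or from the first together with the same non-concentration bound $\int_{\bar A(v)}|\sigma\cdot\hat u|^d\,d\sigma\geq c(\bar\mu,d)$ (when $|v-v_c|>\rho/2$). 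This is a step the paper also elides, so the gap is minor and entirely repairable.
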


\begin{proof}
The cone of nondegeneracy described in Lemma~\ref{l:cone_transformed} trivially implies \eqref{e:nondeg1} for $\bar K_f$. It also fulfills the assumption of Theorem~\ref{t:coercivity}, from which the inequality \eqref{e:local_coercivity} follows.
\end{proof}

Recall that for $z=(t,x,v)$,  $\bar K_{f,z}(w)$ denotes the kernel $\bar K_f(t,x,v,v+w)$.
\begin{cor}[Non-degeneracy conditions for the Schauder estimates]\label{c:ndc-schauder}
When $f$ satisfies Assumption~\ref{a:hydro-assumption},  the kernel $\bar K_{f,z}$ satisfies for any $z \in Q_1$  the coercivity conditions iii. and iv. in Definition~\ref{d:classK} for a parameter $\lambda>0$ uniform with respect to $(t_0,x_0,v_0)$.
\end{cor}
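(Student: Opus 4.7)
The plan is to reuse the same arguments that verify conditions iii.\ and iv.\ of Definition~\ref{d:classK} for $K_f$ in the bounded velocity regime (sketched in the discussion immediately preceding this corollary), now with the uniform cone of nondegeneracy from Lemma~\ref{l:cone_transformed} playing the role of Proposition~\ref{p:cone_of_nondegeneracy}. Because Lemma~\ref{l:cone_transformed} produces a cone with parameters independent of $v_0$, the resulting ellipticity constants $\lambda, \Lambda$ for $\bar K_{f,z}$ will be uniform in $v_0$ as well.

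For condition iv., the starting point is the pointwise bound $\bar K_{f,z}(w) \geq \lambda |w|^{-d-2s}$ for $w \in \bar\cone(v)$ from Lemma~\ref{l:cone_transformed}. Using the scale invariance of the cone,
\[
\int_{B_r}(w\cdot e)_+^2\,\bar K_{f,z}(w)\dd w \;\geq\; \lambda\int_{B_r\cap\bar\cone(v)}(w\cdot e)_+^2\,|w|^{-d-2s}\dd w \;=\; \lambda\,r^{2-2s}\int_{B_1\cap\bar\cone(v)}(w\cdot e)_+^2\,|w|^{-d-2s}\dd w.
\]
The remaining integral is bounded below, uniformly in $e\in S^{d-1}$ and in $v_0$, because the stretching by $T_0^{-1}$ turns the thin strip cone of $K_f$ (of angular width $\sim 1/|v_0|$) into a fat cone of solid angle around the equator $v_0^\perp$ that is independent of $|v_0|$; this prevents $\bar\cone(v)$ from concentrating in any hyperplane.

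For condition iii., the approach is to apply the coercivity result of \cite{chaker2019coercivity} (Theorem~\ref{t:coercivity}), or equivalently \cite{dyda2015regularity}, to the translation-invariant kernel $\bar K_{f,z}$ at the scale of $B_R$, for arbitrary $R>0$. The pointwise bound $\bar K_{f,z}(w) \geq \lambda|w|^{-d-2s}$ on $\bar\cone(v)$, together with the uniform spherical measure estimate on $\bar A(v)$ from Lemma~\ref{l:cone_transformed}, supplies the thickness assumption needed by those results, with constants independent of $v_0$. Scale invariance of $\bar\cone(v)$ then transfers the coercivity from one scale to every $R>0$.

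The main obstacle that this approach resolves is the degeneracy of the ellipticity of $K_f$ as $|v| \to \infty$, which prevents a direct application of the local coercivity results of \cite{imbert2016weak,schauder} at large velocities. That obstacle is precisely what the change of variables $\mathcal{T}_0$ was designed to remove: Lemma~\ref{l:cone_transformed} absorbs the degeneracy into the transformation, after which iii.\ and iv.\ follow by a routine application of the cited coercivity tools.
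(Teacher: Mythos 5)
Your proposal is correct and takes essentially the same route as the paper: condition iv.\ is read off directly from the uniform cone of nondegeneracy of Lemma~\ref{l:cone_transformed} applied to the frozen kernel $\bar K_{f,z}$, and condition iii.\ follows by feeding that cone into Theorem~\ref{t:coercivity} and then using the scale invariance of the cone to pass to arbitrary $R>0$. The paper's proof has exactly this structure (and likewise notes the alternative via \cite{dyda2015regularity}), so there is nothing to add.
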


Note that the statement of Corollary~\ref{c:ndc-schauder} is not the same as \eqref{e:local_coercivity}. One is for the kernels $\bar K_{f,z} : \R^d \to \R^d$, depending on a single parameter $w \in \R^d$. The other is for the kernels $\bar K_f$ as a function of both $v$ and $v'$. The dependence of $t,x$ is irrelevant for either statement. The coercivity condition given in Corollary~\ref{c:ndc-schauder} is in some sense simpler than \eqref{e:local_coercivity} given in Corollary~\ref{c:ndc} since it applies to the kernel with \emph{frozen coefficients} (referring to standard terminology from elliptic PDEs). Theorem~\ref{t:coercivity} is sufficiently strong that it implies both Corollaries~\ref{c:ndc} and \ref{c:ndc-schauder}. One could alternatively justify Corollary~\ref{c:ndc-schauder} using coercivity results that are suitable for translation invariant integro-differential operators like the ones described in \cite{dyda2015regularity}.

\begin{proof} [Proof of Corollary~\ref{c:ndc-schauder}]
Let $\tilde K(w) := \bar K_{f,z}(w)$. The cone of nondegeneracy described in Lemma~\ref{l:cone_transformed} applies to $\bar K_f$, and therefore also to $\tilde K$. In this case, we have
\begin{equation} \label{e:ndc-o2}
 \tilde K(w) \geq \tilde \lambda |w|^{-d-2s} \qquad \text{ whenever } w \in \cone(z).
\end{equation}
Here, $\cone(z)$ is the cone of nondegeneracy for $\bar K_f$ at the point $z=(t,x,v)$. The \emph{frozen} kernel $\tilde K$ has this cone of nondegeneracy at every point $v \in \R^d$.

The coercivity condition iv. from Definition~\ref{d:classK} easily follows from the existence of the cone of non-degeneracy for $\tilde K$. 

The properties of condition of the cone of nondegeneracy described in Lemma~\ref{l:cone_transformed} imply the assumptions of Theorem~\ref{t:coercivity}. Therefore, we have
\[ \iint_{B_2 \times B_2} (\varphi(v') - \varphi(v))^2 \tilde K(v,v') \dd v' \dd v \gtrsim  \tilde \lambda \iint_{B_{1} \times B_{1}} (\varphi(v') - \varphi(v))^2 |v'-v|^{-d-2s} \dd v' \dd v.\]
Since the inequality \eqref{e:ndc-o2} is scale invariant, a standard scaling argument allows us to conclude \eqref{e:frozen_coercivity} for any $R>0$.
\end{proof}

\subsection{First boundedness condition}

\begin{lemma}[First boundedness condition]\label{l:bc1}
  Let us assume that $\gamma+2s \in [0,2]$ and an upper bound in mass and energy
  \[
    \int_{\R^d} f(v) \dd v \leq M_0, \qquad \int_{\R^d} f(v) |v|^2 \dd v \leq E_0.
  \]
  The kernel $\bar K_f$ from \eqref{e:barKf} satisfies \eqref{e:bounded1} with parameters depending on $M_0$, $E_0$,  $\gamma$, $s$ and $d$ only.
\end{lemma}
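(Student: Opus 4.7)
The plan is to use the Carleman representation for $K_f$ and exploit the ellipsoidal geometry of $T_0(B_r)$ carefully. When $|v_0|\leq 2$, $\mathcal T_0$ is just a translation, $\bar K_f$ coincides with $K_f$, and the estimate follows at once from \eqref{e:upper_bound_for_Kf}. So I focus on $|v_0|\geq 2$. The substitution $u=T_0(v'-v)$ (with Jacobian $|v_0|^{-1}$) recasts the integral as
\[
  \int_{\R^d\setminus B_r(v)}\bar K_f(v,v')\dd v'=|v_0|^{-\gamma-2s}\int_{\{|T_0^{-1}u|>r\}}K_f(w_0,w_0+u)\dd u,\qquad w_0:=v_0+T_0v.
\]
I note in advance that the naive route---using $|T_0^{-1}u|\leq|v_0||u|$ to enlarge the integration set to $\{|u|>r/|v_0|\}$ and then invoking \eqref{e:upper_bound_for_Kf}---would produce a spurious factor $|v_0|^{2s}$. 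Instead one must honor the anisotropy of the excluded ellipsoid $T_0(B_r)$, which has extent $\sim r/|v_0|$ in the $v_0$ direction and $r$ perpendicularly.

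Substituting the Carleman form \eqref{e:Kernel-approx} for $K_f$, applying Fubini (everything is nonnegative), and integrating $u=\rho\sigma$ in polar coordinates, the radial cutoff becomes $\rho>r/\sqrt{1+\alpha^2(|v_0|^2-1)}$ with $\alpha=\sigma\cdot e_0$, $e_0:=v_0/|v_0|$. After the $\rho$-integration and swapping the two remaining sphere integrations, the task reduces to proving
\[
  |v_0|^{-\gamma-2s}\int_{\R^d}f(w_0+w)|w|^{\gamma+2s}F(\hat w)\dd w\lesssim M_0+E_0,\quad F(\omega):=\int_{\sigma\in S^{d-2}_\omega}(1+(\sigma\cdot e_0)^2(|v_0|^2-1))^s\dd\sigma.
\]
The inequality $(1+x)^s\leq 1+x^s$ (valid for $s\in(0,1]$) together with a direct angular computation gives $F(\omega)\leq C(1+(\sin\psi_\omega|v_0|)^{2s})$, where $\psi_\omega=\angle(\omega,e_0)$. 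The isotropic ``$1$'' part is controlled by Lemma~\ref{l:convolution-moments} applied with $\kappa=\gamma+2s\in[0,2]$, using $|w_0|\lesssim|v_0|$. Via the identity $|w|^{\gamma+2s}\sin^{2s}\psi_w=|w|^\gamma|w_\perp|^{2s}$ (with $w_\perp$ denoting the component of $w$ perpendicular to $v_0$), the remaining task is the bound
\[
  I_2:=|v_0|^{-\gamma}\int_{\R^d}f(w_0+w)|w|^\gamma|w_\perp|^{2s}\dd w\lesssim M_0+E_0.
\]

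For $\gamma\geq 0$ this is straightforward: expand $|y-w_0|^\gamma\lesssim|y|^\gamma+|v_0|^\gamma$ and $|(y-w_0)_\perp|^{2s}\lesssim|y|^{2s}+1$ (with $y=w_0+w$), and use the elementary moment estimates $\int f|y|^\kappa\dd y\lesssim M_0+E_0$ for every $\kappa\in[0,2]$. The hard case, and the main obstacle of the proof, is $\gamma<0$: the Carleman weight $|y-w_0|^\gamma$ is singular at $y=w_0\approx v_0$. I will split the $y$-integration at $|y-w_0|=|v_0|/2$. On the far part, $|y-w_0|^\gamma\leq(|v_0|/2)^\gamma$ and $|(y-w_0)_\perp|^{2s}\lesssim 1+|y|^2$, so the prefactor $|v_0|^{-\gamma}$ cancels cleanly and the far contribution is $\lesssim M_0+E_0$. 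On the close part, $|y|\gtrsim|v_0|$ forces $\int_{\mathrm{close}}f\dd y\lesssim E_0/|v_0|^2$ via the energy bound; combined with the coarse pointwise bound $|y-w_0|^\gamma|(y-w_0)_\perp|^{2s}\leq|y-w_0|^{\gamma+2s}\leq(|v_0|/2)^{\gamma+2s}$ (valid because $|(y-w_0)_\perp|\leq|y-w_0|$ and $\gamma+2s\geq 0$), the close contribution is $\lesssim|v_0|^{-\gamma}\cdot E_0|v_0|^{\gamma+2s-2}=E_0|v_0|^{2s-2}\leq E_0$ since $s\leq 1$. These estimates combine to yield the claimed bound.
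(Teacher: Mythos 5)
Your proof is correct and follows essentially the same route as the paper: after the change of variables, both arguments reduce the claim to bounding $|v_0|^{-\gamma-2s}\int f(w_0+w)\,\bigl(1+|v_0|^2\sin^2\psi_w\bigr)^s|w|^{\gamma+2s}\,dw$ by $M_0+E_0$ --- you produce the anisotropic weight by integrating over the equatorial sphere $\{\sigma \perp w\}$ after passing to polar coordinates in $u$, while the paper computes the smallest radius of the cross-section $E_r\cap w^{\perp}$; these are dual Fubini computations yielding the same quantity (the paper's Lemma~\ref{lem:reduction}). Your concluding case analysis ($\gamma\ge0$ handled directly by moment bounds; $\gamma<0$ split at $|w|=|v_0|/2$, with Chebyshev on the energy for the near region) matches the substance of the paper's Lemma~\ref{lem:barlambda}; the only point to tidy is that your near-region inequality $|y|\gtrsim|v_0|$ needs $|v_0|$ above a fixed threshold, which is harmless since the lemma is trivial for bounded $|v_0|$.
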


We start with the following computation. 
\begin{lemma} \label{lem:reduction}
  Let $v_0 \in \R^d \setminus B_2$ and $v \in B_2$. For any $r>0$, we have 
  \begin{align*}
&    \int_{\R^d \setminus B_r(v)} \bar K_f(v,v') \dd v' \lesssim \bar  \Lambda r^{-2s} \\
&\text{with} \qquad   \bar \Lambda =  |v_0|^{-\gamma-2s}  \int_{w\in \R^d} f(\bar v+w) \left( |v_0|^2 - \left( v_0 \cdot \frac{w}{|w|} \right)^2 + 1\right)^s |w|^{\gamma+2s} \dd w .
\end{align*}
\end{lemma}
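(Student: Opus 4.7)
\medskip

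\noindent\textbf{Proof plan for Lemma \ref{lem:reduction}.} The natural approach is to transfer the integral back to the original (untransformed) velocity coordinates and then exploit the explicit integral formula \eqref{e:Kernel-approx} for $K_f$. Concretely, I would change variables $u = T_0(v'-v)$, so that $v_0+T_0v' = \bar v + u$ and $\dd v' = |v_0|\dd u$ (since $\det T_0^{-1} = |v_0|$). The condition $|v'-v|>r$ becomes $|T_0^{-1}u|>r$, i.e.\ $u$ lies outside the ellipsoid $E_r = T_0(B_r)$, whose semi-axis is $r/|v_0|$ in the $v_0$ direction and $r$ perpendicularly. Thus
\[
 \int_{\R^d \setminus B_r(v)} \bar K_f(v,v') \dd v' = |v_0|^{-\gamma-2s} \int_{\R^d \setminus E_r} K_f(\bar v, \bar v + u) \dd u.
\]

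Next I would insert the representation \eqref{e:Kernel-approx}, $K_f(\bar v,\bar v + u)\lesssim |u|^{-d-2s}\int_{w\perp u} f(\bar v+w)|w|^{\gamma+2s+1}\dd w$, and write $u=\rho\sigma$ with $\sigma\in S^{d-1}$. The condition $u\notin E_r$ becomes $\rho>r/|T_0^{-1}\sigma|$, and the $\rho$-integral gives
\[
 \int_{r/|T_0^{-1}\sigma|}^{\infty} \rho^{-1-2s}\dd\rho = \frac{1}{2s}\,r^{-2s}|T_0^{-1}\sigma|^{2s}.
\]
Thus the computation reduces to bounding
\[
 \int_{S^{d-1}} |T_0^{-1}\sigma|^{2s} \int_{w \perp \sigma} f(\bar v+w)|w|^{\gamma+2s+1}\dd w\, \dd\sigma.
\]

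I would then swap the order of integration in $(\sigma,w)$ using the standard identity (seen via polar coordinates on both sides)
\[
 \int_{S^{d-1}}\!\int_{w\perp\sigma} g(w,\sigma)\dd w\,\dd\sigma = \int_{\R^d}\int_{\sigma\in S^{d-1}\cap w^\perp} g(w,\sigma)\dd\sigma\,\frac{\dd w}{|w|},
\]
which reduces the computation to estimating $\int_{S^{d-1}\cap w^\perp}|T_0^{-1}\sigma|^{2s}\dd\sigma$ for fixed $w$. A direct computation with the decomposition $\sigma = (v_0\cdot\sigma)v_0/|v_0|^2 + \sigma^\perp$ gives $|T_0^{-1}\sigma|^{2} = 1+(v_0\cdot\sigma)^2(1-|v_0|^{-2})$, so for any $\sigma$ this is at most $1+(v_0\cdot\sigma)^2$. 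For $\sigma\perp w$ the quantity $(v_0\cdot\sigma)^2$ is maximized by the unit vector in $w^\perp$ pointing along the projection of $v_0$, with value $|v_0|^2 - (v_0\cdot w/|w|)^2$. Hence
\[
 |T_0^{-1}\sigma|^{2s} \le \bigl(1+|v_0|^2 - (v_0\cdot w/|w|)^2\bigr)^{s} \qquad \text{for all } \sigma\in S^{d-1}\cap w^\perp,
\]
and the $\sigma$-integral over $S^{d-1}\cap w^\perp$ contributes only a dimensional constant. Combining everything yields exactly the stated bound with the exhibited $\bar\Lambda$.

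The main obstacle I anticipate is just bookkeeping: correctly tracking how the change of variables interacts with the ``orthogonal slice'' structure of \eqref{e:Kernel-approx}, in particular identifying the exact region of integration as the complement of $E_r$ (rather than of $B_r$) and verifying the elementary but crucial identity $|T_0^{-1}\sigma|^2 = 1 + (v_0\cdot\sigma)^2 - (v_0\cdot\sigma)^2/|v_0|^2$ with the right normalization. Once these are in place, the rest is a routine Fubini-type computation.
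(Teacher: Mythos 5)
Your proposal is correct and follows essentially the same route as the paper: change variables to reduce to $K_f$ integrated over the complement of the ellipsoid $E_r$, insert \eqref{e:Kernel-approx}, swap the $u$- and $w$-integrations, and control the resulting geometric factor by the quantity $1+|v_0|^2-(v_0\cdot w/|w|)^2$, which is exactly the paper's computation of the smallest radius of $E_r\cap w^\perp$ (indeed $\max_{\sigma\perp w}|T_0^{-1}\sigma|^2 = |v_0|^2-(v_0\cdot w/|w|)^2+(\hat v_0\cdot \hat w)^2$, matching \eqref{eq:rho}). The only difference is cosmetic — you integrate out the radial variable before applying the Fubini identity, whereas the paper swaps first and then bounds the ellipsoid-complement integral over $u\perp w$ — so the two arguments are the same up to reordering.
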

\begin{proof}
In view of the defintion of $\bar K_f$, we can write
\begin{align*} 
 \int_{\R^d \setminus B_r(v)} \bar K_f(v,v') \dd v' &= \int_{\R^d \setminus E_r} |v_0|^{-1-\gamma-2s} K_f(\bar v,\bar v + u) \frac{\dd u}{\det T_0}, \\
&= |v_0|^{-\gamma-2s} \int_{\R^d \setminus E_r} K_f(\bar v,\bar v + u) \dd u,
\end{align*}
where $\bar v = v_0 + T_0v$ and $E_r=  T_0(B_r)$. The
set $E_r$ is an ellipsoid centered at the origin with radius
$r/|v_0|$ in the direction of $v_0$ and $r$ in the directions
perpendicular to $v_0$. 

Using \eqref{e:Kernel-approx}, we rewrite the expression above as
\begin{align*}
 \int_{\R^d \setminus B_r(v)} \bar K_f(v,v') \dd v' 
&\approx |v_0|^{-\gamma-2s} \int_{u \in \R^d \setminus E_r} |u|^{-d-2s} \left( \int_{w \perp u} f(\bar v+w) |w|^{1+\gamma+2s} \dd w \right) \dd u, \\
&= |v_0|^{-\gamma-2s} \int_{w\in \R^d} \left( \int_{\substack{u \perp w, \\ u \in \R^d \setminus E_r}} |u|^{-d+1-2s}  \dd u \right)  f(\bar v+w) |w|^{\gamma+2s} \dd w.
\end{align*}
We used the fact that 
\begin{equation}\label{e:cdv}
\int_u \left\{ \int_{w\perp u} (\dots) \dd w \right\} \dd u = \int_w \left\{ \int_{u\perp w} (\dots) \frac{|u|}{|w|} \dd u \right\} \dd w.
\end{equation}

In order to estimate the integral in the inner factor, we analyze the intersection of the ellipsoid $E_r$ with the hyperplane $\{u:u \perp w\}$. This is of course a $(d-1)$-dimensional ellipsoid whose dimensions are computable. Its smallest radius $\rho$ equals
\begin{equation}\label{eq:rho}
 \rho := \frac{r}{\sqrt{|v_0|^2 \left( 1- \left( \frac{v_0 \cdot w}{|v_0| |w|} \right)^2 \right) + \left( \frac{v_0 \cdot w}{|v_0| |w|} \right)^2 }}.
\end{equation}
Indeed, it is an elementary planar computation: for $u \in \partial E_r$ with $|u|=\rho$ and $u \perp w$, we write $u = (u \cdot \hat v_0) \hat v_0 + u_1$ with $u_1 \perp v_0$ and $\hat v_0 = v_0 / |v_0|$. In particular, $\rho^2 = (u \cdot \hat v_0)^2 + |u_1|^2$. From the definition of $E_r$ and our choice of $u$, we know that the vector $v = (u\cdot \hat v_0) |v_0| \hat v_0 + u_1$ is of norm $r$. We now write
\begin{align}
\nonumber
  r^2 &= (u \cdot \hat v_0)^2 |v_0|^2 + |u_1|^2 \\
      &= (u \cdot \hat v_0)^2 (|v_0|^2-1) + \rho^2.
        \label{e:rho-sigma}
\end{align}
Use next the fact that $u \perp w$ and get $\rho^{-2} (u \cdot \hat v_0)^2 + (\hat w \cdot \hat v_0)^2 = 1$ with $\hat w = w / |w|$. In particular,
\[ (u \cdot \hat v_0)^2 = \rho^2 (1 - (\hat w \cdot \hat v_0)^2 ).\]
Combining the formulas for $r^2$ and $(u \cdot \hat v_0)^2$ yields \eqref{eq:rho}.

Therefore
\begin{align*}
  \int_{\substack{u \perp w, \\ u \in \R^d \setminus E_r}} |u|^{-d+1-2s}  \dd u
  & \leq \int_{\substack{u \perp w, \\ u \in \R^d \setminus B_{\rho}}} |u|^{-d+1-2s}  \dd u \lesssim \rho^{-2s} \\
  & = r^{-2s} \left( |v_0|^2 \left( 1- \left( \frac{v_0 \cdot w}{|v_0| |w|} \right)^2 \right) + \left( \frac{v_0 \cdot w}{|v_0| |w|} \right)^2 \right)^s \\
  & \leq r^{-2s} \left( |v_0|^2 - \left( v_0 \cdot \frac{ w}{|w|} \right)^2  + 1 \right)^s .
\end{align*}
Substituting in our previous formula, we get the desired result.
\end{proof}
We next aim at estimating the constant $\bar \Lambda$ appearing in Lemma~\ref{lem:reduction}. 
\begin{lemma} \label{lem:barlambda}
  Let $f : \R^d \to [0,\infty)$, $v_0 \in \R^d \setminus B_1$, $|v - v_0|\leq 2$ and $s \geq 0$, then for all $w \in \R^d$,
 \begin{align*}
 |v_0|^{-\gamma-2s} \int_{w \in \R^d} f(v+w) \left( |v_0|^2 - \left( v_0 \cdot \frac{w}{|w|} \right)^2 + 1 \right)^s |w|^{\gamma+2s} \dw
   &\le C \int_{\R^d} f(\tilde v)( 1+|\tilde v|^{2s} + |\tilde v|^{\gamma+2s}) \dd \tilde v \\
   &\le C (M_0+E_0)
\end{align*}
for a constant $C$ depending on $s$ and dimension only (not on $v_0$).
\end{lemma}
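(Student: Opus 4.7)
The plan is to rewrite the integrand via a natural geometric identity and then exploit an estimate on $|v_0 \wedge w|$ that uses the hypothesis $|v-v_0|\leq 2$ to absorb the apparent $|v_0|^{2s}$ growth of $(|v_0|^2\sin^2\theta+1)^s$. Writing $\theta$ for the angle between $v_0$ and $w$ and using $\sin\theta = |v_0 \wedge w|/(|v_0|\,|w|)$, one has
\[
 \Bigl(|v_0|^2 - \bigl(v_0\cdot\tfrac{w}{|w|}\bigr)^2 + 1\Bigr)^s |w|^{\gamma+2s} = \bigl(|v_0 \wedge w|^2 + |w|^2\bigr)^s |w|^\gamma,
\]
and subadditivity of $a\mapsto a^s$ gives $(|v_0\wedge w|^2+|w|^2)^s \leq C(|v_0\wedge w|^{2s}+|w|^{2s})$.

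The crucial step is the geometric estimate
\[
 |v_0 \wedge w| \leq |v_0|\,(|\tilde v|+2), \qquad \tilde v := v+w,
\]
which follows by writing $v_0\wedge w = v_0\wedge\tilde v - v_0\wedge v$, noting that $v_0\wedge v = v_0\wedge(v-v_0)$ (so that $|v_0\wedge v|\leq 2|v_0|$), and using $|v_0\wedge\tilde v|\leq |v_0|\,|\tilde v|$. This yields $|v_0\wedge w|^{2s}\leq C|v_0|^{2s}(1+|\tilde v|^{2s})$, and multiplying through by $|v_0|^{-\gamma-2s}$ gives the pointwise bound
\[
 |v_0|^{-\gamma-2s}\Bigl(|v_0|^2 - \bigl(v_0\cdot\tfrac{w}{|w|}\bigr)^2 + 1\Bigr)^s |w|^{\gamma+2s} \leq C|v_0|^{-\gamma}(1+|\tilde v|^{2s})|w|^\gamma + C|v_0|^{-\gamma-2s}|w|^{\gamma+2s}.
\]

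Next I would change variables to $\tilde v = v+w$ (so that $|w|=|\tilde v-v|$) and control each piece using $|v|\leq |v_0|+2$, $\gamma+2s\in[0,2]$, and $|v_0|\geq 1$. The second term gives $|v_0|^{-\gamma-2s}|\tilde v-v|^{\gamma+2s}\leq C(|\tilde v|^{\gamma+2s}+1)$, since $|v_0|^{-\gamma-2s}\leq 1$ and $|\tilde v-v|^{\gamma+2s}\leq C(|\tilde v|^{\gamma+2s}+|v_0|^{\gamma+2s})$. For $\gamma\geq 0$ the first term is controlled via $|v_0|^{-\gamma}|w|^\gamma \leq C(|\tilde v|^\gamma+1)$, so its contribution is at most $C(1+|\tilde v|^{2s})(1+|\tilde v|^\gamma)\leq C(1+|\tilde v|^{2s}+|\tilde v|^{\gamma+2s})$ (the last step using $|\tilde v|^\gamma \leq 1+|\tilde v|^{\gamma+2s}$ for $s>0$). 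Integrating this pointwise inequality against $f(\tilde v)\,d\tilde v$ gives the first inequality, and the final bound $C(M_0+E_0)$ follows directly from Lemma \ref{l:convolution-moments} with $\kappa = 2s$ and $\kappa = \gamma+2s$, both in $[0,2]$.

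The main obstacle is the case $\gamma<0$, where $|v_0|^{-\gamma}$ grows with $|v_0|$ and the pointwise bound above fails because of the singularity of $|w|^\gamma$ near the origin. In that regime the same decomposition can still be used, but the term $|v_0|^{-\gamma}|w|^\gamma$ has to be handled by splitting the $\tilde v$-integral (separating $|\tilde v-v|\geq 1$, where $|w|^\gamma\leq 1$, from a neighborhood of $v$), and controlling the singular part requires extra information on $f$, for instance the uniform bound \eqref{e:extra-int} on $\int_{B_1}f(\cdot+u)|u|^\gamma\,du$.
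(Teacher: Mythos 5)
Your reduction to a pointwise bound and the identity
\[
\Bigl(|v_0|^2-\bigl(v_0\cdot\tfrac{w}{|w|}\bigr)^2+1\Bigr)^s|w|^{\gamma+2s}=\bigl(|v_0\wedge w|^2+|w|^2\bigr)^s|w|^{\gamma}
\]
are essentially the paper's argument in disguise: writing $w=\alpha v_0/|v_0|+b$ with $b\perp v_0$, one has $|v_0\wedge w|=|v_0|\,|b|$, and your estimate $|v_0\wedge w|\le |v_0|(|\tilde v|+2)$ is the paper's observation $|b|\le |v_0+w|$. For $\gamma\ge 0$ your chain of inequalities is correct and complete.

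The genuine gap is the case $\gamma<0$, where you stop and (incorrectly) conclude that extra information on $f$ such as \eqref{e:extra-int} is required. It is not: the pointwise bound holds for all $\gamma$ with $\gamma+2s\ge 0$, and the lemma is stated (and used) without any such hypothesis. The ingredient you are missing is the \emph{second} elementary bound $|v_0\wedge w|\le |v_0|\,|w|$, i.e.\ $|b|\le|w|$, which couples the wedge factor to $|w|$ and kills the singularity at $w=0$. Concretely, split on the size of $|w|$. If $|w|\le |v_0|/2$, then
\[
|v_0|^{-\gamma-2s}\,|v_0\wedge w|^{2s}\,|w|^{\gamma}\le |v_0|^{-\gamma-2s}\,|v_0|^{2s}|w|^{2s}\,|w|^{\gamma}=|v_0|^{-\gamma}|w|^{\gamma+2s}\le |v_0|^{-\gamma}|v_0|^{\gamma+2s}=|v_0|^{2s}\lesssim 1+|\tilde v|^{2s},
\]
using $\gamma+2s\ge 0$ in the middle step and $|\tilde v|\ge |v_0|/2-2$ (together with $|v_0|\ge 1$) in the last; the term $|v_0|^{-\gamma-2s}|w|^{\gamma+2s}$ from the "$+|w|^2$" part is bounded the same way. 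If instead $|w|\ge |v_0|/2$ and $\gamma<0$, then $|v_0|^{-\gamma}|w|^{\gamma}\le 2^{-\gamma}$, so your first term is directly $\lesssim 1+|\tilde v|^{2s}$, and the second term is handled as you already did. With this addition your proof closes in all cases; note also that the conclusion tacitly uses the standing assumption $\gamma+2s\in[0,2]$ (both in the step above and in invoking Lemma~\ref{l:convolution-moments} at the end), which is worth stating.
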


\begin{proof}
It is enough to prove
\begin{align} \label{e:ub1}
 |v_0|^{-\gamma-2s} \left( |v_0|^2 - \left( v_0 \cdot \frac{w}{|w|} \right)^2 + 1 \right)^s |w|^{\gamma+2s}
 &\lesssim  1+|v_0+w|^{2s} + |v_0+w|^{\gamma+2s}, \\
  &\lesssim 1+|v+w|^{2s} + |v+w|^{\gamma+2s}.
    \nonumber
\end{align}

  The second of these inequalities follows simply because $|v-v_0| \le 2$. We need to prove the first one.
Note that 
\[ \left( |v_0|^2 - \left( v_0 \cdot \frac{w}{|w|} \right)^2 + 1 \right)^s \lesssim \left( |v_0|^2 - \left( v_0 \cdot \frac{w}{|w|} \right)^2 \right)^s + 1. \]
Moreover, 
\[
  |v_0|^{-\gamma-2s} |w|^{\gamma+2s} \lesssim |v_0|^{-\gamma-2s} (|v_0+w|^{\gamma+2s}+|v_0|^{\gamma+2s}) \leq 1+|v_0+w|^{\gamma+2s},
\]
using that $|v_0|>1$. We are left with studying an upper bound for 
\begin{equation} \label{e:ub-aux1} 
|v_0|^{-\gamma-2s} \left( |v_0|^2 - \left( v_0 \cdot \frac{w}{|w|} \right)^2 \right)^s |w|^{\gamma+2s}.
\end{equation}
It is convenient to write $w = \alpha v_0/|v_0| + b$, where $\alpha \in \R$ and $b \in \R^d$ is perpendicular to $v_0$. With this notation
\[ |v_0|^2 - \left( v_0 \cdot \frac{w}{|w|} \right)^2 = \frac{|v_0|^2 |b|^2}{|w|^2}.\]
Thus, the expression in \eqref{e:ub-aux1} equals $|v_0|^{-\gamma} |b|^{2s} |w|^{\gamma}$. We need to study an upper bound for it. The key observation here is that $|b|$ satisfies the two inequalities $|b| \leq |w|$ and $|b| \leq |v_0+w|$, both of which are immediate from its definition.

If $|w| \leq 2|v_0|$ and $\gamma \geq 0$, we have $|v_0|^{-\gamma} |b|^{2s} |w|^{\gamma} \lesssim |b|^{2s} \lesssim |v_0+w|^{2s}$ and \eqref{e:ub1} follows. The same conclusion holds if $|v_0| \leq 2|w|$ and $\gamma \leq 0$.

If $\gamma \geq 0$ and $2|v_0|\leq |w|$, we use that $|b|\leq |v_0+w|$ and $|w|^\gamma \lesssim (|v_0+w|^\gamma+|v_0|^\gamma )$. Thus,
\begin{align*} 
 |v_0|^{-\gamma} |b|^{2s} |w|^\gamma &\lesssim |v_0|^{-\gamma} |v_0+w|^{2s} (|v_0+w|^\gamma+|v_0|^\gamma ) ,\\
 &\lesssim |v_0+w|^{2s} + |v_0+w|^{\gamma+2s} .
\end{align*}

We are left to analyze the case $\gamma < 0$ and $|v_0| > 2|w|$. Using that $|b|\leq |w|$ and $\gamma + 2s \geq 0$, we have
\[ |v_0|^{-\gamma} |w|^\gamma |b|^{2s} \leq |v_0|^{-\gamma} |w|^{\gamma+2s} \lesssim |v_0|^{2s} \lesssim |v_0+w|^{2s}.\]

We conclude that the inequality \eqref{e:ub1} holds in all cases. The proof is now complete. 
\end{proof}

From Lemmas~\ref{lem:reduction} and \ref{lem:barlambda}, we can derive Lemma~\ref{l:bc1}.
\begin{proof}[Proof of Lemma~\ref{l:bc1}]
We get from Lemma~ \ref{lem:barlambda} that
  \[ |v_0|^{-\gamma-2s}  \int_{w\in \R^d} f(v+w) \left( |v_0|^2 - \left( v_0 \cdot \frac{w}{|w|} \right)^2 + 1 \right)^s |w|^{\gamma+2s} \dd w \leq C \int_{\R^d}f(\tilde v) (1+ |\tilde v|^{2s}+|\tilde v |^{\gamma+2s})\dd \tilde v. \]
Combining this estimate with  Lemma~\ref{lem:reduction} yields
  \begin{align*}
    \int_{\R^d \setminus B_r(v)} \bar K_f(v,v') \dd v' & \lesssim \bar  \Lambda r^{-2s} \\
                                                       & \lesssim \left( \int_{\R^d}f(\tilde v) (1+ |\tilde v|^{2s}+|\tilde v|^{\gamma+2s})\dd \tilde v \right) r^{-2s} \\
    \intertext{then use the fact that $\gamma +2s \in [0,2]$,}
    & \lesssim \left( \int_{\R^d}f(\tilde v) (1+ |\tilde v|^2)\dd \tilde v \right) r^{-2s} \\
    & \lesssim (M_0+E_0) r^{-2s}. \qedhere
  \end{align*}
\end{proof}

Lemma~\ref{l:bc1} is phrased in terms of the condition \eqref{e:bounded1} for Theorem~\ref{t:local-holder}. As far as Theorem~\ref{t:change-of-vars-schauder} is concerned, we remark that it is the same condition as the second item in Definition~\ref{d:classK}. We rephrase it in the following corollary. Recall that $\bar K_{f,z}(w)$ denotes $\bar K_f(t,x,v,v+w)$ for $z = (t,x,v)$.
\begin{cor}[Item ii. in Definition~\ref{d:classK} for $\bar K_f$] \label{c:bounded_for_schauder}
Let $f \in [\tau,T]\times \R^d \times \R^d \to [0,\infty)$ and $K_f$ be given by \eqref{e:kf}. Then, for any $z \in Q_1$,
\[ \int_{B_r} \bar K_{f,z}(w) |w|^2 \dd w \leq C \left( \int_{\R^d} (1+|\tilde v|^2) f(\bar t, \bar x, \tilde v) \dd \tilde v \right) r^{2-2s}\]
with $z=(t,x,v)$ and $(\bar t,\bar x,\bar v) = \mathcal{T}_0 (z)$. 
 The constant $C$ in the right hand side depends on $\gamma$, $s$ and dimension only.
\end{cor}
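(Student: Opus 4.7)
The plan is to reduce the weighted $L^1$ bound to the unweighted one furnished by Lemma~\ref{l:bc1} via a layer-cake representation of $|w|^2$. Writing $|w|^2 = \int_0^{|w|} 2\rho \dd \rho$ and applying Fubini gives
\[ \int_{B_r} \bar K_{f,z}(w) |w|^2 \dd w = \int_0^r 2\rho \int_{\{w:\rho < |w| < r\}} \bar K_{f,z}(w) \dd w \dd \rho \leq \int_0^r 2\rho \int_{\R^d \setminus B_\rho} \bar K_{f,z}(w) \dd w \dd \rho, \]
so the task reduces to showing that $\int_{\R^d \setminus B_\rho} \bar K_{f,z}(w) \dd w$ has the correct algebraic decay in $\rho$ with the correct constant in front.

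In the main case $|v_0| \geq 2$ (cf.\ Remark~\ref{r:large-v0}), Lemmas~\ref{lem:reduction} and \ref{lem:barlambda} provide exactly what is needed:
\[ \int_{\R^d \setminus B_\rho} \bar K_{f,z}(w) \dd w \leq C \rho^{-2s} \int_{\R^d} f(\bar t, \bar x, \tilde v)\bigl(1 + |\tilde v|^{2s} + |\tilde v|^{\gamma+2s}\bigr) \dd \tilde v. \]
Under the standing restriction $\gamma + 2s \in [0,2]$, the weight $1 + |\tilde v|^{2s} + |\tilde v|^{\gamma+2s}$ is controlled by $C(1+|\tilde v|^2)$. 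Inserting this bound, the remaining one-dimensional integral is $\int_0^r 2 \rho^{1-2s} \dd \rho = \tfrac{r^{2-2s}}{1-s}$, which is finite because $s<1$; combining the factors yields the stated estimate.

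For the residual case $|v_0| < 2$, the change of variables $\mathcal T_0$ is simply a left translation on the Lie group, so $\bar K_{f,z}$ agrees with $K_f$ evaluated at a point $\bar v$ with $|\bar v| \leq 3$. In this bounded range of velocities, the unweighted estimate \eqref{e:upper_bound_for_Kf} of \cite[Lemma 4.3]{luis} gives the same $\rho^{-2s}$ decay with a constant controlled by the second moment of $f(\bar t, \bar x, \cdot)$, and the same layer-cake step produces the corollary.

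There is no real obstacle here: the proof is essentially bookkeeping downstream of Lemma~\ref{l:bc1}. The only point worth emphasizing is that the hypothesis $\gamma + 2s \in [0,2]$ is exactly what converts the mixed $2s$- and $(\gamma+2s)$-weights produced by Lemma~\ref{lem:barlambda} into the clean second-moment weight $1+|\tilde v|^2$ appearing in the conclusion.
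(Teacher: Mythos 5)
Your proof is correct and follows essentially the same route as the paper: both rest on the tail bound from Lemmas~\ref{lem:reduction} and \ref{lem:barlambda}, control the weight $1+|\tilde v|^{2s}+|\tilde v|^{\gamma+2s}$ by $1+|\tilde v|^2$ using $\gamma+2s\in[0,2]$, and then convert the tail decay $\rho^{-2s}$ into the truncated second moment $r^{2-2s}$. The only (cosmetic) difference is that you perform this last conversion by a layer-cake/Fubini argument where the paper sums over dyadic annuli; the two are interchangeable, as the paper itself notes when discussing the equivalent formulations of item ii in Definition~\ref{d:classK}.
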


\begin{proof}
In terms of the notation $\bar K_{f,z}$, Lemmas~\ref{lem:reduction} and \ref{lem:barlambda} say exactly that
\begin{align*} 
 \int_{\R^d \setminus B_r} \bar K_{f,z}(w) \dd w &\leq C \left( \int_{\R^d} (1+|v|^{2s}+|v|^{\gamma+2s}) f(\bar t, \bar x, v) \dd v \right) r^{-2s}, \\
&\leq C \left( \int_{\R^d} (1+|v|^2) f(\bar t, \bar x,v) \dd v \right) r^{-2s}
\end{align*}
with $(\bar t,\bar x,\bar v) = \mathcal{T}_0 (t,x,v)$. 
The bound for the integral of the kernel on the complement of any ball $\R^d \setminus B_r$, is equivalent to the bound of the integral of the same kernel times $|w|^2$ on any ball. Indeed, the inequality above implies that
\[ \int_{B_{2r} \setminus B_{r}} \bar K_{f,z}(w) |w|^2 \dd w \lesssim \left( \int_{\R^d} (1+|v|^2) f(\bar t, \bar x, v) \dd v \right) r^{2-2s}.\]
Applying this inequality to dyadic rings for $\tilde r = r/2, r/4, r/8, \dots$ and summing the resulting estimates, we conclude the proof.
\end{proof}

\subsection{Second boundedness condition}

\begin{lemma}[Second boundedness condition]\label{l:bc2}
Let us assume that $\gamma+2s \in [0,2]$ and let the upper bound in the mass and energy hold
\[ \int_{\R^d} f(v) \dd v \leq M_0, \qquad \int_{\R^d} f(v) |v|^2 \dd v \leq E_0.\]
If $\gamma+s<0$, we also assume \eqref{e:extra-int}.

Then, the kernel $\bar K_f$ from \eqref{e:barKf} satisfies \eqref{e:bounded2} with parameters depending on $M_0$, $E_0$, $\gamma$, $s$, $d$ and $C_\gamma$ (in case $\gamma+s<0$).
\end{lemma}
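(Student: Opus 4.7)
The strategy parallels that of Lemma~\ref{l:bc1}, with the roles of the two velocity variables interchanged. First I would reduce to an integral of the original kernel via the change of variables. Setting $\tilde v = v_0+T_0 v$ and $\tilde v' = v_0+T_0 v'$ (and noting that $\det T_0^{-1}=|v_0|$), the substitution yields
\[
\int_{\R^d\setminus B_r(v')}\bar K_f(v,v')\,\dd v = |v_0|^{-\gamma-2s}\int_{\R^d\setminus E_r(\tilde v')} K_f(\tilde v,\tilde v')\,\dd \tilde v,
\]
where $E_r(\tilde v') := \tilde v' + T_0 B_r$ is an ellipsoid with semi-axes $r/|v_0|$ along $v_0$ and $r$ perpendicular to $v_0$. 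Since $v'\in B_2$ and $|v_0|\geq 2$, one has $|\tilde v'|\approx|v_0|$.

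Next I would apply the Carleman representation~\eqref{e:Kernel-approx} and substitute $y = \tilde v - \tilde v'$, giving
\[
\int_{\R^d\setminus E_r(\tilde v')} K_f(\tilde v,\tilde v')\,\dd \tilde v \lesssim \int_{y\notin T_0 B_r} |y|^{-d-2s}\int_{w\perp y} f(\tilde v'+y+w)\,|w|^{\gamma+2s+1}\,\dd w\,\dd y.
\]
Applying the coordinate swap~\eqref{e:cdv} (exactly as in the proof of Lemma~\ref{lem:reduction}) to exchange the integrations over $y$ and $w$ transforms the right-hand side into
\[
\int_w |w|^{\gamma+2s}\int_{y\perp w,\,y\notin T_0 B_r} |y|^{-d+1-2s}\,f(\tilde v'+y+w)\,\dd y\,\dd w.
\]
The task is then to bound this double integral by a multiple of $|v_0|^{\gamma+2s}\,r^{-2s}(M_0+E_0)$, so that the prefactor $|v_0|^{-\gamma-2s}$ absorbs the $|v_0|^{\gamma+2s}$ and yields the claimed uniform estimate. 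I would reparameterize via $\zeta = y+w$ (with $|\zeta|^2=|y|^2+|w|^2$ since $y\perp w$) and decompose the integration over $S^{d-1}$ dyadically according to the angle between the direction of $w$ and $\hat v_0$, in the same spirit as the bound for $\bar\Lambda$ in Lemma~\ref{lem:barlambda}. An application of Lemma~\ref{l:convolution-moments} with $\kappa=\gamma+2s\in[0,2]$ (together with $|\tilde v'|\approx|v_0|$) then yields the estimate. When $\gamma+s<0$, the singularity of $|w|^\gamma$ near $w=0$ is absorbed using the auxiliary integrability assumption~\eqref{e:extra-int}.

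The main obstacle lies in the interaction of the anisotropic ellipsoid $T_0 B_r$ with the Carleman-type integral. A naive replacement of $T_0 B_r$ by its inscribed ball $B_{r/|v_0|}$ (or the circumscribed ball $B_r$) would lose a factor $|v_0|^{2s}$ that cannot cancel against $|v_0|^{-\gamma-2s}$ when $\gamma\geq 0$, so the bound would blow up as $|v_0|\to\infty$. One must therefore fully exploit the anisotropy of the ellipsoid --- it extends to distance $r$ in the $d-1$ directions perpendicular to $v_0$ while only $r/|v_0|$ along $v_0$ --- via a dyadic decomposition on the inclination angle: the narrow strip of directions where $y$ can approach the origin (in the thin $v_0$-direction of $T_0 B_r$) exactly compensates for the near-origin singularity of $|y|^{-d+1-2s}$, paralleling the mechanism already at work in Lemma~\ref{l:bc1}. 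This is the technical heart of the argument.
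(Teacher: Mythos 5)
Your setup (change of variables, reduction to the original kernel over $\R^d\setminus E_r(\bar v')$, Carleman representation) matches the paper, and you correctly diagnose the central danger: replacing the ellipsoid by an inscribed or circumscribed ball loses a factor $|v_0|^{2s}$ that cannot be absorbed. But the reduction you propose for the technical heart does not actually set up a workable computation, and the step you defer to "a dyadic decomposition on the inclination angle, paralleling Lemma~\ref{l:bc1}" is precisely where the analogy with Lemma~\ref{l:bc1} breaks down. In Lemma~\ref{lem:reduction} the swap \eqref{e:cdv} is effective because the argument of $f$ depends only on the outer variable $w$, so the inner integral over $u\perp w$, $u\notin E_r$ is purely geometric and is bounded by $\rho^{-2s}$. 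Here, after you apply \eqref{e:cdv}, the inner integral over $y\perp w$, $y\notin E_r$ still carries $f(\tilde v'+y+w)$, so it cannot be separated; and your subsequent substitution $\zeta=y+w$ at fixed $w$ only sweeps an affine hyperplane, not $\R^d$, so Lemma~\ref{l:convolution-moments} is not directly applicable. The paper instead passes to polar coordinates $\bar v-\bar v'=\rho\sigma$ \emph{before} any swap and combines $u=\rho\sigma+w$ (with $w\perp\sigma$) into a genuine $d$-dimensional variable, leaving a spherical integral over $\{\sigma: u\cdot\sigma>r_\sigma\}$ with $r_\sigma\approx r(1+(\sigma\cdot v_0)^2)^{-1/2}$; this is where the anisotropy of $E_r$ is exploited.

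The second gap is in the claim that one application of Lemma~\ref{l:convolution-moments} with $\kappa=\gamma+2s$ then finishes the proof. The lower bound on $\sigma\cdot u/|u|$ over the admissible cap is a \emph{minimum} of two quantities (coming from the two terms $1+|b|^2$ and $(e\cdot\sigma)^2a^2$ in $r_\sigma$, where $v_0=ae+b$, $e=u/|u|$), so the angular integral produces \emph{two} terms: one of size $r^{-2s}|u|^{2s}(1+|v_0|^2-(v_0\cdot u)^2/|u|^2)^s$, which is handled exactly by Lemma~\ref{lem:barlambda}, and a second of size $r^{-s}|u|^{s}|v_0\cdot u/|u||^{s}$, leading to $I_2=r^{-s}|v_0|^{-\gamma-s}\int f(\bar v'+u)|u|^{\gamma+s}\dd u$. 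This second term is where the hypothesis \eqref{e:extra-int} genuinely enters: when $\gamma+s<0$ the weight $|u|^{\gamma+s}$ is not of the form covered by Lemma~\ref{l:convolution-moments}, and the paper needs a three-region splitting ($|u|\gtrsim|v_0|$ using $M_0$; $|u|\le|v_0|^{-1}$ using $C_\gamma$; the intermediate region using $E_0$ and the fact that $|\bar v'+u|\gtrsim|v_0|$ there) to get a bound uniform in $v_0$. You anticipate that \eqref{e:extra-int} is needed for a near-origin singularity, which is the right instinct, but the existence of the $r^{-s}$ term and the case analysis it forces is a structural feature of the proof that your sketch does not reach.
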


\begin{proof}
For $|v_0| < 4$ (or any other fixed constant) the result follows from \cite[Lemma 3.5]{imbert2016weak}. Here, we concentrate on the case $|v_0| \geq 4$.

Given $v' \in B_2$, recall that $\bar v' = v_0 + T_0 v'$ and write
\begin{align*}
\int_{\R^d \setminus B_r (v')} \bar K_f(v,v') \dv & \leq \frac1{|v_0|^{\gamma + 2s}} \int_{\R^d \setminus E_r(\bar v')} K_f(\bar v, \bar v') \dd \bar v \\
& \simeq \frac1{|v_0|^{\gamma + 2s}} \int_{\R^d \setminus E_r(\bar v')} |\bar v'-\bar v|^{-d -2s} \left\{ \int_{w \perp (\bar v'- \bar v)} f(\bar v+w) |w|^{\gamma + 2s +1} \dw \right\} \dd \bar v, \\
\intertext{we use polar coordinates for $\bar v$: $\bar v - \bar v' = \rho \sigma$; we denote $r_\sigma$ to be $\max \{\rho  : \rho \sigma \in E_r\}$,}
& = \frac 1{|v_0|^{\gamma + 2s}} \int_{S^{d-1}} \int_{r_\sigma}^\infty \rho^{-d -2s} \left\{ \int_{w \perp \sigma} f(\bar v'+ \rho \sigma +w) |w|^{\gamma + 2s +1} \dw \right\} \rho^{d-1} \dd \rho \dd \sigma, \\
\intertext{writing $u = \rho \sigma + w$, we have $\dd w \dd \rho = \dd u$ and $\rho = u \cdot \sigma$,}
& \lesssim \frac 1{|v_0|^{\gamma + 2s}} \int_{S^{d-1}} \int_{\{u:u \cdot \sigma > r_\sigma\}} f(\bar v'+ u) |u|^{\gamma + 2s +1} (u \cdot \sigma)^{-1-2s} \dd u \dd \sigma, \\
& = \frac 1{|v_0|^{\gamma + 2s}} \int_{S^{d-1}} \int_{\{u:u \cdot \sigma > r_\sigma\}} f(\bar v'+ u) |u|^{\gamma} \left(\frac{u}{|u|} \cdot \sigma \right)^{-1-2s} \dd u \dd \sigma, \\
& = \frac 1{|v_0|^{\gamma + 2s}} \int_{\R^d} f(\bar v'+ u) |u|^{\gamma} \left( \int_{\{\sigma : u \cdot \sigma > r_\sigma\}} \left(\frac{u}{|u|} \cdot \sigma \right)^{-1-2s} \dd \sigma \right) \dd u.
\end{align*}

We computed above $r_\sigma = \rho$, see \eqref{e:rho-sigma} with $u = \rho \sigma = r_\sigma \sigma$. In particular, this implies
\begin{align*}
  r_\sigma &= \frac{r}{\sqrt{(\sigma \cdot v_0)^2 (1-|v_0|^{-2}) +1}}.\\
\intertext{For $|v_0|$ ``large'' (as a matter of fact $|v_0|\ge 2$), we have}
  r_\sigma  & \approx \frac{r}{\sqrt{1 + (\sigma \cdot v_0)^2}}.
\end{align*}
Thus, we are left with the inequality,
\begin{align*}
  \int_{\R^d \setminus B_r (v')} \bar K_f(v,v') \dv  &\lesssim \frac 1{|v_0|^{\gamma + 2s}} \int_{\R^{d}} f(\bar v'+ u) |u|^{\gamma} \left( \int_{\left\{\sigma : u \cdot \sigma >\frac{r}{\sqrt{1 + (\sigma \cdot v_0)^2}} \right\} } \left(\frac{u}{|u|} \cdot \sigma \right)^{-1-2s} \dd \sigma \right) \dd u.
\end{align*}
We need to estimate the inner integral now. For that, it is essential to understand the smallest value of $\sigma \cdot u/|u|$ in the set $\left\{\sigma : u \cdot \sigma >\frac{r}{\sqrt{1 + (\sigma \cdot v_0)^2}} \right\}$. Let us write $e = u/|u|$ and $v_0 = a e + b$. We see that for every $\sigma$ in the domain of integration we have
\[ (e \cdot \sigma) \sqrt{ 1 + (e \cdot \sigma)^2 a^2 + |b|^2} > r / |u|.\]
Therefore, either $(e\cdot \sigma) \sqrt{1+|b|^2} \gtrsim r / |u|$ or $(e\cdot \sigma) |e\cdot \sigma| |a| \gtrsim r / |u|$. In other words,
\[ e \cdot \sigma \gtrsim \min\left( \frac{r}{|u| (1+|b|^2)^{1/2}} , \left( \frac{ r  }{ |a| |u| } \right)^{1/2} \right) =: \rho_0.\]
Therefore
\begin{align*} 
  \int_{\left\{u \cdot \sigma >\frac{r}{\sqrt{1 + (\sigma \cdot v_0)^2}} \right\}}  \left(\frac{u}{|u|} \cdot \sigma \right)^{-1-2s} \dd \sigma
  &\lesssim \int_{\{ e \cdot \sigma  \gtrsim \rho_0\}} \left(e \cdot \sigma \right)^{-1-2s} \dd \sigma, \\
&\lesssim \rho_0^{-2s},\\
&= \min\left( \frac{r}{|u| (1+|b|^2)^{1/2}} , \left( \frac{ r  }{ |a| |u| } \right)^{1/2} \right)^{-2s}, \\
&\leq r^{-2s} |u|^{2s} (1+|b|^2)^s + r^{-s} |u|^s |a|^s,\\
&= r^{-2s} |u|^{2s} \left(1+|v_0|^2 - \frac{(v_0 \cdot u)^2}{|u|^2} \right)^s + r^{-s} |u|^s \left\vert \frac{u}{|u|} \cdot v_0 \right\vert^s.
\end{align*}
Thus, we are left with
\[   \int_{\R^d \setminus B_r (v')} \bar K_f(v,v') \dv \lesssim I_1 + I_2,\]
where
\begin{align*}
I_1 &= r^{-2s} |v_0|^{-\gamma - 2s} \int_{\R^d} f(\bar v'+ u) |u|^{\gamma+2s} \left(1 + |v_0|^2 - \frac{(v_0 \cdot u)^2}{|u|^2} \right)^s \dd u, \\
I_2 &= r^{-s} |v_0|^{-\gamma - s} \int_{\R^d} f(\bar v'+ u) |u|^{\gamma+s} \dd u.
\end{align*}

The term $I_1$ is bounded thanks to Lemma~\ref{lem:barlambda}.

The term $I_2$ is lower order in the sense that it has a smaller power of $r$. Note that only the values of $r \in (0,2)$ are relevant for \eqref{e:bounded2}, since for larger values of $r$ the domain of integration is empty. We still need to make sure that the factor multiplying $r^{-s}$ in the definition of $I_2$ is bounded independently of $v_0$.

When $\gamma+s \geq 0$, we apply Lemma \ref{l:convolution-moments},
\[ \int_{\R^d} f(\bar v'+ u) |u|^{\gamma+s} \dd u \lesssim (1+|\bar v'|)^{\gamma+s} \approx (1+|v_0|)^{\gamma+s}\]
yielding
\[ I_2 \lesssim r^{-s}.\]

The case $\gamma+s < 0$ is more involved. We split the integral in $I_2$ into three subdomains
\[ 
D_1 = \R^d \setminus B_{|v_0|/4}, \qquad D_2 = B_{|v_0|^{-1}}, \qquad D_3 = \R^d \setminus (D_1 \cup D_2).
\]
We estimate each subintegral separately. In $D_1$, we have $|u|\gtrsim |v_0|$. Thus,
\[
\int_{D_1} f(\bar v'+ u) |u|^{\gamma+s} \dd u \leq  M_0 |v_0|^{\gamma+s}.
\]
In $D_2$, we have $|u|^{\gamma+s} \le |u|^\gamma |v_0|^{-s}$. Therefore
\begin{align*}
\int_{D_2} f(\bar v'+ u) |u|^{\gamma+s} \dd u &\lesssim  |v_0|^{-s} \int_{D_2} f(\bar v'+ u) |u|^{\gamma} \dd u, \\
&\leq |v_0|^{-s} (M_0 + C_\gamma), \\
&\leq |v_0|^{\gamma+s} (M_0 + C_\gamma) \qquad \qquad \text{ using that $\gamma+2s \ge 0$.}
\end{align*}
We estimate the integral on $D_3 $ using the upper bound on the energy: $E_0$. We use the fact that in $D_3$, we have on the one hand $|\bar v' + u | \gtrsim |v_0|$ and on the other hand $|u| \ge |v_0|^{-1}$.  In particular, 
\begin{align*}
  \int_{D_3} f(\bar v'+ u) |u|^{\gamma+s} \dd u &\le E_0 |v_0|^{-2} |v_0|^{-\gamma -s} ,\\
&\lesssim E_0 |v_0|^{\gamma+s} \qquad \text{using that $\gamma+2s \geq 0$ and $2s \leq 2$.}
\end{align*}

Adding the three terms together, we conclude that $I_2 \leq C r^{-s}$, for a constant $C$ independent of $v_0$. The proof is now complete.
\end{proof}

\subsection{First cancellation condition}

The cancellation condition \eqref{e:cancellation1} involves an integral in the principal value sense. We have to be careful when we compute a change of variables of such an integral. There is a delicate cancellation that takes place and we have to make sure that the change of variables does not cause an imbalance around the origin that would ruin this cancellation. The following lemma is precisely what we need in order to carry out the rest of our computations.

\begin{lemma}[Modified principal value]\label{lem:cancel}
 Assuming that $D^2 f \in L^1(\R^d, (1+|v|)^{\gamma+2s}\dv)$, we have 
\begin{equation}\label{e:cancel}
\lim_{R\to 0^+} \int_{B_R \setminus E_R} (K_f(\bar v,\bar v + w) -K_f(\bar v + w,\bar v)) \dw = 0.
\end{equation}
\end{lemma}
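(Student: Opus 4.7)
The plan is to combine a symmetrization argument with a second-order Taylor expansion. First, rewrite the integrand: setting $k(v,w) := K_f(v, v+w)$ and using the symmetry $k(v,w) = k(v,-w)$ built into the Carleman representation \eqref{e:kf}, we have
\[ K_f(\bar v+w, \bar v) = k(\bar v+w, -w) = k(\bar v+w, w), \]
so the integrand becomes $k(\bar v, w) - k(\bar v+w, w)$, a finite difference in the first argument.

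Next, observe that both $B_R$ and $E_R = T_0(B_R)$ are symmetric under $w \mapsto -w$, hence so is $B_R \setminus E_R$. The integrand is singular like $|w|^{-d-2s}$ near the origin and in general only conditionally integrable, but on a symmetric domain its odd-in-$w$ part integrates to $0$ in the principal-value sense. Replacing by the even part and again using $k(v,-w)=k(v,w)$, the quantity of interest equals
\[ \int_{B_R \setminus E_R} \left( k(\bar v, w) - \tfrac12\bigl[k(\bar v+w, w) + k(\bar v-w, w)\bigr] \right) \dw . \]
By a second-order Taylor expansion of $v \mapsto k(v, w)$ at $v=\bar v$ (the first-order terms cancel in the symmetric average), this new integrand equals
\[ -\tfrac12 \int_0^1 (1-t) \left[ D_v^2 k(\bar v + tw, w) + D_v^2 k(\bar v - tw, w)\right](w,w) \dt, \]
so it is pointwise bounded by $C\, |w|^2 \sup_{|\tau|\leq 1} |D_v^2 k(\bar v + \tau w, w)|$.

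To estimate $D_v^2 k$, I differentiate formula \eqref{e:A} under the integral sign—which is precisely what the integrability hypothesis on $D^2 f$ legitimizes—to get
\[ |D_v^2 k(v, w)| \lesssim |w|^{-d-2s} \int_{u \perp w} |D^2 f(v+u)| \, |u|^{\gamma+2s+1} \du. \]
Multiplying by $|w|^2$ and integrating in $w \in B_R$ using polar coordinates $w=\rho\sigma$ yields a radial factor $\int_0^R \rho^{1-2s}\,\mathrm{d}\rho = O(R^{2-2s})$ (finite since $s<1$) times an angular integral. Using the identity \eqref{e:cdv}, the latter is controlled by
\[ \int_{\R^d} |D^2 f(\bar v + \tau w + u)| \, |u|^{\gamma+2s} \du, \]
which, because $\bar v$ is fixed and $\gamma+2s \in [0,2]$, is uniformly bounded in $|\tau w|\leq R$ by $\|D^2 f\|_{L^1((1+|v|)^{\gamma+2s}\dv)}$. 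Therefore $\left|\int_{B_R\setminus E_R} (\cdots) \dw\right| \lesssim R^{2-2s} \to 0$ as $R \to 0^+$.

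The main obstacle is that the integrand $K_f(\bar v,\bar v+w)-K_f(\bar v+w,\bar v)$ is of the borderline non-integrable size $|w|^{-d-2s}$, so a direct estimate on the set of measure $|B_R\setminus E_R|\sim R^d$ gives $R^{-2s}$, which blows up. The symmetrization followed by Taylor expansion is what extracts the extra $|w|^2$ of cancellation and replaces the singularity by $|w|^{2-d-2s}$, which is locally integrable; the weighted $L^1$ hypothesis on $D^2 f$ is tailored exactly to make the transverse integral against $|u|^{\gamma+2s+1}$ finite and uniform in a neighborhood of $\bar v$.
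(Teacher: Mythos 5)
Your overall strategy coincides with the paper's: symmetrize the integrand over the $w\mapsto -w$ symmetric domain $B_R\setminus E_R$ so that the first-order terms cancel, reduce to a second difference, and pay for it with $D^2f$; the expected rate $R^{2-2s}$ is also correct. However, the final quantitative step has a genuine gap. After Taylor expanding you arrive at the pointwise bound
\[
|w|^2\sup_{|\tau|\le 1}\bigl|D_v^2 k(\bar v+\tau w,w)\bigr|
\lesssim |w|^{2-d-2s}\sup_{|\tau|\le1}\int_{u\perp w}|D^2f(\bar v+\tau w+u)|\,|u|^{\gamma+2s+1}\dd u,
\]
and you claim the inner hyperplane integral is uniformly bounded by $\|D^2f\|_{L^1((1+|v|)^{\gamma+2s}\dv)}$ ``using \eqref{e:cdv}''. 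This step fails: the quantity on the right is a codimension-one integral of $|D^2f|$ over the hyperplane $\{\bar v+\tau w+u:\ u\perp w\}$, and a function that is merely in a weighted $L^1(\R^d)$ has no well-defined (let alone uniformly bounded) trace on hyperplanes; the identity \eqref{e:cdv} converts one iterated $d$-dimensional integral into another and cannot upgrade a single hyperplane integral to a full-space one. Taking the supremum over $\tau$ makes this unrecoverable, and for the same reason even the pointwise existence of $D_v^2k(\cdot,w)$ is not guaranteed under the hypothesis.

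The paper's proof repairs exactly this point: it keeps the Taylor parameter $\tau$ inside the integral, writes $(\delta^2f)(\bar v+u,w)=\int_{-1}^1(1-|\tau|)D^2f(\bar v+u+\tau w)\,w\cdot w\,\dd\tau$, and performs the change of variables $(\tau,u)\mapsto z=u+\tau w$ with Jacobian $\dd z=|w|\dd u\dd\tau$. This converts the $(\tau,u)$ double integral into a genuine $d$-dimensional integral of $|D^2f(\bar v+z)|\,|z|^{\gamma+2s+1}$ over the slab $\{-|w|^2<z\cdot w<|w|^2\}$, at the price of one factor of $|w|$, leaving $|w|^{1-d-2s}$. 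Switching the order of integration, the $w$-integral of $|w|^{1-d-2s}$ over $B_R\cap\{-|w|^2<z\cdot w<|w|^2\}$ is $\lesssim R^{2-2s}|z|^{-1}$ (estimate \eqref{e:2ballsout}), which restores the weight $|z|^{\gamma+2s}$ and yields $(I)\lesssim R^{2-2s}\|D^2f\|_{L^1((1+|v|)^{\gamma+2s}\dv)}\to0$. To fix your argument you must therefore not discard the $\tau$-integration via a supremum; the averaging in $\tau$ (transverse to the hyperplane $w^\perp$) is precisely what makes the $L^1$ hypothesis on $D^2f$ usable.
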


The result of Lemma~\ref{lem:cancel} is certainly to be expected by common sense. However, its rigorous verification requires some work. We prove it under the condition that $D^2 f \in L^1(\R^d, (1+|v|)^{\gamma+2s}\dv)$. It holds under much more general conditions on the function $f$. However, due to the kind of solutions that we work with, the scope of Lemma~\ref{lem:cancel} is enough for the purpose of this article. A proof of a version of Lemma~\ref{lem:cancel} under less restrictive assumptions on the function $f$ would require considerably more work. Note that Lemma~\ref{lem:cancel} is merely a qualitative result. There is no estimate resulting from this lemma in terms of the weighted $L^1$ norm of $D_v^2 f$. We invite the reader to skip its proof, that we include below for completeness.

\begin{proof}[Proof of Lemma~\ref{lem:cancel}]
For all $\bar v = v_0 + T_0 v$ with $v \in B_1$, we expand the integral in terms of the formula \eqref{e:A} and symmetrize it.
\begin{align*} 
(I) :=&  \int_{B_R \setminus E_R} (K_f(\bar v,\bar v + w) -K_f(\bar v + w,\bar v)) \dw , \\ 
 =& - \frac12 \int_{B_R \setminus E_R} \int_{u : u \perp w} (\delta^2 f) (\bar v+u, w)  A (|w|,|u|)  |w|^{-d-2s} |u|^{\gamma+2s+1} \dd u \dd w 
\end{align*}
with $(\delta^2 f) (v,w) =f(v+w) + f(v-w) -2f(v)$ and $A$ is a bounded. We express this second order differential quotient using  $D^2_v f$ using the elementary formula
\[ (\delta^2 f) (v,w) = \int_{-1}^1 (1-|\tau|)  D^2f(v+\tau w) w \cdot w  \dd \tau.\]

Thus, we bound the integral in terms of $\|D^2_v f\|_{L^1(\R^d, |v|^{\gamma+2s} \dd v)}$.
\[
  (I) \lesssim \int_{B_R \setminus E_R}  \int_{u \perp w} \int_{-1}^1  |D^2_vf (\bar v+u + \tau w)| |w|^{-d-2s+2}  |u|^{\gamma+2s+1} \dd \tau \dd u \dd w.
\]

For each value of $w \in \R^d$, we make the change of variables $(\tau,u) \mapsto z = u +\tau w$; in particular $\dd z = |w| \dd u \dd \tau$ and $\tau = (z \cdot w)/|w|^2$ and $|u| \le |z|$. Therefore, we estimate the integral above by
\begin{align*} 
 (I) &\lesssim \int_{B_R \setminus E_R} \int_{-|w|^2 < z \cdot w < |w|^2} |D^2_vf (\bar v+z)| |w|^{-d-2s+1} |z|^{\gamma+2s+1} \dd z \dd w, \\
\intertext{switching the order of integration,}
&= \int_{z \in \R^d} |z|^{\gamma+2s+1} |D^2_v f(\bar v+z)| \left\{\int_{\substack{w \in B_R, \\ -|w|^2 < z \cdot w < |w|^2}} |w|^{1-d-2s} \dd w \right\} \dd z, \\
  \intertext{we claim that,}
     &\lesssim R^{2-2s} \int_{z \in \R^d} |D^2_vf (\bar v+z)| |z|^{\gamma+2s} \dz.
\end{align*}

In order to justify the last inequality, observe that the set $\{w :-|w|^2 < z \cdot w < |w|^2\}$ is the complement of two balls of radius $|z|/2$ centered at $z/2$ and $-z/2$ respectively. The intersection of that set with the ball $B_R$ is of volume $\lesssim R^{d+1} |z|^{-1}$. Thus, the following inequality follows by an elementary computation.
\begin{equation}
  \label{e:2ballsout}
  \int_{\substack{w \in B_R, \\ -|w|^2 < z \cdot w < |w|^2}} |w|^{1-d-2s} \dd w \lesssim R^{2-2s} |z|^{-1}.
  \end{equation}

Since $s\in (0,1)$, $R^{2-2s}$ converges to zero as $R \to 0$, which concludes the proof.
\end{proof}

\begin{lemma}[First cancellation condition]\label{l:cc1}
Let $f: \R^d \to \R$ be a function so that Assumption~\ref{a:hydro-assumption} holds. If $\gamma<0$, we assume \eqref{e:extra-int} in addition. Then, the kernel $\bar K_f$ given in \eqref{e:barKf} satisfies \eqref{e:cancellation1} with parameters depending on $M_0$, $E_0$, $\gamma$, $s$, $d$, and also $C_\gamma$ if $\gamma<0$. More precisely, for $v \in B_2$,
\begin{align*} 
\left| \PV \int_{\R^d} (\bar K_f(v,v') - \bar K_f(v',v) ) \dv' \right| &\lesssim  |v_0|^{-\gamma -2s}  \int_{\R^d} f(\bar v') |\bar v'-\bar v|^\gamma \dd v', \\
&\lesssim \begin{cases}
(M_0 + E_0) |v_0|^{-2s} & \text{if } \gamma \in [0,2], \\
(M_0 + C_\gamma) |v_0|^{-\gamma-2s} & \text{if } \gamma < 0.
\end{cases}
\end{align*}
\end{lemma}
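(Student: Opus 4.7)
The plan is to change variables back to the untransformed Boltzmann kernel $K_f$ and reduce the inequality to the classical cancellation identity for Boltzmann.

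First I would use \eqref{e:barKf} to write, with $\bar v = v_0 + T_0 v$ and $\bar v' = v_0 + T_0 v'$,
\[
\bar K_f(v,v') - \bar K_f(v',v) = |v_0|^{-1-\gamma-2s}\bigl(K_f(\bar v,\bar v') - K_f(\bar v',\bar v)\bigr).
\]
Changing variables $v' \mapsto \bar v'$ with Jacobian $|v_0|$ (since $\det T_0 = 1/|v_0|$), the principal value transforms---up to the shape of the excluded region---into $|v_0|^{-\gamma-2s}$ times $\PV \int (K_f(\bar v,\bar v') - K_f(\bar v',\bar v))\,d\bar v'$. The subtle point is that the principal value on the left uses spherical holes $B_R(v)$, whereas the change of variables produces ellipsoidal holes $\bar v + E_R$ with $E_R = T_0(B_R) \subset B_R$. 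Lemma~\ref{lem:cancel} is tailored exactly to say that the annular discrepancy $\int_{B_R \setminus E_R}(K_f(\bar v,\bar v + w)-K_f(\bar v+w,\bar v))\,dw$ vanishes as $R \to 0$, so the two principal values coincide.

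Next I would invoke the classical cancellation lemma for the Boltzmann kernel, which underlies the decomposition $Q = Q_1 + Q_2$ of \eqref{e:Q-bis} (see \cite{silvestre2016new}): comparing the Carleman-type representation $\PV \int (f(v')K_f(v,v') - f(v)K_f(v',v))\,dv'$ of $Q(f,f)$ with $\mathcal L_{K_f}f + c_b (f \ast |\cdot|^\gamma)f$ forces
\[
\PV \int_{\R^d}(K_f(\bar v,\bar v') - K_f(\bar v',\bar v))\,d\bar v' = c_b\,(f \ast |\cdot|^\gamma)(\bar v),
\]
which gives the first claimed inequality. For the second, when $\gamma \in [0,2]$, Lemma~\ref{l:convolution-moments} yields $(f \ast |\cdot|^\gamma)(\bar v) \lesssim (1+|\bar v|)^\gamma M_0 + E_0 \lesssim |v_0|^\gamma(M_0+E_0)$, since $|\bar v| \approx |v_0|$ for $|v_0| \geq 2$ and $v \in B_2$; multiplying by $|v_0|^{-\gamma-2s}$ produces $(M_0+E_0)|v_0|^{-2s}$. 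When $\gamma < 0$ I would split the convolution at $|u|=1$: the near part is bounded by $C_\gamma$ via \eqref{e:extra-int} and the far part by $M_0$ (using $|u|^\gamma \leq 1$ there), giving $(M_0 + C_\gamma)|v_0|^{-\gamma-2s}$. The case $|v_0|<2$ is immediate from \cite{imbert2016weak}, as noted in Remark~\ref{r:large-v0}.

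The main obstacle is the matching of principal values in the first step. The cancellation between $K_f(\bar v,\cdot)$ and $K_f(\cdot,\bar v)$ near the singularity is delicate since neither term is separately integrable there, and replacing spherical by ellipsoidal exclusions could a priori produce a nonzero residual. Lemma~\ref{lem:cancel}, which relies on the $L^1$-integrability of $D^2_v f$ against $(1+|v|)^{\gamma+2s}$ built into Definition~\ref{d:solution} together with the planar bound \eqref{e:2ballsout}, is precisely the tool that rules this out.
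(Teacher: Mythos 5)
Your proposal matches the paper's proof essentially step for step: the same change of variables with the factor $|v_0|^{-1-\gamma-2s}\cdot|v_0| = |v_0|^{-\gamma-2s}$, the same appeal to Lemma~\ref{lem:cancel} to reconcile the spherical and ellipsoidal exclusion regions in the principal value, the same reduction to the classical cancellation lemma (the paper cites \cite[Lemma~3.6]{imbert2016weak}, which is the untransformed version of exactly the identity you write), and the same final bounds via Lemma~\ref{l:convolution-moments} for $\gamma\in[0,2]$ and \eqref{e:extra-int} for $\gamma<0$. The argument is correct and there is no substantive difference from the paper's proof.
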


The inequality in Lemma~\ref{l:cc1} implies \eqref{e:cancellation1} because the tail of the integral (\textit{i.e.} $|v'-v|>1/4$) is bounded by Lemmas~\ref{l:bc1} and \ref{l:bc2}.

\begin{proof}[Proof of Lemma~\ref{l:cc1}]
When $|v_0|<2$, the result is proved in \cite[Lemma 3.6]{imbert2016weak}, and it corresponds to the classical cancellation lemma. Here, we focus on the case $|v_0| \geq 2$.

As before, for $v \in B_2$, we write $\bar v = v_0 + T_0 v \in B_2 (v_0)$. Using Lemma~\ref{lem:cancel}, we compute
\begin{align*}
\PV \int_{\R^d} (\bar K_f(v,v') &- \bar K_f(v',v) ) \dv'  \\ &= |v_0|^{-\gamma -2s-1} \PV \int_{\R^d} (K_f(v_0+T_0v,v_0+T_0 v') -K_f(v_0+T_0v',v_0+T_0 v)) \dv' \\
 & =  |v_0|^{-\gamma -2s-1} \lim_{R \to 0^+} \int_{|z|\ge R} (K_f(\bar v,\bar v + T_0 w) -K_f(\bar v + T_0 w,\bar v)) \dw \\
                                & =  |v_0|^{-\gamma -2s} \lim_{R \to 0^+} \int_{\R^d \setminus E_R} (K_f(\bar v,\bar v + \bar w) -K_f(\bar v + \bar w,\bar v)) \dd \bar w \\
                                & =  |v_0|^{-\gamma -2s} \lim_{R \to 0^+} \int_{|\bar w|\ge R} (K_f(\bar v,\bar v + \bar w) -K_f(\bar v + \bar w,\bar v)) \dd \bar w \\
                                & =  |v_0|^{-\gamma -2s} \PV \int_{\R^d} (K_f(\bar v,\bar v + \bar w) -K_f(\bar v + \bar w,\bar v)) \dd \bar w. 
\end{align*}
We now  use \cite[Lemma~3.6]{imbert2016weak} and get
\[ 
 \left| \PV \int_{\R^d} (\bar K_f(v,v') - \bar K_f(v',v) ) \dv' \right| \le C |v_0|^{-\gamma -2s}  \int_{\R^d} f(\bar v') |\bar v'-\bar v|^\gamma \dz.
\]
If $\gamma <0$, we use \eqref{e:extra-int} and get 
\[ 
\left| \PV \int_{\R^d} (\bar K_f(v,v') - \bar K_f(v',v) ) \dv' \right| \lesssim |v_0|^{-\gamma -2s} (M_0 + C_\gamma).
\]
while for $\gamma >0$, we estimate it using Lemma \ref{l:convolution-moments},
\[ 
\left| \PV \int_{\R^d} (\bar K_f(v,v') - \bar K_f(v',v) ) \dv' \right| \lesssim |v_0|^{-\gamma -2s} (E_0 + M_0 (1+|\bar v|)^\gamma ) \lesssim (M_0 + E_0) |v_0|^{-2s}.
\] 
The proof is now complete. 
\end{proof}

\subsection{Second cancellation condition}

Like in the first cancellation condition, the second cancellation \eqref{e:cancellation2} also involves the principal value of an integral. In this case, the following technical lemma is the one that ensures that we can perform the change of variables.
\begin{lemma}[Modified principal value]\label{l:cancel-2}
 Assuming that $\nabla_v f \in L^1(\R^d, (1+|v|)^{\gamma+2s}\dv )$, we have 
\begin{equation} \label{e:claim}
 \lim_{R\to 0} \int_{B_R \setminus E_R} w K_f(\bar v+w,\bar v) \dd w = 0.
\end{equation}
\end{lemma}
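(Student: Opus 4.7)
The plan is to exploit the symmetry of the domain $B_R \setminus E_R$ under the reflection $w \mapsto -w$. Both $B_R$ and the ellipsoid $E_R = T_0(B_R)$ are symmetric with respect to the origin since $T_0$ is linear, so $B_R \setminus E_R$ is symmetric too. When integrating $w \, K_f(\bar v + w, \bar v)$, the factor $w$ is odd in $w$, so only the odd part of $K_f(\bar v+w,\bar v)$ in $w$ contributes:
\[
  \int_{B_R \setminus E_R} w\, K_f(\bar v + w, \bar v) \dw = \int_{B_R \setminus E_R} \tfrac{w}{2} \bigl( K_f(\bar v+w, \bar v) - K_f(\bar v-w,\bar v)\bigr) \dw.
\]
This gains one order of vanishing in $w$ near the origin, which is the crux of the argument.

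Next I would use formula \eqref{e:A} to write the difference explicitly. In
\[
K_f(\bar v \pm w, \bar v) = |w|^{-d-2s}\int_{\xi \perp w} f(\bar v \pm w + \xi)\, A(|w|,|\xi|)\, |\xi|^{\gamma+2s+1}\, \dd\xi,
\]
only the factor $f(\bar v \pm w + \xi)$ depends on the sign of $w$ (the perpendicular hyperplane $\{\xi \perp w\}$, the amplitude $A$, and $|\xi|^{\gamma+2s+1}$ are all even in $w$). The fundamental theorem of calculus gives
\[
f(\bar v+w+\xi) - f(\bar v-w+\xi) = \int_{-1}^{1} w \cdot \nabla_v f(\bar v + \tau w + \xi)\, \dd\tau,
\]
which provides an additional factor of $|w|$ and replaces $f$ by $\nabla_v f$.

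The final step is to run the same computation as in Lemma~\ref{lem:cancel}: for each fixed $w$, change variables $(\tau, \xi) \mapsto z = \tau w + \xi$ (Jacobian $|w|$, giving the constraint $|z \cdot w| < |w|^2$ and $|\xi| \leq |z|$), then swap the order of integration. The inner integral over $w$ reduces to exactly the quantity estimated in \eqref{e:2ballsout}, which is bounded by $R^{2-2s}|z|^{-1}$. This yields
\[
\left| \int_{B_R \setminus E_R} w\, K_f(\bar v+w,\bar v) \dw \right| \lesssim R^{2-2s} \int_{\R^d} |\nabla_v f(\bar v+z)|\, |z|^{\gamma+2s}\, \dz,
\]
and the integral on the right is finite (for fixed $\bar v$) under the assumption $\nabla_v f \in L^1(\R^d, (1+|v|)^{\gamma+2s}\dv)$, since for $\gamma+2s \geq 0$ one has $|v-\bar v|^{\gamma+2s} \lesssim (1+|v|)^{\gamma+2s} + |\bar v|^{\gamma+2s}$. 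Because $s<1$, the factor $R^{2-2s} \to 0$ as $R \to 0$, establishing \eqref{e:claim}. I expect no serious obstacle beyond carefully recording that the symmetry of $B_R \setminus E_R$ is preserved by linearity of $T_0$, and that the $R^{2-2s}$ gain from \eqref{e:2ballsout} exactly compensates for the lower order of the integrand here compared to the proof of Lemma~\ref{lem:cancel}.
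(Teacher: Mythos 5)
Your proposal is correct and follows essentially the same route as the paper's proof: symmetrize over the origin-symmetric set $B_R\setminus E_R$ to isolate the odd part of the kernel, write the first-order difference $f(\bar v+w+u)-f(\bar v-w+u)$ as an integral of $\nabla_v f$ along the segment, change variables $(\tau,u)\mapsto z=\tau w+u$, swap the order of integration, and invoke \eqref{e:2ballsout} to obtain the factor $R^{2-2s}\to 0$. The only addition beyond the paper's write-up is your explicit remark on why $\int|\nabla_v f(\bar v+z)|\,|z|^{\gamma+2s}\,\dd z$ is finite under the stated hypothesis, which is a harmless (and correct) elaboration.
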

Like in Lemma~\ref{lem:cancel}, the identity \eqref{e:claim} is clearly to be expected by common sense but it takes some work to prove it rigorously. As before, the condition $D f \in L^1(\R^d, (1+|v|)^{\gamma+2s}\dv)$ is a qualitative requirement that does not affect any of our estimates. 

\begin{proof}[Proof of Lemma~\ref{l:cancel-2}]
As in the proof of Lemma~\ref{lem:cancel}, we expand the integral using \eqref{e:A} and symmetrize it in $w$.
\begin{align*} 
 \int_{B_R \setminus E_R} w &K_f(\bar v + w, \bar v) \dw  = 
\int_{B_R \setminus E_R} w \left\{ \int_{u \perp w} f(\bar v+w  + u) 
A (|u|,|w|) \frac{|u|^{\gamma+2s+1}}{|w|^{d+2s}} \du \right\} \dd w, \\
&= \frac 12 \int_{B_R \setminus E_R} w \left\{ \int_{u \perp w} ( f(\bar v+w  + u) - f(\bar v-w  + u) )
A (|u|,|w|) \frac{|u|^{\gamma+2s+1}}{|w|^{d+2s}} \du \right\} \dd w.
\end{align*}
We write the increment $f(\bar v+w  + u) - f(\bar v-w  + u)$ in terms of the integral of the derivative along the segment, and proceed like in the proof of Lemma~\ref{lem:cancel}.
\begin{align*} 
\bigg\vert \int_{B_R \setminus E_R} w &K_f(\bar v + w, \bar v) \dw  \bigg\vert  \\
&\lesssim \int_{B_R \setminus E_R} \int_{u \perp w}
|u|^{\gamma+2s+1} |w|^{2-d-2s} \int_{-1}^1 |\nabla f(\bar v + \tau w + u)| \dd \tau \du  \dd w, \\
\intertext{for fixed $w$, we write $z = \tau w + u$ and observe that $\dd z = |w| \dd u \dd \tau$ and $|u| \le |z|$,}
& \leq \int_{B_R \setminus E_R}  \int_{-|w|^2 < z \cdot w < |w|^2} |z|^{\gamma+2s+1} |w|^{1-d-2s} |\nabla f(\bar v + z)| \dd z \dd w, \\
\intertext{switching the order of integration,}
                                      &\leq \int_{z \in \R^d} |z|^{\gamma+2s+1} |\nabla f(\bar v+z)| \left\{\int_{\substack{w \in B_R, \\ -|w|^2 < z \cdot w < |w|^2}} |w|^{1-d-2s} \dd w \right\} \dd z, \\
  \intertext{using again \eqref{e:2ballsout},}
                                      &\lesssim R^{2-2s} \int_{z \in \R^d} |z|^{\gamma+2s} |\nabla f(\bar v+z)| \dd z.
\end{align*}
This converges to zero as $R \to 0$, so the proof is concluded.
\end{proof}

The following auxiliary lemma contains an estimate that will be useful in the next lemma.
\begin{lemma} \label{l:aux2}
Let $s \in [1/2,1)$. For any $v_0 \in \R^d \setminus B_2$, $v \in B_1(v_0)$, and $r \in (0,1)$, we have
\[ \int_{\R^d} f(v+z) |z|^{\gamma+1} \min(1,r^{2-2s} |z|^{2s-2}) \dd z \leq C |v_0|^{\gamma+2s-1} r^{1-2s},\]
for some constant $C$ depending on $M_0$, $E_0$, $s$, $\gamma$, dimension $d$, and $C_\gamma$ if $\gamma<0$, but not on $v_0$.
\end{lemma}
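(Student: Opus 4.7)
The plan is to change variables $u=v+z$, rewriting
\[
I = \int_{\R^d} f(u)\,g(u-v)\,\du, \qquad g(w):=\min\!\bigl(|w|^{\gamma+1},\ r^{2-2s}|w|^{\gamma+2s-1}\bigr),
\]
so $g(w)=|w|^{\gamma+1}$ for $|w|\le r$ and $g(w)=r^{2-2s}|w|^{\gamma+2s-1}$ otherwise. Set $R:=|v_0|\ge 2$; the hypothesis $v\in B_1(v_0)$ gives $|v|\in[R-1,R+1]$, so $|v|\approx R$. I would then split the domain at $|u|=R/4$: on $A:=\{|u|\le R/4\}$ one has $|u-v|\ge R/4$, while on $B:=\{|u|>R/4\}$ Chebyshev's inequality and the energy bound furnish the small-mass estimate $\int_B f\lesssim E_0/R^2$.

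On $A$ (treating the bounded regime $R<4r$ separately as a routine bounded case), $g(u-v)=r^{2-2s}|u-v|^{\gamma+2s-1}$. If $\gamma+2s-1\le 0$ one bounds $|u-v|^{\gamma+2s-1}\le(R/4)^{\gamma+2s-1}$; if $\gamma+2s-1\in(0,1]$ one decomposes $|u-v|^{\gamma+2s-1}\lesssim|u|^{\gamma+2s-1}+R^{\gamma+2s-1}$ and controls $\int f|u|^{\gamma+2s-1}\le M_0+E_0$. Either way $\int_A fg\lesssim r^{2-2s}R^{\gamma+2s-1}\le r^{1-2s}R^{\gamma+2s-1}$. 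The region $B$ I would split into $B_1=\{|u-v|\le r\}$ and $B_2=\{|u-v|>r\}$. On $B_2$ the $A$-analysis is repeated with the smaller Chebyshev mass, reducing the desired inequality to $r^{\gamma+2s}\le R^{\gamma+2s+1}$, which holds because $\gamma+2s\ge 0$, $r\le 1$, $R\ge 2$. On $B_1$, the sub-case $\gamma+1\ge 0$ is immediate via $g(u-v)\le r^{\gamma+1}$ combined with $\int_{B_1}f\lesssim E_0/R^2$.

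The main obstacle is the sub-case on $B_1$ where $\gamma+1<0$, which forces $\gamma\in[-2s,-1)$ and $s>1/2$. Here $|u-v|^{\gamma+1}$ is singular at $u=v$, so Chebyshev alone is useless, while \eqref{e:extra-int} alone cannot detect the far-from-origin smallness of $f$. The plan is to interpolate the two bounds: pick a threshold $\rho\in(0,r)$, split $B_1$ into $\{|u-v|\le\rho\}$ and $\{\rho<|u-v|\le r\}$; on the inner ball use $|u-v|^{\gamma+1}\le\rho|u-v|^\gamma$ with \eqref{e:extra-int} to get $\lesssim\rho C_\gamma$, and on the annulus use monotonicity $|u-v|^{\gamma+1}\le\rho^{\gamma+1}$ with Chebyshev to get $\lesssim\rho^{\gamma+1}E_0/R^2$. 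The two terms balance at $\rho_\ast\sim R^{2/\gamma}$, producing a combined bound of order $R^{2/\gamma}$.

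The conclusion then hinges on the algebraic inequality
\[
\tfrac{2}{\gamma}\le \gamma+2s-1\qquad\text{for }\gamma\in[-2s,-1),\ s\in(\tfrac12,1),
\]
equivalently $\gamma^2+(2s-1)\gamma-2\le 0$. I would verify this by checking that the largest negative root of the quadratic, $[-(2s-1)-\sqrt{(2s-1)^2+8}]/2$, is $\le -2s$; squaring reduces this to $8\ge 8s$, i.e.\ $s\le 1$. Combined with $r^{1-2s}\ge 1$ (since $1-2s\le 0$ and $r\le 1$), this yields $R^{2/\gamma}\le R^{\gamma+2s-1}\le R^{\gamma+2s-1}r^{1-2s}$. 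The edge case $\rho_\ast>r$ is handled by using $\rho=r$ directly and invoking the defining constraint $r<\rho_\ast\sim R^{2/\gamma}$ to recover the same bound.
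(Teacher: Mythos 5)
Your argument is correct and is essentially the paper's own proof up to a relabeling of the regions: the paper splits first according to which branch of the $\min$ is active ($|z|\lessgtr r$) and then by $|v+z|\lessgtr |v_0|/2$, handles the easy case $\gamma+2s\ge 1$ up front via Lemma~\ref{l:convolution-moments}, and in the hard sub-case $\gamma+1<0$ interpolates the same two quantities you do ($\int f\,|z|^\gamma\le C_\gamma$ from \eqref{e:extra-int} and the Chebyshev bound $\int_{B_r(v)}f\lesssim E_0|v_0|^{-2}$) via H\"older's inequality rather than your split-at-$\rho$-and-optimize, which is the same interpolation and yields the same critical bound $|v_0|^{2/\gamma}$. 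The proof then closes on exactly the same algebraic inequality $2/\gamma\le\gamma+2s-1$ (which the paper verifies more simply by noting $\gamma+2s-1\ge-1\ge 2/\gamma$ since $\gamma\in[-2s,-1)$), so no further comparison is needed.
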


\begin{proof}
Note that since $s \geq 1/2$ and $r<1$, we have $r^{1-2s} \geq 1$.

If $\gamma+2s \geq 1$ the inequality follows easily applying Lemma \ref{l:convolution-moments}. Indeed,
\begin{align*}
  \int_{\R^d} f(v+z) |z|^{\gamma+1} \min(1,r^{2-2s} |z|^{2s-2}) \dd z
  &\leq r^{2-2s} \int_{\R^d} f(v+z) |z|^{\gamma+2s-1} \dd z, \\
  & \lesssim r^{2-2s} (E_0 + (1+|v|^{\gamma+2s-1}) M_0),\\
  & \lesssim r^{2-2s} |v_0|^{\gamma+2s-1} \\
  & \lesssim r^{1-2s} |v_0|^{\gamma+2s-1}.
\end{align*}
We used that $r \in (0,1)$ and $|v|\approx |v_0|$.

So, let us concentrate on the more difficult case $\gamma+2s < 1$.

We split the domain of integration between $z \in B_r$ and $z \notin B_r$. Let us call each term $I_1$ and $I_2$,
\begin{align*} 
I_1 &:= \int_{|z| \le r} f (v  +z)  |z|^{\gamma+1}  \dz, \\
I_2 &:= r^{2-2s} \int_{|z| \ge r} f (v  +z)  |z|^{\gamma+2s-1}  \dd z.
\end{align*}
We now estimate $I_1$ and $I_2$ separately.

When $\gamma+1 \geq 0$, we easily get $I_1 \lesssim E_0 |v_0|^{-2} \lesssim |v_0|^{\gamma+2s-1} r^{1-2s}$, since $r \leq 1$ and $|v_0| \geq 2$. If $\gamma+1 < 0$, we use H\"older inequality and get
\begin{align*}
I_1 = \int_{|z|\le r} f(v+z) |z|^{\gamma+1} \dd z  & \le \left( \int_{|z|\le r} f(v+z) |z|^{\gamma} \dd z \right)^{\frac{\gamma}{1+\gamma}}
\left( \int_{|z|\le r} f(v+z)  \dd z \right)^{-\frac1{\gamma}} \\
& \lesssim C_\gamma^{\frac{\gamma}{1+\gamma}} E_0^{-1/\gamma} |v_0|^{\frac2{\gamma}}.
\end{align*}
Since $2/\gamma \leq -1$ and $\gamma+2s \geq 0$, then $\gamma+2s-1 \geq 2/\gamma$ and $I_1 \lesssim |v_0|^{\gamma+2s-1} \leq |v_0|^{\gamma+2s-1} r^{1-2s}$. This concludes the upper bound for $I_1$.

As far as $I_2$ is concerned, we further split the integral between two subdomains. If $v+z \in B_{|v_0|/2}$ and $|z| \ge r$, we use that $|z| \approx |v_0|$. If $v+z \notin B_{|v_0|/2}$ and $|z| \ge r$, we have,
\begin{align*}
  \int_{v+z \notin B_{|v_0|/2}} f(v+z) |z|^{\gamma+2s} \dd z &\lesssim \int_{v+z \notin B_{|v_0|/2}} f(v+z) (|v+z|^{\gamma+2s} +|v|^{\gamma +2s} ) \dd z\\
  & \lesssim \int_{v+z \notin B_{|v_0|/2}} f(v+z) (|v+z|^{2} |v_0|^{\gamma +2s-2} +|v_0|^{\gamma +2s}) \dd z\\
                                                             & \lesssim \int_{v+z \notin B_{|v_0|/2}} f(v+z) (|v+z|^{2} +1) |v_0|^{\gamma +2s-2} \dd z \\
  & \lesssim (E_0+M_0)|v_0|^{\gamma+2s-2}.
\end{align*}
With such an estimate at hand, we can now write,
\begin{align*}
  I_2 &\lesssim r^{2-2s} |v_0|^{\gamma+2s-1} \int_{v+z \in B_{|v_0|/2}} f(v+z) \dd z + r^{1-2s} \int_{v+z \notin B_{|v_0|/2}} f(v+z) |z|^{\gamma+2s} \dd z, \\
&\lesssim r^{2-2s} |v_0|^{\gamma+2s-1} M_0 + r^{1-2s} |v_0|^{\gamma+2s-2} (E_0+M_0) \lesssim r^{1-2s} |v_0|^{\gamma+2s-1}.
\end{align*}
Adding up the upper bounds for $I_1$ and $I_2$, we finish the proof of the lemma.
\end{proof}

The main result of this subsection is the following.

\begin{lemma}[Second cancellation condition]\label{l:cc2}
Let $s \geq 1/2$ and $f(v)$ be a function so that Assumption~\ref{a:hydro-assumption} holds. If $\gamma<0$, we assume \eqref{e:extra-int} as well. Then, the kernel $\bar K_f$ from \eqref{e:barKf} satisfies
  \eqref{e:cancellation2} with a parameter $\Lambda$ depending on $M_0$, $E_0$,
  $\gamma$, $s$, $d$, and  also $C_\gamma$ if $\gamma<0$.
\end{lemma}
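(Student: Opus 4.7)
The strategy is to reduce the bound for $\bar K_f$ to the second cancellation for $K_f$ via the change of variables, handling the ellipsoid-vs-ball discrepancy by a direct estimate using Lemma \ref{l:aux2}. The case $|v_0|\leq 2$ is immediate from \cite{imbert2016weak}, so I focus on $|v_0|\geq 2$. The Boltzmann symmetry $\bar K_f(v,v+w)=\bar K_f(v,v-w)$ makes the PV integral $\PV\int_{B_r}\bar K_f(v,v+w)\,w\,dw$ vanish by parity, so it suffices to control
\[
I(r):=-\PV\int_{|w|<r}\bar K_f(v+w,v)\,w\,dw.
\]

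After the change of variables $u=T_0 w$ (with $dw=|v_0|\,du$ and $B_r\mapsto \tilde E_r:=T_0(B_r)$), one finds
\[
I(r)=-|v_0|^{-\gamma-2s}\,\PV \int_{\tilde E_r} K_f(\bar v+u,\bar v)\,T_0^{-1}u\,du,
\]
and the identity $T_0^{-1}u=u+(|v_0|-1)(u\cdot e_{v_0})\,e_{v_0}$, with $e_{v_0}:=v_0/|v_0|$, splits $I(r)$ into two pieces. In each piece, I use that $K_f(\bar v,\bar v+u)$ is even in $u$ and $\tilde E_r$ is origin-symmetric: both $\int_{\tilde E_r} K_f(\bar v,\bar v+u)\,u\,du$ and $\int_{\tilde E_r}K_f(\bar v,\bar v+u)(u\cdot e_{v_0})\,du$ vanish, so I may freely subtract them to recast each integrand in the antisymmetric combination $K_f(\bar v+u,\bar v)-K_f(\bar v,\bar v+u)$, which is exactly what the Boltzmann cancellation lemma controls.

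Lemma \ref{l:cancel-2}, combined with the automatic vanishing of the $K_f(\bar v,\bar v+u)\,u$ part on the symmetric set $B_R\setminus\tilde E_R$, then allows me to exchange the ellipsoidal domain for a ball:
\[
\PV\int_{\tilde E_r}\bigl(K_f(\bar v+u,\bar v)-K_f(\bar v,\bar v+u)\bigr)u\,du = \PV\int_{B_r}(\cdots)\,u\,du-\int_{B_r\setminus\tilde E_r}(\cdots)\,u\,du,
\]
with the analogous identity having $(u\cdot e_{v_0})$ in place of $u$. The first PV integral is the second cancellation condition for $K_f$ at the point $\bar v$, proved in \cite{imbert2016weak}; after multiplying by the prefactor $|v_0|^{-\gamma-2s}$ and, for the second piece, exploiting the geometric constraint $|u\cdot e_{v_0}|\lesssim r/|v_0|$ on $\tilde E_r$ (which absorbs the factor $|v_0|-1$), both contributions are bounded by $1+r^{1-2s}$ uniformly in $v_0$.

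The main technical obstacle is the non-singular correction $\int_{B_r\setminus\tilde E_r}(\cdots)\,u\,du$ (and its variant with $(u\cdot e_{v_0})$). I would insert the representation \eqref{e:Kernel-approx} for $K_f$, swap the order of integration as in \eqref{e:cdv}, and reduce to an integral of $f(\bar v+z)$ weighted by the transverse thickness of the thin shell $B_r\setminus\tilde E_r$. This produces exactly the integral
\[
\int_{\R^d} f(\bar v+z)\,|z|^{\gamma+1}\,\min\!\bigl(1,r^{2-2s}|z|^{2s-2}\bigr)\,dz,
\]
for which Lemma \ref{l:aux2} delivers the bound $|v_0|^{\gamma+2s-1}\,r^{1-2s}$. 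The prefactor $|v_0|^{-\gamma-2s}$ turns this into $|v_0|^{-1}r^{1-2s}\leq r^{1-2s}$. The tail $|v'-v|>1/4$ of the original cancellation \eqref{e:cancellation2} is absorbed by Lemmas \ref{l:bc1} and \ref{l:bc2}, concluding the proof.
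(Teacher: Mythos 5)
Your skeleton coincides with the paper's: the symmetry of $\bar K_f$ kills the $K(v,v')$ half of \eqref{e:cancellation2}, the change of variables produces $|v_0|^{-\gamma-2s}\PV\int K_f(\cdot,\bar v)\,T_0^{-1}u\,\dd u$ over the ellipsoid $T_0(B_r)$, Lemma~\ref{l:cancel-2} justifies manipulating the principal value, and the domain is split into a ball plus a thin shell. The gap is in the ball term, which is the heart of the lemma. You antisymmetrize to $K_f(\bar v+u,\bar v)-K_f(\bar v,\bar v+u)$ and then invoke ``the second cancellation condition for $K_f$ at the point $\bar v$, proved in \cite{imbert2016weak}.'' That reference establishes \eqref{e:cancellation2} only for velocities in a fixed bounded set ($v\in B_{7/4}$); at $|\bar v|\approx|v_0|\to\infty$ its constant is uncontrolled, and determining its precise asymptotics is exactly what this lemma must prove. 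Concretely you need $\bigl|\PV\int_{B_r}\bigl(K_f(\bar v+u,\bar v)-K_f(\bar v,\bar v+u)\bigr)u\,\dd u\bigr|\lesssim|v_0|^{\gamma+2s-1}r^{1-2s}$, and for the component of $T_0^{-1}u$ along $v_0$ (which carries the factor $|v_0|-1$) an additional directional gain of $|v_0|^{-1}$. Neither follows from the cited local result. The paper gets both by direct computation in its Step~2: expanding $K_f$ via \eqref{e:A}, passing to polar coordinates, observing that the components of $\sigma$ perpendicular to $z$ cancel over a $(d-2)$-dimensional sphere, and reducing to the integral controlled by Lemma~\ref{l:aux2}, which supplies the crucial power $|v_0|^{\gamma+2s-1}$.

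There is also an internal inconsistency in your handling of the anisotropic piece: the constraint $|u\cdot e_{v_0}|\lesssim r/|v_0|$ that you use to absorb the factor $|v_0|-1$ holds on the ellipsoid $T_0(B_r)$, but you invoke it \emph{after} exchanging the ellipsoid for the ball $B_r$, where $|u\cdot e_{v_0}|$ can be of size $r$. You cannot simultaneously work on the ball (to quote a ball-based cancellation condition) and use the ellipsoidal constraint. A smaller point: the shell correction over $B_r\setminus T_0(B_r)$ does not reduce ``exactly'' to the Lemma~\ref{l:aux2} integral; the paper's Step~1 shows the spherical integral splits into two contributions, of which only one is handled by Lemma~\ref{l:aux2} while the other requires Lemma~\ref{lem:barlambda}, and the resulting bound is $r^{1-2s}$ with no spare factor of $|v_0|^{-1}$.
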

\begin{proof}
For $|v_0|<2$, the result was established in \cite[Lemma 3.7]{imbert2016weak}. Here, we focus on the case $|v_0| \geq 2$.

Given $r \in [0,1/4]$, note that for $s=1/2$, $r^{1-2s} = 1$, whereas for $s>1/2$, $r^{1-2s} > 1$.

For any  $v \in \R^d$ and $r>0$, we have
\[ \PV \int_{B_r(v)} (v'-v) \bar K_f(v,v') \dv' = \frac1{|v_0|^{\gamma +2s+1}} \PV \int_{B_r} w K_f(\bar v, \bar v + T_0w) \dd w =0.\]
Here, we write $\bar v=v_0 + T_0v$ and we use the symmetry of the Boltzmann kernel: $K_f(\bar v, \bar v+\bar w) = K_f(\bar v, \bar v - \bar w)$.

Therefore, the proof is reduced to estimating the term in \eqref{e:cancellation2} involving $K_f(v',v)$ only. That is, we need to estimate the quantity $I(v')$ for $v' \in B_{7/4}$ given by
\[ I(v'):= \left\vert \PV \int_{B_r( v')} ( v'-v) \bar K_f( v, v') \dv \right\vert.\]

Let us change variables. As usual, we write $\bar v' = v_0 + T_0 v'$ and $\bar w = T_0(v'-v)$. We get
\[ I( v') = |v_0|^{-\gamma-2s} \left| \PV \int_{E_r} (T_0^{-1} \bar w) \, K_f(\bar v'-\bar w,\bar v') \dd \bar w \right|.\]
We used Formula \eqref{e:barKf}, $\det T_0^{-1}=|v_0|$, and Lemma~\ref{l:cancel-2} in order to justify the change of variables under the principal value.

Recall that the domain of integration $E_r$ is an ellipsoid. In order to capture the cancellations correctly, it is better to work with a round ball $B_r$. We use that $I(v') \leq I_1 + I_2$, where
\[ I_1 := |v_0|^{-\gamma-2s} \left \vert \PV \int_{B_r} (T_0^{-1} \bar w) \, K_f(\bar v'-\bar w,\bar v') \dd \bar w \right\vert, \qquad I_2 := |v_0|^{-\gamma-2s} \left \vert \int_{B_r \setminus E_r} (\dots) \dd \bar w \right\vert.\]
The rest of the proof is divided into two steps corresponding to establishing the bound for each one of the terms $I_1$ and $I_2$. 

\textbf{Step 1: $I_2 \lesssim r^{1-2s}$.}

In order to estimate $I_2$, we do not need to take any cancellation into consideration. We simply take absolute values everywhere and estimate each factor separately. Recalling Formula \eqref{e:Kernel-approx}, we have
\begin{align*} 
|v_0|^{\gamma+2s} I_2 &=  \left \vert \int_{B_r \setminus E_r} (T_0^{-1} \bar w) \, K_f(\bar v'-\bar w,\bar v') \dd \bar w \right\vert, \\
&\lesssim  \int_{B_r \setminus E_r} |T_0^{-1} \bar w| |\bar w|^{-d-2s} \int_{u \perp \bar w} f(\bar v' - \bar w + u) |u|^{\gamma+2s+1} \dd u \dd \bar w.
\intertext{Like in the proof of Lemma~\ref{l:bc2} about the second boundedness condition, we use polar coordinates $-\bar w = \rho \sigma$, and we write $r_\sigma\approx r(1+(\sigma \cdot v_0)^2)^{-1/2}$ for the maximum value of $\rho$ so that $\rho \sigma \in E_R$.}
&= \int_{\sigma \in S^{d-1}} \int_{r_\sigma < \rho < r} \rho^{-2s} |T_0^{-1} \sigma| \int_{u \perp \sigma} f(\bar v' + \rho \sigma + u) |u|^{\gamma+2s+1} \dd u \dd \rho \dd \sigma. \\
\intertext{We now write $z = \rho \sigma + u$ so that $\dd z = \dd \rho \dd u$, and observe $\rho = z \cdot \sigma$ and $|u| \leq |z|$.}
&\leq \int_{\sigma \in S^{d-1}} \int_{r_\sigma < z \cdot \sigma <r} (z \cdot \sigma)^{-2s} |T_0^{-1} \sigma| f(\bar v' + z) |z|^{\gamma+2s+1} \dd z \dd \sigma, \\
&= \int_{\R^d} f(\bar v' + z) |z|^{\gamma+1}  \left( \int_{\{\sigma : r_\sigma < \sigma \cdot z < r\}} (\sigma \cdot z/|z|)^{-2s} |T_0^{-1} \sigma| \dd \sigma \right) \dd z, \\
&= \int_{\R^d} f(\bar v' + z) |z|^{\gamma+1}  \left( \int_{\{\sigma : r_\sigma < \sigma \cdot z < r\}} (\sigma \cdot z/|z|)^{-2s}  \frac{r}{r_\sigma} \dd \sigma \right) \dd z, \\
&\approx \int_{\R^d} f(\bar v' + z) |z|^{\gamma+1}  \left( \int_{\{\sigma : r_\sigma < \sigma \cdot z < r\}} (\sigma \cdot z/|z|)^{-2s}  \sqrt{1+ (\sigma \cdot v_0)^2} \dd \sigma \right) \dd z. 
\end{align*}

We analyze the spherical integral similarly as in the proof of Lemma \ref{l:bc2}. We write $e = z/|z|$, $v_0 = ae+b$ with $b \perp e$ and $a = v_0 \cdot e$. We observe $ \sqrt{1+ (\sigma \cdot v_0)^2} \lesssim \left( 1 + a^2 (\sigma \cdot e)^2 + |b|^2 \right)^{1/2}$. We divide the domain of integration depending on whether $a^2 (\sigma \cdot e)^2 > 1+|b|^2$ or not. The purpose of these two subdomains is to know which term in $r_\sigma$ to focus on. Let us call $S_1$ and $S_2$  these two integrals respectively. We have
\begin{align*}
  I_2 &\lesssim |v_0|^{-\gamma-2s} \int_{\R^d} f(\bar v'+z) |z|^{\gamma+1} (S_1+S_2) \dd z,\\
\intertext{where}
  S_1 &\lesssim \int_{\sigma\cdot e \leq r/|z|} (\sigma\cdot e)^{-2s+1} |a| \dd \sigma \\
&  \lesssim |a| \min \left(1,(r/|z|)^{2-2s} \right)  \\
  S_2 &\lesssim \int_{\frac{r}{\sqrt{2} |z| (1+|b|^2)^{1/2}} \leq e \cdot \sigma} (e \cdot \sigma)^{-2s} (1+|b|^2)^{1/2} \dd \sigma, \\
      &\lesssim r^{1-2s} |z|^{2s-1} (1+|b|^2)^{s}.
\end{align*}

For the first term in $I_2$, we have $|a| \le |v_0|$ and consequently,
\begin{align*}
  |v_0|^{-\gamma-2s} \int_{\R^d} f(\bar v'+z) |z|^{\gamma+1} S_1 \dd z &\lesssim |v_0|^{-\gamma-2s+1} \int_{\R^d} f(\bar v'+z) |z|^{\gamma+1} \min(1,r^{2-2s} |z|^{2s-2}) \dd z \\
  & \lesssim r^{1-2s} \qquad \text{thanks to Lemma \ref{l:aux2}.}
\end{align*}

Note that $|b|^2 = |v_0|^2 - (v_0 \cdot e)^2$. So, for the second term in $I_2$, we have
\begin{align*}
|v_0|^{-\gamma-2s} \int_{\R^d} f(\bar v'+z) |z|^{\gamma+1} S_2 \dd z &\lesssim r^{1-2s} |v_0|^{-\gamma-2s} \int_{\R^d} f(\bar v'+z) |z|^{\gamma+2s} (1+|v_0|^2-(v_0 \cdot z)^2/|z|^2)^s \dd z, \\
&\lesssim r^{1-2s} \qquad \text{thanks to Lemma \ref{lem:barlambda}.}
\end{align*}

Adding all the terms, we get the announced estimate on $I_2$. 

\medskip
\textbf{Step 2: $I_1 \lesssim r^{1-2s}$.}

The cancellation inside the integral, and in particular in the principal value, plays a central role in the inequality for $I_1$. We proceed similarly as in step 1 but without taking absolute values and keeping equalities. We use polar coordinates $\bar w = r \sigma$ and write $z = - r \sigma + u$ with $\dd z = \dd r \dd u$.
\[ 
 |v_0|^{\gamma+2s} I_1
= \left\vert \PV \int_{z \in \R^d} f(\bar v'+z) T_0^{-1} \left\{  \int_{\sigma \in S^{d-1} : 0 < z \cdot \sigma < r}  \sigma (\sigma \cdot z)^{-2s} ( |z|^2 - (z \cdot \sigma)^2)^{\frac{\gamma+2s+1}2} A(\dots ) \dd \sigma \right\}  \dd z \right\vert . \]
Here, $A$ is the function from \eqref{e:A}. In this case, its value depends on $\sigma \cdot z/|z|$ only: $A(\dots) = \tilde{A} (\sigma \cdot z /|z|) \approx 1$.

If we write $\sigma = \rho z / |z| + \sigma^\perp$ with $\sigma^\perp$ perpendicular to $z$, we see that the only factor in the integrand that depends on $\sigma^\perp$ is $\sigma$. Since $\sigma^\perp$ takes values on a $(d-2)$-dimensional circle, its values cancel out in the integral. Thus, we reduce the integral to
\begin{align*} 
&  |v_0|^{\gamma+2s}  I_1 \\
  &= \left| \int_{z \in \R^d} f(\bar v'+z) |z|^{-2s} T_0^{-1} \left(\frac z {|z|}\right) \int_{\sigma \in S^{d-1} : 0 < z \cdot \sigma < r}  \left(\sigma \cdot \frac z {|z|} \right)^{1-2s} ( |z|^2 - (z \cdot \sigma)^2)^{\frac{\gamma+2s+1}2} \tilde A(\sigma \cdot z/|z| ) \dd \sigma \dd z \right|. \\
  \intertext{ At this point, the cancellations have been taken into account already. All quantities that remain are positive so we continue with inequalities. We use that $\|T_0^{-1}\|=|v_0|$ and that $\tilde A$ is bounded.}
& \le \int_{z \in \R^d} f(\bar v'+z) |z|^{-2s} \left\vert T_0^{-1} \frac z {|z|} \right\vert \int_{\sigma \in S^{d-1} : 0 < z \cdot \sigma < r}  \left(\sigma \cdot \frac z {|z|} \right)^{1-2s} ( |z|^2 - (z \cdot \sigma)^2)^{\frac{\gamma+2s+1}2} \tilde A(\sigma \cdot z/|z| ) \dd \sigma \dd z, \\ 
&\lesssim |v_0| \int_{z \in \R^d} f(\bar v'+z) |z|^{\gamma+1} \left( \int_{\sigma \in S^{d-1} : 0 < z \cdot \sigma < r}  \left(\sigma \cdot \frac z {|z|} \right)^{1-2s} \dd \sigma \right) \dd z , \\
&\lesssim |v_0| \int_{z \in \R^d} f(\bar v'+z) |z|^{\gamma+1} \min(1,(r/|z|)^{2-2s}) \dd z .
\end{align*}
We conclude that $I_1 \lesssim r^{1-2s}$ by applying Lemma \ref{l:aux2}.
\end{proof}

The change of variables theorems derive from the series of lemmas established in this section.  
\begin{proof}[Proof of Theorem~\ref{t:change-of-vars}]
  Combine Corollary~\ref{c:ndc} and Lemmas~\ref{l:bc1}, \ref{l:bc2}, \ref{l:cc1},  \ref{l:cc2}. The inequality \eqref{e:nondeg2} is a combination of \eqref{e:local_coercivity} with \eqref{e:cancellation1}.
\end{proof}

\begin{proof}[Proof of Theorem~\ref{t:change-of-vars-schauder}]
Combine Corollary~\ref{c:ndc-schauder} and Corollary~\ref{c:bounded_for_schauder}.
\end{proof}

 \subsection{H\"older spaces through the change of variables}

We examine how the change of variables $\mathcal{T}_0$ defined in \eqref{e:T0}
affects the kinetic H\"older spaces introduced in Section~\ref{s:holder}.
\begin{lemma}\label{l:holder-cov}
  Given $z_0 \in \R^{1+2d}$ and  $F : \mathcal{E}_R (z_0) \to \R$, we define $\bar F :Q_R \to \R$ by
  $\bar F(z) = F (\mathcal{T}_0 (z))$. Then, for any $\beta>0$,
  \begin{equation}
    \label{e:holder-after-cdv}
    \|\bar F\|_{C_\ell^\beta (Q_R)} \lesssim \|F\|_{C_\ell^{\beta} (\mathcal{E}_R (z_0))} \lesssim |v_0|^{{\bar c} \beta} \|\bar F\|_{C_\ell^\beta (Q_R)}, 
  \end{equation}
  with ${\bar c} = \max \left( \frac{\gamma+2s}{2s},1 \right)$. 
\end{lemma}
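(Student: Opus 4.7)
The plan relies on two ingredients: analyzing how the kinetic distance transforms under $\mathcal{T}_0$, and checking that composition with $\mathcal{T}_0$ or $\mathcal{T}_0^{-1}$ preserves the property of a polynomial having kinetic degree strictly less than $\beta$.

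For the distance comparison I will first reduce to $|v_0| \geq 2$, since for $|v_0|<2$ the map $\mathcal{T}_0$ is just left translation by $z_0$, hence an isometry for $d_\ell$. Writing $\mathcal{T}_0 z = z_0 \circ \zeta(z)$ with $\zeta(z) := (|v_0|^{-\gamma-2s} t,\, |v_0|^{-\gamma-2s} T_0 x,\, T_0 v)$, left invariance gives $d_\ell(\bar z_1, \bar z_2) = \|\zeta(z_2)^{-1} \circ \zeta(z_1)\|$. The plan is to compute the Lie product explicitly, perform the substitution $w \leftrightarrow T_0 w$ in the minimization defining $\|\cdot\|$, and exploit the elementary estimates $|T_0 y| \leq |y|$ and $|y| \leq |v_0|\, |T_0 y|$ (both valid for $|v_0|\geq 1$) together with $\gamma+2s \geq 0$, to obtain the two-sided comparison
\begin{equation*}
  d_\ell(\bar z_1, \bar z_2) \leq d_\ell(z_1, z_2) \leq |v_0|^{\bar c}\, d_\ell(\bar z_1, \bar z_2).
\end{equation*}
The exponent $\bar c = \max(1, (\gamma+2s)/(2s))$ should emerge as the largest of $(\gamma+2s)/(2s)$, $(1+\gamma+2s)/(1+2s)$ and $1$, which are the inflation factors picked up when expanding each of the three nontrivial components of $\|\cdot\|$ by means of $T_0^{-1}$ and powers of $|v_0|$.

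Next I will verify that composition of a polynomial of kinetic degree strictly less than $\beta$ with either $\mathcal{T}_0$ or $\mathcal{T}_0^{-1}$ yields a polynomial of the same degree bound. The coordinate formulas show that $\bar t$ is affine in $t$, $\bar v$ is affine in $v$, and $\bar x$ is affine in $(x,t)$ with an $x$-part of kinetic weight $1+2s$ and a $t$-part of kinetic weight $2s$. Substituting into a monomial of kinetic degree $\delta$ then yields a polynomial whose every monomial has kinetic degree $\leq \delta$, since replacing an $\bar x$-variable by a $t$-variable can only decrease the kinetic degree (this is where I use $2s \leq 1+2s$). The inverse $\mathcal{T}_0^{-1}$ has the same affine linear structure, with different $v_0$-dependent coefficients, so the same reasoning applies.

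To finish, for the first inequality I will pick $z_1 \in Q_R$, let $p$ realize the H\"older constant of $F$ at $\bar z_1 := \mathcal{T}_0 z_1$, and set $\bar p := p \circ \mathcal{T}_0$. Then $\degk \bar p < \beta$ and for every $z \in Q_R$,
\begin{equation*}
  |\bar F(z) - \bar p(z)| = |F(\mathcal{T}_0 z) - p(\mathcal{T}_0 z)| \leq [F]_{C_\ell^\beta(\mathcal{E}_R(z_0))}\, d_\ell(\mathcal{T}_0 z, \mathcal{T}_0 z_1)^\beta \leq [F]_{C_\ell^\beta(\mathcal{E}_R(z_0))}\, d_\ell(z, z_1)^\beta
\end{equation*}
by the distance step, while the $C^0$ norms compare trivially since $\mathcal{T}_0(Q_R) = \mathcal{E}_R(z_0)$. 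The second inequality follows symmetrically, starting from a polynomial $\bar q$ for $\bar F$ at $z_1 := \mathcal{T}_0^{-1} \bar z_1$ and setting $q := \bar q \circ \mathcal{T}_0^{-1}$; the right-hand side of the distance comparison then produces the factor $|v_0|^{\bar c \beta}$. The main obstacle will be the bookkeeping in the distance step---carefully tracking how each component of $\|\cdot\|$ transforms under $T_0$ and under the scalar $|v_0|^{\gamma+2s}$, and identifying which of the three candidate exponents dominates; the polynomial step is essentially automatic once one recognizes $2s \leq 1+2s$.
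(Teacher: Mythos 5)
Your proposal is correct and follows essentially the same route as the paper: write $\mathcal{T}_0 = z_0 \circ \mathcal{T}$, use left invariance of $d_\ell$ together with $\|T_0\|\le 1$ and $\|T_0^{-1}\|=|v_0|$ to get the two-sided distance comparison (with the exponent $(1+\gamma+2s)/(1+2s)$ absorbed into $\max(1,(\gamma+2s)/(2s))$), observe that composing a polynomial with the affine map $\mathcal{T}_0^{\pm1}$ does not increase its kinetic degree, and transport the defining inequality of $C^\beta_\ell$ in each direction. Your justification of the degree-preservation step (that trading an $x$-variable for a $t$-variable only lowers the kinetic degree) is a detail the paper merely asserts, and is a welcome addition.
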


\begin{proof}
We point out that we only need to prove this Lemma for $|v_0|>2$ (See Remark~\ref{r:large-v0}).

We can write $\mathcal{T}_0$ as $z_0 \circ \mathcal{T}$ with 
\[ \mathcal{T} (t,x,v) = (|v_0|^{-\gamma -2s}t ,|v_0|^{-\gamma - 2s} (T_0 x),T_0v).\]

We first prove that for $v_0 \in \R^d \setminus B_2$ and for all $z,z_1 \in \R^{1+2d}$, 
\begin{align}
  \label{e:dellT}
  d_\ell (\mathcal{T}^{-1}z,\mathcal{T}^{-1} z_1) &\lesssim |v_0|^{{\bar c}} d_\ell(z,z_1) \\
\label{e:dellTbis}
  d_\ell (\mathcal{T} z,\mathcal{T} z_1) &\lesssim d_\ell(z,z_1) 
\end{align}
with ${\bar c} = \max (\frac{\gamma+2s}{2s},\frac{\gamma+2s}{1+2s}+1)$. As far as \eqref{e:dellT} is concerned, using the definition of $d_\ell$, we write
\begin{align*}
d_\ell (\mathcal{T}^{-1}z,\mathcal{T}^{-1} z_1) &= \min_{w \in \R^d} \max \bigg\{ (|v_0|^{\gamma+2s} |t-t_1|)^{1/(2s)}, \\
&\phantom{= \min_{w \in \R^d} \max \bigg\{} \left( |v_0|^{\gamma+2s} |T_0^{-1} (x-x_1-(t-t_1)w)| \right)^{1/(1+2s)}, \\
&\phantom{= \min_{w \in \R^d} \max \bigg\{} |T_0^{-1}(v-w)|, |T_0^{-1}(v_1-w)| \bigg\}, \\
\intertext{note that $\|T_0^{-1}\| = |v_0|$,}
&\leq \max\left( |v_0|^{\frac{\gamma+2s}{2s}}, |v_0|^{\frac{\gamma+2s+1}{1+2s}},|v_0| \right) \min_{w \in \R^d} \max \big\{ |t-t_1|^{\frac 1 {2s}}, \\
&\phantom{\leq \max\left( |v_0|^{\frac{\gamma+2s}{2s}}, |v_0|^{\frac{\gamma+2s+1}{1+2s}},|v_0| \right) \min_{w \in \R^d} \max \big\{ } |x-x_1-(t-t_1)w|^{\frac 1 {1+2s}},\\
&\phantom{\leq \max\left( |v_0|^{\frac{\gamma+2s}{2s}}, |v_0|^{\frac{\gamma+2s+1}{1+2s}},|v_0| \right) \min_{w \in \R^d} \max \big\{ } |v-w|, |v_1-w| \big\}, \\
&\leq |v_0|^{{\bar c}} d_\ell(z,z_1).
\end{align*}
Note that $|v_0|^{\frac{\gamma+2s+1}{1+2s}} \leq \max \left( |v_0|^{\frac{\gamma+2s}{2s}},|v_0| \right)$. This justifies \eqref{e:dellT}. The verification of \eqref{e:dellTbis} is very similar using that $\|T_0\|=1$.

Since $d_\ell$ is left invariant, \eqref{e:dellT} implies that for $v_0 \in \R^d \setminus B_2$, $z,z_1 \in \R^{1+2d}$, and $\bar z = \mathcal T_0 z$, $\bar z_1 = \mathcal T_0 z_1$,
\begin{equation}
  \label{e:dellT0}
d_\ell(\bar z, \bar z_1) \leq  d_\ell (z,z_1) \lesssim |v_0|^{{\bar c}} d_\ell(\bar z, \bar z_1).
\end{equation}

We deduce \eqref{e:holder-after-cdv} from \eqref{e:dellT0}. Given any $\bar z_1,\bar z \in Q_R$, and $\bar z = \mathcal T_0 z$, $\bar z_1 = \mathcal T_0 z_1$, let $p$ be the polynomial expansion of $F$ at the point $\bar z_1$ so that $|F(\bar z) - p(\bar z)| \leq [F]_{C^\beta_\ell} d_\ell(\bar z, \bar z_1)^\beta$ and $\deg_k p < \beta$. We observe that $p \circ \mathcal T_0$ is a polynomial of the same degree. Thus,
\[ |\bar F(z) - p \circ \mathcal T_0 (z)| = |F(\bar z) - p(\bar z)| \leq [F]_{C^\beta_\ell} d_\ell(\bar z, \bar z_1)^\beta \leq [F]_{C^\beta_\ell} d_\ell(z, z_1)^\beta.\]
We deduced the first inequality in \eqref{e:holder-after-cdv} from the first inequality in \eqref{e:dellT0}. We deduce the second one similarly.
\end{proof}

\subsection{H\"older continuity of the kernel}

In the next lemma we explore how a H\"older estimate for $f$ of the form $f \in \Cpol^\alpha$ results in a H\"older estimate for the kernel $\bar K_f$ as in the assumption \eqref{e:A0-schauder} in Theorem~\ref{t:local-schauder}. 

\begin{lemma}[H\"older continuity of the kernel] 
\label{l:cdv-coef-holder}
Let $f : [\tau,T] \times \R^d \times \R^d$ be such that  $f \in \Cpol^{\alpha}$.  Then  \eqref{e:A0-schauder} holds true for $\bar K_f$ with  $\alpha' = \frac{2s}{1+2s} \alpha$ and 
\[
    \bar  A_0 \leq C \left( \|f\|_{C^0_{\ell,q}} +  (1+|v_0|)^{\frac \alpha {1+2s} (1-2s-\gamma)_+} [f]_{C^\alpha_{\ell,q}} \right).
\] 
Here, $q$ can be any number larger than $d+2+\alpha/(1+2s)$. The constant $C$ depends only on dimension, $\gamma$, $s$, $\min(1,T-\tau)$, and the choice of $q$.
\end{lemma}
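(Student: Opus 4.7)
The plan is to estimate the integral in \eqref{e:A0-schauder} by expanding $\bar K_f$ via the Carleman representation and controlling the resulting pointwise difference using the $\Cpol^\alpha$ regularity of $f$.

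First, combining \eqref{e:barKf} with \eqref{e:A}, one writes
\[
\bar K_{z}(w) = |v_0|^{-1-\gamma-2s}\,|T_0 w|^{-d-2s} \int_{u \perp T_0 w} f(\bar z \circ (0,0,u))\, A(|T_0 w|,|u|)\, |u|^{\gamma+2s+1} \, du,
\]
where $\bar z = \mathcal{T}_0(z)$ and $\bar z \circ (0,0,u) = (\bar t, \bar x, \bar v + u)$. In the difference $\bar K_{z_1}(w) - \bar K_{z_2}(w)$, the factors $|T_0 w|^{-d-2s}$, $A(|T_0 w|,|u|)$, $|u|^{\gamma+2s+1}$ and the hyperplane of integration are identical, so only the values $f(\bar z_i \circ (0,0,u))$ differ. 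For $d_\ell(z_1,z_2) \leq 1$, the right-translation estimate \eqref{e:repeatedly} together with $d_\ell(\bar z_1,\bar z_2) \leq d_\ell(z_1,z_2)$ from Lemma~\ref{l:holder-cov} yields $d_\ell(\bar z_1 \circ (0,0,u), \bar z_2 \circ (0,0,u))^\alpha \leq d_\ell(z_1,z_2)^{\alpha'} (1+|u|)^{\alpha/(1+2s)}$, and the definition of $\Cpol^\alpha$ (with the $(1+|v|)^{-q}$ weight at the common base velocity, which lies within $O(1)$ of $\bar v_1 + u$) produces
\[
|f(\bar z_1 \circ (0,0,u)) - f(\bar z_2 \circ (0,0,u))| \lesssim [f]_{C^\alpha_{\ell,q}}\, d_\ell(z_1,z_2)^{\alpha'}\, (1+|u|)^{\alpha/(1+2s)}\, (1+|\bar v_1+u|)^{-q}.
\]
The regime $d_\ell(z_1,z_2) > 1$ is handled by a crude $C^0$-bound on each kernel, contributing the $\|f\|_{C^0_{\ell,q}}$ term in $\bar A_0$.

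Substituting this pointwise bound and integrating $|w|^2\,dw$ over $B_\rho$, after pulling out $d_\ell(z_1,z_2)^{\alpha'}$ the claim reduces to
\[
|v_0|^{-1-\gamma-2s} \int_{B_\rho} |T_0 w|^{-d-2s} \left( \int_{u \perp T_0 w} (1+|u|)^{\frac{\alpha}{1+2s}} (1+|\bar v_1+u|)^{-q} |u|^{\gamma+2s+1}\,du \right) |w|^2\,dw \lesssim \bar A_0\, \rho^{2-2s}.
\]
This is structurally the same integral as in Corollary~\ref{c:bounded_for_schauder} (via Lemmas~\ref{lem:reduction} and \ref{lem:barlambda}), enhanced by the extra weight $(1+|u|)^{\alpha/(1+2s)}$. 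I would repeat the case analysis of Lemma~\ref{lem:barlambda}, splitting according to the sign of $\gamma+2s-1$: when $\gamma+2s \geq 1$ the extra weight is absorbed into energy-type moments and no additional power of $|v_0|$ appears; when $\gamma+2s < 1$, the extra growth forces one to quantitatively exploit the rapid decay $(1+|\bar v_1+u|)^{-q}$ (valid for $q > d+2+\alpha/(1+2s)$) on the region $|u| \gtrsim |v_0|$, producing the factor $(1+|v_0|)^{\frac{\alpha}{1+2s}(1-2s-\gamma)_+}$.

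The main obstacle will be this final $|v_0|$-bookkeeping in the soft regime $\gamma+2s < 1$, where the extra growth from $(1+|u|)^{\alpha/(1+2s)}$ is not controlled by the hydrodynamic moments alone and the exact power of $|v_0|$ must be tracked through the subregion splitting. A secondary subtlety, which does not affect the final exponent, is that for $\alpha \geq 1$ the H\"older estimate for $f$ must be applied through polynomial subtraction rather than a naive increment; this is handled in the same spirit as in the proof of Lemma~\ref{l:Calpha_product}.
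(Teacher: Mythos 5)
Your overall strategy is sound and is essentially the computation the paper performs: the paper packages the reduction by observing $|K_{f,\bar z_1}(w)-K_{f,\bar z_2}(w)|\le K_{\Delta f,\bar z_1}(w)$ with $\Delta f=|f-\tau_\xi f|$ and then invoking Corollary~\ref{c:bounded_for_schauder} as a black box, which spares you from redoing the ellipsoid geometry of Lemmas~\ref{lem:reduction} and \ref{lem:barlambda} with an extra weight. The genuine gap is in your $|v_0|$ bookkeeping, and it sits exactly at your displayed pointwise bound $d_\ell(\bar z_1\circ(0,0,u),\bar z_2\circ(0,0,u))^\alpha\le d_\ell(z_1,z_2)^{\alpha'}(1+|u|)^{\alpha/(1+2s)}$. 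That inequality is true, but it throws away the fact that the change of variables compresses time: $|\bar t_1-\bar t_2|=|v_0|^{-\gamma-2s}|t_1-t_2|$, so the cross term in \eqref{e:repeatedly} actually carries the prefactor $|v_0|^{-(\gamma+2s)\alpha/(1+2s)}$. Meanwhile the moment integral $\int(1+|u|)^{\alpha/(1+2s)}(1+|\bar v_1+u|)^{2-q}\,du$ is necessarily of size $(1+|v_0|)^{\alpha/(1+2s)}$ by Lemma~\ref{l:convolution-C0}, because the weight $(1+|u|)^{\alpha/(1+2s)}$ is of exactly that size where the decay factor concentrates ($u\approx-\bar v_1$), regardless of the sign of $\gamma+2s-1$. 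So with your pointwise bound the best obtainable exponent is $(1+|v_0|)^{\alpha/(1+2s)}$, which is strictly worse than the claimed $(1+|v_0|)^{\frac{\alpha}{1+2s}(1-2s-\gamma)_+}$ whenever $\gamma+2s>0$; in particular your assertion that no extra power of $|v_0|$ appears when $\gamma+2s\ge1$ is false under that bound.

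The claimed exponent is not produced by a case analysis on $\gamma+2s\gtrless1$ or by splitting the region $|u|\gtrsim|v_0|$; it is the exact arithmetic combination $-\frac{(\gamma+2s)\alpha}{1+2s}+\frac{\alpha}{1+2s}=\frac{\alpha}{1+2s}(1-\gamma-2s)$ of the time-compression factor and the unavoidable contribution of the moment integral, with the positive part taken because a negative exponent can be replaced by $0$ when $|v_0|>2$. The fix is to keep the increment estimate in the form $\bigl(d_\ell(z_1,z_2)+|v_0|^{-(\gamma+2s)/(1+2s)}d_\ell(z_1,z_2)^{2s/(1+2s)}|u|^{1/(1+2s)}\bigr)^\alpha$ and only then integrate. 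Your secondary concern about $\alpha\ge1$ is moot: in all applications $\alpha<\min(1,2s)$, so the kinetic polynomial of degree $<\alpha$ is a constant and plain increments suffice.
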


\begin{proof}
  Without loss of generality, let us assume $T-\tau \geq 1$. The effect of this assumption is that we take cylinders of the form $Q_1(z_0) \subset [\tau,T] \times \R^d \times \R^d$. Otherwise we would have to work with cylinders $Q_r(z_0)$ for a smaller $r>0$ and the choice of $r$, depending on $T-\tau$, would affect the constants in the lemma.

  As usual, we also focus on $|v_0|>2$.

Recall that the definition of $\Cpol$ says that for all $q>0$, there exists a constant $C_q = [f]_{C^\alpha_{\ell,q}}$ so that
\[ \|f\|_{C^\alpha_{\ell}(Q_1(z_0))} \leq C_q (1+|v_0|)^{-q},\]
whenever $Q_1(z_0) \subset [\tau,T] \times \R^d \times \R^d$. In particular, for $z=(t,x,v)$,
\[ |f(z)| \leq C_1 (1+|v|)^{-q} \qquad \text{and} \qquad |f(z_1) - f(z_2)| \leq C_q d_\ell(z_1,z_2)^\alpha (1+|v_1|)^{-q} \text{ whenever } d_\ell(z_1,z_2) < 1.\]

As usual, we write $\bar z = \mathcal T_0 z$. According to the change
of variables formula \eqref{e:barKf}, we thus have for all
$z_1,z_2 \in Q_1$ (in particular $d_\ell(z_1,z_2)<1$),
\begin{equation} \label{e:Kh1}
 \bar K_{f,z_1}(w) - \bar K_{f,z_2}(w) = |v_0|^{-1-\gamma-2s} \left( K_{f,\bar z_1} (T_0 w) - K_{f,\bar z_2} (T_0 w) \right).
\end{equation}

From the formula \eqref{e:A}, we observe that, for any $\bar z_1 = \mathcal T_0 z_1$ and $\bar z_2 = \mathcal T_0 z_2$, 
\begin{align*} 
 |K_{f,\bar z_1} (w) &- K_{f,\bar z_2}(w)| \\
& = |w|^{-d-2s}\left\vert \int_{u \perp w} |u|^{\gamma+2s+1} A(|w|,|u|) \left( f(\bar z_1 \circ (0,0,u)) - f(\bar z_2 \circ (0,0,u)) \right) \dd u \right\vert, \\
& \leq |w|^{-d-2s} \int_{u \perp w} |u|^{\gamma+2s+1} A(|w|,|u|) \left\vert f(\bar z_1 \circ (0,0,u)) - f(\xi \circ \bar z_1\circ (0,0,u)) \right \vert \dd u, \\
&= K_{\Delta f, \bar z_1}(w),
\end{align*}
where $\Delta f(z) = |f(z) - f(\xi \circ z)|$ and  $\xi = \bar z_1 \circ \bar z_2^{-1}$. Combining with \eqref{e:Kh1}, we get
\begin{align*}
\int_{B_\rho} |\bar K_{f,z_1}(w) - \bar K_{f,z_2}(w)| |w|^2 \dd w &\leq \int_{B_\rho} |\bar K_{\Delta f,z_1}(w)| |w|^2 \dd w, \\
\intertext{we now apply Corollary~\ref{c:bounded_for_schauder} to obtain}
&\lesssim \left( \int_{\R^d} (1+|v|^2) \Delta f(\bar t_1, \bar x_1, v) \dd v \right) \rho^{2-2s}, \\
&= \left( \int_{\R^d} (1+|\bar v_1+w|^2) | f(\bar z_1 \circ (0,0,w)) - f(\bar z_2 \circ (0,0,w))  |  \dd w \right) \rho^{2-2s}.
\end{align*}

In order to estimate the difference $| f(\bar z_1 \circ (0,0,w)) - f(\bar z_2 \circ (0,0,w))  |$, we use the $C^\alpha_{\ell,q}$ semi-norms of $f$. We use \eqref{e:repeatedly} and we get
\begin{equation} \label{e:kh1}
 | f(\bar z_1 \circ (0,0,w)) - f(\bar z_2 \circ (0,0,w))  |  \leq
\begin{cases} \left( d_\ell(\bar z_1,\bar z_2) + |\bar t_1-\bar t_2|^{1/(1+2s)} |w|^{1/(1+2s)} \right)^\alpha (1+|\bar v_1+w|)^{-q} [f]_{C^\alpha_{\ell,q}}, \\
\left\{ (1+|\bar v_1+w|)^{-q} + (1+|\bar v_2+w|)^{-q} \right\}   [f]_{C^0_{\ell,q}}. \end{cases} 
\end{equation}
The first line applies whenever $d_\ell(\bar z_1 \circ (0,0,w),\bar z_2 \circ (0,0,w)) < 1$. Note that from \eqref{e:repeatedly}, this distance is less or equal than the factor inside the parenthesis on the first line.

The second line applied whenever $d_\ell(\bar z_1 \circ (0,0,w),\bar z_2 \circ (0,0,w)) \geq 1$. In that case, it is better to estimate the left hand side in \eqref{e:kh1} by the $C^0$ norm of each term. 

Note that since $\bar v_1, \bar v_2 \in B_1(v_0)$, then $(1+|\bar v_1+w|) \approx (1+|\bar v_2+w|) \approx (1+|v_0+w|)$. Recall that we write $\|f\|_{C^\alpha_{\ell,q}} = [f]_{C^\alpha_{\ell,q}} +\|f\|_{C^0_{\ell,q}}$. Thus, in any case we have,
\begin{equation} \label{e:kh11}
 | f(\bar z_1 \circ (0,0,w)) - f(\bar z_2 \circ (0,0,w))  |  \lesssim
\left( d_\ell(\bar z_1,\bar z_2) + |\bar t_1-\bar t_2|^{1/(1+2s)} |w|^{1/(1+2s)} \right)^\alpha (1+|\bar v_1+w|)^{-q} \|f\|_{C^\alpha_{\ell,q}}.
\end{equation}

By the definition of the transformation $\mathcal T_0$, we always have $d_\ell(\bar z_1, \bar z_2) \leq d_\ell(z_1,z_2)$. Moreover, $|\bar t_1-\bar t_2| = |v_0|^{-\gamma-2s} |t_1-t_2|$.

We estimate
\begin{align*} 
 \int  & (1+|\bar v_1+w|^2) | f(\bar z_1 \circ (0,0,w)) - f(\bar z_2 \circ (0,0,w))  |  \dd w  \\
&\lesssim \int \left( d_\ell(\bar z_1,\bar z_2) + |\bar t_1-\bar t_2|^{1/(1+2s)} |w|^{1/(1+2s)} \right)^\alpha (1+|v_0+w|)^{2-q} \|f\|_{C^\alpha_{\ell,q}} \dd w, \\
&\lesssim \|f\|_{C^\alpha_{\ell,q}} \left( d_\ell(z_1,z_2)^\alpha + |v_0|^{-\frac{\gamma+2s}{1+2s} \alpha} |t_1-t_2|^{\frac 1 {1+2s} \alpha} \int_{\R^d} |w|^{\frac \alpha {1+2s}}(1+|v_0+w|)^{2-q} \dd w \right), \\
&\lesssim \|f\|_{C^\alpha_{\ell,q}} \left( d_\ell(z_1,z_2)^\alpha + d_\ell(z_1,z_2)^{\alpha'} |v_0|^{\frac{(-\gamma-2s+1) \alpha}{1+2s}} \right), \\
&\lesssim \|f\|_{C^\alpha_{\ell,q}} d_\ell(z_1,z_2)^{\alpha'} |v_0|^{\frac{\alpha}{1+2s} (1-\gamma-2s)_+ }.
\end{align*}
In the last inequality we used $d_\ell(z_1,z_2) < 1$ and $|v_0| > 2$. We also used Lemma \ref{l:convolution-C0} to estimate the last integral, which holds provided that $q > d+2+\alpha/(1+2s)$.
\end{proof}

\section{Bounds for the bilinear operator $\Q(\cdot, \cdot)$}

\label{s:bilinear}

The right hand side of the Boltzmann equation $\Q(f,f)$ is a quadratic function of $f$. Its structure as a bilinear operator $\Q(f,g)$ is relevant when differentiating the equation. In this section we collect several lemmas to evaluate the H\"older regularity of $\Q(f,g)$ in terms of H\"older norms of $f$ and $g$.

Recall that we write $\Q = \Q_1 + \Q_2$. We obtain bounds for each of these two terms separately.

\subsection{Bounds for $\Q_2$}

Recall that $\Q_2 (f,g) = c_b (f \ast |\cdot |^\gamma) g$. We start with estimating how the convolution with $|\cdot|^\gamma$ affects the local H\"older norm of a function. 

\begin{lemma}[Convolution with  $|\cdot|^\gamma$] \label{l:convol-gamma} 
Let $f \in \Cpol^\alpha([\tau,T] \times \R^d \times \R^d)$ for $0 < \alpha \leq \min(1,2s)$. Let us consider the convolution of $f$ with $|\cdot|^\gamma$ in velocity. That is
\[ g(z) = \int_{\R^d} f(z \circ (0,0,w)) |w|^\gamma \dd w.\]
Then for all $z_0 \in [\tau,T] \times \R^d \times \R^d$ and $r \in (0,1)$ such that $Q_r(z_0) \subset [\tau,T] \times \R^d \times \R^d$, and any $q > d+\gamma_+ + \alpha/(1+2s)$,
  \[
   \| g \|_{C_\ell^{\alpha'}(Q_r(z_0))}  \le C  (1+|v_0|)^{\frac{\alpha}{1+2s}+\gamma} \|f\|_{C_{\ell,q}^\alpha}
  \]
 with $C=C(d,\gamma,s,\alpha)$ and $\alpha' = \frac{2s}{1+2s} \alpha$.
\end{lemma}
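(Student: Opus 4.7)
The plan is to bound $\|g\|_{C^0}$ and $[g]_{C^{\alpha'}_\ell}$ separately on $Q_r(z_0)$. Observe that since $\alpha \le \min(1,2s)$, we have $\alpha' = \frac{2s\alpha}{1+2s} < 1$, so the kinetic Hölder seminorm of exponent $\alpha'$ reduces to controlling pointwise differences: it suffices to show $|g(z_1)-g(z_2)| \lesssim (1+|v_0|)^{\gamma+\alpha/(1+2s)} \|f\|_{C^\alpha_{\ell,q}} \, d_\ell(z_1,z_2)^{\alpha'}$ for all $z_1,z_2 \in Q_r(z_0)$.

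For the $C^0$ bound, I would use the pointwise decay $f(z) \le \|f\|_{C^0_{\ell,q}}(1+|v|)^{-q}$ and apply Lemma~\ref{l:convolution-C0} with $\kappa=\gamma$, which requires $q>d+\gamma_+$. This gives $|g(z)| \lesssim \|f\|_{C^0_{\ell,q}}(1+|v|)^\gamma \lesssim (1+|v_0|)^\gamma \|f\|_{C^0_{\ell,q}}$ on $Q_r(z_0)$. For the Hölder seminorm, write
\[ g(z_1)-g(z_2) = \int_{\R^d}\bigl[f(z_1\circ(0,0,w))-f(z_2\circ(0,0,w))\bigr]\,|w|^\gamma\,\dd w, \]
and control the integrand using the combination of the $C^\alpha_{\ell,q}$ bound (when the translated points are close) and the $C^0_{\ell,q}$ bound (when they are far). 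Specifically, inequality \eqref{e:repeatedly} gives
\[ d_\ell\bigl(z_1\circ(0,0,w),z_2\circ(0,0,w)\bigr) \le d_\ell(z_1,z_2) + d_\ell(z_1,z_2)^{2s/(1+2s)}|w|^{1/(1+2s)}, \]
so (using that $(1+|\bar v_1+w|)\approx(1+|\bar v_2+w|)$ and that $d_\ell(z_1,z_2)\lesssim 1$) I get the uniform bound
\[ \bigl|f(z_1\circ(0,0,w))-f(z_2\circ(0,0,w))\bigr| \lesssim \|f\|_{C^\alpha_{\ell,q}}(1+|v_1+w|)^{-q}\Bigl(d_\ell(z_1,z_2)^\alpha + d_\ell(z_1,z_2)^{\alpha'}|w|^{\alpha/(1+2s)}\Bigr). \]
(For the range where the translated distance exceeds $1$, the second term already dominates the full difference by the $C^0_{\ell,q}$ norm.)

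Multiplying by $|w|^\gamma$ and integrating, Lemma~\ref{l:convolution-C0} applied to the two resulting convolutions (with $\kappa=\gamma$ and $\kappa=\gamma+\alpha/(1+2s)$ respectively) yields
\[ |g(z_1)-g(z_2)| \lesssim \|f\|_{C^\alpha_{\ell,q}}\Bigl(d_\ell(z_1,z_2)^\alpha(1+|v_1|)^\gamma + d_\ell(z_1,z_2)^{\alpha'}(1+|v_1|)^{\gamma+\alpha/(1+2s)}\Bigr), \]
provided $q>d+\gamma_++\alpha/(1+2s)$. Since $d_\ell(z_1,z_2)\le 1$ and $\alpha \ge \alpha'$, the first term is absorbed into the second, and $(1+|v_1|)\approx (1+|v_0|)$ on $Q_r(z_0)$ with $r\le 1$, giving the claimed bound.

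The only mildly delicate point is the splitting into the two regimes of $w$, which is resolved by the min-of-two-bounds trick: one uses the Hölder seminorm where the translated kinetic distance is small, and the $C^0$ bound otherwise; both are dominated by the combined expression above after invoking \eqref{e:repeatedly}. Everything else is bookkeeping: verifying the integrability condition on $q$ and collecting powers of $(1+|v_0|)$.
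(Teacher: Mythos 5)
Your proof is correct and follows essentially the same route as the paper's: the $C^0$ bound via Lemma~\ref{l:convolution-C0}, and for the seminorm a split according to whether the right-translated points $z_i\circ(0,0,w)$ are kinetically close (where $[f]_{C^\alpha_{\ell,q}}$ combined with \eqref{e:repeatedly} applies) or far (where the decaying $C^0$ bound applies), followed by integration against $|w|^\gamma$ using Lemma~\ref{l:convolution-C0} with $\kappa=\gamma$ and $\kappa=\gamma+\alpha/(1+2s)$. The only imprecision is the parenthetical claim that in the far regime the \emph{second} term dominates the $C^0$ bound: when $d_\ell(z_1,z_2)\gtrsim 1$ and $|w|$ is small it is the first term $d_\ell(z_1,z_2)^\alpha$ that does the job (the far condition together with \eqref{e:repeatedly} only forces the \emph{sum} of the two terms to be $\gtrsim 1$), but since your combined estimate keeps both terms and the first is absorbed into the second after integration (using $d_\ell\lesssim 1$ and $\alpha\ge\alpha'$), the argument goes through.
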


\begin{proof}
Let $z_0=(t_0,x_0,v_0), z_1=(t_1,x_1,v_1) \in [\tau,T] \times \R^d \times \R^d$ with $d_\ell(z_0,z_1) < 1$. We will show the two inequalities below from which the conclusion follows.
\begin{align*} 
 |g(z_0)| &\leq C  (1+|v_0|)^{\gamma} [f]_{C_{\ell,q}^0},\\
 |g(z_1) - g(z_0)| &\leq C  (1+|v_0|)^{\frac{\alpha}{1+2s}+\gamma} \|f\|_{C_{\ell,q}^\alpha} d_\ell(z_0,z_1)^{\alpha'}.
\end{align*}
Note that the assumption $\alpha \leq \min(1,2s)$ implies that any polynomial of degree less than $\alpha$ must be  constant. Thus, the H\"older semi-norm of order $\alpha$ involves merely increments.

Let us start with the first inequality. Using Lemma~\ref{l:convolution-C0} (since $q > d+\gamma^+$), we compute
\begin{align*} 
| g(z_0)| & \le  \int_{\R^d} |f|(t_0,x_0,v_0+w) |w|^\gamma \dd w, \\
&\leq [f]_{C_{\ell,q}^0} \int_{\R^d} (1+|v_0+w|)^{-q} |w|^\gamma \dd w, \\
&\leq C [f]_{C_{\ell,q}^0} (1+|v_0|)^\gamma .
\end{align*}

The second inequality requires a slightly longer computation,
\begin{align*} 
 |g(z_1) - g(z_0)| &\leq \int_{\R^d} |f(z_1 \circ (0,0,w)) - f(z_0 \circ (0,0,w))| |w|^\gamma \dd w, \\
&\leq \int_{|w| < d_\ell(z_0,z_1)^{-2s}} (\dots) \dw  + \int_{|w|>d_\ell(z_0,z_1)^{-2s}} (\dots) \dw , \\[1ex]
&=: \mathrm{I} + \mathrm{II}.
\end{align*}

For the first integral $\mathrm I$, we use the inequality \eqref{e:repeatedly} together with the semi-norm $[f]_{C^\alpha_{\ell,q}}$. In this domain, we have 
\begin{align*} 
 d_\ell(z_0 \circ (0,0,w) , z_1 \circ (0,0,w)) &\leq d_\ell(z_0,z_1) + d_\ell(z_0,z_1)^{2s/(1+2s)}|w|^{1/(1+2s)}, \\
&\leq d_\ell(z_0,z_1) + 1 \leq 2.
\end{align*}
Therefore, using again Lemma~\ref{l:convolution-C0},
\begin{align*} 
 \mathrm I &\lesssim [f]_{C^\alpha_{\ell,q}} \int_{\R^d} \left(d_\ell(z_0,z_1) +  d_\ell(z_0,z_1)^{2s/(1+2s)}|w|^{1/(1+2s)} \right)^\alpha |w|^\gamma (1+|v_0+w|)^{-q} \dd w, \\ 
 &\lesssim [f]_{C^\alpha_{\ell,q}} \left( d_\ell(z_0,z_1)^\alpha (1+|v_0|)^\gamma + d_\ell(z_0,z_1)^{\alpha'} \int_{\R^d} |w|^{\alpha/(1+2s)+\gamma} (1+|v_0+w|)^{-q} \dd w \right), \\
\intertext{(recall $\alpha' = 2s \alpha / (1+2s)$)}
 &\lesssim [f]_{C^\alpha_{\ell,q}} \left( d_\ell(z_0,z_1)^\alpha (1+|v_0|)^\gamma + d_\ell(z_0,z_1)^{\alpha'} (1+|v_0|)^{\alpha/(1+2s)+\gamma} \right) \ \ \text{since } q > d+\gamma_++\alpha/(1+2s).
\end{align*}
Naturally, the second term is larger than the first one.

For the second integral $\mathrm{II}$ we bound $|f(z_0 \circ (0,0,w)) - f(z_1 \circ (0,0,w))|$ using $[f]_{C^0_{\ell,q}}$. That is
\begin{align*} 
 \mathrm{II} &\leq [f]_{C^0_{\ell,q}} \int_{|w|>d_\ell(z_0,z_1)^{-2s}}  ((1+|v_0 + w|)^{-q} + (1+|v_1 + w|)^{-q} ) |w|^\gamma \dd w.
\end{align*}
We analyze two cases depending on whether $d_\ell(z_0,z_1)^{-2s} > (1+3|v_0|)$ or not. In the first case, we have $|v_1+w| \gtrsim |w|$ and $|v_1 + w|\gtrsim |w|$, therefore
\begin{align*} 
 \mathrm{II} &\lesssim [f]_{C^0_{\ell,q}} \int_{|w|>d_\ell(z_0,z_1)^{-2s}} |w|^{-q+\gamma} \dd w, \\
& \lesssim [f]_{C^0_{\ell,q}} d_\ell(z_0,z_1)^{2s (q-d-\gamma)}, \\
&\lesssim  [f]_{C^0_{\ell,q}} (1+|v_0|)^\gamma d_\ell(z_0,z_1)^{\alpha'}.
\end{align*}
The last inequality holds because $q > d+\gamma_++\alpha/(1+2s)$ and $d_\ell(z_0,z_1)^{-2s} > (1+3|v_0|)$. Indeed, in this case we have
\begin{align*} 
 \frac{ d_\ell(z_0,z_1)^{2s (q-d-\gamma)} } { (1+|v_0|)^\gamma d_\ell(z_0,z_1)^{\alpha'} } &= d(z_0,z_1)^{2s (q-d-\gamma - \alpha/(1+2s) )} (1+|v_0|)^{-\gamma}, \\
& \lesssim (1+|v_0|)^{-(q-d-\gamma - \alpha/(1+2s)) - \gamma}, \\
&= (1+|v_0|)^{-(q-d - \alpha/(1+2s)) } ,\\
  & \lesssim 1.
\end{align*}

In the second case $d_\ell(z_0,z_1)^{-2s} \leq (1+3|v_0|)$, which means that $d(z_0,z_1) \geq (1+3|v_0|)^{-1/(2s)}$. Therefore
\begin{align*} 
 \mathrm{II} &\leq [f]_{C^0_{\ell,q}}  \int_{\R^d} ((1+|v_0 + w|)^{-q} + (1+|v_1 + w|)^{-q} ) |w|^\gamma \dd w,   \\
&\lesssim [f]_{C^0_{\ell,q}}  (1+|v_0|)^{\gamma}, \\
&\lesssim [f]_{C^0_{\ell,q}}  (1+|v_0|)^{\gamma+\frac{\alpha}{1+2s}} d_\ell(z_0,z_1)^{\alpha'}, \qquad \text{ using } d(z_0,z_1) \gtrsim (1+|v_0|)^{-1/(2s)}.
\end{align*}
Adding the upper bounds for $\mathrm I$ and $\mathrm{II}$ we conclude the proof of Lemma~\ref{l:convol-gamma}.
\end{proof}

\begin{lemma}[Bound for $\Q_2$] \label{l:Q2_bound} 
Let $f, g \in \Cpol^\alpha([\tau,T] \times \R^d \times \R^d)$ for $0 < \alpha \leq \min(1,2s)$. Then $Q_2(f,g) \in \Cpol^\alpha([\tau,T] \times \R^d \times \R^d)$ and the following estimates hold for any $q > d+\gamma_++\alpha/(1+2s)$,
  \[
   \| \Q_2(f,g) \|_{C_{\ell,q}^{\alpha'}}  \leq C   \|f\|_{C_{\ell,q}^\alpha} \|g\|_{C_{\ell,q+\alpha/(1+2s) + \gamma}^{\alpha'}}
  \]
 with $C=C(d,\gamma,s,\alpha)$ and $\alpha' = \frac{2s}{1+2s} \alpha$.
\end{lemma}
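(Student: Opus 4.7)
The plan is to write $\Q_2(f,g) = c_b \, h \cdot g$ where $h(z) := (f \ast_v |\cdot|^\gamma)(z) = \int_{\R^d} f(z \circ (0,0,w)) |w|^\gamma \dd w$, and combine the convolution estimate from Lemma~\ref{l:convol-gamma} with the product estimate from Lemma~\ref{l:Calpha_product}. The decay weight bookkeeping is the only subtle point.

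First I fix $z_0 = (t_0,x_0,v_0)$ and $r \in (0,1]$ with $Q_r(z_0) \subset [\tau,T] \times \R^d \times \R^d$. Lemma~\ref{l:convol-gamma} gives directly
\[
 \|h\|_{C_\ell^{\alpha'}(Q_r(z_0))} \le C (1+|v_0|)^{\frac{\alpha}{1+2s}+\gamma} \|f\|_{C^\alpha_{\ell,q}},
\]
provided $q > d+\gamma_+ + \alpha/(1+2s)$. Since Lemma~\ref{l:Calpha_product} applies pointwise/locally (the proof uses only the defining inequalities of $C^\alpha_\ell$ on the given domain and the trivial bound $\|fg\|_{C^0} \le \|f\|_{C^0} \|g\|_{C^0}$), it extends verbatim to any cylinder $Q_r(z_0)$, yielding
\[
 \|h g\|_{C_\ell^{\alpha'}(Q_r(z_0))} \le C \|h\|_{C_\ell^{\alpha'}(Q_r(z_0))} \, \|g\|_{C_\ell^{\alpha'}(Q_r(z_0))}.
\]

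Next I extract the decay in $v_0$ from the $g$ factor. By the definition of the fast-decay semi-norm applied with weight $\tilde q := q + \alpha/(1+2s) + \gamma$,
\[
 \|g\|_{C_\ell^{\alpha'}(Q_r(z_0))} \le (1+|v_0|)^{-\tilde q} \, \|g\|_{C^{\alpha'}_{\ell,\tilde q}}.
\]
Multiplying the three displays together, the exponents of $(1+|v_0|)$ combine as $(\alpha/(1+2s)+\gamma) + (-\tilde q) = -q$, so
\[
 (1+|v_0|)^{q} \, \|\Q_2(f,g)\|_{C_\ell^{\alpha'}(Q_r(z_0))} \le C \|f\|_{C^\alpha_{\ell,q}} \, \|g\|_{C^{\alpha'}_{\ell,q+\alpha/(1+2s)+\gamma}}.
\]
Taking the supremum over admissible $z_0$ and $r$ produces the bound for the semi-norm $[\Q_2(f,g)]_{C^{\alpha'}_{\ell,q}}$. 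The $C^0_{\ell,q}$ part is even easier: Lemma~\ref{l:convolution-C0} bounds $|h(z)| \le C (1+|v|)^\gamma \|f\|_{C^0_{\ell,q}}$ (using $q > d+\gamma_+$), and multiplying by $|g(z)| \le (1+|v|)^{-q-\gamma} \|g\|_{C^0_{\ell,q+\gamma}}$ yields the matching decay.

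The only real obstacle is verifying that Lemma~\ref{l:convol-gamma} applies with the same threshold $q > d+\gamma_+ + \alpha/(1+2s)$ throughout; since the hypothesis is the $C^\alpha_{\ell,q}$ bound on $f$ which we already assume, this is automatic. Nothing else in the argument is beyond direct application of the two auxiliary lemmas and the multiplicative structure of the weight $(1+|v_0|)$. Since the final inequality holds for every admissible $q$, it follows in particular that $\Q_2(f,g) \in \Cpol^{\alpha'}$, completing the proof.
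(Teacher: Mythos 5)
Your argument is exactly the paper's: decompose $\Q_2(f,g)$ as the product of the convolution $f\ast_v|\cdot|^\gamma$ with $g$, bound the convolution on $Q_r(z_0)$ by Lemma~\ref{l:convol-gamma}, multiply using Lemma~\ref{l:Calpha_product}, absorb the gained factor $(1+|v_0|)^{\gamma+\alpha/(1+2s)}$ into the decay weight of $g$, and take the supremum over admissible cylinders. The bookkeeping of the exponents is correct, so the proof is fine and coincides with the paper's.
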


\begin{proof}
Recall that
\( \Q_2(f,g)(v) = c_b \left(\int_{\R^d} f(v+w) |w|^\gamma \dd w \right) g(v).\)
Given $Q_r(z_0) \subset [\tau,T] \times \R^d \times \R^d$, we combine Lemma~\ref{l:Calpha_product} and Lemma~\ref{l:convol-gamma} to get
\[
 (1+|v_0|)^q \| \Q_2 (f,g) \|_{C_\ell^{\alpha'}(Q_r(z_0))} \lesssim (1+|v_0|)^{q+ \gamma + \frac{\alpha}{1+2s}}\|f\|_{C_{\ell,q}^\alpha} \| g\|_{C_\ell^{\alpha'}(Q_r(z_0))}.
\]
Taking the supremum over $Q_r(z_0)$ yields the announced estimate. 
\end{proof}

\subsection{Bounds for $\Q_1$}
This paragraph is dedicated to the derivation of appropriate bounds for $\Q_1 (f,g)$ when $f \in \Cpol^\alpha$ and $g \in \Cpol^{2s+\alpha}$. For this purpose, we need to localize around a given point $z_0$. In order to measure the effect of this localization procedure, we need some preparatory lemmas. \medskip

The proof of the following lemma uses ideas introduced in \cite{imbert2018decay}. It is used to bound a part of the integral involved in the computation of an integro-differential operator.
\begin{lemma} \label{l:bump_integral_C0}
  Let $f$ be such that for $q> d+ (\gamma +2s)$ and for all $v \in \R^d$, 
  \[ |f(v)| \le N_q (1+|v|)^{-q}.\]
Let $K_f$ be the Boltzmann kernel given in formula \eqref{e:kf} applied to this function $f$. Then for all $v \in B_r (v_0)$ with $r \in (0,1)$  and $g \in L^1(B_{|v_0|/8})$,
  \[
    \int_{\substack{
|v'-v| > 1+ |v_0|/8, \\
|v'| < |v_0|/8 
}} |g(v')| K_f (v,v') \dv' \le C N_q \|g\|_{L^1(B_{|v_0|/8})} (1+|v_0|)^{-q+\gamma}
  \]
for some $C$ depends on $q,d,\gamma,s$ and the constants in $B$ (the Boltzmann kernel). 
\end{lemma}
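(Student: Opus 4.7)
The plan is to factor out $\|g\|_{L^1(B_{|v_0|/8})}$ from the integral and reduce matters to the uniform pointwise bound
\[ K_f(v,v') \lesssim N_q (1+|v_0|)^{-q+\gamma} \]
on the domain of integration. The first step is a geometric observation: if $|v'|<|v_0|/8$ and $|v-v_0|<r<1$, then $|v'-v| \ge |v|-|v'| \ge (7/8)|v_0|-1$, and combining with $|v'-v| > 1+|v_0|/8$ gives $|v'-v| \gtrsim 1 + |v_0|$ uniformly on the domain. For $|v_0|$ bounded by an absolute constant the right-hand side of the claimed pointwise bound is bounded below, so a crude estimate obtained by plugging $f(v+w) \le N_q (1+|v+w|)^{-q}$ into \eqref{e:Kernel-approx} and applying a $(d-1)$-dimensional analogue of Lemma~\ref{l:convolution-C0} on the hyperplane $w \perp (v'-v)$ (which uses $q > d + \gamma + 2s$) already suffices. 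I would therefore focus on the regime where $|v_0|$ is large.

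In that regime the decay of $f$ must be exploited more carefully. I would let $e = (v'-v)/|v'-v|$ and decompose $v = v_\parallel e + v_\perp$ with $v_\perp \perp e$. A direct computation from $v \cdot e = (v\cdot v' - |v|^2)/|v'-v|$, using $|v'| \le |v_0|/8$, $|v| \in [|v_0|-1,|v_0|+1]$ and $|v'-v| \le 2|v_0|$, yields $v_\parallel \le -c|v_0|$ for some fixed $c>0$. Since $w \perp e$, every $w$ in the hyperplane satisfies $|v+w|^2 = v_\parallel^2 + |v_\perp+w|^2$, so
\[ f(v+w) \le N_q (1+|v+w|)^{-q} \lesssim N_q (1 + |v_\parallel| + |v_\perp + w|)^{-q}. \]
Substituting into \eqref{e:Kernel-approx} and changing variables $u = v_\perp + w$ (which stays on the hyperplane $H = e^\perp$) reduces the task to estimating
\[ |v-v'|^{-d-2s} \int_{H} (1+|v_\parallel|+|u|)^{-q}\, |u-v_\perp|^{\gamma+2s+1} \dd u. \]

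The remaining integral I would handle by rescaling $u = (1+|v_\parallel|)\tilde u$: this extracts a factor $(1+|v_\parallel|)^{-q+d+\gamma+2s}$ and leaves an integral on $H$ of $(1+|\tilde u|)^{-q} |\tilde u - \tilde v_\perp|^{\gamma+2s+1}$ with $|\tilde v_\perp| = |v_\perp|/(1+|v_\parallel|) \lesssim 1$. This last integral is uniformly bounded by the hyperplane analogue of Lemma~\ref{l:convolution-C0}, which converges precisely when $q > (d-1) + (\gamma+2s+1) = d + \gamma + 2s$. Using $|v-v'| \approx 1 + |v_0| \approx 1+|v_\parallel|$ produces the sought pointwise bound on $K_f(v,v')$, after which the conclusion follows from $\int |g|K_f \le \|g\|_{L^1(B_{|v_0|/8})} \sup_{v'} K_f(v,v')$. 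The main obstacle is the bookkeeping in the hyperplane integral and keeping precise track of the $v_0$-dependence after rescaling; the structural reason the argument works is that $v \cdot (v'-v)$ is forced to be negative and of order $|v_0|^2$ on this domain, so the parallel component $|v_\parallel|$ dominates both $|v_\perp|$ and the diameter $|v-v'|$ in a quantitatively uniform way.
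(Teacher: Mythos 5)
Your proposal is correct and follows essentially the same route as the paper: both reduce the claim to a pointwise bound on $K_f(v,v')$ over the domain by showing that the entire hyperplane $v+(v'-v)^\perp$ stays at distance $\gtrsim 1+|v_0|+|u|$ from the origin, so the decay of $f$ makes the hyperplane integral $\lesssim N_q(1+|v_0|)^{-q+\gamma+2s+d}$ (under $q>d+\gamma+2s$), which combined with $|v-v'|^{-d-2s}\approx(1+|v_0|)^{-d-2s}$ gives $(1+|v_0|)^{-q+\gamma}$. The paper obtains the key lower bound $1+|v+u|\gtrsim 1+|u|+|w|$ by a triangle-inequality computation rather than your parallel/perpendicular decomposition and rescaling, but this is only a cosmetic difference in execution.
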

\begin{remark}\label{r:bump}
Later on, Lemma~\ref{l:bump_integral_C0} will be applied to large values of $|v_0|$. In that case, the inequality $|v'| < |v_0|/8$, together with $v \in B_1(v_0)$, implies $|v'-v| > 1+|v_0|/8$.
\end{remark}
\begin{proof}
  The proof is very similar to the ones of
  \cite[Propositions~4.7,~4.8]{imbert2018decay}. 
Using \eqref{e:Kernel-approx}, we first write for $v \in B_r(v_0)$, 
\begin{align*}
  \int_{\substack{
|w| > 1+|v_0|/8, \\
|v+w| < |v_0|/8 
}} &|g(v+w)| K_f (v,v+w) \dw, \\
  & = \int_{\substack{
|w| > 1+|v_0|/8, \\
|v+w| < |v_0|/8 
}} |g(v+w)| |w|^{-d-2s} \left\{ \int_{u \perp w} f(v+u) A(|w|,|u|)|u|^{\gamma+2s+1} \du \right\} \dw,  \\
  & \lesssim  N_q \|g\|_{L^1(B_{|v_0|/8})} (1+|v_0|/8)^{-d-2s} \max_{\substack{
|w| > 1+|v_0|/8, \\
|v+w| < |v_0|/8 
}}  \left\{  \int_{u \perp w} (1+|v+u|)^{-q} |u|^{\gamma+2s+1} \dd u  \right\}.
\end{align*}

Since $|v+w| \leq |v_0|/8$, we also have
\[ |w| \geq |v_0| - |v-v_0| - |v+w| \geq \frac 78 |v_0| - 1.\]
Thus
\[ \frac{|w|+1}7 \geq |v+w|.\]
Therefore, we get for $u \perp w$,
\begin{align*} 
 1+ |v+u| &\geq 1 + |u-w| - |v+w|, \\
&\geq 1 + |u-w| - \frac{|w|+1}7 , \\
&= \frac 67  + (|u|^2+|w|^2)^{1/2} - \frac{|w|}7, \\
&\geq \frac 67 + \frac{|u|}{\sqrt 2} + \frac{|w|}{\sqrt 2}  -\frac{|w|}7,\\
&\gtrsim (1+|u|+|w|).
\end{align*}
We use this inequality to continue our estimate from the beginning of this proof.

\begin{align*}
  \int_{\substack{
|w| > 1+|v_0|/8, \\
|v+w| < |v_0|/8 
}} &|g(v+w)|  K_f (v,v+w) \dw, \\
  & \lesssim N_q \|g\|_{L^1(B_{|v_0|/8})} (1+|v_0|)^{-d-2s} \max_{|w| > 1+|v_0|/8}  \left\{ \left( \int_{u \perp w} (1+ |u| +|w|)^{-q} |u|^{\gamma+2s+1} \dd u \right) \right\} , \\
  & \lesssim N_q \|g\|_{L^1(B_{|v_0|/8})} (1+|v_0|)^{-d-2s} \max_{|w| \geq 1+|v_0|/8} (1+|w|)^{-q+\gamma+2s+d}, \\
  & \lesssim N_q \|g\|_{L^1(B_{|v_0|/8})} (1+|v_0|)^{-q+\gamma}.
\end{align*}

The implicit constant in $\lesssim$ depends only on $d,q,s,\gamma$. The proof is now complete. 
\end{proof}

For the next lemmas in this section, we define a cutoff function in the following way. Let $\bar \varphi$ be a fixed smooth nonnegative bump function supported in $B_{1/8}$ so that $\bar \varphi=1$ in $B_{1/9}$. For any given value of $v_0 \neq 0$, we define the cutoff function $\varphi$ as
\begin{equation} \label{e:bump_function}
 \varphi(v) := \bar \varphi( |v_0|^{-1} v).
\end{equation}
This function $\varphi$ is supported in $B_{|v_0|/8}$ and is identically $1$ in $B_{|v_0|/9}$. Note that Lemma~\ref{l:bump_integral_C0} can be reformulated using $\varphi$ in the following way (at least for $|v_0|$ large, see Remark~\ref{r:bump}), 
\begin{equation}
  \label{e:reformulated}
  \forall v \in B_1 (v_0), \qquad \Q_1(f,\varphi g)(v) = \int_{\R^d}g(v') \varphi(v') K_f(v,v') \dd v' \lesssim N_q \|\varphi g\|_{L^1(\R^d)} (1+|v_0|)^{-q+\gamma}.
\end{equation}
By the definition of $\|g\|_{C^\alpha_{\ell,q}}$, we see that for any $\alpha \geq 0$ and $q>0$,
\begin{equation} \label{e:bump_norm_decay}
 \|(1-\varphi) g\|_{C^\alpha_{\ell}(Q_1(z_0))} \lesssim (1+|v_0|)^{-q} \|g\|_{C^\alpha_{\ell,q}}.
\end{equation}

In the following lemma, we establish the upper bound and decay as $|v|\to \infty$ for $\Q_1(f,g)$ in terms of corresponding norms of $f$ and $g$.

\begin{lemma}[Pointwise upper bound for $\Q_1$] \label{l:Q1_C0}
Let $f \in \Cpol^0([0,T] \times \R^d \times \R^d)$ and $g \in \Cpol^{2s+\alpha}([0,T] \times \R^d \times \R^d)$ for some $\alpha>0$. Then $\Q_1(f,g) \in \Cpol^0([0,T] \times \R^d \times \R^d)$. Moreover, for any $q>d+\gamma+2s$,
\begin{equation} \label{e:cutoff_decay}
 \|\Q_1(f,g)\|_{C^0_{\ell,{q-\gamma-2s}}} \leq C \|f\|_{C^0_{\ell,q}}  \|g\|_{C^{2s+\alpha}_{\ell,q}}.
\end{equation}
Here, the constant $C$ depends only on $\alpha$, dimension, $s$, $\gamma$, and the constant in $B$ (the Boltzmann kernel).
\end{lemma}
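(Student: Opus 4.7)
The conclusion is equivalent to the pointwise estimate
\[
  |\Q_1(f,g)(t,x,v)| \lesssim \|f\|_{C^0_{\ell,q}}\|g\|_{C^{2s+\alpha}_{\ell,q}}(1+|v|)^{-(q-\gamma-2s)},
\]
valid for every $(t,x,v)$; the norm $\|\Q_1(f,g)\|_{C^0_{\ell,q-\gamma-2s}}$ then follows by multiplying by $(1+|v|)^{q-\gamma-2s}$ and taking the supremum over the cylinders of Definition~\ref{d:holder-space-fast}. For $|v|$ bounded (say $|v|\leq 16$) a direct application of Lemma~\ref{l:pointwise_operator_bound} to $g$ with kernel $w\mapsto K_f(t,x,v,v+w)$, combined with \eqref{e:upper_bound_for_Kf} and Lemma~\ref{l:convolution-C0}, does the job. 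So fix $z_0=(t_0,x_0,v_0)$ with $|v_0|$ large, freeze $(t_0,x_0)$, and drop them from the notation below.

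\textbf{Cutoff splitting.} Let $\varphi$ be the bump from \eqref{e:bump_function}, supported in $B_{|v_0|/8}$ and equal to $1$ on $B_{|v_0|/9}$. Write $g=\varphi g+(1-\varphi)g$, so that
\[
\Q_1(f,g)(v_0)=\Q_1(f,\varphi g)(v_0)+\Q_1(f,(1-\varphi)g)(v_0).
\]
Since $v_0$ lies well outside $\supp\varphi$, no principal value is needed in the first term. By Remark~\ref{r:bump} and the reformulation \eqref{e:reformulated} of Lemma~\ref{l:bump_integral_C0}, together with the pointwise decay $|g(v)|\leq \|g\|_{C^0_{\ell,q}}(1+|v|)^{-q}$ (which gives $\|\varphi g\|_{L^1(\R^d)}\lesssim \|g\|_{C^0_{\ell,q}}$ because $q>d$), we obtain
\[
|\Q_1(f,\varphi g)(v_0)|\lesssim \|f\|_{C^0_{\ell,q}}\|g\|_{C^0_{\ell,q}}(1+|v_0|)^{-q+\gamma}\leq \|f\|_{C^0_{\ell,q}}\|g\|_{C^{2s+\alpha}_{\ell,q}}(1+|v_0|)^{-q+\gamma+2s}.
\]

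\textbf{Smooth part via Lemma~\ref{l:pointwise_operator_bound}.} For $|w|\leq |v_0|/10$ one has $|v_0+w|\geq |v_0|/8$, so $(1-\varphi)g\equiv g$ on $B_{|v_0|/10}(v_0)$; hence the local kinetic expansion of $g$ at $v_0$ (with semi-norm bounded by $\|g\|_{C^{2s+\alpha}_{\ell,q}}(1+|v_0|)^{-q}$) describes $(1-\varphi)g$ on that ball with an $O(|w|^{2s+\alpha})$ error. For $|w|\geq |v_0|/10$ we use instead the decay $|(1-\varphi)g(v_0+w)|\lesssim \|g\|_{C^0_{\ell,q}}(1+|v_0+w|)^{-q}$ and compare against the same polynomial; this yields a global pointwise $C^{2s+\alpha}$ bound at $v_0$ of size $\lesssim \|g\|_{C^{2s+\alpha}_{\ell,q}}(1+|v_0|)^{-q}$. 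Simultaneously $\|(1-\varphi)g\|_{L^\infty(\R^d)}\lesssim \|g\|_{C^0_{\ell,q}}(1+|v_0|)^{-q}$. Finally, \eqref{e:upper_bound_for_Kf} and Lemma~\ref{l:convolution-C0} (using $q>d+\gamma+2s$) give the boundedness constant
\[
\int_{\R^d\setminus B_r}K_f(v_0,v_0+w)\dw\leq \Lambda r^{-2s},\qquad \Lambda\lesssim \|f\|_{C^0_{\ell,q}}(1+|v_0|)^{\gamma+2s}.
\]
Plugging these three ingredients into Lemma~\ref{l:pointwise_operator_bound} produces
\[
|\Q_1(f,(1-\varphi)g)(v_0)|\lesssim \|f\|_{C^0_{\ell,q}}\|g\|_{C^{2s+\alpha}_{\ell,q}}(1+|v_0|)^{\gamma+2s-q},
\]
and adding the two contributions concludes the estimate.

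\textbf{Main obstacle.} The delicate point is promoting the \emph{local} kinetic H\"older expansion of $g$ near $v_0$ into the \emph{global} pointwise $C^{2s+\alpha}$ bound (i.e.\ valid for all $w\in\R^d$) that Lemma~\ref{l:pointwise_operator_bound} demands. One controls the coefficients of the local Taylor-type polynomial by $\|g\|_{C^{2s+\alpha}_{\ell,q}}(1+|v_0|)^{-q}$ as in Lemma~\ref{l:Calpha_local} and its proof, and then on $|w|\geq |v_0|/10$ trades the polynomial against the $\|g\|_{C^0_{\ell,q}}$-decay of $(1-\varphi)g$; this patching is the analogue, in the weighted fast-decay setting, of the argument used in Lemma~\ref{l:Calpha_local}, and is the step most in need of care.
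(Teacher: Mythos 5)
Your proposal is correct and follows essentially the same route as the paper: the cutoff splitting $g=\varphi g+(1-\varphi)g$, Lemma~\ref{l:bump_integral_C0} (via \eqref{e:reformulated}) for the near part, and Lemma~\ref{l:pointwise_operator_bound} combined with \eqref{e:upper_bound_for_Kf} and Lemma~\ref{l:convolution-C0} for the far part. Your extra care in promoting the local kinetic expansion of $(1-\varphi)g$ at $v_0$ to the global pointwise $C^{2s+\alpha}$ bound required by Lemma~\ref{l:pointwise_operator_bound} is a step the paper passes over silently (it simply invokes $\|(1-\varphi)g\|_{C^{2s+\alpha}_\ell}\lesssim(1+|v_0|)^{-q}\|g\|_{C^{2s+\alpha}_{\ell,q}}$ as in \eqref{e:bump_norm_decay}), and your treatment of it is sound.
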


\begin{proof}
Let us start by recalling the formula for $\Q_1(f,g)$.
\begin{align*} 
 \Q_1(f,g) &= \int_{\R^d} (g'-g) K_f(t,x,v,v') \dd v', \\
&= \int_{\R_d} (g(z\circ (0,0,w)) - g(z)) K_{f,z}(w) \dd w.
\end{align*}
Here, $K_f$ is the Boltmann kernel depending on the function $f$ as in \eqref{e:kf}. As usual, we write $K_{f,z}(w) = K_f(t,x,v,v+w)$ for $z=(t,x,v)$.

We need to establish an upper bound for $\Q_1(f,g)(z_0)$ for any given $z_0 = (t_0,x_0,v_0) \in [0,T] \times \R^d \times \R^d$.

If $|v_0| \leq 2$, we use \eqref{e:upper_bound_for_Kf} together with Lemma~\ref{l:pointwise_operator_bound} and conclude the inequality immediately.

If $|v_0| > 2$, we decompose $\Q_1(f,g) = \Q_1(f,\varphi g)+ \Q_1(f,(1-\varphi)g)$. Here, $\varphi$ is the cutoff function defined in \eqref{e:bump_function}.

For $\Q_1(f,\varphi g)$, we use Lemma~\ref{l:bump_integral_C0} to get \eqref{e:reformulated} and obtain
\[ |\Q_1(f,\varphi g)(z_0)| \lesssim \|f\|_{C^0_{\ell,q}} \|g\|_{C^0_{\ell,q}} (1+|v_0|)^{-q+\gamma}.\]

For $\Q_1(f,(1-\varphi) g)$, we apply \eqref{e:upper_bound_for_Kf} and Lemma~\ref{l:pointwise_operator_bound}. We get
\begin{align*} 
| \Q_1(f,(1-\varphi) g)(z_0)| &\lesssim \left( \int_{\R^d} f(t_0,x_0,v_0+w) |w|^{\gamma+2s} \dd w \right) \|(1-\varphi) g\|_{C_\ell^{2s+\alpha}} , \\
&\lesssim \|g\|_{C^{2s+\alpha}_{\ell,q}} \|f\|_{C^0_{\ell,q}} (1+|v_0|)^{-q+\gamma+2s} \qquad \text{using  Lemma~\ref{l:convolution-C0}.}
\end{align*}
Adding the estimates for $|\Q_1(f,\varphi g)(z_0)|$ and $|\Q_1(f,(1-\varphi) g)(z_0)|$, we conclude the proof.
\end{proof}

In order to estimate the H\"older semi-norm of $\Q_1 (f,g)$, we need the following lemma that is the $C^\alpha$ version of Lemma~\ref{l:bump_integral_C0}.

\begin{lemma} \label{l:bump_integral_Calpha_pre}
Let $f,g \in \Cpol^\alpha$ for some $\alpha \in (0,\min(1,2s)]$. Let $z_0 = (t_0,x_0,v_0)$ such that $|v_0| > 2$ and $Q_1(z_0) \subset [0,\infty) \times \R^d \times \R^d$. Let $\varphi$ be the smooth bump function supported in $B_{|v_0|/8}$ with $\varphi=1$ in $B_{|v_0|/9}$. Let $h: Q_1(z_0) \to \R$ be given by
\[ h(z) := \int_{\R^d} \varphi(v+w) g(z \circ (0,0,w)) K_{f,z} (w) \dd w.\]
Then $h \in C^{\alpha'}_\ell(Q_1(z_0))$ and for any $q>d+\gamma+2s$,
\[ [h]_{C^{\alpha'}_\ell(Q_1(z_0)) } \leq C \|g\|_{C^\alpha_{\ell,q+\alpha/(1+2s)}} \|f\|_{C^\alpha_{\ell,q+\alpha/(1+2s)}} (1+|v_0|)^{-q+\gamma+\alpha/(1+2s)},\]
for a constant $C$ depending on $q$, $d$, $\gamma$ and $s$.
\end{lemma}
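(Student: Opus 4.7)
The plan rests on the observation that for $|v_0| > 2$ and $z = (t,x,v) \in Q_1(z_0)$, one has $|v| \ge |v_0| - 1 > |v_0|/8$, so $\varphi(v) = 0$. Hence the integrand in the definition of $h(z)$ is supported in $\{w : v + w \in B_{|v_0|/8}\}$, a region where $|w| \gtrsim |v_0|$. This is exactly the regime in which Lemma~\ref{l:bump_integral_C0} provides a strong decay factor of order $(1+|v_0|)^{-q+\gamma}$.

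For $z_1, z_2 \in Q_1(z_0)$, I would split the increment $h(z_1) - h(z_2)$ into three pieces by varying $\varphi$, $g$ and the kernel one at a time:
\begin{align*}
h(z_1) - h(z_2) &= \int [\varphi(v_1+w) - \varphi(v_2+w)]\, g(z_1\circ(0,0,w))\, K_{f,z_1}(w)\,\dw \\
&\quad + \int \varphi(v_2+w)\, [g(z_1\circ(0,0,w)) - g(z_2\circ(0,0,w))]\, K_{f,z_1}(w)\,\dw \\
&\quad + \int \varphi(v_2+w)\, g(z_2\circ(0,0,w))\, [K_{f,z_1}(w) - K_{f,z_2}(w)]\,\dw \\
&=: \mathrm{(I)} + \mathrm{(II)} + \mathrm{(III)}.
\end{align*}
Setting $q_0 := q + \alpha/(1+2s)$, the target for each piece is a bound of the form $\|f\|_{C^\alpha_{\ell,q_0}}\,\|g\|_{C^\alpha_{\ell,q_0}}\,(1+|v_0|)^{-q+\gamma+\alpha/(1+2s)}\, d_\ell(z_1,z_2)^{\alpha'}$.

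For $\mathrm{(I)}$, the Lipschitz bound $|\varphi(v_1+w) - \varphi(v_2+w)| \lesssim |v_1-v_2|/|v_0| \lesssim d_\ell(z_1,z_2)/|v_0|$, together with the interpolation $d_\ell(z_1,z_2) \lesssim d_\ell(z_1,z_2)^{\alpha'}$ (valid because $d_\ell(z_1,z_2) \le 2$ on $Q_1(z_0)$), extracts the required $d_\ell^{\alpha'}$ factor; Lemma~\ref{l:bump_integral_C0} applied to $g(z_1\circ(0,0,\cdot))$, whose $L^1(B_{|v_0|/8})$ norm is dominated by $\|g\|_{C^0_{\ell,q_0}}$ since $q_0 > d$, then produces the decay $(1+|v_0|)^{-q_0+\gamma}$, comfortably within range. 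For $\mathrm{(II)}$, the weighted H\"older bound \eqref{e:kh11} for $g$, combined with $|w| \lesssim |v_0|$ on $\supp \varphi(v_2+\cdot)$, yields
\[ |g(z_1\circ(0,0,w)) - g(z_2\circ(0,0,w))| \lesssim \|g\|_{C^\alpha_{\ell,q_0}}\,(1+|v_1+w|)^{-q_0}\, d_\ell(z_1,z_2)^{\alpha'}\,(1+|v_0|)^{\alpha/(1+2s)}, \]
after which Lemma~\ref{l:bump_integral_C0} applied to the remaining $w$-integral against $K_{f,z_1}$ closes this term.

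The main obstacle is $\mathrm{(III)}$, where both the base point $v$ and the function $f$ move with $z$. To disentangle them I would use the identity, valid because in~\eqref{e:A} the integration hyperplane $u \perp w$ depends only on $w$,
\[ K_{f,z_1}(w) - K_{f,z_2}(w) = |w|^{-d-2s} \int_{u \perp w} A(|w|,|u|)\,|u|^{\gamma+2s+1}\,[f(z_1\circ(0,0,u)) - f(z_2\circ(0,0,u))]\,\du. \]
Inserting the weighted H\"older estimate $|f(z_1\circ(0,0,u)) - f(z_2\circ(0,0,u))| \lesssim \|f\|_{C^\alpha_{\ell,q_0}}\,(1+|v_1+u|)^{-q_0}\, d_\ell(z_1,z_2)^{\alpha'}\,(1+|u|)^{\alpha/(1+2s)}$ and then splitting $(1+|u|)^{\alpha/(1+2s)} \lesssim (1+|v_1+u|)^{\alpha/(1+2s)} + (1+|v_0|)^{\alpha/(1+2s)}$ bounds $|K_{f,z_1}(w) - K_{f,z_2}(w)|$ by $\|f\|_{C^\alpha_{\ell,q_0}}\, d_\ell(z_1,z_2)^{\alpha'}$ times the sum of two Boltzmann-type kernels associated with the polynomial profiles $(1+|v|)^{-q_0+\alpha/(1+2s)}$ and $(1+|v|)^{-q_0}$. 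Since $q_0-\alpha/(1+2s) = q > d+\gamma+2s$, Lemma~\ref{l:bump_integral_C0} applies to each piece, and using $|v_1| \approx |v_0|$ produces the required decay. Summing $\mathrm{(I)}$, $\mathrm{(II)}$ and $\mathrm{(III)}$ then gives the claimed bound on $[h]_{C^{\alpha'}_\ell(Q_1(z_0))}$.
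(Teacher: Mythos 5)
Your proposal is correct and follows essentially the same route as the paper: the paper decomposes $h(z_1)-h(z_2)$ into the variation of $\varphi g$ plus the variation of the kernel (you merely split the first piece further into the $\varphi$- and $g$-variations), rewrites the kernel difference as the Boltzmann kernel of the difference function $f - \tau_{\xi^{-1}}f$ exactly as you do, controls the increments via \eqref{e:repeatedly}/\eqref{e:kh11}, and extracts the decay from Lemma~\ref{l:bump_integral_C0}. The exponent bookkeeping with $q_0 = q+\alpha/(1+2s)$ matches the paper's $N_1$, $N_2$ estimates, so no gap.
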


\begin{proof}
Let $z_1, z_2 \in Q_1(z_0)$. We need to estimate an upper bound for $|h(z_1)-h(z_2)|$. We write $\xi = z_2 \circ z_1^{-1}$. As usual,  $\tau_\xi$  denotes the right translation operator $\tau_\xi f(z) := f(\xi \circ z)$.

Note that $d_\ell(z_1,z_2) \approx \|z_2^{-1} \circ z_1 \| \neq \|\xi\|$. In fact, $\|\xi\|$ can be large.

We have
\begin{align*} 
 h(z_2) - h(z_1) &= \int_{\R^d} \bigg( \varphi(v_2+w) g(z_2 \circ (0,0,w)) K_{f,z_2}(w) - \varphi(v_1+w) g(z_1 \circ (0,0,w)) K_{f,z_1}(w) \bigg) \dd w, \\ 
&= \int_{\R^d} \bigg( \varphi(v_2+w) g(z_2 \circ (0,0,w)) - \varphi(v_1+w) g(z_1 \circ (0,0,w)) \bigg) K_{f,z_1}(w) \dd w \\
& \phantom{=} + \int_{\R^d} \varphi(v_2+w) g(z_2 \circ (0,0,w)) \bigg( K_{f,z_2}(w) - K_{f,z_1}(w) \bigg) \dd w, \\
&= \int_{\R^d} \left( \tau_\xi \varphi g - \varphi g) (z_1 \circ (0,0,w))  \right) K_{f,z_1}(w) \dd w \\
                 & \phantom{=} + \int_{\R^d} \varphi(v_2+w) g(z_2 \circ (0,0,w)) \left( K_{(f - \tau_{\xi^{-1}} f),z_2}(w)\right) \dd w. \\
  \intertext{This implies in particular that,}
| h(z_2) - h(z_1) | &\leq \int_{\R^d} \left\vert (\tau_\xi \varphi g - \varphi g) (z_1 \circ (0,0,w))  \right \vert K_{f,z_1}(w) \dd w \\
& \phantom{=} + \int_{\R^d} |\varphi(v_2+w) g(z_2 \circ (0,0,w))| \left( K_{ |f - \tau_{\xi^{-1}} f| ,z_2}(w)\right) \dd w, \\
\intertext{applying Lemma~\ref{l:bump_integral_C0} and observing $|v_1| \approx |v_2| \approx |v_0|$, we get}
&\lesssim \left( \|f\|_{C^0_{\ell,q}}N_1  + \|\varphi g \|_{C^0_{\ell,q}} N_2 \right) (1+|v_0|)^{-q+\gamma},
\end{align*}
where
\begin{align*}
N_1 &:= \|\tau_\xi \varphi g - \varphi g \|_{L^1(B_{|v_0|/8})}, \\
N_2 &:= \sup_{w \in \R^d} \left( |f - \tau_{\xi^{-1}} f| (z_2 \circ (0,0,w)) |v_2 + w|^q \right).
\end{align*}

Let us first analyze $N_2$. 
\begin{align*} 
N_2 & = \sup_{w \in \R^d}  | f(z_2 \circ (0,0,w)) - f(z_1 \circ (0,0,w))| (1+|v_2+w|)^q,  \\
\intertext{ using that $1+| v_1| \approx 1+|v_2| \approx 1+|v_0|$, we compute}
&\lesssim \sup_{w \in \R^d} \min \left( 1, d_\ell( z_1 \circ (0,0,w), z_2 \circ (0,0,w))^\alpha \right) (1+|v_0+w|)^{-\alpha/(1+2s)} \|f\|_{C^\alpha_{\ell,q+\alpha/(1+2s)}},\\
\intertext{using \eqref{e:repeatedly},}
&\lesssim \sup_{w \in \R^d} \min \left( 1, d_\ell( z_1, z_2)^{\alpha'} (1+|w|)^{\alpha/(1+2s)} \right) (1+|v_0+w|)^{-\alpha/(1+2s)} \|f\|_{C^\alpha_{\ell,q+\alpha/(1+2s)}},\\
&\lesssim  d_\ell( z_1, z_2)^{\alpha'} (1+|v_0|)^{\alpha/(1+2s)} \|f\|_{C^\alpha_{\ell,q+\alpha/(1+2s)}}.
\end{align*}

The factor $N_1$ is estimated similarly in terms of $\|\varphi g\|_{C^\alpha_{\ell,q+\alpha/(1+2s)}} \lesssim \|g\|_{C^\alpha_{\ell,q+\alpha/(1+2s)}}$.

Thus, we conclude
\[ |h(z_1) - h(z_2)| \lesssim \|g\|_{C^\alpha_{\ell,q+\alpha/(1+2s)}} \|f\|_{C^\alpha_{\ell,q+\alpha/(1+2s)}} (1+|v_0|)^{-q+\gamma+\alpha/(1+2s)} d_\ell( z_1,  z_2)^{\alpha'}\]
from which we get the desired estimate. 
\end{proof}

\begin{cor} \label{c:bump_integral_Calpha}
Let $f,g \in \Cpol^\alpha$ for some $\alpha \in (0,\min(1,2s)]$. Let $z_0$ such that $\mathcal E_1(z_0) \subset [0,\infty) \times \R^d \times \R^d$. Let $\varphi$ be the smooth bump function supported in $B_{|v_0|/8}$ with $\varphi=1$ in $B_{|v_0|/9}$. Let $\bar h: Q_1 \to \R$ be given by
\[ \bar h(z) := \int_{\R^d} \varphi(\bar v+w) g((\mathcal T_0 z) \circ (0,0,w)) K_{f,\mathcal T_0 z} (w) \dd w\]
with $\mathcal{T}_0 z  =(\bar t,\bar x,\bar v)$. 
Then $h \in C^{\alpha'}_\ell(Q_1)$ and for any $q>d+(\gamma+2s)$,
\[ [\bar h]_{C^{\alpha'}_\ell(Q_1)} \leq C \|g\|_{C^\alpha_{\ell,q+\alpha/(1+2s)}} \|f\|_{C^\alpha_{\ell,q+\alpha/(1+2s)}} (1+|v_0|)^{-q+\gamma+\alpha/(1+2s)},\]
for a constant $C$ depending on $q$, $d$, $\gamma$ and $s$.
\end{cor}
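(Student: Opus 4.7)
The key observation is that $\bar h = h \circ \mathcal{T}_0$, where $h$ is precisely the function appearing in Lemma~\ref{l:bump_integral_Calpha_pre}: both statements use the same cutoff $\varphi$ supported in $B_{|v_0|/8}$ and $\equiv 1$ in $B_{|v_0|/9}$, and the two defining integral formulas coincide after the substitution $\tilde z = \mathcal{T}_0 z$. The plan is therefore to estimate $[h]_{C^{\alpha'}_\ell(\mathcal{E}_1(z_0))}$ through Lemma~\ref{l:bump_integral_Calpha_pre} and then transport this estimate back to $Q_1$ via the change of variables.

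For the transport step I would use the contraction inequality $d_\ell(\mathcal{T}_0 z_1, \mathcal{T}_0 z_2) \le d_\ell(z_1, z_2)$, which appears as \eqref{e:dellT0} in the proof of Lemma~\ref{l:holder-cov}. Setting $\tilde z_i := \mathcal{T}_0 z_i \in \mathcal{E}_1(z_0)$ for any $z_1, z_2 \in Q_1$, this yields
\[ |\bar h(z_1) - \bar h(z_2)| = |h(\tilde z_1) - h(\tilde z_2)| \le [h]_{C^{\alpha'}_\ell(\mathcal{E}_1(z_0))}\, d_\ell(\tilde z_1, \tilde z_2)^{\alpha'} \le [h]_{C^{\alpha'}_\ell(\mathcal{E}_1(z_0))}\, d_\ell(z_1, z_2)^{\alpha'}, \]
and Lemma~\ref{l:bump_integral_Calpha_pre} supplies the required bound on $[h]_{C^{\alpha'}_\ell(\mathcal{E}_1(z_0))}$ with the prefactor $(1+|v_0|)^{-q+\gamma+\alpha/(1+2s)}$ times $\|f\|_{C^\alpha_{\ell,q+\alpha/(1+2s)}}\|g\|_{C^\alpha_{\ell,q+\alpha/(1+2s)}}$, which is exactly the claim.

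The main technical subtlety, and the only step requiring care, is that Lemma~\ref{l:bump_integral_Calpha_pre} is phrased for the standard cylinder $Q_1(z_0)$, while the corollary only assumes $\mathcal{E}_1(z_0) \subset [0,\infty) \times \R^d \times \R^d$; this is strictly weaker than $Q_1(z_0) \subset [0,\infty) \times \R^d \times \R^d$ because the $t$-extent of $\mathcal{E}_1(z_0)$ is $|v_0|^{-\gamma-2s} \le 1$. Inspecting the proof of Lemma~\ref{l:bump_integral_Calpha_pre}, however, the ambient cylinder is used only to guarantee $|v_i| \approx |v_0|$ for the evaluation points (so that Lemma~\ref{l:bump_integral_C0} applies uniformly) and to invoke the increment bound~\eqref{e:repeatedly}; both remain valid on $\mathcal{E}_1(z_0)$, since its velocity projection is contained in $B_1(v_0)$. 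Thus the proof goes through verbatim with $\mathcal{E}_1(z_0)$ in place of $Q_1(z_0)$. The ancillary case $|v_0| \le 2$ is handled separately and is immediate, since then $\mathcal{T}_0$ reduces to the left translation $z \mapsto z_0 \circ z$, the weight $(1+|v_0|)$ is bounded above and below, and the corollary coincides with Lemma~\ref{l:bump_integral_Calpha_pre} applied to $z_0 \circ Q_1$.
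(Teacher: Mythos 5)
Your argument is correct and is essentially the paper's proof: identify $\bar h = h \circ \mathcal T_0$, invoke Lemma~\ref{l:bump_integral_Calpha_pre}, and transport the seminorm back to $Q_1$ via the contraction $d_\ell(\mathcal T_0 z_1, \mathcal T_0 z_2) \le d_\ell(z_1,z_2)$, which is exactly the relevant half of Lemma~\ref{l:holder-cov}. Your extra remark about the mismatch between the hypothesis $\mathcal E_1(z_0) \subset [0,\infty)\times\R^d\times\R^d$ and the lemma's hypothesis on $Q_1(z_0)$ is a legitimate point that the paper's one-line proof glosses over, and your resolution (the lemma's argument only uses that the velocity slots stay in $B_1(v_0)$, so it restricts to $\mathcal E_1(z_0)$ without change) is sound.
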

\begin{proof} Apply Lemma~\ref{l:bump_integral_Calpha_pre} and observe $\bar h = h \circ \mathcal T_0$. Then use Lemma~\ref{l:holder-cov} to conclude.
\end{proof}

We are now in position to derive the desired estimate for $\Q_1 (f,g)$. 

\begin{lemma}[Bound for $\Q_1$] \label{l:Q1_Calpha}
Let $f \in \Cpol^\alpha$ and $g \in \Cpol^{2s+\alpha}$ for some $0<\alpha \leq \min(1,2s)$. Then $\Q_1(f,g) \in \Cpol^{\alpha'}$ for $\alpha' = 2s \alpha / (1+2s)$. Moreover, for any $q>d+\gamma+2s$,
\[ \|\Q_1(f,g)\|_{C^{\alpha'}_{\ell,{q-\gamma-2s-\alpha/(1+2s)}}} \leq C  \|f\|_{C^\alpha_{\ell,q}}  \|g\|_{C^{2s+\alpha}_{\ell,q}} .\]
Here, the constant $C$ depends only on $\alpha$, dimension, $s$, $\gamma$, and the constants in $B$ (the Boltzmann kernel).
\end{lemma}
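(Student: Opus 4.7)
The $C^0$ bound with the required decay is already Lemma \ref{l:Q1_C0} (the exponent $q-\gamma-2s-\alpha/(1+2s)$ is strictly smaller than the $q-\gamma-2s$ provided there). Thus, it remains to bound the $C^{\alpha'}_\ell$ seminorm of $\Q_1(f, g)$ on each cylinder $Q_r(z_0) \subset [\tau, T] \times \R^d \times \R^d$ with the prescribed weighted norms. When $|v_0| \leq 2$, the kernel $K_f$ is uniformly elliptic on a bounded velocity set and the estimate follows by a local argument. We therefore focus on $|v_0| > 2$ and work on the larger cylinder $\mathcal E_1(z_0)$, which contains $Q_1(z_0)$.

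Using the cutoff $\varphi$ from \eqref{e:bump_function} (supported in $B_{|v_0|/8}$, identically $1$ on $B_{|v_0|/9}$), decompose $g = \varphi g + (1-\varphi) g$, so that $\Q_1(f, g) = \Q_1(f, \varphi g) + \Q_1(f, (1-\varphi) g)$. Since $\varphi \equiv 0$ on $B_1(v_0)$ when $|v_0| > 2$, at every $z$ with $v \in B_1(v_0)$ the first term equals
\[
 \Q_1(f, \varphi g)(z) = \int_{\R^d} \varphi(v+w)\, g(z \circ (0,0,w))\, K_{f,z}(w) \dd w,
\]
which is exactly the function $h$ appearing in Lemma \ref{l:bump_integral_Calpha_pre}. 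Applying that lemma with a parameter $q' \in [q-2s,\, q-\alpha/(1+2s)]$ chosen so that $q' > d+\gamma+2s$ (this interval is nonempty since $\alpha/(1+2s) < 2s$) yields a $C^{\alpha'}_\ell(Q_1(z_0))$ bound with decay $(1+|v_0|)^{-q'+\gamma+\alpha/(1+2s)}$, better than the target $(1+|v_0|)^{-q+\gamma+2s+\alpha/(1+2s)}$.

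For the second piece $\Q_1(f, (1-\varphi) g)$, use the change of variables $\mathcal T_0$. A direct substitution in \eqref{e:barKf} shows that for any function $u$,
\[
 \Q_1(f, u)(\mathcal T_0 z) = |v_0|^{\gamma+2s}\, \mathcal L_{\bar K_f}(u \circ \mathcal T_0)(z).
\]
Setting $\tilde h := ((1-\varphi) g) \circ \mathcal T_0$, which agrees with $\bar g := g \circ \mathcal T_0$ on $Q_1$ (since $\varphi \equiv 0$ in that range), the problem is reduced to bounding $[\mathcal L_{\bar K_f} \tilde h]_{C^{\alpha'}_\ell(Q_1)}$. The kernel $\bar K_f$ belongs to the ellipticity class $\mathcal K$ uniformly (Theorem \ref{t:change-of-vars-schauder}), is bounded in the integral sense (Corollary \ref{c:bounded_for_schauder}), and is $C^{\alpha'}$-continuous in $z$ with constant $\bar A_0$ as in Lemma \ref{l:cdv-coef-holder}; meanwhile $\|\tilde h\|_{C^{2s+\alpha}_\ell(Q_2)} \lesssim (1+|v_0|)^{-q}\|g\|_{C^{2s+\alpha}_{\ell,q}}$ by Lemma \ref{l:holder-cov}. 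For $z_1, z_2 \in Q_1$, decompose
\[
 \mathcal L_{\bar K_f} \tilde h(z_2) - \mathcal L_{\bar K_f} \tilde h(z_1) = \mathcal L_{\bar K_{f,z_2} - \bar K_{f,z_1}} \tilde h(z_2) + \bigl[\mathcal L_{\bar K_{f,z_1}} \tilde h(z_2) - \mathcal L_{\bar K_{f,z_1}} \tilde h(z_1)\bigr];
\]
bound the first summand using \eqref{e:A0-schauder} and Lemma \ref{l:pointwise_operator_bound} applied to $\tilde h$ minus its Taylor polynomial at $z_2$, and bound the second using the $C^{2s+\alpha}_\ell$ regularity of $\tilde h$ through the same pointwise operator estimate. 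Finally, pull back via Lemma \ref{l:holder-cov} and multiply by $|v_0|^{\gamma+2s}$.

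The main obstacle is the bookkeeping in this last step: the factor $|v_0|^{\gamma+2s}$ from the change-of-variables formula, the factor $|v_0|^{\bar c\, \alpha'}$ from pulling back the H\"older seminorm via Lemma \ref{l:holder-cov}, the factor $|v_0|^{(1-2s-\gamma)_+ \alpha/(1+2s)}$ contained in $\bar A_0$, and the loss intrinsic to Lemma \ref{l:pointwise_operator_bound} must combine to yield exactly the decay $(1+|v_0|)^{-q+\gamma+2s+\alpha/(1+2s)}$ across the full range $\gamma+2s \in [0,2]$ and $s \in (0,1)$.
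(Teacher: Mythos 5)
Your treatment of the near part $\Q_1(f,\varphi g)$ matches the paper: it is exactly Lemma~\ref{l:bump_integral_Calpha_pre}, and the decay you get there is more than enough. The divergence, and the problem, is in the far part $\Q_1(f,(1-\varphi)g)$. The paper does \emph{not} pass through the change of variables $\mathcal T_0$ for this term. It works directly in the original variables: for $z_1,z_2\in Q_r(z_0)$ it splits the difference into a frozen-kernel increment $\mathcal L_{K_{f,z_2}}[(1-\varphi)g](z_2)-\mathcal L_{K_{f,z_2}}[(1-\varphi)g](z_1)$, handled by the H\"older-in-$z$ operator estimate of \cite[Lemma 3.6]{schauder} together with \eqref{e:upper_bound_for_Kf} and Lemma~\ref{l:convolution-C0} (giving the factor $(1+|v_0|)^{-q+\gamma+2s}$ directly), and a kernel-difference term $\mathcal L_{K_{f,z_2}-K_{f,z_1}}[(1-\varphi)g](z_1)$, handled by Lemma~\ref{l:pointwise_operator_bound} plus the estimate $\int|f(z_2\circ(0,0,w))-f(z_1\circ(0,0,w))||w|^{\gamma+2s}\dd w\lesssim d_\ell(z_1,z_2)^{\alpha'}(1+|v_0|)^{\alpha/(1+2s)}\|f\|_{C^\alpha_{\ell,q}}$ obtained from \eqref{e:repeatedly}. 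No pullback of H\"older norms is needed, so no $\bar c$ appears.

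The gap in your route is precisely the bookkeeping you flag and leave open, and it does not close as described. Pulling the seminorm back through $\mathcal T_0$ via Lemma~\ref{l:holder-cov} costs a factor $|v_0|^{\bar c\alpha'}$ with $\bar c=\max\bigl(\tfrac{\gamma+2s}{2s},1\bigr)$, i.e.\ $|v_0|^{\max(\gamma+2s,2s)\,\alpha/(1+2s)}$, and passing from $\mathcal E_1(z_0)$ back to $Q_1(z_0)$ via Lemma~\ref{l:Calpha_local} costs further powers. When $\gamma+2s>1$ (or $2s>1$) this already exceeds the entire budget $\alpha/(1+2s)$ allowed by the claimed exponent $q-\gamma-2s-\alpha/(1+2s)$, before even accounting for the $(1+|v_0|)^{(1-2s-\gamma)_+\alpha/(1+2s)}$ hidden in $\bar A_0$. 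So your argument would at best prove the lemma with a strictly worse decay exponent, not the one stated. A secondary imprecision: for the frozen-kernel increment between $z_1$ and $z_2$ you invoke ``the same pointwise operator estimate,'' but Lemma~\ref{l:pointwise_operator_bound} bounds $\mathcal L_K u$ at a single point; controlling the \emph{difference} of the operator at two kinetic points with the same kernel requires the genuine H\"older estimate of \cite[Lemma 3.6]{schauder} (or reproving it), which is the step your sketch elides.
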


\begin{proof}
  The norm $\|\Q_1(f,g)\|_{C^0_{\ell,{q-\gamma-2s}}}$ is already controlled by Lemma~\ref{l:Q1_C0}. We are left with estimating the semi-norm $[\Q_1(f,g)]_{C^{\alpha'}_{\ell,{q-\gamma-2s}}}$.

  Let $z_0=(t_0,x_0,v_0)$ be so that $Q_r(z_0) \subset [0,T] \times \R^d \times \R^d$ for some $r\in (0,1)$ like in Definition~\ref{d:holder-space-fast} and
let $\varphi$ be the cutoff function as in \eqref{e:bump_function}. From Lemma~\ref{l:bump_integral_Calpha_pre}, we know that 
\begin{equation} \label{e:q1_ca1}
 [\Q_1(f,\varphi g)]_{C^{\alpha'}_{\ell}(Q_r(z_0))} \lesssim \|g\|_{C^\alpha_{\ell,q}} \|f\|_{C^\alpha_{\ell,q}} (1+|v_0|)^{-q+\gamma+2\alpha/(1+2s)}.
\end{equation}

In order to estimate $\|\Q_1(f,(1-\varphi) g)\|_{C^{\alpha'}_{\ell}(Q_r(z_0))}$, let us consider two points $z_1, z_2 \in Q_r(z_0)$. We have
\begin{align*} 
 \Q_1(f,(1-\varphi) g)(z_2) - \Q_1(f,(1-\varphi) g)(z_1) &= \mathcal L_{K_{f,z_2}} [(1-\varphi)g](z_2) -  \mathcal L_{K_{f,z_1}} [(1-\varphi)g](z_1), \\
&= \left( \mathcal L_{K_{f,z_2}} [(1-\varphi)g](z_2) - \mathcal L_{K_{f,z_2}} [(1-\varphi)g](z_1) \right)\\
& \phantom{=} + \left( \mathcal L_{K_{f,z_2}} [(1-\varphi)g](z_1) -  \mathcal L_{K_{f,z_1}} [(1-\varphi)]g(z_1) \right).
\end{align*}
In the first term we are fixing the kernel $K_{f,z_2}$ (freezing coefficients) and evaluating in the function $(1-\varphi)g$ at two points $z_2$ and $z_1$. In the second term, we are evaluating the operator at the same point $z_1$, for the same function $(1-\varphi)g$, and we will obtain cancellation from the two kernels $K_{f,z_2} - K_{f,z_1}$.

For estimating the first term, we use  \cite[Lemma 3.6]{schauder}. It gives us that
\begin{align*} 
| \mathcal L_{K_{f,z_2}} [(1-\varphi)g](z_2) - \mathcal L_{K_{f,z_2}} [(1-\varphi)g](z_1)| &\lesssim \Lambda_{K_{f,z_2}} [(1-\varphi) g]_{C^{2s+\alpha'}_\ell} d_\ell(z_1,z_2)^{\alpha'}, \\
&\lesssim (1+|v_0|)^{-q+\gamma+2s} \|f\|_{C^0_{\ell,q}} \|g\|_{C^{2s+\alpha'}_{\ell,q}} d_\ell(z_1,z_2)^{\alpha'},
\end{align*}
using \eqref{e:upper_bound_for_Kf} and Lemma~\ref{l:convolution-C0} to get the second inequality.

For the second term, we use Lemma~\ref{l:pointwise_operator_bound}, and compute
\begin{align*} 
| \mathcal L_{K_{f,z_2}} &[(1-\varphi)g](z_1) - \mathcal L_{K_{f,z_1}} [(1-\varphi)g](z_1) | \lesssim \Lambda_{(K_{f,z_2}-K_{f,z_1})} \|(1-\varphi) g\|_{C^{2s+\alpha'}_\ell}, \\
\intertext{using \eqref{e:bump_norm_decay} and estimating $\Lambda_{(K_{f,z_2}-K_{f,z_1})}$ from \eqref{e:upper_bound_for_Kf},}
& \lesssim (1+|v_0|)^{-q} \|g\|_{C^{2s+\alpha'}_{\ell,q}} \left( \int_{\R^d} | f(z_2 \circ (0,0,w))-f(z_1 \circ (0,0,w)) | |w|^{\gamma+2s} \dd w \right) 
\end{align*}

We proceed like in the proof of Lemma~\ref{l:cdv-coef-holder} to estimate the integral. Using \eqref{e:repeatedly}, we have that
\[ | f(z_2 \circ (0,0,w))-f(z_1 \circ (0,0,w)) | \lesssim \min( 1, (d_\ell(z_1,z_2) + d_\ell(z_1,z_2)^{2s/(1+2s)} |w|^{1/(1+2s)})^\alpha ) (1+|v_1+w|)^{-q} \|f\|_{C^\alpha_{\ell,q}}\]
from which we get,
\[ \int_{\R^d} | f(z_2 \circ (0,0,w))-f(z_1 \circ (0,0,w)) | |w|^{\gamma+2s} \dd w \lesssim d_\ell(z_1,z_2)^{\alpha'} \|f\|_{C^\alpha_{\ell,q}} (1+|v_0|)^{\alpha/(1+2s)},\]
provided that $q > d+\gamma+2s+\alpha/(1+2s)$. Incorporating this inequality in the computation above, we get
\begin{align*} 
\big| \mathcal L_{K_{f,z_2}} &[(1-\varphi)g](z_1) - \mathcal L_{K_{f,z_1}} [(1-\varphi)g](z_1) \big|\lesssim
(1+|v_0|)^{-q+\alpha/(1+2s)} \|g\|_{C^{2s+\alpha'}_{\ell,q}} \|f\|_{C^\alpha_{\ell,q}} d_\ell(z_1,z_2)^{\alpha'}. \end{align*}
Collecting the two upper bounds above,
\[  |\Q_1(f,(1-\varphi) g)(z_2) - \Q_1(f,(1-\varphi) g)(z_1)| \lesssim (1+|v_0|)^{-q+\gamma+2s+\alpha/(1+2s)} d_\ell(z_1,z_2)^{\alpha'} \|f\|_{C^\alpha_{\ell,q}} \|g\|_{C^{2s+\alpha'}_{\ell,q}}. \]
Combining with \eqref{e:q1_ca1} and using that $\alpha/(1+2s) < 2s$,
\[  |\Q_1(f,g)(z_2) - \Q_1(f,g)(z_1)| \lesssim (1+|v_0|)^{-q+\gamma+2s+\alpha/(1+2s)} d_\ell(z_1,z_2)^{\alpha'} \|f\|_{C^\alpha_{\ell,q}} \|g\|_{C^{2s+\alpha'}_{\ell,q}}. \]
Thus, $[ \Q_1(f,g) ]_{C^{\alpha'}_\ell(Q_1(z_0))} \leq (1+|v_0|)^{-q+\gamma+2s+\alpha/(1+2s)} \|f\|_{C^\alpha_{\ell,q}} \|g\|_{C^{2s+\alpha'}_{\ell,q}}$ and we concluded the proof.
\end{proof}

\begin{remark}
The lemmas in Section~\ref{s:bilinear} allow us to estimate the H\"oder norm of $Q(f,g)$ in terms of the norms of $f$ and $g$. When it comes to global H\"older norms, we obtain certain decay exponent ``$q$'' in each of the lemmas. The precise value (and loss) of decay exponent is not computed sharply because it was not necessary for the purpose of the result in this paper. It might make sense to investigate sharper version of the lemmas in this section if one tries to obtain $C^\infty$ estimates as in Theorem~\ref{t:main2} for $\gamma \leq 0$ but for solutions that do not decay rapidly as $|v| \to \infty$.
\end{remark}

\section{Global H\"older estimates for the Boltzmann equation}

In this section, we derive global H\"older estimates and global Schauder estimates from the local ones derived in the previous sections. 

\subsection{Global H\"older estimates}

The H\"older estimate in Theorem~\ref{t:local-holder} applies directly to the Boltzmann equation. However, in doing so, it leads to a H\"older estimate only locally, that is to say for a compact set of velocities. In order to obtain a global estimate (that holds for $v \in \R^d$) we combine Theorem~\ref{t:local-holder} with the change of variables described in Section~\ref{s:cov}.

\begin{prop}[Global H\"older estimate] \label{p:global-holder} Let $f$
  be a solution of the Boltzmann
  equation~\eqref{e:boltzmann} in $(0,T) \times \R^d \times \R^d$ so that Assumption~\ref{a:hydro-assumption} holds in its full domain. Assume that for some $q>d$, there exists  $N_q>0$ such that for all $(t,x,v) \in (0,T) \times \R^d \times \R^d$, 
  \[ f(t,x,v) \leq N_q (1+|v|)^{-q}\]
(this is the same as to say $N_q = \|f\|_{C^0_{\ell,q}}$).

  Let us set $z_0=(t_0,x_0,v_0) \in (\tau,T) \times \R^d \times \R^d$ with
  $|v_0| \ge 2$, to be the center of the change of variables $\mathcal T_0$ from Section~\ref{s:cov}.  Then for all $r \in (0,1)$ such that
  $\mathcal{E}_r (z_0) \subset (\tau,T) \times \R^d \times \R^d$ and
  all $\bar z_1, \bar z_2 \in \mathcal{E}_{r/2}(z_0)$,
  \[
    | f(\bar z_1) - f(\bar z_2)| \leq C(N_q)  (1+|v_0|)^{-q+\gamma_+} d_\ell(z_1,z_2)^\alpha
  \]
  where $C>0$ and $\alpha \in (0,1)$ only depend on $q$, $N_q$, the parameters in Assumption~\ref{a:hydro-assumption}, dimension $d$, $\gamma$, $s$ from
  \eqref{assum:B} and $\tau$. Here $\bar z_i = \mathcal T_0 z_i$.
\end{prop}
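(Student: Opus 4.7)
My plan is to apply the local H\"older estimate of Theorem~\ref{t:local-holder} to the transformed function $\bar f = f \circ \mathcal T_0$ in the cylinder $Q_r$, using the fact that the change of variables straightens up the ellipticity of $K_f$. By the computation preceding the statement, $\bar f$ solves
\[
\partial_t \bar f + v \cdot \nabla_x \bar f = \mathcal L_{\bar K_f} \bar f + \bar h \quad \text{on } Q_r,
\]
with $\bar K_f$ as in \eqref{e:barKf} and $\bar h(z) = c_b |v_0|^{-\gamma - 2s} f(\bar z)(f \ast |\cdot|^\gamma)(\bar z)$. Theorem~\ref{t:change-of-vars} guarantees that $\bar K_f$ satisfies all the ellipticity hypotheses \eqref{e:nondeg1}--\eqref{e:cancellation2} with constants independent of $v_0$, so Theorem~\ref{t:local-holder} (after the standard rescaling to radius $r$) is available with uniform constants.

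The easy estimates are: for $z \in Q_r$ we have $|\bar v - v_0| \le 1$, hence $\bar f(z) \lesssim N_q(1+|v_0|)^{-q}$; and using Lemmas~\ref{l:convolution-moments} and \ref{l:convolution-C0} to bound $(f \ast |\cdot|^\gamma)(\bar z) \lesssim C(N_q)(1+|v_0|)^\gamma$, together with the factor $|v_0|^{-\gamma-2s}$, gives $\|\bar h\|_{L^\infty(Q_r)} \lesssim C(N_q)(1+|v_0|)^{-q-2s}$, which is smaller than the target bound. The genuine difficulty is that the $L^\infty$ norm of $\bar f$ on the full slab $(-r^{2s},0]\times B_{r^{1+2s}}\times \R^d$ is \emph{not} controlled by $(1+|v_0|)^{-q}$: for $v$ of order $|v_0|^2$ in the $v_0$-direction we get $\bar v \approx 0$, where $f$ can reach size $N_q$. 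A direct use of Theorem~\ref{t:local-holder} would thus only give $[\bar f]_{C^\alpha_\ell(Q_{r/2})} \lesssim N_q$, missing the $(1+|v_0|)^{-q+\gamma_+}$ decay.

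To overcome this, I would introduce a velocity cutoff $\eta(v)$ supported in $B_1$ with $\eta \equiv 1$ on $B_{r/2}$, and work with $\bar f_{\mathrm{loc}} := \eta \bar f$. This localized function still solves an equation of the form \eqref{e:class} with the same kernel $\bar K_f$, but with modified source
\[
\tilde h(z) = \eta(v) \bar h(z) - \int_{\R^d} (\eta(v+w) - \eta(v)) \, \bar f(z \circ (0,0,w)) \, \bar K_f(z, z \circ (0,0,w)) \dw.
\]
Now $\|\bar f_{\mathrm{loc}}\|_{L^\infty} \lesssim N_q(1+|v_0|)^{-q}$ trivially, since $\eta$ restricts to a region where $|\bar v| \approx |v_0|$. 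The commutator integral must then be shown to be bounded by $C(N_q)(1+|v_0|)^{-q+\gamma_+}$: the piece where $v+w$ stays in a large ball around the origin in the transformed coordinates is handled by the boundedness conditions for $\bar K_f$ (Lemmas~\ref{l:bc1}--\ref{l:bc2}), while the dangerous piece where $v+w$ is so large that $v_0 + T_0(v+w)$ returns close to the origin is handled by Lemma~\ref{l:bump_integral_C0}. The factor $(1+|v_0|)^{\gamma_+}$ arises in this second step: it is exactly the growth rate of $(f \ast |\cdot|^\gamma)$ that appears when estimating the tail of the kernel via $\int f(\bar v + w)|w|^{\gamma+2s} \dw$. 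Applying Theorem~\ref{t:local-holder} to $\bar f_{\mathrm{loc}}$ then yields $[\bar f]_{C^\alpha_\ell(Q_{r/4})} \lesssim C(N_q)(1+|v_0|)^{-q+\gamma_+}$ (since $\bar f_{\mathrm{loc}} \equiv \bar f$ on $Q_{r/2}$), which translates directly via $\bar z_i = \mathcal T_0 z_i$ into the stated inequality. The hardest step is precisely the commutator estimate, because it requires balancing the pointwise decay of $f$ against the tail of $\bar K_f$ sharply enough to capture the $\gamma_+$ exponent and no worse.
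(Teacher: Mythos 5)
Your proposal is correct and follows the same strategy as the paper: the obstruction you identify (the $L^\infty$ norm of $\bar f$ over all of $\R^d$ in velocity is only of size $N_q$, not $N_q(1+|v_0|)^{-q}$, because very large transformed velocities return near the origin) is exactly the one the paper addresses, and the remedy --- a velocity cutoff, an estimate of the resulting extra source via Lemma~\ref{l:bump_integral_C0} and the kernel bounds, then Theorem~\ref{t:local-holder} run through Theorem~\ref{t:change-of-vars} --- is the same. The only structural difference is where the cutoff sits. The paper multiplies $f$ \emph{before} changing variables by $1-\varphi$, with $\varphi$ supported in $B_{|v_0|/8}$; since $\supp\varphi$ is at distance $\gtrsim |v_0|$ from $E_1(v_0)$, the extra source $h_1=\int \varphi(v')f(v')K_f(v,v')\,dv'$ is a single far-field integral with no singularity, to which Lemma~\ref{l:bump_integral_C0} applies verbatim. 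Your cutoff $\eta$ at unit scale in the transformed variables splits the commutator into the two pieces you describe, and --- the one point you should make explicit --- that commutator is a genuinely far-field integral only at points where $\eta\equiv 1$: on the annulus where $\nabla\eta\neq 0$ it involves a principal value which, for $s\ge 1/2$, cannot be bounded in terms of $N_q$ alone. Since you only invoke the localized equation on $Q_{r/2}$ (where $\eta\equiv 1$) before concluding on $Q_{r/4}$, this is harmless, but it is why the paper's choice of cutoff is slightly cleaner. A minor remark: the exponent $\gamma_+$ in the statement is generous --- your two commutator pieces, as well as the paper's $h_1$ and $h_2$, come out no worse than $(1+|v_0|)^{-q+\gamma}$ even before the change-of-variables prefactor $|v_0|^{-\gamma-2s}\le 1$ is discarded, and the dominant contribution is simply $\|\bar g\|_{L^\infty}\lesssim N_q(1+|v_0|)^{-q}$ --- so attributing the $\gamma_+$ specifically to the ``dangerous piece'' is not quite accurate, though this does not affect the validity of the argument.
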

\begin{remark}
Our solutions $f$ will be in $\Cpol^0$ with semi-norms controlled by Theorem~\ref{t:decay}.
\end{remark}

\begin{remark}
  When $t_1 = t_2$ and $x_1=x_2$, $d_\ell(z_1,z_2)$ is the same as $|v_1-v_2|$. It is exactly comparable to the non-isotropic
  distance $\dgs(\bar v_1, \bar v_2)$ of Gressman and Strain as defined in \cite{gressman2011global} (since $r<1$); see Lemma~\ref{l:da}
  in the appendix.

Proposition~\ref{p:global-holder} gives us a global H\"older estimate in all variables $t$, $x$ and $v$. 
\end{remark}
We will use in Corollary~\ref{c:schauder_boltzmann} below the following straightforward consequence of the sharp global H\"older estimate from Proposition~\ref{p:global-holder}.
\begin{cor}[H\"older estimate with fast decay]\label{c:holder}
  Let $f$ be a solution of the Boltzmann
  equation~\eqref{e:boltzmann} in $(0,T) \times \R^d \times \R^d$ so that Assumption~\ref{a:hydro-assumption} holds for all
  $(t,x) \in (0,T) \times \R^d$. Moreover, assume $f \in \Cpol^0$.

  Then, there is an $\alpha>0$ so that for all $\tau \in (0,T)$, $f \in \Cpol^\alpha$ with
\[ \|f\|_{C_{\ell,q}^\alpha( (\tau,T) \times \R^d \times \R^d)} \le C_q \|f\|_{C^0_{\ell,q+\tilde q(\gamma,s)} ( (0,T) \times \R^d \times \R^d)} \qquad \text{ for all $q>d$.}\]
The constant $ C_q>0$ depends on $q$, the parameters from Assumption~\ref{a:hydro-assumption}, dimension $d$, $\gamma$ and $s$ from
  \eqref{assum:B} and time $\tau$. The value of $\alpha>0$  depends on the constants in Assumption~\ref{a:hydro-assumption}, $d$, $\gamma$ and  $s$. The value of $\tilde q(\gamma,s)$ depends on $s$ and $\gamma$ only.
\end{cor}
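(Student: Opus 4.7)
The plan is to derive the $C^\alpha_{\ell,q}$ bound from a pointwise local H\"older estimate around each point of $Q_r(z_0)$, then to stitch these local estimates together via Lemma~\ref{l:Calpha_local} on an arbitrary admissible cylinder $Q_r(z_0) \subset (\tau,T) \times \R^d \times \R^d$, $r \in (0,1]$. The $C^0$ contribution to $\|f\|_{C^\alpha_{\ell,q}}$ is dominated by $\|f\|_{C^0_{\ell,q+\tilde q}}$, so I concentrate on the seminorm and choose $\alpha \leq \min(1,2s)$ once and for all, so that the polynomial in Lemma~\ref{l:Calpha_local} reduces to the constant $p_{z^*} = f(z^*)$. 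The task thus reduces to producing a uniform scale $r_*$ and a local H\"older estimate $|f(z^* \circ \xi) - f(z^*)| \leq C_0 \|\xi\|^\alpha$ for every $z^* \in Q_r(z_0)$ and every $\xi$ with $\|\xi\| \leq r_*$ and $z^* \circ \xi \in Q_r(z_0)$.

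For $|v^*| \leq 2$ this local estimate follows from Theorem~\ref{t:local-holder}, since Assumption~\ref{a:hydro-assumption} makes $K_f$ satisfy \eqref{e:nondeg1}--\eqref{e:cancellation2} with universal constants on bounded velocity sets, and the $\Q_2$ source is controlled by $\|f\|_{C^0_{\ell,q'}}$ with $q' := q + \tilde q$. For $|v^*| > 2$ I apply Proposition~\ref{p:global-holder} at $z^*$ with fixed radius $\rho_0 := \min(1,(\tau/2)^{1/(2s)})$, which (using $\gamma+2s \geq 0$) ensures $\mathcal E_{\rho_0}(z^*) \subset (\tau/2,T) \times \R^d \times \R^d$. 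Taking $N_{q'} = \|f\|_{C^0_{\ell,q'}}$, the proposition yields, for $\bar z_i = \mathcal T_{z^*}(z_i) \in \mathcal E_{\rho_0/2}(z^*)$,
\[
|f(\bar z_1) - f(\bar z_2)| \leq C (1+|v^*|)^{-q'+\gamma_+} \|f\|_{C^0_{\ell,q'}} \, d_\ell(z_1,z_2)^\alpha,
\]
in terms of the pre-image distance. Converting to the true distance by $d_\ell(z_1,z_2) \leq C|v^*|^{\bar c} d_\ell(\bar z_1,\bar z_2)$ (from the proof of Lemma~\ref{l:holder-cov}) and noting that the narrowest direction of $\mathcal E_{\rho_0/2}(z^*)$ is the $v^*$-direction, of width $\sim \rho_0/|v^*|$, so that the ellipsoid contains every $z^* \circ \xi$ with $\|\xi\| \leq r_* := c(1+|v^*|)^{-\bar c}$ for a constant $c$ depending only on $\rho_0,\gamma,s$, the local bound takes the form $|f(z^* \circ \xi) - f(z^*)| \leq C(1+|v^*|)^{-q'+\gamma_+ + \bar c \alpha} \|f\|_{C^0_{\ell,q'}} \|\xi\|^\alpha$ for $\|\xi\| \leq r_*$.

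Applying Lemma~\ref{l:Calpha_local} on $Q_r(z_0)$ at scale $r_* \approx (1+|v_0|)^{-\bar c}$ (using $|v^*| \approx |v_0|$ on $Q_r(z_0)$ since $r \leq 1$), and bounding $\osc_{Q_r(z_0)} f$ by $2(1+|v_0|)^{-q'}\|f\|_{C^0_{\ell,q'}}$, gives
\[
[f]_{C^\alpha_\ell(Q_r(z_0))} \leq C(1+|v_0|)^{-q'+\gamma_+ + \bar c \alpha} \|f\|_{C^0_{\ell,q'}} + C(1+|v_0|)^{\bar c \alpha - q'} \|f\|_{C^0_{\ell,q'}}.
\]
Choosing $\tilde q(\gamma,s) := \gamma_+ + \bar c \alpha$ forces both terms to be of order $(1+|v_0|)^{-q}\|f\|_{C^0_{\ell,q+\tilde q}}$, and taking the supremum over admissible $(z_0,r)$ closes the estimate. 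The main obstacle is the mismatch between the non-isotropic ellipsoid $\mathcal E_\rho(z^*)$, on which Proposition~\ref{p:global-holder} naturally lives, and the standard kinetic cylinder used in the definition of $\Cpol^\alpha$: the ellipsoid is thin in the $v^*$-direction, so both the local scale $r_*$ and the distance-comparison factor degrade polynomially in $|v_0|$, and the explicit form of $\tilde q$ is precisely what is needed to absorb this degradation into the faster-decay norm on the right-hand side.
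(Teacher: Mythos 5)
Your argument is correct and is precisely the route the paper leaves implicit when it calls the corollary a ``straightforward consequence'' of Proposition~\ref{p:global-holder}: apply Theorem~\ref{t:local-holder} for $|v^*|\le 2$ and Proposition~\ref{p:global-holder} for $|v^*|>2$, convert distances via \eqref{e:dellT0}, inscribe a kinetic cylinder of radius $\approx (1+|v_0|)^{-\bar c}$ in the ellipsoid, and patch with Lemma~\ref{l:Calpha_local}, absorbing every polynomial loss in $|v_0|$ into $\tilde q(\gamma,s)$. The only cosmetic slip is the claim that the $v$-direction is always the narrowest direction of $\mathcal E_{\rho_0/2}(z^*)$ (for $\gamma>0$ the time direction is narrower in the kinetic scaling), but the exponent $\bar c=\max(1,(\gamma+2s)/(2s))$ you use for $r_*$ is the right one regardless.
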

We now turn to the proof of the previous proposition. 
\begin{proof}[Proof of Proposition~\ref{p:global-holder}]
Without loss of generality, we assume $\tau \geq 1$ and take $r=1$. Otherwise we would have to adjust the choice of $r$ to be $\leq \tau^{1/(2s)}$ so that $Q_r(z_0) \subset (0,T) \times \R^d \times \R^d$. The constants in the result would be affected by this value accordingly.

  Let $\varphi$ be the cutoff function as in \eqref{e:bump_function}. In particular $\varphi =0$ in $E_1(v_0)$ for $|v_0|\ge 2$ because $E_1 (v_0) \subset B_1 (v_0)$. Let $g(t,x,v) = (1-\varphi(v))  f(t,x,v)$. Thus, $\|g\|_{L^\infty} \le N_q (1+|v_0|)^{-q}$.

By a direct computation, we observe that $g$ solves the equation
\[ 
\partial_t g + v \cdot \nabla_x g = \LL_{K_f}(g) 
+ h_1 + h_2 \qquad \text{ in }  (0,T) \times \R^d \times E_1(v_0) ,
\]
where
\[
h_1 = \int_{\R^d} \varphi(v') f(v') K_f(v,v') \dd v'  
\qquad \text{and} \qquad  
h_2 = \Q_2(f,f) =c_b (f \ast |\cdot|^\gamma)f.
\]

Recall that $\varphi$ is supported in $B_{|v_0|/8}$. Thus, we apply Lemma~\ref{l:bump_integral_C0} and obtain
\begin{align*}
 |h_1| & \lesssim \|\varphi f\|_{L^1_v} N_q  (1+|v_0|)^{-q+\gamma} \\
 & \lesssim M_0   N_q  (1+|v_0|)^{-q+\gamma}
\end{align*}
in $\mathcal E_1(z_0) \subset Q_1 (z_0)$. 

In order to estimate $h_2$, we note that if $\gamma \geq 0$, then Lemma~\ref{l:convolution-moments} implies that $|\cdot|^\gamma \ast_v f \lesssim (1+|v_0|)^\gamma (M_0+E_0)$. On the other hand, if $\gamma < 0$, then Lemma~\ref{l:convolution-C0} implies that $|\cdot|^\gamma \ast_v f \lesssim N_q (1+|v_0|)^{\gamma}$, provided that $q>d$. Thus
\[
|h_2| \lesssim \begin{cases}
(1+|v_0|)^{-q+\gamma} N_q, & \text{ if } \gamma \geq 0,\\
(1+|v_0|)^{-q+\gamma} N_q^2 & \text{ if } \gamma < 0,
\end{cases}
\] 
in $\mathcal{E}_1(z_0) \subset Q_1 (z_0)$.  

Applying the change of variables $\To$ from \eqref{e:T0} and Theorem~\ref{t:change-of-vars}, we have that the function
$\bar g = g \circ \To $ solves
\[ 
\partial_t \bar g + v \nabla_x \bar g =  \LL_{\bar K_f} \bar g + \bar h \qquad \text{ in }  Q_1
\]
with 
\[  \bar h := |v_0|^{-\gamma-2s} \left(  h_1(\To(t,x,v)) + h_2( \To(t,x,v) \right)  \]
and $\bar K_f$ satisfies  ellipticity conditions~\eqref{e:nondeg1} (only if $s < \frac12$), \eqref{e:nondeg2}, \eqref{e:bounded1}, \eqref{e:bounded2}, \eqref{e:cancellation1}, \eqref{e:cancellation2} (only if $s \geq \frac12$). Note that Assumption~\ref{e:extra-int} in Theorem~\ref{t:change-of-vars} holds with $C_\gamma \lesssim N_q$ because $q > d$, recall Lemma~\ref{l:convolution-C0}.

Applying Theorem~\ref{t:local-holder} to $\bar g$, we get for all
$z_1, z_2 \in Q_{1/2}$,
\begin{align*} 
 |\bar g(z_1) - \bar g(z_2)| &
\le C  \left( \|\bar g\|_{L^\infty([-1,0]\times B_1 \times \R^d)} + \|\bar h\|_{L^\infty(Q_1)} \right) d_\ell(z_1, z_2)^\alpha, \\
&\le C \bigg( N_q (1+|v_0|)^{-q} + (N_q + N_q^2) (1+|v_0|)^{-q+\gamma} \bigg) d_\ell(z_1, z_2)^\alpha \\
&\le C (N_q) (1+|v_0|)^{-q+\gamma_+} d_\ell(z_1, z_2)^\alpha.
\end{align*}

This estimate yields the result since $\bar g = g \circ \To$ and $\To z_i = \bar z_i$ for $i=1,2$.
\end{proof}

\subsection{Global Schauder estimates for the Boltzmann equation}
\label{s:initial_schauder_for_Boltzmann}

Our next task is to use the change of variables in order to derive global Schauder estimates for the Boltzmann equation. In this case, we work with a more general equation, the \emph{linear} Boltzmann equation,
\begin{equation}
  \label{e:lin-boltzmann}
  (\partial_t + v \cdot \nabla_x) g = \Q_1(f,g) +h \text{ in } (0,T) \times \R^d \times \R^d.
\end{equation}
Theorem~\ref{t:local-schauder} gives us local Schauder estimates for the solution $g$, with a precise exponent, in terms of H\"older norms of $h$, $g$ and the kernel $K_f$. We will combine it with the change of variables described in Section~\ref{s:cov} in order to obtain global Schauder estimates.

We should not be deceived by the description of \eqref{e:lin-boltzmann} as a \emph{linear} equation. Proposition~\ref{p:global_schauder} below applies whenever a function $g$ satisfies any equation of that form, for any functions $f$ and $h$. Whether the functions $f$, $g$ and $h$ are related to each other or not is irrelevant for the estimates. In particular, if $f=g$ and $h = \Q_2(f,f)$, the estimate in Proposition~\ref{p:global_schauder} applies to the original (nonlinear) Boltzmann equation. Equation \eqref{e:lin-boltzmann} will also be satisfied when $g$ is a directional derivative of $f$ or some incremental quotient, for an appropriate $h$ in each case. In that sense, an estimate for \eqref{e:lin-boltzmann} as in Proposition~\ref{p:global_schauder} is more general than a Schauder estimate for merely the original Boltzmann equation \eqref{e:boltzmann}.

\begin{prop}[Global Schauder estimates]\label{p:global_schauder}
  Let $f : (0,T) \times \R^d \times \R^d \to [0,\infty)$ be such that Assumption~\ref{a:hydro-assumption} holds. Assume also that $f \in \Cpol^{\alpha} $ for some  $\alpha \in (0,\min (1,2s))$.  Let $g \in \Cpol^{\alpha} $ be a
  solution of \eqref{e:lin-boltzmann} with $h \in \Cpol^{\alpha'}$
  with $\alpha' = \frac{2s}{1+2s} \alpha$ and $2s+\alpha' \notin \{1,2\}$. Then for all $\tau > 0$, we have the following a priori estimate for $g$ in $\Cpol^{2s+\alpha'}([\tau,T] \times \R^d \times \R^d)$, for each $q > d+2+2s$,
  \[ 
    \|g\|_{C_{\ell,q}^{2s+\alpha'} ([\tau,T] \times \R^d \times \R^d) } \le C \left( \|g\|_{C_{\ell,q+\kappa}^\alpha ([0,T] \times \R^d \times \R^d))}
    + \|h\|_{C_{\ell,q+\kappa}^{\alpha'} ([0,T] \times \R^d \times \R^d) }  \right) 
  \]
  where the constant $\kappa$ depends on $s$ and $\gamma$ only, and $C$ depends on $r,q$, dimension $d$, parameters
  $s, \gamma$ in \eqref{assum:B}, $m_0,M_0,E_0,H_0$ from Assumption~\ref{a:hydro-assumption}, $\tau$ and $\|f\|_{C_{\ell,q+\kappa}^{\alpha}(Q_r(z_0))}$.
\end{prop}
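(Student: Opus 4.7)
The plan is to reduce to Theorem~\ref{t:local-schauder} via the change of variables $\mathcal{T}_0$ of Section~\ref{s:cov}. Fix $z_0 = (t_0, x_0, v_0) \in [\tau, T] \times \R^d \times \R^d$. When $|v_0| < 2$, $\mathcal T_0$ reduces to a left translation and Theorem~\ref{t:local-schauder} applies directly to \eqref{e:lin-boltzmann}, with the H\"older-in-$z$ hypothesis furnished by $f \in \Cpol^\alpha$ via Lemma~\ref{l:cdv-coef-holder}. When $|v_0| \geq 2$, I set $\bar g = g \circ \mathcal T_0$ and $\bar h(z) = |v_0|^{-\gamma - 2s} h(\mathcal T_0 z)$; the same direct computation as in the derivation of \eqref{e:barKf} shows that $\bar g$ solves $(\partial_t + v \cdot \nabla_x)\bar g = \mathcal L_{\bar K_f} \bar g + \bar h$ on $Q_1$. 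Theorem~\ref{t:change-of-vars-schauder} then places $\bar K_{f,z}$ in the ellipticity class $\mathcal K$ with constants independent of $v_0$, and Lemma~\ref{l:cdv-coef-holder} gives the H\"older-in-$z$ hypothesis \eqref{e:A0-schauder} with $\bar A_0 \lesssim (1+|v_0|)^{\alpha(1-\gamma-2s)_+/(1+2s)}\|f\|_{C^\alpha_{\ell,q_0}}$ for some fixed $q_0 > d+2+\alpha/(1+2s)$.

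Applying Theorem~\ref{t:local-schauder} to $\bar g$ on $Q_1$ and transferring the result back to $g$ via Lemma~\ref{l:holder-cov}, which compares $C^\beta_\ell(Q_R)$ with $C^\beta_\ell(\mathcal E_R(z_0))$ up to a factor $|v_0|^{\bar c \beta}$, I expect to obtain
\[
(1+|v_0|)^{q}\,[g]_{C^{2s+\alpha'}_\ell(\mathcal E_{1/2}(z_0))} \lesssim \|g\|_{C^\alpha_{\ell, q+\kappa}} + \|h\|_{C^{\alpha'}_{\ell, q+\kappa}},
\]
after combining the factor $|v_0|^{-\gamma-2s}$ built into $\bar h$ with the various polynomial losses coming from $\bar A_0$ and from Lemma~\ref{l:holder-cov}. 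Here $\kappa = \kappa(s,\gamma)$ absorbs all of these losses, and the requirement $q > d+2+2s$ of the proposition ensures $q+\kappa > q_0$ so that Lemma~\ref{l:cdv-coef-holder} may be invoked with the available decay of $f$.

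The main technical obstacle is converting this bound on the ellipsoid $\mathcal E_{1/2}(z_0)$ into the $\Cpol^{2s+\alpha'}$-style control on kinetic cylinders $Q_r(z_0)$ demanded by Definition~\ref{d:holder-space-fast}. For large $|v_0|$, $\mathcal E_{1/2}(z_0)$ is thin in the $v_0$ direction and contains only a kinetic cylinder of radius $r_0 \sim |v_0|^{-1}$ around $z_0$. I would apply Lemma~\ref{l:Calpha_local} with that $r_0$ to extend the Taylor-type polynomial expansions produced in the previous step from scale $r_0$ to full cylinders $Q_r(z_0)$ with $r\in(0,1]$; this costs a factor $r_0^{-(2s+\alpha')} \sim |v_0|^{2s+\alpha'}$ multiplying $\osc g$, which is harmless after enlarging $\kappa$ if necessary. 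Taking the supremum over admissible $(z_0,r)$ then yields the stated weighted estimate on $\|g\|_{C^{2s+\alpha'}_{\ell,q}}$. The only delicate point is the careful bookkeeping of the powers of $(1+|v_0|)$ through each of Theorem~\ref{t:change-of-vars-schauder}, Lemma~\ref{l:cdv-coef-holder}, Theorem~\ref{t:local-schauder}, Lemma~\ref{l:holder-cov}, and Lemma~\ref{l:Calpha_local}, together with the verification that the lower-order contribution $\max(1,A_0)\|\bar h\|_{C^0}$ in the Schauder estimate fits inside the same $\kappa$ budget.
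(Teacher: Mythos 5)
Your overall architecture (change of variables, Theorem~\ref{t:change-of-vars-schauder} plus Lemma~\ref{l:cdv-coef-holder} to verify the hypotheses of Theorem~\ref{t:local-schauder}, transfer back via Lemma~\ref{l:holder-cov}, and extension from the ellipsoid to full cylinders via Lemma~\ref{l:Calpha_local}) matches the paper's proof. But there is a genuine gap: you apply the local Schauder estimate directly to $\bar g = g \circ \mathcal T_0$ without first localizing $g$ in velocity. The right-hand side of Theorem~\ref{t:local-schauder} contains the term $[\bar g]_{C_\ell^\alpha ((-(2r)^{2s},0] \times B_{(2r)^{1+2s}} \times \R^d)}$, taken over the \emph{entire} velocity space because the diffusion is nonlocal. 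Since $\mathcal T_0$ is affine in $v$, this seminorm of $\bar g$ is comparable to the unweighted global seminorm of $g$, i.e.\ roughly $\|g\|_{C^\alpha_{\ell,0}}$, with no factor $(1+|v_0|)^{-q}$. The dominant contribution comes from small velocities, where $g$ is not small, so the weighted inequality $(1+|v_0|)^{q}[g]_{C^{2s+\alpha'}_\ell(\mathcal E_{1/2}(z_0))} \lesssim \|g\|_{C^\alpha_{\ell,q+\kappa}} + \|h\|_{C^{\alpha'}_{\ell,q+\kappa}}$ that you assert cannot be extracted this way; you would only obtain the estimate with $q=0$ on the right.

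The fix, which is what the paper does, is to first multiply $g$ by $1-\varphi$, where $\varphi$ is the cutoff of \eqref{e:bump_function} supported in $B_{|v_0|/8}$, and set $\bar g = [(1-\varphi)g]\circ \mathcal T_0$. Then \eqref{e:bump_norm_decay} gives $[(1-\varphi)g]_{C^\alpha_\ell([0,T]\times\R^d\times\R^d)} \lesssim (1+|v_0|)^{-q}\|g\|_{C^\alpha_{\ell,q}}$, which restores the decay weight in the troublesome tail term. The price is a new source term $\bar h_2 = |v_0|^{-\gamma-2s}\int \varphi(v')\, g(\bar t,\bar x,v')\, K_f(\bar t,\bar x,\bar v,v')\,\dd v'$ coming from the nonlocal operator acting across the cutoff; its $C^0$ and $C^{\alpha'}_\ell$ norms must then be controlled with the correct decay in $|v_0|$, which is exactly the content of Lemma~\ref{l:bump_integral_C0} and Corollary~\ref{c:bump_integral_Calpha}. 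Your proposal never introduces this localization, and consequently also omits the estimate of $\bar h_2$, so as written the argument does not yield the stated weighted bound.
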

\begin{proof}

Like in the proof of Proposition~\ref{p:global-holder}, we concentrate on $|v_0|>2$ and assume without loss of generality that $\tau \geq 1$. Let us pick any $z_0$ so that $\mathcal E_1(z_0) \subset [0,T] \times \R^d \times \R^d$.

Let $\varphi$ be the cutoff function as in \eqref{e:bump_function}.

We multiply $g$ by $(1-\varphi)$ in order to concentrate on velocities $|v| \geq |v_0|/9$. Then, we change variables by looking at $\bar g = [(1-\varphi) g] \circ \mathcal T_0$. Recall that $\mathcal T_0$ maps $Q_1$ into the slanted ellipsoidal cylinder $\mathcal E_1(z_0)$. The function $\bar g$ satisfies the following equation in $Q_1$,
\[ \partial_t \bar g + v \cdot \nabla_x \bar g = \left( \int_{\R^d}(\bar g'- \bar g) \bar K_f(t,x,v,v') \dd v' \right) + \bar h + \bar h_2.\]
Here, $\bar K_f$ is the kernel after the change of variables, as in \eqref{e:barKf}. 

The function $\bar h$ corresponds to the source term $h$ after the change of variables. The new source term $\bar h_2$ is the result of  our application of the cutoff factor $(1-\varphi)$. The functions $\bar h$ and $\bar h_2$ are given by
\begin{align*}
\bar h &:= |v_0|^{-\gamma-2s} h \circ \mathcal T_0, \\
\bar h_2 &:= |v_0|^{-\gamma-2s} \int_{\R^d}\varphi(v') g(\bar t, \bar x, v') K_f(\bar t, \bar x, \bar v, v') \dd v'.
\end{align*}
As usual, we write $\bar z = (\bar t, \bar x, \bar v) = \mathcal T_0 z$.

According to the Schauder estimates of Theorem~\ref{t:local-schauder}, we get
\begin{align*}
 [\bar g]_{C^{2s+\alpha'}_\ell(Q_1)} & \lesssim \left(1+\bar A_0^{\frac{2s+\alpha'-\alpha}{\alpha'}} \right) [\bar g]_{C^\alpha_\ell([-2^{2s},0] \times B_2 \times \R^d)}  + [\bar h + \bar h_2]_{C^{\alpha'}_\ell(Q_2)} + (1+\bar A_0) \|\bar h + \bar h_2\|_{C^0(Q_2)} , \\
&=: T_1 + T_2 + T_3.
\end{align*}

Since we know from Lemma~\ref{l:holder-cov} that $[g]_{C^{2s+\alpha'}_\ell(\mathcal E_1(z_0))} \lesssim {|v_0|^{\bar c (2s+\alpha')}}  [\bar g]_{C^{2s+\alpha'}_\ell(Q_1)}$, the proof of this proposition will proceed by estimating the right hand side in the inequality above. Let us estimate the three terms $T_1$, $T_2$ and $T_3$, one by one.

For the first term, let us observe that by the construction of $\varphi$ and the definition of the norm $C^\alpha_{\ell,q}$,
\[ [(1-\varphi) g]_{C^\alpha_\ell([0,T]\times \R^d \times \R^d)} \lesssim (1+|v_0|)^{-q}  \|g\|_{C^\alpha_{\ell,q}([0,T]\times \R^d \times \R^d)}.\]
Combining with the change of variables and using Lemma~\ref{l:holder-cov}, 
\begin{equation} \label{e:gs_e1}
 [\bar g]_{C^\alpha_\ell([-2^{2s},0] \times B_2 \times \R^d)} \leq (1+|v_0|)^{-q_1}  \|g\|_{C^\alpha_{\ell,q_1}([0,T]\times \R^d \times \R^d)}.
\end{equation}
The estimate in \eqref{e:gs_e1} holds for any value of $q_1>0$.

Using Lemma~\ref{l:cdv-coef-holder}, we have that, for any $q_1 > d+2+\alpha/(1+2s)$
\[ \bar A_0 \lesssim (1+|v_0|)^{\frac \alpha {1+2s}(1-2s-\gamma)_+} \|f\|_{C^\alpha_{\ell,q_1}}.\]
Combining it with \eqref{e:gs_e1}, we estimate the first term $T_1$ as
\begin{align*} 
 T_1 &\lesssim |v_0|^{-q_1}  \|g\|_{C^\alpha_{\ell,q_1}} + |v_0|^{-q_1 + \frac \alpha {1+2s}(1-2s-\gamma)_+ \frac{2s+\alpha'-\alpha}{\alpha'}} \|f\|_{C^\alpha_{\ell,q_1}}^{\frac{2s+\alpha'-\alpha}{\alpha'}} \|g\|_{C^\alpha_{\ell,q_1}}, \\
&= |v_0|^{-q_1}  \|g\|_{C^\alpha_{\ell,q_1}} + |v_0|^{-q_1 + (1-2s-\gamma)_+ \left( 1 - \frac{\alpha}{2s (1+2s)} \right) } \|f\|_{C^\alpha_{\ell,q_1}}^{\frac{2s+\alpha'-\alpha}{\alpha'}} \|g\|_{C^\alpha_{\ell,q_1}},\\
&\le |v_0|^{-q_1}  \|g\|_{C^\alpha_{\ell,q_1}} + |v_0|^{-q_1 + 1 } \|f\|_{C^\alpha_{\ell,q_1}}^{\frac{2s+\alpha'-\alpha}{\alpha'}} \|g\|_{C^\alpha_{\ell,q_1}}.
\end{align*}
This is true for any $q_1 > d+2+\alpha/(1+2s)$.

For the other terms, we must estimate the $C^{\alpha'}_\ell$ norms of $\bar h$ and $\bar h_2$. In the case of $\bar h$, we simply observe that
\begin{align*} 
 \|\bar h\|_{C^0(Q_2)} &= |v_0|^{-\gamma-2s} \|h\|_{C^0(\mathcal E_2(z_0))} , \\ 
 &\leq |v_0|^{-\gamma-2s-q_1} \|h\|_{C^0_{\ell,q_1}}. \\
[ \bar h ]_{C^{\alpha'}_\ell(Q_2)} &\leq |v_0|^{-\gamma-2s} [h]_{C^{\alpha'}_\ell(\mathcal E_2(z_0))} , \\ 
 &\leq |v_0|^{-\gamma-2s-q_1} [h]_{C^{\alpha'}_{\ell,q_1}}.
\end{align*}

In the case of $\bar h_2$, we apply Lemma~\ref{l:bump_integral_C0} (with $q=q_1$) and Corollary~\ref{c:bump_integral_Calpha} (with $q=q_1-\alpha / (1+2s)$) and obtain  for any $q_1 > d+(\gamma+2s)$,
\begin{align*}
\|\bar h_2\|_{C^0_\ell(Q_2)} &\lesssim |v_0|^{-q_1-2s} \|g\|_{C^0_{\ell,q_1}} \|f\|_{C^0_{\ell,q_1}} , \\
  [\bar h_2]_{C^{\alpha'}_\ell(Q_2)} &\lesssim |v_0|^{-q_1-2s+2\alpha/(1+2s)} \|f\|_{C^\alpha_{\ell,q_1}} \|g\|_{C^\alpha_{\ell,q_1}}  .
\end{align*}
We use these estimates to obtain upper bounds for $T_2$ and $T_3$.
\begin{align*}
T_2 &\lesssim |v_0|^{-\gamma-2s-q_1} [h]_{C^{\alpha'}_{\ell,q_1}} + |v_0|^{-q_1-2s+2\alpha/(1+2s)} \|f\|_{C^\alpha_{\ell,q_1}} \|g\|_{C^\alpha_{\ell,q_1}} ,\\
T_3 &\lesssim \left(1+ |v_0|^{\frac \alpha {1+2s}(1-2s-\gamma)_+} \|f\|_{C^\alpha_{\ell,q_1}} \right) \left(  |v_0|^{-\gamma-2s-q_1} \|h\|_{C^0_{\ell,q_1}}  + |v_0|^{-q_1-2s} \|g\|_{C^0_{\ell,q_1}} \|f\|_{C^0_{\ell,q_1}} \right).
\end{align*}

Finally, using Lemma~\ref{l:holder-cov},
\begin{equation} \label{e:gs-a1}
 [g]_{C^{2s+\alpha'}_\ell (\mathcal E_1(z_0))} \lesssim |v_0|^{\bar c(2s+\alpha')} [\bar g]_{C^{2s+\alpha'}_\ell(Q_1)} \lesssim |v_0|^{\bar c(2s+\alpha')}(T_1+T_2+T_3).
\end{equation}

Note that $\mathcal E_1(z_0) \supset Q_{|v_0|^{-\tilde c}} (z_0)$ with $\tilde c = \max (1, (\gamma+2s)/(2s))$ and \eqref{e:gs-a1} holds at any point $z_0$. Using Lemma~\ref{l:Calpha_local}, we extend the inequality to the larger domain $Q_1(z_0)$,
\begin{align*} 
 [g]_{C^{2s+\alpha'}_\ell ( Q_1(z_0))} &\lesssim |v_0|^{\bar c (2s+\alpha')} (T_1+T_2+T_3) + |v_0|^{\tilde c (2s+\alpha')} \|g\|_{C^0_{\ell}(Q_1(z_0))}.  \\
\intertext{Collecting all inequalities, not tracking the dependence on $\|f\|_{C^{\alpha}_{\ell,q_1}}$, and keeping only the largest exponents of $|v_0|$, we are left with }
&\leq C  \left(  |v_0|^{-q_1+\kappa} \|g\|_{C^\alpha_{\ell,q_1}} +  |v_0|^{-q_1+\kappa} \|h\|_{C^{\alpha'}_{\ell,q_1}} \right).
\end{align*}
Here, the constant $C$ depends on $\|f\|_{C^{\alpha}_{\ell,q_1}}$ and $\kappa$ depends on $s$ and $\gamma$ only.

For any given value of $q$, we pick $q_1 = q + \kappa$ and conclude the proof of the lemma.
\end{proof}

\begin{remark}
In Proposition~\ref{p:global_schauder}, we obtain a priori estimates for the norms $\|g\|_{C^{2s+\alpha'}_{\ell,q}}$ in terms of $\|g\|_{C^{\alpha}_{\ell,q_1}}$, $\|f\|_{C^{\alpha}_{\ell,q_1}}$ and $\|h\|_{C^{\alpha'}_{\ell,q_1}}$ for $q_1 = q+\kappa$. Note that we gain some regularity in the estimate but we lose some decay from $q$ to $q_1$. We have made no effort to make the choice of $q_1$ as $q+\kappa$ optimal. Since we work with functions that have a rapid decay as $|v| \to \infty$, the precise exponent in the loss of decay in the estimate has no consequence for our proof.
\end{remark}

\begin{remark}
Following the proof of Proposition~\ref{p:global_schauder} one can compute how the constant $C$ depends on $\|f\|_{C^\alpha_{\ell,q+\kappa}}$. We get $C \approx \|f\|_{C^\alpha_{\ell,q+\kappa}}^{\frac{\alpha+2s-\alpha'}{\alpha'}} = \|f\|_{C^\alpha_{\ell,q+\kappa}}^{1/(2s) + (1+2s)/\alpha}$.
\end{remark}

\begin{cor} \label{c:schauder_boltzmann}
Let $f$  be a  solution of the Boltzmann  equation~\eqref{e:boltzmann} in $(0,T) \times \R^d \times \R^d$ so that Assumption~\ref{a:hydro-assumption} holds. If $\gamma \leq 0$, assume further that $f(0,x,v) = f_0(x,v)$ with
\[ 0 \leq f_0(x,v) \leq N_q (1+|v|)^{-q},\]
for all non-negative integer $q$.

Then, for some $\alpha>0$ and every $q \in \mathbb N$, the norm $\|f\|_{C^{2s+\alpha}_{\ell,q}((\tau,T)\times \R^d \times \R^d)}$ is bounded depending only $d$, $\gamma$, $s$, $\tau$, the parameters in Assumption~\ref{a:hydro-assumption}, and the values of $N_q$ (if $\gamma \leq 0$).
\end{cor}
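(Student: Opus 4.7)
The plan is to combine the previous ingredients in a direct way: decay of $f$, then the global H\"older regularization, and finally one application of the global Schauder estimate to the Boltzmann equation itself, viewed as an instance of the linear equation \eqref{e:lin-boltzmann}.

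First I would invoke Theorem~\ref{t:decay} to conclude that $f \in \Cpol^0$, with $\|f\|_{C^0_{\ell,q}}$ bounded for every $q$ by a constant depending only on the hydrodynamic bounds, $d,\gamma,s$, and (in the case $\gamma \leq 0$) the initial decay constants $N_q$. Next I would apply Corollary~\ref{c:holder} on the time slab $(\tau/2,T) \times \R^d \times \R^d$: since $f \in \Cpol^0$, this yields a small $\alpha_0 \in (0,1)$ and bounds $\|f\|_{C^{\alpha_0}_{\ell,q}((\tau/2,T) \times \R^d \times \R^d)} \leq C_q$ for every $q$, with $C_q$ depending only on the allowed data. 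Without loss of generality I would shrink $\alpha_0$ if necessary so that $\alpha_0 \in (0,\min(1,2s))$ and so that the resulting exponent $\alpha_0' := \frac{2s}{1+2s}\alpha_0$ satisfies $2s + \alpha_0' \notin \{1,2\}$ (this only requires avoiding two values of $\alpha_0$, which is harmless).

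The key step is to apply Proposition~\ref{p:global_schauder} to the Boltzmann equation itself. To do this, I rewrite \eqref{e:boltzmann} in the form \eqref{e:lin-boltzmann} with $g = f$ and source term $h := \Q_2(f,f)$. Proposition~\ref{p:global_schauder} requires $g \in \Cpol^{\alpha_0}$ (granted from the previous paragraph) and $h \in \Cpol^{\alpha_0'}$. The latter is supplied by Lemma~\ref{l:Q2_bound}: for any $q > d+\gamma_+ + \alpha_0/(1+2s)$,
\[
\|\Q_2(f,f)\|_{C^{\alpha_0'}_{\ell,q}} \leq C \,\|f\|_{C^{\alpha_0}_{\ell,q}}\,\|f\|_{C^{\alpha_0'}_{\ell,q+\alpha_0/(1+2s)+\gamma}},
\]
both factors of which are finite, for every $q$, from the preceding step. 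With these bounds in hand, Proposition~\ref{p:global_schauder} applied on the slab $(\tau,T) \times \R^d \times \R^d$ (using $(\tau/2,T)$ as the larger domain for the input norms) gives, for every $q > d+2+2s$,
\[
\|f\|_{C^{2s+\alpha_0'}_{\ell,q}((\tau,T)\times \R^d \times \R^d)} \leq C\left(\|f\|_{C^{\alpha_0}_{\ell,q+\kappa}} + \|\Q_2(f,f)\|_{C^{\alpha_0'}_{\ell,q+\kappa}}\right),
\]
with constants depending on the allowed data (the dependence on $\|f\|_{C^{\alpha_0}_{\ell,q+\kappa}}$ appearing in the constant of Proposition~\ref{p:global_schauder} has already been controlled). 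Setting $\alpha := \alpha_0'$ yields the conclusion for every fixed $q > d + 2 + 2s$; smaller values of $q$ follow from the monotonicity $\|\cdot\|_{C^\beta_{\ell,q'}} \leq \|\cdot\|_{C^\beta_{\ell,q}}$ when $q' \leq q$.

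The only subtle point, and where care is needed, is the choice of $\alpha_0$ so that all hypotheses of Proposition~\ref{p:global_schauder} are strictly met (in particular the avoidance of $2s+\alpha_0' \in \{1,2\}$) and the loss of decay exponent $\kappa$ is absorbed harmlessly into the rapid-decay of $f$ provided by Theorem~\ref{t:decay}. There is no real obstacle here, since Theorem~\ref{t:decay} gives decay for every $q$, so the shift $q \mapsto q+\kappa$ is irrelevant in the final statement; the whole argument is essentially a one-shot bootstrap from $C^0$-decay to $C^{2s+\alpha}$-decay.
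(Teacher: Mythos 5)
Your argument is correct and follows the paper's proof essentially verbatim: decay via Theorem~\ref{t:decay}, H\"older regularity via Corollary~\ref{c:holder}, then one application of Proposition~\ref{p:global_schauder} to $g=f$ with source $h=\Q_2(f,f)$ controlled by Lemma~\ref{l:Q2_bound}. Your added care about choosing $\alpha_0$ so that $2s+\alpha_0'\notin\{1,2\}$ and about absorbing the decay loss $\kappa$ is exactly the right bookkeeping, and is harmless since Theorem~\ref{t:decay} gives decay for every $q$.
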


\begin{proof}
Applying Theorem~\ref{t:decay}, we get an estimate for the norms $\|f\|_{C^0_{\ell,q}(\tau/3,T) \times \R^d \times \R^d)}$ for any value of $q \in \mathbb N$.

Applying Corollary~\ref{c:holder}, we get an estimate for the norms $\|f\|_{C^\alpha_{\ell,q}((\tau/2,T) \times \R^d \times \R^d)}$ for any value of $q \in \mathbb N$, and some small $\alpha>0$.

Applying Proposition~\ref{p:global_schauder} to $g=f$ and $h=f$, we conclude the proof of the corollary.
\end{proof}

\section{Increments}

\label{s:incremental_quotients}

In order to bootstrap the regularity estimate from Corollary~\ref{c:schauder_boltzmann}, we will apply the global Schauder estimates from Proposition~\ref{p:global_schauder} to derivatives and increments of the solution $f$ iteratively.

Before doing that, we develop some technical lemmas about increments and H\"older norms in this section.

Let us write
\[
  \Delta_y f (z) = f(z \circ (0,y,0)) - f(z)  \quad \text{ and } \quad \Delta_w f (z) = f(z\circ (0,0,w))-f(z)
\]
for some small increments $y \in \R^d$ and $w \in \R^d$.
Roughly speaking, the global Schauder estimate from Proposition~\ref{p:global_schauder} allows us to gain only $2s$ derivatives at each iteration, which can be less than $1$ if $s<1/2$. In order to gain one full derivative in each variable, we will apply this estimate to increments of $f$ as above.

The following fact about (usual) H\"older spaces is commonly used to study the regularity of solutions to nonlinear equations (see \cite[Lemma~5.6]{cc}). If $f: \R \to \R$ is a $C^\alpha$ function, and the $C^\alpha$ semi-norm of the increments $f(x+h)-f(x)$ is bounded above by $\lesssim |h|^\beta$, then $f$ is H\"older continuous with the larger exponent $\min(\alpha+\beta,1)$. In our current context of kinetic equations, the underlying geometry and Galilean invariance make the procedure more complicated. Here, we present two separate lemmas that involve increments in space and velocity respectively. They allow us to transfer a regularity estimate for an increment, into a higher order of differentiation. 

In spite of the apparent simplicity of the statement, the proof is rather involved. The first step in the proof is inspired by \cite[Lemma~5.6]{cc}.

\begin{lemma}[Gaining regularity with $x$-increments]\label{l:cc-x}
  Let $\alpha_1,\alpha_2 >0$ and $\beta \ge 0$.  Given a cylinder  $Q= Q_R(z_0)$ with $R \in (0,1)$ and a bounded continuous function  $f$ defined in $Q$, we consider for any  $y \in B_{R^{1+2s}/2}$  the following  function,
  \[ \Delta_y f(z) = f(z \circ (0,y,0)) -f(z).\]
It is defined in $\Qint = Q_{R/2}(z_0)$.

We assume there exists an $N>0$ such that for all  $y \in B_{R^{1+2s}/2}$
\begin{equation} \label{e:Lemma5-ass-x}
    \|\Delta_y f\|_{C^0(\Qint)} \le N, \qquad \; [\Delta_y f]_{C_\ell^{\alpha_1 + \alpha_2} (\Qint)} \le N \|(0,y,0)\|^\beta.
\end{equation}

We assume that $\alpha_2 \in (0,\min (1,2s)]$, $\alpha_1+\alpha_2 \le 1+  2s$, $\alpha_1+\beta \leq 1+  2s$.  Then for all  $y \in B_{R^{1+2s}/2}$, 
  \[ \begin{cases}
      \big\| \Delta_y f \big\|_{C_\ell^{\alpha_2} (\Qint)} \lesssim N \|(0,y,0)\|^{\alpha_1+\beta} &  \text{ if } \alpha_1+\alpha_2+\beta \leq 1+ 2s, \\
      \big\| \Delta_y f \big\|_{C_\ell^{\eta} (\Qint)} \lesssim N \|(0,y,0)\|^{1+ 2s} & \text{ if $\alpha_1+\alpha_2+\beta>1+ 2s$},
\end{cases}
\]
 for some $\eta=\eta(\alpha_1,\alpha_2,\beta,s)>0$. 
\end{lemma}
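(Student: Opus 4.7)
The pivot is the cocycle identity coming from $(0,y_1,0)\circ(0,y_2,0)=(0,y_1+y_2,0)$:
\[
  \Delta_{y_1+y_2}f(z_0)-\Delta_{y_1}f(z_0)-\Delta_{y_2}f(z_0)=\Delta_{y_2}f(z_0\circ(0,y_1,0))-\Delta_{y_2}f(z_0),
\]
which converts the defect of additivity of $y\mapsto \Delta_y f(z_0)$ into an $x$-increment of $\Delta_{y_2}f$. Since $d_\ell(z_0\circ(0,y_1,0),z_0)=\|(0,y_1,0)\|$ and since the constraint $\alpha_1+\alpha_2\le 1+2s$ forbids $x$-linear monomials in the polynomial approximation of $\Delta_{y_2}f$ at $z_0$ (these have kinetic degree exactly $1+2s$), the polynomial evaluated at $\xi=(0,y_1,0)$ collapses to the constant $\Delta_{y_2}f(z_0)$. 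Assumption \eqref{e:Lemma5-ass-x} then yields the Zygmund-type estimate
\[
  |\Delta_{y_1+y_2}f(z_0)-\Delta_{y_1}f(z_0)-\Delta_{y_2}f(z_0)| \le N\,\|(0,y_1,0)\|^{\alpha_1+\alpha_2}\|(0,y_2,0)\|^\beta,
\]
symmetric in $y_1,y_2$ after exchanging the roles.

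Next, set $\phi(y):=\Delta_y f(z_0)$, $\gamma:=(\alpha_1+\alpha_2+\beta)/(1+2s)$, and specialise to $y_1=y_2=y$, getting $|\phi(2y)-2\phi(y)|\le CN|y|^{\gamma}$. Denoting $Y_k=2^k y$, I will iterate $\phi(Y_k)=2\phi(Y_{k-1})+\epsilon_k$ to obtain, for any $K$,
$\phi(y)=2^{-K}\phi(Y_K)-\sum_{k=1}^{K}\epsilon_k/2^k$ with $|\epsilon_k|\le CN(2^{k-1}|y|)^{\gamma}$. In Case A ($\gamma\le 1$) the series $\sum 2^{k(\gamma-1)}$ is summable, so choosing $K$ with $|Y_K|\sim R^{1+2s}$ and using the trivial bound $|\phi(Y_K)|\le N$ gives
\[
  \|\Delta_y f\|_{C^0(\Qint)} \le C_R N \, \|(0,y,0)\|^{\alpha_1+\alpha_2+\beta},
\]
after absorbing $|y|/R^{1+2s}\le |y|^{\gamma}/R^{1+2s}$ for $|y|\le 1$. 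In Case B ($\gamma>1$) I instead iterate downward $y\to y/2$: the series $\sum 2^{k-1}(|y|/2^k)^{\gamma}$ converges, $2^K\phi_{z_0}(y/2^K)$ is Cauchy, and its limit $L_{z_0}(y)$ is additive in $y$ (the asymmetric Zygmund bound applied at scale $y/2^K$ produces a defect $\lesssim N|y_1|^{(\alpha_1+\alpha_2)/(1+2s)}|y_2|^{\beta/(1+2s)}2^{K(1-\gamma)}\to 0$), hence linear; the identity $|\phi(y)-L_{z_0}(y)|\le CN|y|^{\gamma}$ combined with $|\phi|\le N$ at scale $|y|\sim R^{1+2s}$ gives $|L_{z_0}(y)|\le C_R N|y|$ and therefore $\|\Delta_y f\|_{C^0(\Qint)}\le C_R N\,\|(0,y,0)\|^{1+2s}$.

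The last step upgrades the $C^0$ bound to the stated Hölder estimate via the interpolation inequality in Proposition~\ref{p:interpol} with exponents $0$ and $\alpha_1+\alpha_2$ and $\theta=\alpha_1/(\alpha_1+\alpha_2)$. In Case A a direct computation shows that the $|y|$-exponent in the product term $\|\Delta_y f\|_{C^0}^{\theta}\,[\Delta_y f]_{C^{\alpha_1+\alpha_2}_\ell}^{1-\theta}$ equals exactly $(\alpha_1+\beta)/(1+2s)$; the auxiliary term $r^{-\alpha_2}\|\Delta_y f\|_{C^0}$ is absorbed into $C_R$ using that $|y|^{\gamma}\le |y|^{(\alpha_1+\beta)/(1+2s)}$ for $|y|\le 1$. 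Combined with the $C^0$ bound from Step~2, this produces $\|\Delta_y f\|_{C^{\alpha_2}_\ell(\Qint)}\lesssim N\|(0,y,0)\|^{\alpha_1+\beta}$. In Case B the same interpolation with $\|\Delta_y f\|_{C^0}\le C_RN|y|$, for $\eta>0$ chosen small enough (depending on $\alpha_1,\alpha_2,\beta,s$) to control the product term, yields $\|\Delta_y f\|_{C^\eta_\ell(\Qint)}\lesssim N\|(0,y,0)\|^{1+2s}$.

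The main obstacle is the iteration in Step~2. In Case~A one must balance the accumulated geometric error against the trivial bound at the largest admissible dyadic scale, which is delicate because the series only barely diverges; the borderline case $\gamma=1$ needs a logarithmic refinement. In Case~B, the key technical point is verifying that the downward iteration indeed produces a well-defined \emph{linear} functional $L_{z_0}$ (not merely additive) and that its operator norm is controlled by $N/R^{1+2s}$; controlling the $C^\eta_\ell$ seminorm when $\beta<1+2s$ requires the choice of $\eta$ to be tied to the interplay between the linear part, the $O(N|y|^\gamma)$ remainder, and the hypothesised semi-norm at exponent $\alpha_1+\alpha_2$.
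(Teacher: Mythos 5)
Your Step 1 (the cocycle identity, the observation that the polynomial expansion of $\Delta_{y_2}f$ evaluated at $(0,y_1,0)$ reduces to its constant term because $x$-monomials have kinetic degree $1+2s\ge\alpha_1+\alpha_2$, and the dyadic iteration yielding $\|\Delta_y f\|_{C^0(\Qint)}\lesssim N\|(0,y,0)\|^{\min(\alpha_1+\alpha_2+\beta,\,1+2s)}$) reproduces Steps 1--2 of the paper's proof; your downward iteration in Case B is a standard variant of the paper's upward one and delivers the same $C^0$ bound.

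The gap is in the final interpolation step, in Case B. Writing $\eta=(1-\theta')(\alpha_1+\alpha_2)$, the product term in Proposition~\ref{p:interpol} is bounded by $\bigl(N\|(0,y,0)\|^{1+2s}\bigr)^{\theta'}\bigl(N\|(0,y,0)\|^{\beta}\bigr)^{1-\theta'}=N\|(0,y,0)\|^{(1+2s)-(1-\theta')(1+2s-\beta)}$. Since $\alpha_1>0$ and $\alpha_1+\beta\le 1+2s$ force $\beta<1+2s$, this exponent is \emph{strictly} below $1+2s$ for every $\eta>0$; shrinking $\eta$ only brings it closer without ever reaching it. So norm interpolation cannot produce the stated bound $N\|(0,y,0)\|^{1+2s}=N|y|$, and this full power of $|y|$ is exactly what the bootstrap in Section~\ref{s:smoothing} needs (one divides by $|y|$ and lets $y\to0$ to extract the next $x$-derivative), so the loss is not cosmetic. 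The paper avoids this by arguing pointwise in the pair $(a,\xi)$ rather than at the level of norms: for $\|\xi\|\le\|a\|$ it interpolates the coefficients of the Taylor polynomial of $\Delta_a f$ (its Steps 3--4) and then converts the surplus powers of $\|a\|$ into powers of $\|\xi\|$ using $\|\xi\|\le\|a\|$; for $\|\xi\|\ge\|a\|$ it transfers the estimate from $a$ to the dyadic multiple $2^k a$ with $\|2^k a\|\simeq\|\xi\|$ via the Step 1 identity (its Step 6). Your argument has no analogue of this two-regime analysis. In Case A, by contrast, your interpolation shortcut is correct and genuinely shorter than the paper's Steps 3--6: the product exponent computes to exactly $\alpha_1+\beta$, the $r^{-\alpha_2}\|\Delta_yf\|_{C^0}$ term and the $C^0$ norm itself are absorbed since $\alpha_1+\alpha_2+\beta\ge\alpha_1+\beta$, and the logarithmic borderline $\alpha_1+\alpha_2+\beta=1+2s$ you flag is harmless there because only the strictly smaller exponent $\alpha_1+\beta$ is required.
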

\begin{remark}
  One might expect that $\eta = \alpha_1 + \alpha_2 + \beta - (1+2s)$.
  Our proof gives us a smaller number $\eta>0$, with an explicit
  formula. We do not know if the value we obtain is sharp.
\end{remark}
\begin{lemma}[Gaining regularity with $v$-increments]\label{l:cc-v}
  Let $\alpha_1,\alpha_2 >0$ and $\beta \ge 0$.  Given a cylinder   $Q= Q_R(z_0)$ with $R \in (0,1)$ and a bounded continuous function  $f$ defined in $Q$, we consider for any $w \in B_{R/2}$ the  following function,
  \[ \Delta_w f(z) = f(z \circ (0,0,w)) -f(z).\]
  It is defined in $\Qint = Q_{R/2}(z_0)$.

We assume there exists an $N>0$ such that for all  $w \in B_{R/2}$,
  \begin{equation} \label{e:Lemma5-ass-v}
    \|\Delta_w f\|_{C^0(\Qint)} \le N, \qquad \; [\Delta_w f]_{C_\ell^{\alpha_1 + \alpha_2} (\Qint)} \le N \|(0,0,w)\|^\beta.
\end{equation}

We assume that $\alpha_2 \in (0,\min (1,2s)]$, $\alpha_1+\alpha_2 \le 1$, $\alpha_1+\beta \leq 1$.  Then  for all  $w \in B_{R/2}$, 
  \[ \begin{cases}
      \big\| \Delta_w f \big\|_{C_\ell^{\alpha_2} (\Qint)} \lesssim N \|(0,0,w)\|^{\alpha_1+\beta} &  \text{ if } \alpha_1+\alpha_2+\beta \leq 1, \\ 
      \big\| \Delta_w f \big\|_{C_\ell^{\eta} (\Qint)} \lesssim N \|(0,0,w)\| & \text{ if $\alpha_1+\alpha_2+\beta>1$}
\end{cases}
\]
 for some $\eta=\eta(\alpha_1,\alpha_2,\beta,s)>0$. 
\end{lemma}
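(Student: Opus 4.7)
The plan is to adapt the classical Caffarelli--Cabré iteration (see \cite[Lemma~5.6]{cc}) to the kinetic group structure. The key algebraic identity is
\[
\Delta_{2w}f(z)=2\Delta_w f(z)+\Delta_w^2f(z),\qquad \Delta_w^2f(z):=\Delta_wf(z\circ(0,0,w))-\Delta_wf(z),
\]
which follows from writing $(0,0,2w)=(0,0,w)\circ(0,0,w)$ and using the associativity of the kinetic group law. Iterating $K$ times gives the telescoping
\[
\Delta_w f(z)=\frac{1}{2^K}\,\Delta_{2^K w}f(z)-\sum_{k=0}^{K-1}\frac{1}{2^{k+1}}\,\Delta_{2^k w}^2 f(z),
\]
valid whenever all shifted points remain in the domain. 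Since $d_\ell(z\circ(0,0,h),z)=|h|$ and $\alpha_1+\alpha_2\le 1$, the second assumption in \eqref{e:Lemma5-ass-v} yields the pointwise control $|\Delta_h^2 f(z)|\le N|h|^{\alpha_1+\alpha_2+\beta}$.

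Substituting this estimate in the telescoping, using the trivial bound $\|\Delta_{2^K w}f\|_{C^0}\le N$ for the boundary term, and choosing $K$ maximal with $2^K|w|\lesssim R$, the geometric series $\sum_{k}2^{k(\alpha_1+\alpha_2+\beta-1)}$ delivers
\[
\|\Delta_w f\|_{C^0(\Qint)}\lesssim N|w|^{\min(\alpha_1+\alpha_2+\beta,\,1)},
\]
the two regimes corresponding to whether the sum converges or diverges. For the first conclusion ($\alpha_1+\alpha_2+\beta\le 1$), I would obtain the $C^{\alpha_2}_\ell$-seminorm by a dichotomy on $h:=d_\ell(z_1,z_2)$: when $h\le|w|$ the assumed $C^{\alpha_1+\alpha_2}$ bound gives $|\Delta_w f(z_2)-\Delta_w f(z_1)|\le N|w|^\beta h^{\alpha_1+\alpha_2}\le N|w|^{\alpha_1+\beta}h^{\alpha_2}$, while when $h>|w|$ the improved $C^0$ bound yields $|\Delta_w f(z_2)-\Delta_w f(z_1)|\le 2CN|w|^{\alpha_1+\alpha_2+\beta}\le 2CN|w|^{\alpha_1+\beta}h^{\alpha_2}$ via $|w|^{\alpha_2}\le h^{\alpha_2}$. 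Combined with the pointwise bound (which in this regime is stronger than $N|w|^{\alpha_1+\beta}$), this gives $\|\Delta_w f\|_{C_\ell^{\alpha_2}(\Qint)}\lesssim N|w|^{\alpha_1+\beta}$.

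The main obstacle is the second case ($\alpha_1+\alpha_2+\beta>1$), in which the telescoping saturates at $\|\Delta_w f\|_{C^0}\lesssim N|w|$ and the dichotomy above no longer preserves the full $|w|^1$-decay together with a positive H\"older exponent. I expect to choose $\eta>0$ strictly less than $\alpha_1+\alpha_2+\beta-1$ and to combine the improved $C^0$ bound with the $C^{\alpha_1+\alpha_2}$ hypothesis through the interpolation inequality of Proposition~\ref{p:interpol} applied on subcylinders of radius $r_0$ proportional to a fractional power of $|w|$, so that the localization principle of Lemma~\ref{l:Calpha_local} allows the oscillation error to be reabsorbed by the improved $C^0$ estimate. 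The delicate point is tuning $r_0$ and $\eta$ so that the local $C^{\alpha_1+\alpha_2}$ contribution and the global oscillation piece together reproduce the required $N|w|$ growth rather than a weaker fractional power of $|w|$.
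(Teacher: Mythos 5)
Your dyadic doubling identity and the resulting improved $C^0$ bound $\|\Delta_w f\|_{C^0}\lesssim N|w|^{\min(\alpha_1+\alpha_2+\beta,1)}$ are correct and coincide with Steps 1--2 of the paper's argument (including the key observation that, since $\deg_{\mathrm k}p<\alpha_1+\alpha_2\le 1$, the expansion polynomial of $\Delta_w f$ evaluated at $(0,0,w)$ reduces to $\Delta_w f(z)$). However, your seminorm estimate in the case $\alpha_1+\alpha_2+\beta\le 1$ has a gap: when $h=d_\ell(z_1,z_2)\le|w|$ you invoke ``$|\Delta_w f(z_2)-\Delta_w f(z_1)|\le N|w|^\beta h^{\alpha_1+\alpha_2}$,'' but the hypothesis $[\Delta_w f]_{C_\ell^{\alpha_1+\alpha_2}}\le N\|(0,0,w)\|^\beta$ only controls the deviation of $\Delta_w f(z_1\circ\xi)$ from its expansion polynomial, which contains the term $(\partial_t+v\cdot\nabla_x)\Delta_w f(z_1)\,\xi_t$ whenever $\alpha_1+\alpha_2>2s$ (this regime does occur in the application, e.g.\ $(\alpha_1,\alpha_2)=(2s,\alpha')$). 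You must separately bound this coefficient; the paper does so by interpolating (Proposition~\ref{p:interpol}) between the improved $C^0$ bound and the hypothesis, obtaining $|(\partial_t+v\cdot\nabla_x)\Delta_w f|\lesssim N\|(0,0,w)\|^{m-2s\theta}$, and then absorbs the extra term using $\|\xi\|\le\|(0,0,w)\|$ and $\alpha_2\le 2s$.

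The second case, $\alpha_1+\alpha_2+\beta>1$, is not proved: you correctly identify that the naive dichotomy loses a power of $|w|$ there, but the interpolation-on-subcylinders scheme you sketch is not carried out and is not the mechanism that works. The paper's resolution is a second use of the doubling identity at the \emph{other} scale: for $\|\xi\|\gtrsim\|(0,0,w)\|$ one picks $k$ with $\|2^k(0,0,w)\|\simeq\|\xi\|$, applies the already-established estimate (valid for $\|\xi\|\lesssim\|a\|$, with exponent $\eta=\min(1,2s,\alpha_1+\alpha_2)(1-\theta)$ coming from the interpolated derivative bounds) to $\Delta_{2^kw}f$ at both $z$ and $z\circ\xi$, and transfers back to $\Delta_w f$ via the telescoping inequality of Step~1; the hypothesis $\alpha_1+\beta\le 1$ guarantees the telescoping error is admissible. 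Without this rescaling step your argument does not recover the full factor $N\|(0,0,w)\|$ together with a positive H\"older exponent, so as written the proof is incomplete.
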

\begin{proof}[Proof of Lemmas~\ref{l:cc-x} and \ref{l:cc-v}]
  Let $\iota= 1$ if $a=(0,y,0)$ and $\iota=0$ if $a=(0,0,w)$. We then consider  \( \Delta_a f (z) = f(z \circ a) -f(z) \)  so that $\Delta_a f$ equals either $\Delta_y f$ or $\Delta_w f$ if $a =(0,y,0)$ or $a =(0,0,w)$. 
  
  Let $p_z$ denote the polynomial expansion of $f$ at $z$ of kinetic  degree strictly smaller than $\alpha_1 + \alpha_2$. The assumptions   \eqref{e:Lemma5-ass-x} and \eqref{e:Lemma5-ass-v} implies in particular the  following: for all $z \in \Qint$ and $\xi$ such that  $z \circ \xi \in \Qint$,
\begin{equation}\label{e:assum-g}
    |\Delta_a f (z \circ \xi) - \delta_a p_z (\xi)| \leq N \|a\|^\beta \|\xi \|^{\alpha_1+\alpha_2}
\end{equation}
where $\Delta_a f (z) = f(z \circ a) -f(z)$ and where  $\delta_a p_z$ is the polynomial expansion of $\Delta_a f$ at the point $z$.

We abuse notation by writing $\Delta_y f = \Delta_{(0,y,0)} f$,  $\delta_y p_z = \delta_{(0,y,0)} p_z$,  $\Delta_w f = \Delta_{(0,0,w)}f$ and  $\delta_w p_z = \delta_{(0,0,w)}p_z$. Since $\alpha_2 \in (0,\min (1,2s))$, we aim at proving that for  $z \in \Qint$ and $a \in Q_{R/2}$ and $\xi$ such that  $z \circ \xi \in \Qint$,
\begin{eqnarray}
\label{e:conclusion}
    |\Delta_a f (z \circ \xi) - \Delta_a f (z)| \lesssim N \|a\|^{\alpha_1+\beta} \|\xi \|^{\alpha_2} , &\qquad \text{ if } \alpha_1+\alpha_2 + \beta \le 1 + \iota 2s  \\
\label{e:conclusion-weak}
    |\Delta_a f (z \circ \xi) - \Delta_a f (z)| \lesssim N \|a\|^{1 + \iota 2s} \|\xi \|^{\eta}, &\qquad \text{ if } \alpha_1+\alpha_2 + \beta > 1 + \iota 2s 
\end{eqnarray}
where $\iota = 1$ for $a =(0,y,0)$ and $\iota = 0$ if $a= (0,0,w)$.

The remainder of the proof proceeds in several steps. The first one is reminiscent of the proof of \cite[Lemma~5.6]{cc}.  \medskip

\textsc{Step~1.} We claim that for all  $z \in \Qint$ and all $k \in \mathbb{N}$ such that $z \circ (2^k a) \in \Qint$, we have
  \begin{align}
    \label{e:step1}
    |\Delta_a f (z) - 2^{-k} \Delta_{2^k a} f(z) | \lesssim &N \|a\|^{\beta+\alpha_1+\alpha_2} 2^{k \left( \frac{\beta+\alpha_1+\alpha_2}{1+ \iota 2s} - 1 \right)_+}.
  \end{align}
  
  In order to get such an estimate, we remark that
  \[ \Delta_{2a} f(z) =\Delta_a f(z) + \Delta_a f (z\circ a).\]
  Using \eqref{e:assum-g}, we thus get
  \begin{align*}
    |\Delta_{2a} f(z) - 2 \Delta_a f(z)|  & = |\Delta_a f (z\circ a) - \Delta_a f(z)| \\
                                          & \lesssim N \|a\|^{\alpha_1+\alpha_2+\beta} + |\delta_a p_z (a) - \Delta_a f (z)| .
  \end{align*}
  Since the polynomial $p_z$ is of degree strictly less than $\alpha_1+\alpha_2$, we have for $\xi = (\xi_t,\xi_x,\xi_v) \in \R^{1+2d}$, 
  \begin{equation}
    \label{e:polyz}
    \delta_a p_z (\xi) = \Delta_a f(z) + \underset{\text{if } \alpha_1 + \alpha_2 > 2s}{\underbrace{(\partial_t + v\cdot \nabla_x)\Delta_a f (z)\xi_t}}
    + \underset{\text{if } \alpha_1 + \alpha_2 > 1}{\underbrace{D_v \Delta_a f (z)\cdot \xi_v}}
    + \frac12\underset{\text{if } \alpha_1 + \alpha_2 > 2}{\underbrace{D_v^2 \Delta_a f (z) \xi_v \cdot \xi_v}}.
  \end{equation}
  In particular, since $\alpha_1 + \alpha_2 \le 1$ when $a = (0,0,w)$, we remark that, when evaluating the previous expression with $a = (0,y,0)$ at $\xi = (0,y,0)$ or with $a =(0,0,w)$ at $\xi =(0,0,w)$, 
  \[ \delta_y p_z ((0,y,0)) = \Delta_y f(z) .\]
  In the case $a = (0,0,w)$, we used the assumption $\alpha_1+\alpha_2 \le 1$. 
  
We thus conclude that
  \[ |\Delta_{2a} f(z) - 2 \Delta_a f(z)| \lesssim N \|a\|^{\alpha_1+\alpha_2+\beta}\]
  or equivalently
  \[ |\Delta_{a} f(z) - 2^{-1} \Delta_ {2a} f(z)| \lesssim 2^{-1} N \|a\|^{\alpha_1+\alpha_2+\beta}.\]
  By induction, we get
  \begin{align*}
    |\Delta_{a} f(z) - 2^{-k} \Delta_{2^k a} f(z)| & \lesssim N \sum_{j=1}^k2^{-j} \|2^{j-1}a\|^{\alpha_1+\alpha_2+\beta} \\
                                                   & \lesssim N \|a\|^{\alpha_1+\alpha_2+\beta} \sum_{j=1}^k 2^{-j + (j-1)\frac{\alpha_1+\alpha_2 + \beta}{1+\iota 2s}} \\
    & \lesssim N \|a\|^{\alpha_1+\alpha_2+\beta} 2^{k\left(\frac{\alpha_1+\alpha_2 + \beta}{1+ \iota 2s} -1\right)_+}.
  \end{align*}
  This achieves the proof of the claim.
  \medskip

  \textsc{Step~2.} We claim now that for $z \in \Qint$ and $a \in Q_{R/2}$, 
  \begin{equation}
    \label{e:step2}
    |\Delta_a f(z)| \lesssim N \|a\|^m
  \end{equation}
  with $m={\min (\alpha_1+\alpha_2+\beta,1+ \iota 2s)}$.   It is enough to pick an integer $k=k(a)$ such that  $\|2^k a\| \simeq 1$ (or equivalently $2^{-k} \simeq \|a\|^{1+\iota 2s}$), and apply Claim~\eqref{e:step1} from Step~1.

  Indeed, using the assumption $\alpha_1+\alpha_2 \le 1$ in the case $a=(0,0,w)$, we can write in both cases 
  \begin{align*}
    |\Delta_{a} f(z) | &\lesssim 2^{-k} | \Delta_{2^k a} f (z)| + N \|a\|^{\alpha_1+\alpha_2+\beta} 2^{k\left(\frac{\alpha_1+\alpha_2 + \beta}{1+\iota 2s} -1\right)_+} \\
                       &\lesssim \|\Delta_{2^k a} f\|_{C^0}  \|a\|^{1+\iota 2s} + N \|a\|^{\min (\alpha_1+\alpha_2+\beta,1+ \iota 2s)} \\
                   &\lesssim  N \|a\|^{\min (\alpha_1+\alpha_2+\beta,1+\iota 2s)}.
  \end{align*}

 Because $\alpha_1 + \beta \le 1+\iota 2s$ and $a \in Q_1$, we remark that this implies
  \(
    |\Delta_a f(z)| \lesssim N \|a\|^{\alpha_1 + \beta}.
  \)
We are thus left with estimating the semi-norm.
  \medskip

  \textsc{Step~3.}
  We next claim that for $z \in \Qint$ and  $a \in Q_{R/2}$,
  \begin{equation}
    \label{e:step3}
    \left\{
    \begin{aligned}
    | (\partial_t + v \cdot \nabla_x) \Delta_a f (z)| &\lesssim N \| a \|^{m -  (2s) \theta} & \text{if } \alpha_1 + \alpha_2 > 2s \\
    | D_v \Delta_a f (z)| &\lesssim N \| a \|^{m-\theta} & \text{if } \alpha_1 + \alpha_2 > 1 \\
      | D_v^2 \Delta_a f (z)| &\lesssim N \| a \|^{m-2\theta} & \text{if } \alpha_1 + \alpha_2 > 2
    \end{aligned}
    \right.
  \end{equation}
  where
  \begin{equation}\label{e:m-theta}
      m = \min (\alpha_1+\alpha_2 + \beta, 1+ \iota 2s) \qquad \text{ and } \qquad 
      \theta = \min \left(1,\frac{1+\iota 2s-\beta}{\alpha_1+\alpha_2} \right).
  \end{equation}

  It is a consequence of the assumption~\eqref{e:assum-g}, the
  estimate~\eqref{e:step2} from Step~2 and the interpolation
  inequality given by Proposition~\ref{p:interpol}.  For instance, in
  the case $a =(0,y,0)$ and if $\alpha_1 + \alpha_2 > 2s$, we have
  \begin{align*}
    [(\partial_t + v \cdot \nabla_x) \Delta_y f ]_{C_\ell^0 (\Qint)} & \lesssim [\Delta_y f]_{C_\ell^{2s} (\Qint)} \\
                                                                & \lesssim [\Delta_y f]_{C_\ell^0 (\Qint)}^{1- \frac{2s}{\alpha_1+\alpha_2}}
                                                                  [\Delta_y f]_{C_\ell^{\alpha_1+\alpha_2} (\Qint)}^{\frac{2s}{\alpha_1+\alpha_2}} + [\Delta_y f]_{C_\ell^0 (\Qint)}. \\
    & \lesssim N \| (0,y,0)\|^{\left(1-\frac{2s}{\alpha_1+\alpha_2}\right) m + \frac{2s}{\alpha_1+\alpha_2} \beta}.
  \end{align*}
  We now remark that $\left(1-\frac{2s}{\alpha_1+\alpha_2}\right) m + \frac{2s}{\alpha_1+\alpha_2} \beta = m -2s \theta$. 
   The other cases are treated similarly.  
  \medskip
  
  \textsc{Step 4.}
Let $z \in \Qint$ and $a \in Q_{R/2}$ and $\xi$ such that $z \circ \xi \in \Qint$. Assume $\|\xi\| \leq \|a\|$.  We derive from \eqref{e:polyz} and the previous step 
\begin{align*} 
     |\delta_a p_z (\xi) - \Delta_a f (z)| &\lesssim N \left( \|a\|^{m-2s \theta} \|\xi\|^{2s} + \|a\|^{m-\theta} \|\xi\| + \|a\|^{m-2\theta} \|\xi\|^2 \right),  \\
& = N \|a\|^m \left( \left( \frac {\|\xi\|}{\|a\|^\theta} \right)^{2s} + \left( \frac {\|\xi\|}{\|a\|^\theta} \right) + \left( \frac {\|\xi\|}{\|a\|^\theta} \right)^2 \right).
\end{align*}

Since   $\theta \leq 1$ and we are now focusing on the case $\|\xi\| \leq \|a\|$,
\begin{equation}
  \label{e:step4}
  |\delta_a p_z (\xi) - \Delta_a f (z)| \lesssim N \|a\|^m  \left( \frac{\|\xi\|}{\|a\|^\theta} \right)^{\min(1,2s)} = N \|a\|^{m-\theta \min(1,2s)} \|\xi\|^{\min(1,2s)}.
\end{equation}
Here $m$ and $\theta$ are given in \eqref{e:m-theta}.

\medskip

\textsc{Step 5.} Assume $\|a\| \gtrsim \|\xi\|$.

We first use \eqref{e:assum-g} with $\|a\| \gtrsim \|\xi\|$ to get
\begin{equation}
  \label{e:step5}
  |\Delta_a f (z\circ \xi) - \delta_a p_z(\xi)| \lesssim  N \|a\|^{\alpha_1+\beta} \|\xi \|^{\alpha_2}. 
\end{equation}

We claim that in the case $\alpha_1 + \alpha_2 + \beta \leq 1+2s$ for $a=(0,y,0)$ or $\alpha_1 + \alpha_2 + \beta \leq 1$ for $a=(0,0,w)$, \eqref{e:conclusion} holds true. Indeed, since  in this case $m = \alpha_1+\alpha_2+\beta$ and $\theta=1$ and we also have $\alpha_2 < \min (1,2s)$, in view of \eqref{e:step4} we get 
\[
  |\delta_a p_z (\xi) - \Delta_a f (z)|  \lesssim  N \|a\|^{\alpha_1+\alpha_2+\beta-\min(1,2s)} \|\xi\|^{\min(1,2s)} \lesssim N \|a\|^{\beta+\alpha_1} \|\xi \|^{\alpha_2}.
\]
Adding the previous two inequalities, we get \eqref{e:conclusion} for such $a$'s and $\xi$'s. 

For those values of $\alpha_1$, $\alpha_2$ and $\beta$ so that
$\theta \neq 1$, we obtain a somewhat weaker estimate. In this case, \eqref{e:step4} tells us that
\[
  |\delta_a p_z (\xi) - \Delta_a f (z)|  \lesssim N \|a\|^{1+\iota 2s - \min(1,2s) \theta} \|\xi\|^{\min(1,2s)} \lesssim N \|a\|^{1+\iota 2s} \|\xi\|^{\min(1,2s)(1-\theta)} .
\]
The last inequality holds because $\|\xi\| \lesssim \|a\|$.

In this case, using again that $\|\xi\| \lesssim \|a\|$, \eqref{e:step5} implies that
\[  |\Delta_a f (z\circ \xi) - \delta_a p_z(\xi)| \lesssim  N \|a\|^{\alpha_1+\beta} \|\xi \|^{\alpha_2} \lesssim N \|a\|^{1+ \iota 2s} \|\xi\|^{\alpha_2 -(1+\iota 2s-(\beta +\alpha_1))} \lesssim  N \|a\|^{1+ \iota 2s} \|\xi\|^{(1-\theta)(\alpha_1+\alpha_2)}. \]

Combining the two inequalities above, we get
  \[
    |\Delta_a f (z\circ \xi) - \Delta_a f(z) | \lesssim N \|a\|^{1+\iota 2s} \|\xi\|^\eta,
  \]
  where
    \begin{equation}
      \label{e:eta} \eta = \min(1,2s,\alpha_1+\alpha_2)(1-\theta).
    \end{equation}

  \textsc{Step 6.}  We finally claim that \eqref{e:conclusion},  \eqref{e:conclusion-weak} hold true in all cases. In order to prove  it, we only have to deal with the case $\|a\| \lesssim \|\xi\|$ in  which we pick $k \in \mathbb{N}$ such that  \(\|2^k a\| \simeq \|\xi\|\).  In this case, we can use  \eqref{e:conclusion}, \eqref{e:conclusion-weak} with $2^k a$ and $\xi$ (from Step~5) and  $\alpha_1 +\beta \le 1+ \iota 2s$ and get
   \begin{eqnarray}
     \label{e:int1}
     | 2^{-k}\Delta_{2^k a} f(z \circ \xi) - 2^{-k} \Delta_{2^k a} f(z)|
     \lesssim N \| a\|^{\alpha_1+\beta} \|\xi\|^{\alpha_2}  &\qquad \text{ if } \alpha_1+\alpha_2 + \beta \le 1 + \iota 2s,  \\
    \label{e:int1-weak}
     | 2^{-k}\Delta_{2^k a} f(z \circ \xi) - 2^{-k} \Delta_{2^k a} f(z)|
     \lesssim N \|a\|^{1 + \iota 2s} \|\xi \|^{\eta}, &\qquad \text{ if } \alpha_1+\alpha_2 + \beta > 1 + \iota 2s .
   \end{eqnarray}
   We now use twice what we obtained in Step~1, to $z$ and $z\circ \xi$, and get for $\alpha_1 + \alpha_2 + \beta \le 1+ \iota 2s$
   (using $\alpha_1+\beta \le 1+\iota 2s$ once again),
  \begin{align}
    \label{e:int2}    |\Delta_a f (z \circ \xi) - 2^{-k} \Delta_{2^k a} f(z \circ \xi ) | &\lesssim N \|a\|^{\alpha_1+\beta} \|\xi\|^{\alpha_2 }, \\
    \label{e:int3}     |\Delta_a f (z) - 2^{-k} \Delta_{2^k a} f(z) |  &\lesssim N \|a\|^{\alpha_1+\beta} \|\xi\|^{\alpha_2 } ,
  \end{align}
  and for $\alpha_1 + \alpha_2 + \beta > 1 + \iota 2s$,
  \begin{align}
    \label{e:int2-weak}    |\Delta_a f (z \circ \xi) - 2^{-k} \Delta_{2^k a} f(z \circ \xi ) | &\lesssim N \|a\|^{1 + \iota 2s} \|\xi\|^{\eta} ,\\
    \label{e:int3-weak}     |\Delta_a f (z) - 2^{-k} \Delta_{2^k a} f(z) |  &\lesssim N \|a\|^{1+\iota 2s} \|\xi\|^{\eta} .
  \end{align}
  
  Summing \eqref{e:int1}, \eqref{e:int2} and \eqref{e:int3} yields
  \eqref{e:conclusion} for all $y$ and $\xi$.  In the same way,
  Summing \eqref{e:int1-weak}, \eqref{e:int2-weak} and \eqref{e:int3-weak} yields
  \eqref{e:conclusion-weak} for all $y$ and $\xi$. This achieves the proof
  of the lemma.
\end{proof}

\begin{lemma}[H\"older continuous increments in $x$]\label{l:4-x}
  Given $y \in B_{R^{1+2s}/2}$ with $R \le 1$ and  $\alpha \in (0,\min (1,2s)]$ and some cylinder $Q=Q_R(z_0)$, let  $f \in C_\ell^{2s+\alpha} (Q)$. Then $\Delta_y f$ lies in $C_\ell^{\alpha} (\Qint)$ with  $\Qint= Q_{R/2}(z_0)$ and
 \begin{equation}\label{e:dg}
   \|\Delta_y f \|_{C_\ell^{\alpha} (\Qint)} \le C \|f\|_{C_\ell^{2s+\alpha} (Q)} \|(0,y,0)\|^{2s}
 \end{equation}
 for some constant $C$ only depending on dimension and $s$.
\end{lemma}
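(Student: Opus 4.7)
The strategy is to combine a pointwise $C^0$ bound and a $C_\ell^{2s+\alpha}$ seminorm bound for $\Delta_y f$ via the interpolation inequality from Proposition~\ref{p:interpol}. The essential structural observation is that the element $(0,y,0) \in \R^{1+2d}$ lies in the center of the Lie group \eqref{e:lie_group}: a direct computation gives
\[
  (0,y,0) \circ (t,x,v) \;=\; (t,x+y,v) \;=\; (t,x,v) \circ (0,y,0),
\]
because the cross term $tw$ in the composition vanishes when $w=0$. Consequently, right translation by $(0,y,0)$ coincides with left translation, hence is an isometry of $d_\ell$ and preserves the kinetic H\"older seminorms.

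For the $C^0$ bound, I would use the polynomial expansion of $f$ at an arbitrary $z_1 \in \Qint$: there exists $\tilde p_{z_1}$ with $\degk \tilde p_{z_1} < 2s+\alpha$ satisfying $|f(z_1 \circ \xi) - \tilde p_{z_1}(\xi)| \le [f]_{C_\ell^{2s+\alpha}(Q)} \|\xi\|^{2s+\alpha}$, valid for $\xi = 0$ and for $\xi = (0,y,0)$ since $|y| \le R^{1+2s}/2$ ensures $\Qint \circ (0,y,0) \subset Q$. Because $\alpha \le \min(1,2s)$, any monomial containing an $x_j$ has kinetic degree at least $1+2s \ge 2s+\alpha$, so $\tilde p_{z_1}$ depends only on $(t,v)$. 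Therefore $\tilde p_{z_1}(0,y,0) = \tilde p_{z_1}(0) = f(z_1)$, which yields
\[
  |\Delta_y f(z_1)| \;\le\; [f]_{C_\ell^{2s+\alpha}(Q)}\,\|(0,y,0)\|^{2s+\alpha}.
\]
For the $C_\ell^{2s+\alpha}$ seminorm, set $g(z):=f(z\circ(0,y,0))$. By centrality, $g$ is the left translate of $f$ by $(0,y,0)$, so since $\Qint \circ (0,y,0) \subset Q$ we get $[g]_{C_\ell^{2s+\alpha}(\Qint)} \le [f]_{C_\ell^{2s+\alpha}(Q)}$ and hence $[\Delta_y f]_{C_\ell^{2s+\alpha}(\Qint)} \le 2[f]_{C_\ell^{2s+\alpha}(Q)}$.

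Finally I would interpolate on $\Qint = Q_{R/2}(z_0)$ via Proposition~\ref{p:interpol} with $\alpha_1=0$, $\alpha_2=\alpha$, $\alpha_3=2s+\alpha$ and $\theta = 2s/(2s+\alpha)$. Substituting the two estimates above,
\[
  [\Delta_y f]_{C_\ell^\alpha(\Qint)} \le C[f]_{C_\ell^{2s+\alpha}(Q)}\,\|(0,y,0)\|^{2s} + C R^{-\alpha}[f]_{C_\ell^{2s+\alpha}(Q)}\,\|(0,y,0)\|^{2s+\alpha}.
\]
Since $\|(0,y,0)\| \le R \le 1$, the factor $R^{-\alpha}\|(0,y,0)\|^\alpha$ is bounded by a universal constant, so the second term is absorbed into the first; combining with the $C^0$ bound (which, using $\|(0,y,0)\| \le 1$, is also controlled by $\|(0,y,0)\|^{2s}$ up to constants) yields \eqref{e:dg}. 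The only delicate step is the seminorm bound for $\Delta_y f$: it relies crucially on the centrality of $(0,y,0)$, which is what distinguishes the $x$-increment case from the $v$-increment case, where the analogous preservation fails and the much more involved case analysis of Lemma~\ref{l:cc-v} is required.
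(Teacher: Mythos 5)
Your proof is correct, but it follows a genuinely different route from the paper. The paper proves Lemma~\ref{l:4-x} in two lines by invoking Lemma~\ref{l:cc-x} with $\beta=0$, $\alpha_1=2s$, $\alpha_2=\alpha$ and $N=2\|f\|_{C_\ell^{2s+\alpha}(Q)}$: the hypotheses \eqref{e:Lemma5-ass-x} then hold trivially, and since $\alpha_1+\alpha_2+\beta=2s+\alpha\le 1+2s$ the first alternative of that lemma gives exactly \eqref{e:dg}. You instead give the direct argument that the paper only alludes to in the remark following its proof (``Lemma~\ref{l:4-x} can also be proved directly\dots because $(0,y,0)$ belongs to the center of the Lie group''), and you do it more economically than the suggested route through the case analysis of Lemma~\ref{l:4-v}: centrality gives the $C_\ell^{2s+\alpha}$ seminorm bound for $\Delta_y f$ for free, the absence of $x$-monomials in a polynomial of kinetic degree $<2s+\alpha\le 1+2s$ gives the pointwise bound $\|(0,y,0)\|^{2s+\alpha}$, and Proposition~\ref{p:interpol} interpolates between the two; the residual term $r^{\alpha_1-\alpha_2}[\cdot]_{C^{\alpha_1}}$ is harmless because $\|(0,y,0)\|\le R$. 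Both arguments exploit $\alpha\le\min(1,2s)$ at the same pressure point (for you, it kills the $x$-monomials; for the paper, it ensures $\alpha_1+\alpha_2\le 1+2s$ in Lemma~\ref{l:cc-x}). What the paper's reduction buys is uniformity with the $v$-increment case and reuse of machinery already needed for the bootstrap in Section~\ref{s:smoothing}; what your argument buys is self-containedness and a cleaner explanation of why the $x$-case is structurally easier than the $v$-case. All domain inclusions you use ($\Qint\circ(0,y,0)\subset Q$, courtesy of $|y|\le R^{1+2s}/2$) check out, so I see no gap.
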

\begin{remark}
  This lemma and the following one can be seen as discrete
  counterparts of \cite[Lemma~2.5]{schauder}.
\end{remark}
\begin{proof}
  We remark that the assumption of the lemma implies that the
  assumptions of Lemma~\ref{l:cc-x} holds true with $\beta=0$ and
  $\alpha_1 =2s$ and $\alpha_2 = \alpha$ with
  $N = 2\|f\|_{C_\ell^\alpha} (Q)$. Applying Lemma~\ref{l:cc-x} yields
  the desired result. 
\end{proof}

Lemma~\ref{l:4-x} can also be proved directly along the lines of the proof of Lemma~\ref{l:4-v} below. The proof would be easier because $(0,y,0)$ belongs to the center of the Lie group and thus $z \circ (0,y,0) \circ \xi = z \circ \xi \circ (0,y,0)$.

\begin{lemma}[H\"older continuous increments in $v$]\label{l:4-v}
  Given $w \in B_{R/2}$ with $R \le 1$, and $2s + \alpha < 1$ and $\alpha \le \min (1,2s)$ and some cylinder $Q=Q_R(z_0)$,  let  $f \in C_\ell^{2s+\alpha} (Q)$ such that $\nabla_x f \in C^0(Q)$. Then $\Delta_w f$ lies in $C_\ell^{\alpha} (\Qint)$ with $\Qint = Q_{R/2} (z_0)$ and 
 \begin{equation}\label{e:dg-v}
   [\Delta_w f ]_{C_\ell^{\alpha} (\Qint)} \le C ( [f]_{C_\ell^{2s+\alpha} (Q)} + |w|^{1-\alpha} \|\nabla_x f\|_{C^0(Q)} )\|(0,0,w)\|^{2s}
   \end{equation}
   for some constant $C$ only depending on dimension and $s$.
\end{lemma}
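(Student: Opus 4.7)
The constraint $2s + \alpha < 1$ together with $\alpha \leq \min(1, 2s)$ forces $s < 1/2$ and, crucially, makes every kinetic polynomial of degree strictly less than $2s + \alpha$ (resp.\ $\alpha$) a constant. Consequently, the hypothesis reduces to
\[ |f(z \circ \eta) - f(z)| \leq [f]_{C_\ell^{2s+\alpha}(Q)} \|\eta\|^{2s+\alpha} \quad \text{whenever } z, \, z \circ \eta \in Q, \]
and the target seminorm $[\Delta_w f]_{C_\ell^{\alpha}(\Qint)}$ reduces to the classical H\"older quotient $\sup |\Delta_w f(z \circ \xi) - \Delta_w f(z)| / \|\xi\|^\alpha$. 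The plan is to fix $z$, $z \circ \xi \in \Qint$ and $w \in B_{R/2}$, and bound $|\Delta_w f(z \circ \xi) - \Delta_w f(z)|$ by splitting into two regimes according to whether $\|\xi\|$ is large or small compared to $|w|=\|(0,0,w)\|$.

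When $\|\xi\| \geq |w|$, I would estimate each endpoint separately: $|\Delta_w f(\cdot)| \leq [f]_{C_\ell^{2s+\alpha}(Q)} |w|^{2s+\alpha}$, and then the inequality $|w|^\alpha \leq \|\xi\|^\alpha$ gives $|w|^{2s+\alpha} \leq |w|^{2s} \|\xi\|^\alpha$ as desired. When $\|\xi\| \leq |w|$, the naive approach is too crude and the cancellation built into $\Delta_w f(z \circ \xi) - \Delta_w f(z)$ must be extracted. I would use the three-term decomposition
\[ \Delta_w f(z \circ \xi) - \Delta_w f(z) = A + B - C \]
with $A := f(z \circ \xi \circ (0,0,w)) - f(z \circ (0,0,w) \circ \xi)$, $B := f(z \circ (0,0,w) \circ \xi) - f(z \circ (0,0,w))$, and $C := f(z \circ \xi) - f(z)$. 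The key algebraic observation comes from the non-commutativity of \eqref{e:lie_group}: directly computing $\xi \circ (0,0,w) = (\xi_t, \xi_x, \xi_v + w)$ against $(0,0,w) \circ \xi = (\xi_t, \xi_x + \xi_t w, \xi_v + w)$, the two arguments in $A$ are seen to coincide except for a displacement of $\xi_t w$ in the $x$-coordinate alone. Integrating $\nabla_x f$ along this pure $x$-segment yields $|A| \leq \|\nabla_x f\|_{C^0(Q)} |\xi_t| |w|$, while $B$ and $C$ are ordinary H\"older increments of displacement $\xi$, each bounded by $[f]_{C_\ell^{2s+\alpha}(Q)} \|\xi\|^{2s+\alpha}$.

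Assembling the pieces in the regime $\|\xi\| \leq |w|$, and using $|\xi_t| \leq \|\xi\|^{2s}$ together with $\alpha \leq 2s$:
\begin{align*}
\|\nabla_x f\|_{C^0} |\xi_t| |w| &\leq \|\nabla_x f\|_{C^0} \|\xi\|^{2s-\alpha} \|\xi\|^\alpha |w| \leq \|\nabla_x f\|_{C^0} |w|^{1-\alpha} \cdot |w|^{2s} \|\xi\|^\alpha, \\
[f]_{C_\ell^{2s+\alpha}} \|\xi\|^{2s+\alpha} &= [f]_{C_\ell^{2s+\alpha}} \|\xi\|^{2s} \|\xi\|^\alpha \leq [f]_{C_\ell^{2s+\alpha}} |w|^{2s} \|\xi\|^\alpha,
\end{align*}
which combines with the regime-1 bound to produce \eqref{e:dg-v}.

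The main conceptual obstacle is recognizing that the straightforward bound $|\Delta_w f(z')| \leq [f]_{C_\ell^{2s+\alpha}} |w|^{2s+\alpha}$ at each endpoint only yields $|w|^{2s+\alpha}$ and not $|w|^{2s} \|\xi\|^\alpha$ when $\|\xi\|$ is much smaller than $|w|$, so cancellation in the difference is essential. Identifying what that cancellation looks like here amounts to realizing that the commutator of $\xi$ with $(0,0,w)$ is entirely in the $x$-direction, of size $|\xi_t| |w|$, which is precisely what makes $\nabla_x f \in C^0$ the natural assumption and explains the factor $|w|^{1-\alpha}$ in \eqref{e:dg-v}. The remaining work is routine geometric bookkeeping: verifying that all intermediate points (both along the $\nabla_x f$ integration segment and those used for the H\"older bounds on $B$ and $C$) lie in $Q = Q_R(z_0)$, which follows from $z, z \circ \xi \in Q_{R/2}(z_0)$, $|w| \leq R/2$ and $R \leq 1$, possibly after adjusting the ratio $1/2$ to a smaller universal constant absorbed in $C$.
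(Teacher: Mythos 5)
Your overall architecture is the paper's: the same split into the regimes $\|\xi\|\ge |w|$ and $\|\xi\|\le |w|$, and the same identification of the commutator $A$ as a pure $x$-displacement of size $|\xi_t||w|$ handled by $\|\nabla_x f\|_{C^0}$. The regime $\|\xi\|\ge|w|$ is also correct as written, because there the increment is purely in $v$ and the polynomial expansion evaluated at $(0,0,w)$ genuinely reduces to the constant $f(z)$.

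The gap is in your opening reduction, and it propagates into the bounds on $B$ and $C$. A kinetic polynomial of degree strictly less than $2s+\alpha$ is \emph{not} necessarily constant: the monomial $t$ has kinetic degree $2s<2s+\alpha$, so the expansion of $f\in C^{2s+\alpha}_\ell$ at $z$ is $p_z(\xi)=f(z)+(\partial_t+v\cdot\nabla_x)f(z)\,\xi_t$, with a nontrivial linear term in $\xi_t$. (Take $f=t$: its $C^{2s+\alpha}_\ell$ seminorm vanishes, yet $f(z\circ\xi)-f(z)=\xi_t$ can be of size $\|\xi\|^{2s}$, which is not $O(\|\xi\|^{2s+\alpha})$.) Hence the claims $|B|,|C|\le [f]_{C^{2s+\alpha}_\ell}\|\xi\|^{2s+\alpha}$ are false as stated. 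What rescues the argument is a cancellation in the difference: writing
\begin{equation*}
B-C=\bigl(f(z\circ a\circ\xi)-p_{z\circ a}(\xi)\bigr)-\bigl(f(z\circ\xi)-p_z(\xi)\bigr)+\bigl[(\partial_t+(v+w)\cdot\nabla_x)f(z\circ a)-(\partial_t+v\cdot\nabla_x)f(z)\bigr]\xi_t,
\end{equation*}
the first two brackets are each $\le [f]_{C^{2s+\alpha}_\ell}\|\xi\|^{2s+\alpha}$ by definition, but the last bracket does not vanish; it must be controlled by $[f]_{C^{2s+\alpha}_\ell}\|a\|^{\alpha}$, which requires knowing that the material derivative $(\partial_t+v\cdot\nabla_x)f$ of a $C^{2s+\alpha}_\ell$ function is $C^{\alpha}_\ell$ with comparable seminorm (the paper invokes Lemma~2.7 of \cite{schauder} for exactly this). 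One then closes with $\|a\|^{\alpha}|\xi_t|\le\|a\|^{\alpha}\|\xi\|^{2s}\le\|a\|^{2s}\|\xi\|^{\alpha}$ in the regime $\|\xi\|\le\|a\|$, using $\alpha\le 2s$. Without this step your estimate for $B-C$ does not follow, so the proof as written is incomplete.
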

\begin{proof}
  It is convenient to write $a=(0,0,w)$. We need to estimate the quantity
\[ W := |\Delta_a f(z \circ \xi) - \Delta_a f(z)| = |f(z \circ \xi \circ a) - f(z \circ \xi) - f(z \circ a) + f(z)|.\]
The easiest case is when $\|a\| \leq \|\xi\|$. In this case, we apply Definition~\ref{d:holder-space} at the point $z$ and $z \circ \xi$ with increment $a$. We get
\[ |f(z \circ \xi \circ a) - p_{z \circ \xi}(a)| \leq [f]_{C_\ell^{2s+\alpha}} \|a\|^{2s+\alpha}, \qquad |f(z \circ a) - p_{z}(a)| \leq [f]_{C_\ell^{2s+\alpha}} \|a\|^{2s+\alpha}.\]
The polynomials $p_z$ and $p_{z \circ \xi}$ are of kinetic degree less than $2s+\alpha < 1$. Thus, they do not have any component in the ``$v$'' variable: $p_z(0,0,w)=f(z)$ and $p_{z \circ \xi}(0,0,w) = f(z \circ \xi)$. Thus,
\begin{align*} 
 W &\leq 2 [f]_{C_\ell^{2s+\alpha}} \|a\|^{2s+\alpha} \lesssim  [f]_{C_\ell^{2s+\alpha}} \|a\|^{2s} \|\xi\|^{\alpha}.
\end{align*}
The last inequality holds when $\|a\| \leq \|\xi\|$. Note that for this case, we did not need a correction in terms of $\|\nabla_x f\|_{C^0(Q)}$. For $\|a\| > \|\xi\|$, we will need an alternative chain of inequalities.

When $\|a\| > \|\xi\|$, we apply Definition~\ref{d:holder-space} at the point $z$ and $z \circ a$ with increment $\xi$. We get
\begin{equation} \label{e:ic1}
 |f(z \circ a \circ \xi) - p_{z \circ a}(\xi)| \leq [f]_{C_\ell^{2s+\alpha}} \|\xi\|^{2s+\alpha}, \qquad |f(z \circ \xi) - p_{z}(\xi)| \leq [f]_{C_\ell^{2s+\alpha}} \|\xi\|^{2s+\alpha}.
\end{equation}
The polynomials $p_z$ and $p_{z \circ a}$ have kinetic degree less than $2s+\alpha < 1$. Thus, they have at most two nonzero terms, the constant one, and the one in the ``$t$'' variable. They are (see \cite{schauder}),
\begin{equation} \label{e:ic2}
 p_z(\xi) = f(z) + (\partial_t + v \cdot \nabla_x) f(z) \xi_t, \qquad p_{z\circ a}(\xi) = f(z \circ a) + (\partial_t + (v+w) \cdot \nabla_x) f(z \circ a) \xi_t.
\end{equation}

 Note that $z \circ a \circ \xi$ differs
  from $ z\circ \xi \circ a$. We estimate this discrepancy. If $z = (t,x,v)$ and $\xi = (\xi_t,\xi_x,\xi_v)$, we have
  \begin{align*}
   f (z \circ a \circ \xi)- f(z\circ \xi \circ a)  = &  f(t+\xi_t, x + \xi_x + \xi_t (v+w), v + \xi_v + w) - f(t+\xi_t, x+\xi_x + \xi_t v, v + \xi_v + w) \\
    = &  \int_0^1 \nabla_x f (t+\xi_t, x+\xi_x + \xi_t v + \theta \xi_t w,v +\xi_v + w) \cdot \xi_t w \dd \theta. 
  \end{align*}
  This implies that
  \begin{align*}
| f (z \circ a \circ \xi)& - f(z\circ \xi \circ a) - (w \cdot \nabla_x f (z \circ a)) \xi_t | \\
  &\le |\xi_t||w| \int_0^1 |\nabla_x f (t+\xi_t, x+\xi_x + \xi_t v + \theta \xi_t w,v +\xi_v + w) - \nabla_x f (t,x,v+w)| \dd \theta \\
  & \le 2 \|\nabla_x f\|_{C^0} \|\xi\|^{2s} |w|.
\end{align*}  

We combine this with \eqref{e:ic1} and \eqref{e:ic2} to obtain the following upper bound for $W$,
\begin{align*} 
W &\leq 2 [f]_{C_\ell^{2s+\alpha}} \|\xi\|^{2s+\alpha} +  \|\nabla_x f\|_{C^0}  |w|  \|\xi\|^{2s} \\
&\phantom{\leq} + |  (\partial_t + v \cdot \nabla_x) f(z) -  (\partial_t + (v+w) \cdot \nabla_x) f(z \circ a)| |\xi_t|, \\
\intertext{Using \cite[Lemma 2.7 for $D = (\partial_t + v \cdot \nabla_x)$]{schauder},}
&\leq 2 [f]_{C_\ell^{2s+\alpha}} \|\xi\|^{2s+\alpha} +  \|\nabla_x f\|_{C_0}|w|  \|\xi\|^{2s} + [f]_{C_\ell^{2s+\alpha}} \|a\|^\alpha \|\xi\|^{2s}, \\
&\lesssim\left( [f]_{C_\ell^{2s+\alpha}} + |w|^{1-\alpha}  \|\nabla_x f\|_{C_0} \right) \|a\|^{2s} \|\xi\|^{\alpha},
\end{align*}
For the last inequality, we used $|w| = \|a\| \geq \|\xi\|$ and $\alpha \leq 2s$.
\end{proof}

\section{The proof of Theorem~\ref{t:main2}}
\label{s:smoothing}

This section is devoted to proving Theorem~\ref{t:main2}. By an iterative process, we will establish the following family of inequalities. For all differential operator $D = \partial_t^{k_t} D_x^{k_x} D_v^{k_v}$ with $k = (k_t,k_x,k_v) \in \N^{1+2d}$, there exists $\alpha>0$ so that for all $\tau>0$ and $q>0$ there is a constant $C_{k,q}$ (depending on $k_t,k_x,k_v$, $q$, $\tau$, and the parameters in Theorem~\ref{t:main2}) such that
\begin{equation}
    \label{e:Cpol-2s+alpha}
    \|Df\|_{C^{2s+\alpha}_{\ell,q}([\tau,\infty) \times \R^d \times \R^d)} \le C_{k,q}.
  \end{equation}
The value of $\alpha$ that we obtain in the iteration will also depend on $k$ and it will tend to be smaller as the order of differentiation increases. A posteriori, we obtain a $C^\infty$ estimate for $f$, so the particular values of $\alpha$ after each iteration do not matter.
In order to be in position to apply the Schauder estimate and gain $2s$ derivatives, we will always pick $\alpha \in (0,\min (1,2s))$ such that $2s + \alpha \notin \{1,2\}$. To do so, it is convenient to work with exponents $\alpha$ such that $\alpha < 1-2s$ if $s < 1/2$ and $\alpha < 2-2s$ if $s \ge 1/2$.

  We use the (classical) definition $D^{(k_t,k_x,k_v)} = \partial_t^{k_t} \partial_{x_1}^{k_x^1}\dots \partial_{x_d}^{k_x^d} \partial_{v_1}^{k_v^1}\dots \partial_{v_d}^{k_v^d}$ if  $k_x = (k_x^1,\dots, k_x^d)$ and $k_v = (k_v^1,\dots, k_v^d)$. We recall that the order of a multi-index $k \in \mathbb{N}^{1+2d}$ is $k_t + k_x^1 + \dots k_x^d + k_v^1 + \dots + k_v^d$ and is denoted by $|k|$.  In this section, when we refer to the order of $D$, we mean literally the classical order of differentiation (not the kinetic order as defined in \cite{schauder}).

Note that the value of $C_{k,q}$ depend on several parameters. We stress their dependence with respect to $k$ and $q$ because it affects the order in which these numbers are computed. As we said, we establish Inequalities \eqref{e:Cpol-2s+alpha} for every value of $k$ and $q$ iteratively. We first prove if for $k=(0,0,0)$ and any value of $q$. Then, we will compute $C_{k,q}$ in terms of the values of $C_{i,q+\kappa+3}$ for multi-indices $i \in \N^{1+2d}$ so that either $|i| < |k|$, or $|i|=|k|$ and $i_x > k_x$. In other words, the upper bounds for the differential operator $D^k f$ will depend on the bounds for lower order operators, and on the bounds for operators with the same total order but higher order in $x$. We observe that the computation of any of these values $C_{k,q}$ would involve finitely many iterations, starting from the family of inequalities \eqref{e:Cpol-2s+alpha} for $k=0$. Note the addition ``$+\gamma$'' in the decay exponent $q+\kappa+3$, which is not problematic since we start with the inequality $C_{0,q}$ for every value of $q$. This loss $\kappa+3$ only depends on the parameter $s$ and $\gamma$ from the collision kernel $B$, see  \eqref{assum:B}.

There are several sequential orders which we could employ in order to compute all the constants $C_{k,q}$. In this proof, we make the following (somewhat arbitrary) choice. We first establish \eqref{e:Cpol-2s+alpha} for $k=(0,k_x,0)$, with $k_t=|k_v|=0$. In the second step, we extend the inequalities \eqref{e:Cpol-2s+alpha} to indices of the form $k=(k_t,k_x,0)$, with $k_v = 0$. In the third and last step, we establish \eqref{e:Cpol-2s+alpha} for all values of $k \in \N^{1+2d}$. By proving Estimates \eqref{e:Cpol-2s+alpha} in this order, we ensure that we always have enough previous information to establish the value of $C_{k,q}$ in each step.

We start with a function $f \in \Cpol^0$ (according to Theorem \ref{t:decay}). The iteration procedure described below allows us to obtain upper bounds of the form \eqref{e:Cpol-2s+alpha} for increasingly higher values of $|k|$. If we only had an upper bound for $\|f\|_{C_{\ell,q}^0}$ for some finite exponent $q$, the iteration would provide regularity estimates only up to certain order of differentiation.

The zeroth step of the iteration is to apply Corollary~\ref{c:schauder_boltzmann}, which provides Inequality \eqref{e:Cpol-2s+alpha} for $k_t=0$, $k_x=0$, $k_v = 0$. This is the case where $Df=f$.  The remainder of the proof proceeds in three steps, as described above. 

\medskip

 \textsc{Step~1.} We prove \eqref{e:Cpol-2s+alpha} holds true for all
 differential operators of the form $D =D_x^{k_x}$. We proceed by
 induction on $n=|k_x|$. It is convenient to make the inductive statement in terms of increments. More precisely, we are going to prove
 by induction on $n \ge 1$ that there exists an $\alpha_n$ such that for any $\tau>0$, there exists a $C_{n,q}>0$ so that
 \begin{equation} \label{e:induc-n}
\forall k_x \in \mathbb{N}^d,  q>0,  y \in B_1, \qquad |k_x |\le n-1 \Rightarrow \|\Delta_y D_x^{k_x} f\|_{C_{\ell,q}^{2s+\alpha_n} ([\tau,\infty) \times \R^d \times \R^d)} \le C_{n,q} |y|
\end{equation}
where we recall that $\Delta_y f (z) = f (z \circ (0,y,0)) - f(z)$.

Note that passing to the limit as $y \to 0$, the inequality above implies that for all $|k_x| \leq n$,
\begin{equation} \label{e:induc-n2}
 \|D_x^{k_x} f\|_{C_{\ell,q}^{2s+\alpha_n} ([\tau,\infty) \times \R^d \times \R^d)} \le C_{n,q}.
\end{equation}
  
Corollary \ref{c:schauder_boltzmann} provides the case $n=0$ in \eqref{e:induc-n2}. Note that \eqref{e:induc-n} holds trivially for $n=0$ since there is no $k_x$ so that $|k_x| \leq -1$. In order to proceed by induction, we assume we know \eqref{e:induc-n} and \eqref{e:induc-n2} hold up to certain value of $n \in \mathbb N$ and we prove it for $n+1$.

Let $|k_x| = n$ and $g = \Delta_y D^{k_x}_x f$. By the inductive hypothesis \eqref{e:induc-n2} combined with Lemma~\ref{l:4-x}, we have that for any value of $\tau>0$ and $q>0$,
\begin{equation} \label{e:ss1}
 \|g\|_{C^{\alpha_n}_{\ell,q}([\tau,\infty) \times \R^d \times \R^d)} \lesssim \|(0,y,0)\|^{2s} = |y|^{2s/(1+2s)}.
\end{equation}
We want to enhance the exponent $2s/(1+2s)$ on the right hand side all the way to one. For that, we apply the following lemma successively.

\begin{lemma}[Gain of regularity in $x$]\label{l:step1}
Let $g = \Delta_y D^{k_x}_x f$ (as above), $\beta \in (0,1+2s)$ and assume that    \eqref{e:induc-n} holds true. If there exists $\bar \alpha \in (0,\alpha_n]$ such that $2s + \bar \alpha' \notin \{1,2\}$ and, 
\[
      \|g\|_{C_{\ell,q+\kappa+3}^{\bar \alpha}( [\tau,\infty) \times \R^d \times \R^d )} \lesssim \|(0,y,0)\|^\beta,
\]
then
\[ \|g\|_{C_{\ell,q}^{2s + \bar \alpha'}( [2\tau,\infty) \times \R^d \times \R^d )} \lesssim \|(0,y,0)\|^\beta, \]
with $\bar \alpha' = \frac{2s}{1+2s}\bar \alpha$. Here, $\kappa$ is the constant in Proposition \ref{p:global_schauder}.
\end{lemma}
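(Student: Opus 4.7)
The plan is to recognize that $g = \Delta_y D^{k_x}_x f$ satisfies a linear Boltzmann equation of the form \eqref{e:lin-boltzmann} with a controlled source term, and then to apply the global Schauder estimate from Proposition~\ref{p:global_schauder}.

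First I would derive the equation for $g$. Since $\Q_1$ and $\Q_2$ act only in $v$, they commute with $x$-derivatives and $x$-translations; moreover both are bilinear. Applying the Leibniz rule in $x$ and the product rule for the increment
\[
\Delta_y \Q(F,G) = \Q(\Delta_y F, \tau_y G) + \Q(F, \Delta_y G), \qquad \tau_y H(z) := H(z\circ(0,y,0)),
\]
I get
\[
\Delta_y D^{k_x}_x \Q(f,f) = \Q_1(f,g) + h,
\]
where $h$ is the sum of $\Q_2(f,g)$, the ``symmetric'' term $\Q(g,\tau_y f)$, and all other terms of the form $\Q(\Delta_y D_x^i f,\, \tau_y D_x^j f)$ and $\Q(D_x^i f,\, \Delta_y D_x^j f)$ with $i+j=k_x$, excluding the one that produced $\Q_1(f,g)$. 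Every summand in $h$ involves $f$ or at least one of its $x$-derivatives of order $\leq n-1$ carrying a $\Delta_y$, except for $\Q_2(f,g)$ and $\Q(g,\tau_y f)$, which involve $g$ itself.

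Next I would bound $\|h\|_{C^{\bar\alpha'}_{\ell,q+\kappa}([\tau,\infty)\times\R^d\times\R^d)}$ by $\|(0,y,0)\|^\beta$. The inductive hypothesis~\eqref{e:induc-n} at level $n$ provides $\|\Delta_y D_x^j f\|_{C^{2s+\alpha_n}_{\ell,Q}} \lesssim |y| = \|(0,y,0)\|^{1+2s}$ for every $|j|\leq n-1$ and every decay exponent $Q$, and its limit~\eqref{e:induc-n2} controls $\|D_x^i f\|_{C^{2s+\alpha_n}_{\ell,Q}}$ for $|i|\leq n$. Using Lemma~\ref{l:Q2_bound} for the $\Q_2$-terms and Lemma~\ref{l:Q1_Calpha} for the $\Q_1$-terms, each summand of $h$ is estimated by a product of two such norms, one of which is either the $C^{\bar\alpha}$ norm of $g$ (bounded by $\|(0,y,0)\|^\beta$ by the hypothesis of the lemma) or one of the small increments $\Delta_y D_x^j f$ with $j\leq n-1$ (bounded by $|y|=\|(0,y,0)\|^{1+2s}$, which dominates $\|(0,y,0)\|^\beta$ because $\beta<1+2s$). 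The ``$+3$'' slack in the decay exponent $q+\kappa+3$ is used to absorb the mild decay losses incurred in the bilinear estimates of Section~\ref{s:bilinear} when passing from norms at level $q+\kappa+3$ to the ambient level $q+\kappa$ required by Schauder.

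Finally, I would invoke Proposition~\ref{p:global_schauder} for the equation $(\partial_t+v\cdot\nabla_x)g = \Q_1(f,g)+h$, using the time interval $[\tau,\infty)$ as input and $[2\tau,\infty)$ as output (shifting time to fit the proposition's format), together with the assumption $2s+\bar\alpha'\notin\{1,2\}$. The conclusion of the proposition reads
\[
\|g\|_{C^{2s+\bar\alpha'}_{\ell,q}([2\tau,\infty)\times\R^d\times\R^d)} \;\lesssim\; \|g\|_{C^{\bar\alpha}_{\ell,q+\kappa}([\tau,\infty)\times\R^d\times\R^d)} + \|h\|_{C^{\bar\alpha'}_{\ell,q+\kappa}([\tau,\infty)\times\R^d\times\R^d)},
\]
and both right-hand side terms are $\lesssim \|(0,y,0)\|^\beta$ by the hypothesis of the lemma and by the estimate on $h$ established above. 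The main technical obstacle is not any single estimate—each follows directly from Section~\ref{s:bilinear}—but the bookkeeping: enumerating the many terms that appear after the combined Leibniz/increment decomposition, verifying that each contains either a factor controlled by the inductive hypothesis or a factor comparable to $g$ itself, and checking that the decay exponent losses stay within the budget $\kappa+3$.
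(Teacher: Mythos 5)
Your proposal is correct and follows essentially the same route as the paper: derive the equation $(\partial_t+v\cdot\nabla_x)g-\Q_1(f,g)=h$ via the Leibniz/increment decomposition, bound every summand of $h$ in $C^{\bar\alpha'}_{\ell,q+\kappa}$ using Lemmas~\ref{l:Q1_Calpha} and \ref{l:Q2_bound} together with the inductive hypothesis \eqref{e:induc-n}--\eqref{e:induc-n2} (with the two ``extreme'' terms containing $g$ itself controlled by the hypothesis of the lemma), and then apply Proposition~\ref{p:global_schauder}. Your accounting of which factor in each bilinear term carries the smallness ($\|(0,y,0)\|^\beta$ versus $|y|=\|(0,y,0)\|^{1+2s}\leq\|(0,y,0)\|^\beta$) and of the decay-exponent budget $\kappa+3$ matches the paper's argument.
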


\begin{proof}
The key to this lemma is to differentiate \eqref{e:boltzmann} and compute an equation for $g$. Then, we apply the global Schauder estimate of Proposition~\ref{p:global_schauder} together with the estimates we have for each incremental quotient.

Indeed, by a direct computation, we verify that $g$ verifies the equation
\[ (\partial_t + v \cdot \nabla_x) g - \Q_1(f,g) = h,\]
where
\[ h = \sum_{\substack{|i|<n \\ i \leq k_x}} \left\{ \Q_1(\Delta_y \hat D_i f, \tau_y D_i f) + \Q_1(\hat D_i f, \Delta_y D_i f) \right\} + \sum_{i \leq k_x} \left\{ {\Q_2(\Delta_y \hat D_i f, \tau_y D_i f)} + \Q_2(\hat D_i f, \Delta_y D_i f) \right\}.\]
Here, $i \in \mathbb N^d$ is a multi-index. When we write $i \leq k_x$, we mean that each component of $i$ is less or equal than each component of $k_x$. We write $\tau_y f(z) = f(z \circ (0,y,0)) = \Delta_y f + f$. We also write $\hat D_i$ to denote the differential operator so that $D^{k_x}_x = \hat D_i \circ D_i$.

Since the index $i$ in the first sum runs over $|i| < n$, the inductive hypothesis \eqref{e:induc-n} tells us that both $\tau_y D_i f$ and $\Delta_y D_i f$ are bounded in $C^{2s+\alpha_n}_{\ell,q+\kappa+3}$  by $\lesssim |y|$. Likewise, for every value of $i$ so that $i \leq k_x$, we have $D_i f$, $\hat D_i f$, $\Delta_y D_i f$, $\Delta_y \hat D_i f$, all bounded in $C^{\alpha_n}_{\ell,q+\kappa+3}$ by $\lesssim |y|$ except for the two extreme cases: $\Delta_y D_i f$ for $i=k_x$ and $\Delta_y \hat D_i f$ for $i=(0,0,0)$. Both functions coincide with $\Delta_y D^{k_x}_x f$. For this reason, the hypothesis of the lemma bounds these two functions in $C^{\bar \alpha}_{\ell,q+\kappa+3}$ by $\lesssim \| (0,y,0)\|^\beta$.

Taking the previous paragraph into account, we bound each term in $h$ using Lemmas~\ref{l:Q1_Calpha} and \ref{l:Q2_bound}. We obtain a bound for $\|h\|_{C^{\bar  \alpha'}_{\ell,q+\kappa+3-(\gamma+2s)-\alpha/(1+2s)}}$ and consequently a bound for $\|h\|_{C^{\bar  \alpha'}_{\ell,q+\kappa}}$ since $\gamma +2s + \alpha / (1+2s) \le 3$.

Applying Proposition~\ref{p:global_schauder}, we obtain the desired  bound for $\|g\|_{C^{2s+\bar \alpha'}_{\ell,q}}$.
\end{proof}

Note that Lemma~\ref{l:step1} provides a gain in regularity at the expense of a loss in decay, from $q+\kappa +3$ to $q$. We did not try to make $\kappa$ explicit in Proposition~\ref{p:global_schauder} and $+3$ is a rather rough overestimation of the additional loss in the decay exponent when applying Lemmas~\ref{l:Q1_Calpha} and \ref{l:Q2_bound}.  
\bigskip

Applying Lemma~\ref{l:step1} once, we transform \eqref{e:ss1} into the following inequality, for every value of $q>0$,
\begin{equation} \label{e:ss2}
 \|g\|_{C^{2s+\alpha'_n}_{\ell,q}([2\tau,\infty) \times \R^d \times \R^d)} \lesssim \|(0,y,0)\|^{2s} = |y|^{2s/(1+2s)}.
\end{equation}
Note that the time shift $\tau$ was updated to $2\tau$. This is because the application of Proposition~\ref{p:global_schauder} in the proof of Lemma~\ref{l:step1} requires a gap in time. We obtain estimate for every value of $\tau>0$ (with constants depending on $\tau$). So, the difference between $\tau$ and $2\tau$ is not relevant for the final estimates. In view of this observation, we will omit the domain dependence in the estimates below as a way to unclutter the expressions and focus on the H\"older and decay exponents.

The estimate \eqref{e:ss2} can be combined with Lemma~\ref{l:cc-x} for $(\alpha_1,\alpha_2,\beta)=(2s,\alpha'_n,2s)$. We get, 
\begin{equation} \label{e:ss3}
 \|g\|_{C^{\alpha'_n}_{\ell,q}} \lesssim \|(0,y,0)\|^{4s}.
\end{equation}

This is an improvement on the exponent in the right hand side of \eqref{e:ss1} from $2s$ to $4s$ (at the expense of reducing $\alpha_n$ to $\alpha'_n$). We continue applying Lemma~\ref{l:step1} together with Lemma~\ref{l:cc-x} successively improving the exponent on the right hand side to $6s$, $8s$, $10s$, \dots for as long as this exponent is strictly less than $1+2s$. After $j$ steps, we are left with the inequality
\[
 \|g\|_{C^{\tilde \alpha_j}_{\ell,q}} \lesssim \|(0,y,0)\|^{2s (j+1)} \qquad \text{where } \tilde \alpha_j := \left( \frac{2s}{1+2s} \right)^j \alpha_n.
\]
This iteration continues identically until $\tilde \alpha_j + (j+1)(2s)> 1+2s$. At that point, Lemma~\ref{l:cc-x} takes a different form and the next step gives us,
\[
 \|g\|_{C^{\tilde \alpha_{j+1}}_{\ell,q}} \lesssim \|(0,y,0)\|^{1+2s}.
\]
If the value of $\tilde \alpha_j + (j+1) (2s)$ is only barely above $1+2s$, the value of $\tilde  \alpha_{j+1}$ that we would get applying Lemma~\ref{l:cc-x} might be tiny. In order to avoid that inconvenience, if $\tilde \alpha_j + j(2s) \in (1,1+s]$, then we can perform an intermediate step gaining $s$ derivatives instead of $2s$ derivatives, \textit{i.e.} taking $\alpha_1 =s$ when applying Lemma~\ref{l:cc-x}. That way, we ensure that the value of $\tilde  \alpha_{j+1}$  is bounded below only in terms of the parameters of Theorem~\ref{t:main2}.
One more application of Lemma~\ref{l:step1}  gives us
\begin{equation} \label{e:ss6}
 \|g\|_{C^{2s+\tilde \alpha_{j+1}}_{\ell,q}} \lesssim \|(0,y,0)\|^{1+2s} = |y|.
\end{equation}

Recalling that $g = \Delta_y D^{k_x}_x f$, we finished the proof of \eqref{e:induc-n} with $\alpha_{n+1} := \tilde \alpha_{j+1}\le \alpha_n < 1-2s$. This finishes \textsc{Step} 1 in the proof of Theorem~\ref{t:main2}. That is, we obtained \eqref{e:Cpol-2s+alpha} when $D$ involves derivatives with respect to $x$ only.

\medskip
  \textsc{Step~2.} 
We next prove that for all $k=(k_t,k_x,0)$, and all $q>0$ and $\tau>0$, we can control
  $\|\partial_t^{k_t} D_x^{k_x} f \|_{C_{\ell,q}^{2s+\alpha_k}}$ for some small
  $\alpha_k > 0$. That means that for any $\tau>0$, there is a $C_{k,q}$ so that
  \begin{equation} \label{e:s12}
    \|\partial_t^{k_t} D_x^{k_x} f \|_{C_{\ell,q}^{2s+\alpha_k} ( [\tau,\infty) \times \R^d \times \R^d) } \le C_{k,q}.
\end{equation}
We are going to derive \eqref{e:s12} for all $k_x$ by  induction on $n=k_t$. Remark that for $n=0$, we proved these estimates for all $k_x$  in \textsc{Step}~1. We argue by induction as follows: we  prove that, given any $n \in \mathbb N$, $n \ge 1$, and $m \in \mathbb N$, if \eqref{e:s12} holds whenever $k_t \leq n-1$ and $|k_x|\leq m+1$, and also for $k_t = n$ and $|k_x| < m$ then it also holds true for $k_t = n$ and $|k_x| = m$.

Let $n \ge 1$ and $k_x \in \N^d$ be any multi-index with $|k_x| = m$. Using the inductive hypothesis \eqref{e:s12} with $k_t = n-1$, we apply
\cite[Lemma~2.6]{schauder} and, for any value of $q>0$, get a bound on
\[ \|(\partial_t + v \cdot \nabla_x)\partial_t^{n-1} D^{k_x}_x f
  \|_{C_{\ell,q}^{\alpha} } \lesssim C_{k_0, q} \]
with $k_0= (n-1,k_x,0)$. 

Using the induction assumption for $k_t = n-1$, $|\tilde k_x| = m+1$, we also control the
norm of $(v\cdot \nabla_x) \partial_t^{n-1} D^{k_x}_x f$,
\[ \|(v\cdot \nabla_x) \partial_t^{n-1} D^{k_x}_x f \|_{C_{\ell,q}^{2s+\alpha} } \leq \|\partial_t^{n-1} \nabla_x D^{k_x}_x f \|_{C_{\ell,q+1}^{2s+\alpha}} \lesssim \max_{\tilde k} C_{\tilde k, q+1}\]
with $\tilde k = (n-1,\tilde k_x,0)$ and $|\tilde k_x| = m+1$.

Therefore, we combine the last two estimates to obtain the inequality, for some $\alpha>0$ and some constant $C$ depending on $n$ and $m$,
\begin{equation} \label{e:s13}
 \| \partial_t^{n} D^{k_x}_x f \|_{C_{\ell,q}^{\alpha}} \leq C.
\end{equation}

Our next objective is to turn the estimate \eqref{e:s13} into
\begin{equation} \label{e:s14}
 \| \partial_t^{n} D^{k_x}_x f \|_{C_{\ell,q}^{2s+\alpha'} } \leq C.
\end{equation}

Let $g := \partial_t^{n} D^{k_x}_x f$. We compute an equation for $g$ and get
\[ (\partial_t + v\cdot \nabla_x) g - \Q_1(f,g) = h,\]
where
\[ h = \sum_{\substack{i \leq (n,k_x,0) \\ i \neq (n,k_x,0)} } \Q_1(\hat D_i f, D_i f)  + \sum_{i \leq (n,k_x,0)}  \Q_2(\hat D_i f, D_i f).\]
Here, $i \in \mathbb N^{1+2d}$ is a multi-index, and like in \textsc{Step} 1, $\partial_t^{n} D^{k_x}_x = \hat D_i \circ D_i$.

An  inspection of the functions involved in $h$ shows that, by applying the inductive hypothesis together with Lemmas~\ref{l:Q1_Calpha} and \ref{l:Q2_bound}, we bound $\|h\|_{C^{\alpha'}_{\ell,q}}$ for all $q>0$. Finally, \eqref{e:s14} follows after applying Proposition~\ref{p:global_schauder}.

\medskip

\textsc{Step~3.} In the third and last step, we establish the inequality \eqref{e:Cpol-2s+alpha} for every differential operator
$D=\partial_t^{k_t} D_x^{k_x}D_v^{k_v}$ with $k \in \N^{1+2d}$, and for all $q>0$ and $\tau>0$. We will prove that
 \begin{equation} \label{e:induc-n-3b}
\begin{aligned}
  \exists \alpha_{n,m} >0 \; / \;   \forall k \in \N^{1+2d}, q>0,w \in B_1, \\
  \{ |k_v| \le n, k_t + |k_x| \leq m \} \Rightarrow \quad \|D f\|_{C_{\ell,q}^{2s+\alpha_{n,m}} } \le C_{n,m}.
\end{aligned}
\end{equation}
We proceed the proof of \textsc{Step} 3 by a bidimensional induction similar as in \textsc{Step} 2. 
\medskip

If $s \geq 1/2$, we can proceed like in \textsc{Step} 2. Indeed, \eqref{e:induc-n-3b}  implies that (See Proposition~\ref{p:interpol})
\[  \|\partial_{v_i} Df\|_{C^{\alpha_{n-1,m}}_{\ell,q}} \leq C_{n-1,m}. \]
Thus, we compute an equation for $g = \nabla_v Df$ and argue like in the previous step. 
\medskip

When $s < 1/2$, like in \textsc{Step} 1, it is convenient to set up the induction keeping track of the  H\"older regularity of differential operators, and also of increments. Thus, we prove that for all $n \ge 1$, $m \in \N$,
 \begin{equation} \label{e:induc-n-3}
\begin{aligned}
 \exists \alpha_{n,m} \in (0, 1-2s), \; / \;   \forall k \in \N^{1+2d}, q>0,w \in B_1,   \\
\{  |k_v| \le n -1, \; k_t + |k_x| \leq m\} \Rightarrow \quad \|\Delta_w D f\|_{C_{\ell,q}^{2s+\alpha_{n,m}} } \le C_{n,m} |w|.
\end{aligned}
\end{equation}
The constants $\alpha_{n,m}$ depend on $n$, $m$ and the parameters in Assumption~\ref{a:hydro-assumption}. The constants $C_{n,m}$ depend in addition on $\tau$ and $q$. By taking $w \to 0$, \eqref{e:induc-n-3} implies \eqref{e:induc-n-3b}.
\medskip

The case $n=0$ of \eqref{e:induc-n-3b} was established in \textsc{Step} 2. The inequality \eqref{e:induc-n-3} holds trivially for $n=0$.
\medskip

Now, let $n\geq 1$ and $k$ be any multi-index with $|k_v|=n-1$ and $k_t + |k_x| = m$. From the inductive hypothesis, $Df$ satisfies \eqref{e:induc-n-3b}. Remark that we can assume without loss of generality that $\alpha_{n,m} < 1-2s$ in \eqref{e:induc-n-3b} and \eqref{e:induc-n-3}.  Thus, for any $q>0$,
\begin{equation} \label{e:s31}
 \|Df\|_{C^{2s+\alpha_{n-1,m}}_{\ell,q}} \leq C_{n-1,m}.
\end{equation}

Let $w \in B_1$. Since $2s+\alpha_{n-1,m} < 1$, we apply Lemma~\ref{l:4-v} together with \eqref{e:s31} and obtain, for  $\alpha=\alpha_{n-1,m}>0$,
\begin{align}
\nonumber
  \| \Delta_w Df\|_{C^{\alpha}_{\ell,q}} & \lesssim \left( \|Df\|_{C^{2s+\alpha_{n-1,m}}_{\ell,q}} + \|\nabla_x Df\|_{C^{\alpha_{n-1,m+1}}_{\ell,q}} \right) |w|^{2s} \\
\nonumber    & \lesssim \left( C_{n-1,m} + C_{n-1,m+1} \right) |w|^{2s} \\
  & \lesssim |w|^{2s}.  \label{e:s32}
  \end{align}

In order to obtain \eqref{e:induc-n-3} for $n$ and $m$, we need to enhance the exponent on the right hand side of the inequality above, from $2s$ all the way to one. We do it through an iterative process similar to \textsc{Step} 1.

  \begin{lemma}[Gain of regularity in $v$]\label{l:step3}
    Let $g = \Delta_w Df$, $\beta \in (0,1)$ and assume that    \eqref{e:induc-n-3} holds true for smaller values of $n+m$, or for the same value of $n+m$ with $n$ smaller. If there exists    $\bar \alpha \in (0,\alpha_n]$ such that $2s +\bar \alpha' \notin \{1,2\}$ and,
    \begin{equation}
      \label{e:gain-v-assum}
      \|g\|_{C_{\ell,q+\gamma+3}^{\bar \alpha}( [\tau,\infty) \times \R^d \times \R^d )} \lesssim |w|^\beta,
    \end{equation}
    then
    \[ \|g\|_{C_{\ell,q}^{2s + \bar \alpha'}( [2\tau,\infty) \times \R^d \times \R^d )} \lesssim |w|^\beta, \]
    with $\bar \alpha' = \frac{2s}{1+2s}\bar \alpha$.
  \end{lemma}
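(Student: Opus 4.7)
My plan is to mirror the proof of Lemma~\ref{l:step1}: derive an equation for $g=\Delta_w Df$ of the form $(\partial_t+v\cdot\nabla_x)g-\Q_1(f,g)=h$, bound $\|h\|_{C^{\bar\alpha'}_{\ell,q+\kappa}}\lesssim |w|^\beta$ using Lemmas~\ref{l:Q1_Calpha} and~\ref{l:Q2_bound} together with the inductive hypothesis, and then conclude via the global Schauder estimate of Proposition~\ref{p:global_schauder}. The essential novelty compared to the $x$-increment case is twofold: right translation by $(0,0,w)$ does not commute with the kinetic transport, so that for any smooth $u$ a direct computation gives
\[
 (\partial_t+v\cdot\nabla_x)\Delta_w u=\Delta_w[(\partial_t+v\cdot\nabla_x)u]-w\cdot\tau_w\nabla_x u,
\]
and additionally the commutator $[(\partial_t+v\cdot\nabla_x),D]$ is no longer zero when $|k_v|>0$, producing a linear combination of derivatives of $f$ in which one $v$-derivative is traded for one $x$-derivative.

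Using these identities, expanding $D\Q(f,f)$ by Leibniz, and using that $\tau_w\Q_j(f_1,f_2)=\Q_j(\tau_w f_1,\tau_w f_2)$ for $j=1,2$, the equation for $g$ reads $(\partial_t+v\cdot\nabla_x)g-\Q_1(f,g)=h$ with
\begin{align*}
 h&=\sum_{i\leq k}\binom{k}{i}\bigl[\Q_1(\Delta_w D_i f,\tau_w D_{k-i}f)+\Q_2(\Delta_w D_i f,\tau_w D_{k-i}f)+\Q_2(D_i f,\Delta_w D_{k-i}f)\bigr]\\
  &\quad+\sum_{\substack{i\leq k\\ i\neq 0}}\binom{k}{i}\Q_1(D_i f,\Delta_w D_{k-i}f)+\Delta_w[(\partial_t+v\cdot\nabla_x),D]f-w\cdot\tau_w\nabla_x Df.
\end{align*}
The bilinear $\Q_j$-contributions are controlled exactly as in the proof of Lemma~\ref{l:step1}: every factor appearing is either $f$ or one of its derivatives (bounded by Corollary~\ref{c:schauder_boltzmann} and the inductive hypothesis), the function $g$ itself (when $(j,i)=(1,k)$ in the first sum, handled by the assumption $\|g\|_{C^{\bar\alpha}_{\ell,q+\gamma+3}}\lesssim |w|^\beta$), or an increment $\Delta_w D_i f$ of a derivative at a strictly smaller inductive level (with $n'+m'<n+m$), which \eqref{e:induc-n-3} bounds by $\lesssim |w|$, hence by $\lesssim |w|^\beta$ since $|w|\leq 1$ and $\beta\leq 1$.

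The two new terms $\Delta_w[(\partial_t+v\cdot\nabla_x),D]f$ and $-w\cdot\tau_w\nabla_x Df$ both involve derivatives at level $(n-1,m+1)$: the commutator by its very structure, and $\nabla_x Df$ because it has the same $v$-order as $D$ and one extra $x$-derivative. Crucially $(n-1)+(m+1)=n+m$ with $n-1<n$, so this level lies in the inductive scope assumed by the lemma. Hence \eqref{e:induc-n-3} at $(n-1,m+1)$ gives $\|\Delta_w[(\partial_t+v\cdot\nabla_x),D]f\|_{C^{\bar\alpha'}_{\ell,q+\kappa}}\lesssim |w|$, and \eqref{e:induc-n-3b} at the same level gives $\|\nabla_x Df\|_{C^{\bar\alpha'}_{\ell,q+\kappa}}\leq C$, so that $\|w\cdot\tau_w\nabla_x Df\|_{C^{\bar\alpha'}_{\ell,q+\kappa}}\lesssim |w|\leq |w|^\beta$. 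Summing all contributions gives $\|h\|_{C^{\bar\alpha'}_{\ell,q+\kappa}}\lesssim |w|^\beta$, and Proposition~\ref{p:global_schauder} applied to $g$ yields
\[
 \|g\|_{C^{2s+\bar\alpha'}_{\ell,q}([2\tau,\infty)\times\R^d\times\R^d)}\lesssim \|g\|_{C^{\bar\alpha}_{\ell,q+\kappa}}+\|h\|_{C^{\bar\alpha'}_{\ell,q+\kappa}}\lesssim |w|^\beta,
\]
which is the desired conclusion. The main subtle point, and the chief difference from the $x$-increment proof, is handling the transport correction and commutator terms; the fact that they both land precisely at level $(n-1,m+1)$ is exactly why the inductive hypothesis in this step is phrased diagonally (``same $n+m$ with smaller $n$'') rather than lexicographically.
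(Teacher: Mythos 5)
Your proposal is correct and follows essentially the same route as the paper's proof: derive the equation for $g=\Delta_w Df$ via the commutator identity $(\partial_t+v\cdot\nabla_x)\Delta_w u=\Delta_w[(\partial_t+v\cdot\nabla_x)u]-w\cdot\tau_w\nabla_x u$, bound the bilinear terms as in Lemma~\ref{l:step1}, control the two new terms $\Delta_w\{(\partial_t+v\cdot\nabla_x),D\}f$ and $-w\cdot\tau_w\nabla_x Df$ by the inductive hypothesis at level $(n-1,m+1)$, and conclude with Proposition~\ref{p:global_schauder}. Your observation that these two terms are precisely why the induction must be diagonal in $n+m$ matches the paper's reasoning exactly.
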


\begin{proof}
  The proof is very similar to the proof of Lemma~\ref{l:step1}.  The only difference is that the equation for $g$ will now have terms involving $\nabla_x Df$.

 The function $\bar g = Df$ satisfies
  \[ 
  (\partial_t + v \cdot \nabla_x) \bar g -\Q_1(f, \bar g) = \bar{H}
  \] 
  with
  \[
    \bar{H}  =  \sum_{\substack{i \leq k \\ i \neq k} } \Q_1(\hat D_i f, D_i f)  + \sum_{i \leq k}  \Q_2(\hat D_i f, D_i f) +  \{(\partial_t + v \cdot \nabla_x),D\} f
  \]  
  where $\{(\partial_t + v \cdot \nabla_x),D\} f=(\partial_t + v \cdot \nabla_x)Df-D(\partial_t + v \cdot \nabla_x)f$ (Poisson bracket) and $D_i$ and $\hat{D}_i$  are such that $D_i \circ \hat{D}_i = D$.

Since $\partial_t$ and $D$ commute, $\{(\partial_t + v \cdot \nabla_x),D\} = \{ v \cdot \nabla_x,D\}$. Given $k = (k_t,k_x,k_v)$, by a direct computation one verifies that
\[ \{ v \cdot \nabla_x,D\} = \sum_{\tilde k} D^{\tilde k}\]
where the multi-index $\tilde k$ runs over all multi-indexes with the same order as $k$ so that $D^{\tilde k} = \partial_{x_i} \tilde D$ and $D^k = \partial_{v_i} \tilde D$ for some differential operator $\tilde D$ and $i=1,\dots,d$. According to our induction hypothesis, \eqref{e:induc-n-3} holds for all these indexes $\tilde k$, therefore
\begin{equation} \label{e:s32_commutator_bound}
 \| \Delta_w \{(\partial_t + v \cdot \nabla_x),D\} f\|_{C^{2s+\alpha}_{\ell,q}} \leq  C_{n-1,m+1} |w|.
\end{equation}

The function $g = \Delta_w D f$ satisfies the  equation 
\[
 (\partial_t + v \cdot \nabla_x) g - \Q_1(f,g) = H \quad \text{ in } (0,T) \times \R^d \times \R^d
\]
  where 
  \begin{align*} 
 H &= \Delta_w \bar{H} - \sum_{j=1}^d w_j \tau_w (\partial_{x_j} Df), \\
&= \sum_{\substack{|i|<n \\ i \leq k_x}} \left\{ \Q_1(\Delta_y \hat D_i f, \tau_y D_i f) + \Q_1(\hat D_i f, \Delta_y D_i f) \right\} + \sum_{i \leq k_x} \left\{  \Q_2(\Delta_y \hat D_i f, \tau_y D_i f) +\Q_2(\hat D_i f, \Delta_y D_i f) \right\} \\
&\phantom{=} +  \Delta_w \{(\partial_t + v \cdot \nabla_x),D\}f - \sum_{j=1}^d w_j \tau_w (\partial_{x_j} Df).
\end{align*}
  The last term is the commutator between $\Delta_w$ and the transport part $(\partial_t + v \cdot \nabla_x)$, and it is bounded in $C^{2s+\alpha}_{\ell,q}$, for all $q>0$, by the inductive hypothesis. The first two terms are bounded identically as in the proof of Lemma~\ref{l:step1}. And the third term was bounded in \eqref{e:s32_commutator_bound}. The proof finish by applying Proposition~\ref{p:global_schauder} to $g$, in the same way as in the proof of Lemma~\ref{l:step1}.

\end{proof}

Once Lemma~\ref{l:step3} is established, the rest of the proof of \textsc{Step} 3 proceeds similarly  as in \textsc{Step} 1 using Lemma~\ref{l:cc-v} instead of Lemma~\ref{l:cc-x}.

This finishes the proof of Theorem~\ref{t:main2}.

\appendix
\section{Gressman-Strain coercivity estimate}
\label{s:gressman}

In this appendix, we show how the change of variables described in Section~\ref{s:cov}, together with a local coercivity estimate like the one in Theorem~\ref{t:coercivity}, can be used to recover the global coercivity estimate with respect to the \emph{lifted} anisotropic distance of Gressman and Strain \cite{gressmanstrainBETTERpaper} (see also the prequel paper \cite{gressman2011global}).

The transformation $T_0$ defined in \eqref{e:T0v} depends on a given point $v_0 \in \R^d$. For any such $v_0$, let us consider the pushed forward distance: for
$v_1,v_2 \in E_1 (v_0)= v_0 + T_0 (B_1)$, 
\begin{equation}\label{e:da}
\da (v_1,v_2) = | T_0^{-1} (v_1-v_2)|
\end{equation}
This distance $\da$ depends on the choice of $v_0$. However, as we will see, for any pair $v_1, v_2 \in \R^d$, all the possible values of $\da(v_1,v_2)$ are comparable for all possible choices of $v_0$ so that $v_1,v_2 \in E_1(v_0)$.

We also recall the anisotropic distance defined in \cite{gressman2011global}:
for all $v_1,v_2 \in \R^d$,
\begin{equation}\label{e:dgs}
  \dgs (v_1,v_2) =\sqrt{\frac14 \left(|v_1|^2 - |v_2|^2\right)^2 + |v_1-v_2|^2}.
\end{equation}
\begin{lemma}[The anisotropic distance $\da$]\label{l:da}
Given  $v_0 \in \R^d$ with $|v_0| \ge 2$, we have for all $v_1,v_2 \in v_0 + T_0 (B_1)$,
\[ \da (v_1,v_2) \simeq \dgs (v_1,v_2).\]
The hidden constants in $\simeq$ do not depend on any parameter, not even  dimension. 
\end{lemma}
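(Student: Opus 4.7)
The plan is a direct computation comparing the two expressions \eqref{e:da} and \eqref{e:dgs}, exploiting that $v_1,v_2 \in v_0 + T_0(B_1) \subset B_1(v_0)$ when $|v_0|\geq 2$.

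First, I would unpack $\da$ explicitly. Writing any vector $u \in \R^d$ as $u = u_\parallel \hat v_0 + u_\perp$ with $\hat v_0 = v_0/|v_0|$ and $u_\perp \perp v_0$, the definition \eqref{e:T0v} of $T_0$ gives $|T_0^{-1} u|^2 = u_\parallel^2 |v_0|^2 + |u_\perp|^2 = |u|^2 + u_\parallel^2(|v_0|^2 - 1)$. Setting $u = v_1-v_2$, $\alpha := (v_1-v_2) \cdot \hat v_0$ and $\ell := |v_1-v_2|$, this yields
\[
  \da(v_1,v_2)^2 \;=\; \ell^2 + \alpha^2 (|v_0|^2 - 1).
\]
Since $|v_0|\ge 2$, we have $\tfrac12|v_0|^2 \le |v_0|^2 - 1 \le |v_0|^2$, so $\da(v_1,v_2)^2 \simeq \ell^2 + \alpha^2 |v_0|^2$ with universal constants.

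Second, I would rewrite $\dgs$ using the identity $|v_1|^2 - |v_2|^2 = (v_1-v_2)\cdot(v_1+v_2)$, giving
\[
  \dgs(v_1,v_2)^2 \;=\; \ell^2 + \tfrac14 \big((v_1-v_2)\cdot(v_1+v_2)\big)^2.
\]
Because $v_1,v_2 \in v_0 + T_0(B_1) \subset B_1(v_0)$, we have $v_1 + v_2 = 2v_0 + \eta$ with $|\eta|\le 2$. Hence
\[
  \tfrac12(v_1-v_2)\cdot(v_1+v_2) \;=\; \alpha|v_0| + \rho, \qquad |\rho| \le \ell.
\]
Setting $A := \alpha|v_0|$ and $B := A + \rho$, the elementary bounds $A^2 \le 2B^2 + 2\rho^2$ and $B^2 \le 2A^2 + 2\rho^2$ combined with $|\rho|\le \ell$ give
\[
  \tfrac12(A^2 + \ell^2) \;\le\; B^2 + \ell^2 \;\le\; 3(A^2 + \ell^2),
\]
i.e.\ $\dgs(v_1,v_2)^2 \simeq \ell^2 + \alpha^2 |v_0|^2$ with universal constants.

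Combining the two comparisons yields $\da(v_1,v_2)^2 \simeq \dgs(v_1,v_2)^2$ with constants depending on nothing. There is no serious obstacle here — the only thing to be careful about is that the constants genuinely remain universal (independent even of the dimension), which is transparent from the scalar manipulations above.
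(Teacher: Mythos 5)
Your proof is correct and follows essentially the same route as the paper's: the same orthogonal decomposition giving $\da(v_1,v_2)^2 \simeq |v_1-v_2|^2 + ((v_1-v_2)\cdot v_0)^2$, followed by the observation that the midpoint of $v_1,v_2$ lies within distance $1$ of $v_0$ — you merely spell out the paper's terse final step via the identity $|v_1|^2-|v_2|^2=(v_1-v_2)\cdot(v_1+v_2)$ and a perturbation argument. The only blemish is the claimed lower bound $\tfrac12(A^2+\ell^2)\le B^2+\ell^2$, which fails (take $A=2$, $\ell=1$, $\rho=-1$, so $B=1$: it would read $2.5\le 2$); the correct chain $A^2+\ell^2\le 2B^2+2\rho^2+\ell^2\le 3(B^2+\ell^2)$ gives the constant $\tfrac13$, which of course still yields universal comparability.
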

\begin{proof}
Since $T_0$ is linear, we have to estimate $|T_0^{-1}(v_1-v_2)|$. Let $v_{1,2} =v_1-v_2$. 
We have 
\[ v_{1,2} = \lambda \frac{v_0}{|v_0|} + w \quad \text{ with } w \cdot
v_0 =0.\]
The real number $\lambda$ satisfies
$\lambda |v_0| = v_{1,2} \cdot v_0$ and
$|v_{1,2}|^2 = \lambda^2 + |w|^2$. Hence we have
\begin{align*}
 \da (v_1,v_2) = &| T_0^{-1} (v_{1,2})|  = \sqrt{\lambda^2 |v_0|^2 + |w|^2} \\
= &\sqrt{\lambda^2 (|v_0|^2 -1) + |v_1-v_2|^2}\\
\simeq & \sqrt{\lambda^2 |v_0|^2 + |v_1-v_2|^2}\\
= &  \sqrt{((v_1-v_2) \cdot v_0)^2 + |v_1-v_2|^2}.
 \end{align*}
We finally use that $T_0 (B_1)$ is a convex subset of $B_1$ in order to get
\[ \left|\frac{v_1+v_2}2 - v_0 \right| \le 1 \]
which allows us to conclude.  
\end{proof}

In \cite{gressman2011global,gressmanstrainBETTERpaper}, Gressman and Strain obtained
sharp coercivity estimates for the linear Boltzmann collision operator
under some conditions on $f$ on mass, concentration and moments. In
the next proposition, we prove an inequality of the same nature.
\begin{prop}[Coercivity estimate]\label{p:GS} Let $f$ be non-negative and such that
Assumption~\ref{a:hydro-assumption} holds. If $\gamma < 0$, we also assume \eqref{e:extra-int}. Let  $g:\R^d \to \R$ be an arbitrary function. Then
\begin{multline}\label{e:GS}
 -\int_{\R^d} \Q (f,g) g \dv \\
\ge c \iint_{\dgs(v,v')<\rho}  (g(v)-g(v'))^2 \frac{(1+|v+ v'|)^{\gamma + 2s +1}}{\dgs(v,v')^{d+2s}} \dv \dv'
- C \int_{\R^d} g(v)^2 (1+| v|)^{\max(\gamma,0)} \dd  v
\end{multline}
where the constants $c$, $\rho$ and $C$ only depend on dimension $d$ and $m_0,M_0,E_0,H_0$ from Assumption~\ref{a:hydro-assumption} and $C_\gamma$ in \eqref{e:extra-int} (only if $\gamma<0$). We recall that $\Q$ denotes the Boltzmann collision operator defined in
\eqref{e:Q} and $\dgs$ denotes the non-isotropic distance defined in \eqref{e:dgs}.
\end{prop}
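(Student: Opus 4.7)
The plan is to decompose $-\int \Q(f,g)g \dv$ into a main quadratic form associated with $K_f$ plus lower-order terms, and then lower-bound the main term by stitching together local coercivity estimates via the change of variables of Section~\ref{s:cov}.

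Writing $\Q = \Q_1 + \Q_2$ as in \eqref{e:Q-bis} and symmetrizing the bilinear form associated with $\Q_1=\LL_{K_f}$ by swapping $v$ and $v'$, one obtains
\[
-\int \Q_1(f,g)\,g \dv = \frac{1}{2}\iint (g(v')-g(v))^2 K_f(v,v') \dd v' \dv + \frac{1}{2}\int g(v)^2 \left(\PV \int (K_f(v,v')-K_f(v',v))\dd v'\right) \dv.
\]
The inner $\PV$ integral is controlled by the classical cancellation lemma \cite[Lemma~3.6]{imbert2016weak} by $C\int f(v')|v-v'|^\gamma \dd v'$, which is $\lesssim (1+|v|)^{\max(\gamma,0)}$ by Lemmas~\ref{l:convolution-moments}--\ref{l:convolution-C0} together with \eqref{e:extra-int}. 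Similarly, $-\int \Q_2(f,g)g\dv = -c_b\int (f\ast|\cdot|^\gamma)g^2\dv \geq -C\int g(v)^2 (1+|v|)^{\max(\gamma,0)}\dv$ by the same pair of lemmas applied to $f\ast|\cdot|^\gamma$. Both contributions are absorbed into the error term of \eqref{e:GS}.

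For the main quadratic form $\iint (g(v')-g(v))^2 K_f(v,v') \dd v' \dv$, I will construct a countable family $\{v_i\}\subset \R^d$ for which $\{E_{1/4}(v_i)\}$ covers $\R^d$ and $\{E_1(v_i)\}$ has bounded overlap. At each $v_0=v_i$ with $|v_i|\geq 2$, the change of variables $\mathcal T_0$ sends $B_1$ onto $E_1(v_i)$ and turns $K_f$ into $\bar K_f$; Corollary~\ref{c:ndc} provides the local coercivity inequality \eqref{e:local_coercivity} for $\bar K_f$. Pulling this back using \eqref{e:barKf} and the Jacobian factor $(\det T_0^{-1})^2=|v_i|^2$ produces
\[
\iint_{E_1(v_i)\times E_1(v_i)} (g(\bar v')-g(\bar v))^2 K_f(\bar v,\bar v') \dd\bar v' \dd\bar v \;\geq\; c\, |v_i|^{1+\gamma+2s} \iint_{E_{1/2}(v_i)\times E_{1/2}(v_i)} \frac{(g(\bar v')-g(\bar v))^2}{\da(\bar v,\bar v')^{d+2s}} \dd\bar v' \dd\bar v.
\]
On $E_{1/2}(v_i)\times E_{1/2}(v_i)$, Lemma~\ref{l:da} gives $\da\simeq\dgs$ and $|v_i|\simeq 1+|\bar v+\bar v'|$, so the right-hand side is comparable to the target integrand restricted to this set. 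Summing over $i$ (with bounded overlap on the left and the covering property on the right) yields \eqref{e:GS}, provided $\rho$ is chosen small enough that every pair with $\dgs(v,v')<\rho$ lies in a common $E_{1/2}(v_i)$. The bounded-velocity range $|v|<2$ is handled by a single direct application of Theorem~\ref{t:coercivity} on a fixed large ball.

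The main obstacle is the construction and analysis of the Whitney-type ellipsoidal family $\{E_r(v_i)\}$. Because $E_r(v_0)$ changes shape with $v_0/|v_0|$, one cannot simply translate a fixed reference ellipsoid. The plan is to proceed dyadic shell by shell: on each annulus $2^k\leq |v_0|<2^{k+1}$, the ellipsoids have uniform eccentricity $\sim 2^{-k}$, and a standard Vitali selection applied to $\{E_{1/10}(v_i)\}$ delivers a pairwise disjoint subfamily whose fivefold enlargements cover the shell, yielding both the covering property of $\{E_{1/4}(v_i)\}$ and the bounded overlap of $\{E_1(v_i)\}$. Compatibility of the $\da$-distances at nearby centers---which follows from $\|T_0^{-1}\|=|v_0|$ and the Lipschitz dependence of $T_0$ on $v_0/|v_0|$---ensures that any pair with small $\dgs$ is captured by a common member of the family.
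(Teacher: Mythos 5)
Your decomposition, symmetrization, and treatment of the lower-order terms are exactly what the paper does: split $\Q=\Q_1+\Q_2$, symmetrize the $\Q_1$ form, control the principal-value asymmetry by the cancellation lemma of \cite{imbert2016weak}, bound it and the $\Q_2$ contribution by $C(1+|v|)^{\max(\gamma,0)}$ via Lemmas~\ref{l:convolution-moments} and \ref{l:convolution-C0} (and \eqref{e:extra-int} when $\gamma<0$), and reduce the main term to the pulled-back local coercivity of Corollary~\ref{c:ndc}, whose Jacobian bookkeeping you carry out correctly ($|v_0|^{1+\gamma+2s}$ in front of the $\da$-form, with $\da\simeq\dgs$ and $|v_0|\simeq 1+|\bar v+\bar v'|$ on $E_{1/2}(v_0)$).

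The one place where you genuinely diverge from the paper is the globalization step, and here the paper's route is worth knowing because it sidesteps the part you flag as the main obstacle. Instead of constructing a Whitney/Vitali family of ellipsoids with bounded overlap, the paper multiplies the localized inequality by $|v_0|$ and \emph{integrates continuously over all centers} $v_0\in\R^d\setminus B_2$. This produces weight functions $W_1(v,v')=\int|v_0|\un_{v,v'\in E_1(v_0)}\dv_0$ and $W_{1/2}$ on the two sides, and the whole covering analysis collapses to two pointwise estimates: $W_1\lesssim\un_{\dgs<R}$ (since the set of admissible $v_0$ has volume $\approx(1+|v|)^{-1}$) and $W_{1/2}\gtrsim\un_{\dgs<\rho}$ away from $B_2$. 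Your discrete covering would also work, but it forces you to prove bounded overlap of $\{E_1(v_i)\}$ and the capture of all $\dgs$-close pairs by a common $E_{1/2}(v_i)$, i.e.\ precisely the volume computations that the continuous average delivers for free; if you pursue your route, the dyadic-shell Vitali selection you sketch is the right construction, and the capture property should be argued by showing that the $\dgs$-ball of radius $\rho$ about any point of $E_{1/4}(v_i)$ sits inside $E_{1/2}(v_i)$ for $\rho$ universal. (Minor remark: your sign on the cancellation term in the symmetrization identity is the correct one; in either convention the term is bounded in absolute value and absorbed into the error, so nothing downstream changes.)
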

We recall that the collision operator can be split in a principal part
and a lower order term, see \eqref{e:Q-bis}.  We prepare the proof of
the proposition by first estimating from below the principal
contribution of the bilinear form
$\langle \Q (f,g), g \rangle_{L^2}$.
\begin{lemma} \label{l:GS} Let $f$ be non-negative and such that
  Assumption~\ref{a:hydro-assumption} holds, and if $\gamma<0$ also \eqref{e:extra-int} holds true. Let $g:\R^d \to \R$ be an
  arbitrary function. Then
\begin{equation}\label{e:GS-symmetric-part}
 \iint_{\dgs(v,v') < R} (g(v)-g(v'))^2 K_f(v,v')  \dd v' \dd v 
\ge c \iint_{\dgs(v,v')<\rho}  (g(v)-g(v'))^2 \frac{(1+|v+ v'|)^{\gamma + 2s +1}}{\dgs(v,v')^{d+2s}} \dv \dv'.
\end{equation}
Here, the constants $c>0$ and $\rho \in (0,1), R \in (2,+\infty)$ only depend on dimension $d$ and
$m_0,M_0,E_0,H_0$ from Assumption \eqref{a:hydro-assumption} and $C_\gamma$ in
\eqref{e:extra-int} (only if $\gamma<0$).  We recall that $K_f$ is the
kernel defined in \eqref{e:kf} and $\dgs$ denotes the non-isotropic
distance defined in \eqref{e:dgs}.
\end{lemma}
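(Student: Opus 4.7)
My plan is to derive Lemma \ref{l:GS} by transferring the local coercivity estimate of Corollary \ref{c:ndc} for the transformed kernel $\bar K_f$ back to the original kernel $K_f$ via the change of variables of Section \ref{s:cov}, and then averaging the resulting one-parameter family of local estimates in $v_0$. Since $\det T_0^{-1}=|v_0|$ and $\bar K_f$ carries the normalizing factor $|v_0|^{-1-\gamma-2s}$, pulling Corollary \ref{c:ndc} back through $\mathcal{T}_0$ produces exactly a factor $|v_0|^{1+\gamma+2s}$, which is the source of the weight $(1+|v+v'|)^{\gamma+2s+1}$ in the statement.

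\textbf{Local inequality in the original variables.} For any $v_0 \in \R^d$ with $|v_0|\ge 2$, I apply Corollary \ref{c:ndc} to the function $v\mapsto g(v_0+T_0 v)$ and then change variables $u=v_0+T_0 v$, $u'=v_0+T_0 v'$. Using $|v-v'|=|T_0^{-1}(u-u')|=\da(u,u')$, the Jacobian $|v_0|$ per integration variable, and Lemma \ref{l:da} to identify $\da\simeq \dgs$ on $E_{1/2}(v_0)\times E_{1/2}(v_0)$, the inequality becomes
\[
\iint_{E_1(v_0)^2}(g(u)-g(u'))^2 K_f(u,u')\du\du' \;\ge\; c\,|v_0|^{1+\gamma+2s}\iint_{E_{1/2}(v_0)^2}\frac{(g(u)-g(u'))^2}{\dgs(u,u')^{d+2s}}\du\du'.
\]
For $|v_0|<2$ the analogous inequality (with the weight bounded by a constant) is a direct application of Corollary \ref{c:ndc} in a fixed ball and contributes to the portion of the target estimate where $(1+|v+v'|)^{\gamma+2s+1}$ is itself bounded.

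\textbf{Averaging over $v_0$.} To promote this family of local inequalities to a global one, I integrate both sides over $v_0\in\R^d\setminus B_2$ against $|v_0|\,\dd v_0$ and apply Fubini. The left-hand side becomes $\iint(g(u)-g(u'))^2 K_f(u,u')\,\phi(u,u')\du\du'$ with
\[
\phi(u,u'):=\int_{|v_0|\ge 2}|v_0|\,\un_{\{u,u'\in E_1(v_0)\}}\dd v_0,
\]
and the right-hand side becomes $c\iint(g(u)-g(u'))^2\dgs(u,u')^{-d-2s}\,\psi(u,u')\du\du'$ with
\[
\psi(u,u'):=\int_{|v_0|\ge 2}|v_0|^{2+\gamma+2s}\,\un_{\{u,u'\in E_{1/2}(v_0)\}}\dd v_0.
\]
A direct geometric computation, decomposing $v_0$ into components parallel and perpendicular to $u$ (as in the analysis of $T_0^{-1}$ in Section \ref{s:cov}), shows that the set $\{v_0:u\in E_1(v_0)\}$ is concentrated near $|v_0|\approx |u|$ with angular deviation $\lesssim 1/|u|$ and has volume $\lesssim 1/|u|$. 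Consequently $\phi(u,u')\lesssim 1$ uniformly, and $\phi$ is supported in $\{\dgs(u,u')\le R\}$ for a universal $R$, producing the restriction $\dgs(v,v')<R$ on the left-hand side of the statement.

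\textbf{Lower bound on $\psi$ and main obstacle.} Conversely, for $(u,u')$ with $\dgs(u,u')<\rho$ sufficiently small, the midpoint $v_0=(u+u')/2$ and a tube around it of volume $\gtrsim 1/(1+|u+u'|)$ all satisfy $u,u'\in E_{1/2}(v_0)$ with $|v_0|\approx 1+|u+u'|$ throughout; hence
\[
\psi(u,u')\;\gtrsim\;(1+|u+u'|)^{2+\gamma+2s}\cdot\frac{1}{1+|u+u'|}\;\simeq\;(1+|u+u'|)^{1+\gamma+2s},
\]
which is exactly the weight claimed in Lemma \ref{l:GS}. Combining the upper bound on $\phi$ with this lower bound on $\psi$, and adding the (easy) contribution from $|v_0|<2$, yields the desired inequality. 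The main technical obstacle is the bookkeeping in these two geometric volume computations: each ingredient is elementary, but the nonlinear dependence of $T_0$ on $v_0$ means that the supports $\{v_0:u\in E_r(v_0)\}$ are not standard balls or ellipsoids, and one must parametrize them carefully in order to extract the sharp asymptotic weight $(1+|v+v'|)^{1+\gamma+2s}$.
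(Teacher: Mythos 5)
Your proposal is correct and follows essentially the same route as the paper: pull the uniform local coercivity of Corollary~\ref{c:ndc} back through $\mathcal T_0$ to get the weighted local inequality on $E_1(v_0)$, average over $v_0\in\R^d\setminus B_2$ against $|v_0|\,\dd v_0$, and control the resulting weight functions ($\phi$, $\psi$ in your notation; $W_1$, $W_{1/2}$ in the paper's) by the same volume estimates for $\{v_0: u,u'\in E_r(v_0)\}$, with the region where both velocities are small handled by a direct application of the coercivity on a fixed ball. The only cosmetic difference is that the paper converts $|v_0|$ to $1+|\bar v+\bar v'|$ before integrating in $v_0$, whereas you keep the power of $|v_0|$ inside $\psi$ and extract the weight afterwards.
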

\begin{proof}
  We are going to use the change of variables from  Section~\ref{s:cov}. We recall that a kernel $\bar K_f$ is defined  in \eqref{e:barKf} and that this kernel satisfies appropriate   ellipticity conditions. 

From Corollary~\ref{c:ndc}, we know that the kernel $\bar K_f$ satisfies \eqref{e:local_coercivity}, with a constant $\lambda$ independent of $v_0$.

Let $R_0 \ge 2$ and $v_0$ such that $|v_0|=R_0$.   We change variables in \eqref{e:nondeg2}. Recall that  $\bar v = v_0 + T_0  v$ and $\bar v' = v_0 + T_0  v'$. We also write  $\bar g(v) = g(\bar v)$. Note that $\dd  v = |v_0| \dd \bar v$. Thus, \  \eqref{e:local_coercivity} for $\bar K_f$ translates into the following inequality for $K_f$,
\begin{multline*}
\iint_{ E_1 (v_0) \times  E_1(v_0)} (g(\bar v')- g(\bar v))^2 |v_0|^{1-\gamma-2s} K_f(\bar v, \bar v') \dd \bar v' \dd \bar v \\
\gtrsim  |v_0|^2 \iint_{ E_{1/2} (v_0)\times  E_{1/2}(v_0)} (g(\bar v')-  g(\bar v))^2 \dgs (\bar v,\bar v')^{-d-2s} \dd \bar v' \dd \bar v
\end{multline*}
where we recall that $E_r (v_0) = v_0+T_0 (B_r)$ for $r>0$. We used the definition of $d_a$ and Lemma~\ref{l:da}. Rearranging the powers of $|v_0|$, we get for any $v_0 \in \R^d \setminus B_2$,
\begin{multline*} 
  \iint_{E_1 (v_0) \times E_1 (v_0) } (g(\bar v')- g(\bar v))^2 K_f(\bar v, \bar v') \dd \bar v' \dd \bar v \\
  \gtrsim (1+|v_0|)^{1+\gamma+2s} \iint_{ E_{1/2} (v_0) \times  E_{1/2} (v_0)} (g(\bar v')-  g(\bar v))^2 \dgs (\bar v,\bar v')^{-d-2s} \dd \bar v' \dd \bar v.
\end{multline*}
We remark that for $\bar v,\bar v' \in E_{1/2} (v_0)$, we have $1+|v_0| \simeq 1+|\bar v + \bar v'|$. Hence, we get 
\begin{multline*}
  \iint_{E_1 (v_0) \times E_1 (v_0) } (g(\bar v')- g(\bar v))^2 K_f(\bar v, \bar v') \dd \bar v' \dd \bar v \\
  \gtrsim \iint_{E_{1/2}(v_0) \times E_{1/2} (v_0)} (g(\bar v')-  g(\bar v))^2 \frac{(1+|\bar v + \bar v'|)^{1+\gamma+2s}}{\dgs (\bar v,\bar v')^{d+2s}} 
\dd \bar v' \dd \bar v.
\end{multline*}
We now multiply the previous inequality by $|v_0|$, integrate with
respect to $v_0 \in \R^d \setminus B_2$. We get
\begin{equation}
  \label{e:recall}
  \iint (g(\bar v')- g(\bar v))^2 K_f(\bar v, \bar v')  W_1 (\bar v,\bar v') \dd \bar v' \dd \bar v 
  \gtrsim \iint (g(\bar v')-  g(\bar v))^2 \frac{(1+|\bar v + \bar v'|)^{1+\gamma+2s}}{\dgs (\bar v,\bar v')^{d+2s}} 
W_{1/2} (\bar v,\bar v') \dd \bar v' \dd \bar v
\end{equation}
with
\[
   W_1 (v,v') := \int_{\R^d \setminus B_2}  |v_0| \un_{v,v' \in E_1(v_0)} \dv_0 \quad \text{ and } \quad 
   W_{1/2} (v,v') := \int_{\R^d \setminus B_2} |v_0| \un_{v,v' \in E_{1/2}(v_0)}  \dv_0
\]
where $\un_A$ denotes the indicator function of a set $A$:
$\un_A (v)=1$ if $v \in A$ and $\un_A (v)=0$ if $v \notin A$.

We now observe that for some constants $R>0$ (large) and $\rho>0$ (small),
\begin{align}
  \label{e:upper} W_1(v,v') &\lesssim \un_{\{\dgs (v,v') < R \}} \\
  \label{e:lower} W_{1/2} (v,v') &\gtrsim \un_{\{\dgs (v,v') < \rho \}} \un_{\{v \notin B_{2} \text{ or } v' \notin B_{2}\}} .
\end{align}

As far as \eqref{e:upper} is concerned, if there exists $v_0 \in \R^d$
such that $v,v' \in E_1(v_0)$, then $\da (v,v') <2$, see
\eqref{e:da}. Thus, from Lemma~\ref{l:da}, $\dgs(v,v') < R$ for some universal constant $R$. Moreover, since we have $\da(v,v_0) < 1$, we also have $\dgs(v,v_0) < R$. In particular $|v| \approx |v_0|$. The set of points $v_0 \in \R^d$ so that $\dgs(v,v_0) < R$ has volume $\approx (1+|v|)^{-1}$. Thus, $W_1 \lesssim |v| (1+|v|)^{-1} \leq 1$, and \eqref{e:upper} follows. As far as \eqref{e:lower} is concerned, if $\dgs (v,v') < \rho$ for $\rho$ small, then the set of $v_0$ so that $v,v' \in E_1(v_0)$ will be indeed of volume $\approx (1+|v|)^{-1}$. If $v \notin B_2$ or $v' \notin B_2$, we ensure that at least half of this set lies outside $B_2$. Note that since $|v_0| \approx |v|>2$ (or $|v'|>2$), we have $|v_0| / (1+|v|) \approx 1$ and \eqref{e:lower} follows.

With \eqref{e:upper} and \eqref{e:lower} at hand, we can deduce from \eqref{e:recall} that
\begin{multline*}
  \iint_{\{ \dgs (\bar v,\bar v')<R\} } (g(\bar v')- g(\bar v))^2 K_f(\bar v, \bar v') \dd \bar v' \dd \bar v  \\
  \gtrsim \iint_{\{ \dgs(\bar v, \bar v') < \rho \}} (g(\bar v')-  g(\bar v))^2 \un_{\{|\bar v|>2 \text{ or } |\bar v'| >2\}}
  \frac{(1+|\bar v + \bar v'|)^{1+\gamma+2s}}{\dgs (\bar v,\bar v')^{d+2s}} \dd \bar v' \dd \bar v.
\end{multline*}

In order to deal with small velocities, the change of variables is not needed: we apply \eqref{e:local_coercivity} (scaled to $B_4$) directly to $K_f$ and get, 
  \begin{align*}
    \iint_{B_{4} \times B_{4}} (g(\bar v')- g(\bar v))^2 K_f(\bar v, \bar v') \dd \bar v' \dd \bar v  
    & \gtrsim \iint_{B_2 \times B_2} (g(\bar v')-  g(\bar v))^2 |\bar v-\bar v'|^{-d-2s} \dd \bar v' \dd \bar v \\
    & \gtrsim \iint_{\{\dgs (\bar v,\bar v') < \rho\}} (g(\bar v')-  g(\bar v))^2 \un_{\{\bar v,\bar v' \in B_2\} } \frac{(1+|\bar v+ \bar v'|)^{\gamma + 2s +1}}{\dgs(\bar v,\bar v')^{d+2s}} \dd \bar v' \dd \bar v.
  \end{align*}

We conclude the proof by combining the estimate for large velocities with the one for small velocities. 
\end{proof}

We can now prove Proposition~\ref{p:GS}. 

\begin{proof}[Proof of Proposition~\ref{p:GS}]
  From Corollary~\ref{c:ndc}, we know that $\bar K_f$ satisfies \eqref{e:local_coercivity} with a $\lambda>0$ independent of $v_0$.

We use again the decomposition \eqref{e:Q-bis} from \cite{villani-book,silvestre2016new}  After straight-forward arithmetic manipulations, we get
\begin{align*} 
 -\int_{\R^d} \Q (f,g) g \dv &= \frac 12 \iint_{\R^d \times \R^d} (g( v')- g( v))^2 K_f( v,  v') \dd  v' \dd  v \\
& \qquad - \frac 12  \int_{\R^d} g(v)^2 \left( \int_{\R^d} (K_f(v,v') - K_f(v',v)) \dd v' \right)  \dd v, \\
& \qquad - \int_{\R^d} (f \ast |\cdot|^\gamma) g(v)^2 \dd v, \\
& = I_1 - I_2 - I_3.
\end{align*}

We use Lemma~\ref{l:GS} to estimate the first term. We use \cite[Lemma 3.6]{imbert2016weak} to estimate the second term. In fact, the classical cancellation lemma from \cite{alexandre2000entropy} (see also  \cite[Lemmas 5.1 and 5.2]{silvestre2016new}) tells us that the second and third terms are
identical. Thus, using \eqref{e:extra-int} if $\gamma <0$, 
\begin{align*}
I_1 &\gtrsim\iint_{\dgs(v,v')<\rho} (g( v') - g( v))^2 \frac{(1+| v+ v'|)^{1+\gamma+2s}} {\dgs ( v, v')^{d+2s}} \dd  v' \dd  v, \\
I_2 &= I_3 = \int_{\R^d} g(v)^2 \left( f(v+w) |w|^\gamma \dd w \right) \dd v \leq \begin{cases}
C (1+|v|)^\gamma \int_{\R^d}g(v)^2 \dv & \text{if $\gamma\geq 0$, with $C=C(M_0,E_0)$},\\
C_\gamma \int_{\R^d}g(v)^2 \dv & \text{if $\gamma < 0$}. \end{cases}
\end{align*}
The proof is now complete. 
\end{proof}

\begin{remark}
  It is possible to justify that the universal constants $R>\rho$ can be  chosen arbitrarily using a covering argument as in  \cite[Section 5.2]{chaker2019coercivity}.  The norm $\mathcal N^s_\gamma$ in   \cite{gressman2011global,gressmanstrainBETTERpaper} is defined with $\rho = 1$.
\end{remark}

\begin{remark} \label{r:GS} The coercivity estimate from
  \cite{gressman2011global} and the coercivity
  estimates from \cite{gressmanstrainBETTERpaper} and in Proposition~\ref{p:GS} involve different operators. Our proposition, as well as the estimate in \cite{gressmanstrainBETTERpaper}, is for the
  linear operator
\begin{equation} \label{e:ourLinear}
 L(g) = -\mathcal \Q(f,g),
\end{equation}
for any given profile $f$ for which the mass, energy and entropy are bounded above, and the mass is bounded below. The estimate in \cite{gressman2011global} is for the linearized Boltzmann operator
\begin{equation} \label{e:theirLinear}
 L(g) = - M^{-1/2} \mathcal \Q(M, M^{1/2} g) - M^{-1/2} \mathcal \Q(M^{1/2} g, M),
\end{equation}
where $M$ is a Maxwellian profile.

The linear operators \eqref{e:ourLinear} and \eqref{e:theirLinear} are different. The operator \eqref{e:ourLinear} is useful to study (so far conditional) regularity estimates for generic solutions of the Boltzmann equation. The operator \eqref{e:theirLinear} is useful to study the stability of the equation for small perturbations around a Maxwellian.

Coercivity estimates from \cite{gressmanstrainBETTERpaper} and from  Proposition~\ref{p:GS} are proved under slightly different sets of  assumptions. It is assumed in \cite{gressmanstrainBETTERpaper} that $f$ satisfies for  all $v \in \R^d$ and $a \in [\gamma,\gamma +2s]$,
  \begin{equation}\label{e:GS-cond}
 \int_{\R^d} f(w) |w-v|^a (1+|w|)^i \dd w \lesssim (1+|v|)^a 
\end{equation}
with $i=1$ if $s < \frac12$ and $i=2$ for $s \ge \frac12$.  For $\gamma<0$, \eqref{e:GS-cond} implies \eqref{e:extra-int} by choosing $a=\gamma$. Notice that \eqref{e:GS-cond} imply a control of moments  of order $2+\gamma +2s$ if $s \ge 1/2$ which can be larger than $2$.

Note also that Assumption L in \cite{gressmanstrainBETTERpaper} is slightly more general than the upper bound on the entropy in Assumption~\ref{a:hydro-assumption}.
\end{remark}

\bibliographystyle{plain}
\bibliography{bootstrap}
\end{document}